\newlength{\fighskip} \fighskip=2pt
\newlength{\figvskip} \figvskip=3pt
\newcommand*{\figbox}[2]{{
  \def\figscale{#1}
  \def\arraystretch{0.8}
  \arraycolsep=0pt
  \begin{array}{c}
    \vbox{\vskip\figscale\figvskip
      \hbox{\hskip\figscale\fighskip
        \includegraphics[scale=\figscale]{#2}}}
  \end{array}}}
\numberwithin{equation}{section}
\newcommand{\C}{\mathbb{C}}
\newcommand{\R}{\mathbb{R}}
\newcommand{\Z}{\mathbb{Z}}
\newcommand{\E}{{\mathcal E}}
\newcommand{\pa}{\partial}
\newcommand{\W}{\mathcal{W}}
\DeclareMathOperator{\Sym}{Sym}
\DeclareMathOperator{\Hom}{Hom}
\DeclareMathOperator{\Tr}{Tr}
\newcommand{\A}{\mathcal A}
\theoremstyle{plain}
\newtheorem{thm}{Theorem}[section]
\newtheorem{thm-defn}{Theorem/Definition}[section]
\newtheorem{lem}[thm]{Lemma}
\newtheorem{lem-defn}[thm]{Lemma/Definition}
\newtheorem{prop}[thm]{Proposition}
\newtheorem{cor}[thm]{Corollary}
\theoremstyle{definition}
\newtheorem{defn}[thm]{Definition}
\newtheorem{notn}[thm]{Notation}
\newtheorem{eg}[thm]{Example}
\theoremstyle{remark}
\newtheorem{rmk}{Remark}[section]
\begin{document}
\title[Kapranov, Fedosov and 1-loop exact BV on K\"ahler manifolds]{Kapranov's $L_\infty$ structures, Fedosov's star products, and one-loop exact BV quantizations on K\"ahler manifolds}
%{Deformation quantization as BV quantization and the algebraic index}

\author[Chan]{Kwokwai Chan}
\address{Department of Mathematics\\ The Chinese University of Hong Kong\\ Shatin\\ Hong Kong}
\email{kwchan@math.cuhk.edu.hk}

\author[Leung]{Naichung Conan Leung}
\address{The Institute of Mathematical Sciences and Department of Mathematics\\ The Chinese University of Hong Kong\\ Shatin \\ Hong Kong}
\email{leung@math.cuhk.edu.hk}

\author[Li]{Qin Li}
\address{Shenzhen Institute for Quantum Science and Engineering\\ Southern University of Science and Technology\\ Shenzhen\\China}
\email{liqin@sustech.edu.cn}

\subjclass[2010]{53D55 (58J20, 81T15, 81Q30)}
\keywords{$L_\infty$ structure, deformation quantization, star product, BV quantization, algebraic index theorem, K\"ahler manifold}
\thanks{}

\begin{abstract}
We study quantization schemes on a K\"ahler manifold and relate several interesting structures.
We first construct Fedosov's star products on a K\"ahler manifold $X$ as quantizations of Kapranov's $L_\infty$-algebra structure.
%and prove that every Wick type deformation quantization (or star product) on $X$ arises in this manner. 
Then we investigate the Batalin-Vilkovisky (BV) quantizations associated to these star products. 
A remarkable feature is that they are all one-loop exact, meaning that the Feynman weights associated to graphs with two or more loops all vanish.
This leads to a succinct cochain level formula in de Rham cohomology for the algebraic index.
\end{abstract}

\maketitle

%\tableofcontents

\section{Introduction}

K\"ahler manifolds possess very rich structures because they lie at the crossroad of complex geometry and symplectic geometry. On the other hand, as symplectic manifolds which admit natural polarizations (namely, the complex polarization), K\"ahler manifolds provide a natural ground for constructing quantum theories. In particular, there have been extensive studies on deformation quantization on K\"ahler manifolds. A notable example is the Berezin-Toeplitz quantization, which is closely related to geometric quantization in the complex polarization \cite{Bordemann-Meinrenken, Bordemann, Karabegov96, Karabegov00, Karabegov07, Karabegov, Ma-Ma, Neumaier}.

This paper is another attempt to understand the relation between the K\"ahlerian condition and properties of quantum theories.
Our starting point is Kapranov's famous construction of an {\em $L_\infty$-algebra structure} for a K\"ahler manifold in \cite{Kapranov}, which was motivated by the study of Rozansky-Witten theory \cite{RW} (and also \cite{Kontsevich2}) and has since been playing important roles in many different subjects.

Let $X$ be a K\"ahler manifold. Using the Atiyah class of the holomorphic tangent bundle $TX$, Kapranov constructed a natural $L_\infty$-algebra structure on the Dolbeault complex $\A_X^{0,\bullet - 1}(TX)$, enabling us to view $TX[-1]$ as a Lie algebra object in the derived category of coherent sheaves on $X$. 
In Section \ref{section: L-infty-structure}, we reformulate this structure as a flat connection $D_K$ (where the subscript ``K'' stands for ``Kapranov'') on the holomorphic Weyl bundle $\W_X$ over $X$. 
On the other hand, {\em Fedosov abelian connections}, which give rise to {\em deformation quantizations} or {\em star products} on $X$, are connections of the form $D = \nabla-\delta+\frac{1}{\hbar}[I,-]_{\star}$ on the complexified Weyl bundle $\W_{X,\mathbb{C}}$  satisfying $D^2=0$; here $\nabla$ is the Levi-Civita connection, $I\in\A^1(X,\W_{X,\C})$ is a $1$-form valued section of $\W_{X,\C}$, and $\star$ is the fiberwise Wick product on $\W_{X,\mathbb{C}}$. The flatness condition $D^2 = 0$ is equivalent to the {\em Fedosov equation}
\begin{equation}\label{eqn:Fedosov-equation-Wick-intro}
\nabla I-\delta I+\frac{1}{\hbar}I\star I+ R_\nabla=\alpha.
\end{equation}
Our first main result says that Kapranov's $L_\infty$ structure can naturally be quantized to produce Fedosov abelian connections $D_{F,\alpha}$ (where the subscript ``F'' stands for ``Fedosov''):
\begin{thm}[=Theorem \ref{proposition: Fedosov-connection-general}]
	For a representative $\alpha$ of any given formal cohomology class $[\alpha] \in \hbar H^2_{dR}(X)[[\hbar]]$ of type $(1,1)$, there exists a Fedosov abelian connection $D_{F,\alpha} = \nabla-\delta+\frac{1}{\hbar}[I_\alpha,-]_{\star}$
	%(with $I_\alpha$ solving the Fedosov equation \eqref{eqn:Fedosov-equation-Wick-intro})
	such that $D_{F,\alpha}|_{\W_X} = D_K$.
\end{thm}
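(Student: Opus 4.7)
The plan is to construct $I_\alpha$ by a Fedosov-style iterative argument that is seeded with Kapranov's data, so that the restriction identity $D_{F,\alpha}|_{\mathcal{W}_X} = D_K$ is built into the construction from the outset.

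First, I would decompose $I_\alpha = I^{1,0} + I^{0,1}$ according to the Hodge bidegree of forms and split the Fedosov equation~\eqref{eqn:Fedosov-equation-Wick-intro} into its $(2,0)$, $(1,1)$, and $(0,2)$ components. Since $R_\nabla$ is of pure type $(1,1)$ on a K\"ahler manifold and $\alpha$ is assumed to be of type $(1,1)$, the $(2,0)$-component becomes an equation purely for $I^{1,0}$; comparison with the reformulation of Kapranov's structure in Section~\ref{section: L-infty-structure} shows that this is precisely the equation solved by the Kapranov 1-form $I_K$ appearing in $D_K$. I therefore set $I^{1,0} := I_K$ and reduce the problem to finding $I^{0,1}$ satisfying the remaining $(1,1)$ and $(0,2)$ equations.

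Next, I would rewrite the system for $I^{0,1}$ as a fixed-point equation using the standard Fedosov contracting homotopy $\delta^{-1}$, imposing the gauge condition $\delta^{-1} I^{0,1} = 0$. Because $\delta^{-1}$ raises fiber degree by one, iteration converges in the fiber-degree-adic topology on $\mathcal{W}_{X,\mathbb{C}}$ and determines a unique $I^{0,1}$, hence $I_\alpha = I_K + I^{0,1}$. The type $(1,1)$ assumption on $\alpha$ enters crucially here, as it ensures the $(0,2)$-component has a solution compatible with the Kapranov seed.

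Finally, the identity $D_{F,\alpha}|_{\mathcal{W}_X} = D_K$ would follow from a structural property of the Wick star product on a K\"ahler manifold: the commutator $\frac{1}{\hbar}[I^{0,1}, -]_\star$ applied to a purely holomorphic Weyl section produces contractions of $\partial_{\bar y}$-derivatives of $I^{0,1}$ with $\partial_y$-derivatives of the section, which land back in $\mathcal{W}_X$ and, together with the $\bar\partial$-piece of $\nabla$, reproduce the antiholomorphic part of $D_K$. The main obstacle I anticipate is bookkeeping: verifying that the $(2,0)$-component of \eqref{eqn:Fedosov-equation-Wick-intro} really matches the flatness equation for $D_K$ (rather than some shifted version) and that $I^{0,1}$ retains the expected antiholomorphic-fiber structure throughout the iteration. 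Both issues should reduce to the K\"ahler identities $R_\nabla^{2,0} = R_\nabla^{0,2} = 0$ combined with the bidegree-preserving nature of $\delta^{-1}$.
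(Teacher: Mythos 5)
Your overall strategy---seed a Fedosov-type recursion with Kapranov's data and let the one-sided contractions of the Wick product account for the restriction to $\mathcal{W}_X$---is in the right spirit, but two load-bearing steps do not work as stated. First, the bidegree bookkeeping is backwards: Kapranov's corrections $\tilde{R}_n^*$ are $(0,1)$-forms, living in $\mathcal{A}^{0,1}_X(\Hom(T^*X,\Sym^n(T^*X)))$, so the $L_\infty$ relations \eqref{equation: square-zero-0-1-part} sit in the $(0,2)$-component of $D_K^2$, not the $(2,0)$-component; correspondingly, in the paper's solution the entire form $I_\alpha$ is of type $(0,1)$ and the $(2,0)$-component of the Fedosov equation is vacuous (both $R_\nabla$ and $\alpha$ are of type $(1,1)$). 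Setting $I^{1,0}:=I_K$ is therefore a type error, and it also skips the actual crux: Kapranov's relations concern compositions of derivations $\tilde{R}_j^*\circ\tilde{R}_k^*$, whereas the Fedosov equation involves the Wick bracket of sections. The bridge between the two is the lifting operator $L$ and Lemma \ref{lemma: commutation-relations} (in particular $\tilde{R}_n^*=\frac{1}{\hbar}[L(R_n^*),-]_\star|_{\mathcal{W}_X}$, $\delta^{1,0}\circ L(R_2^*)=R_\nabla$, and $L([R_m^*,R_n^*])=[L(R_m^*),L(R_n^*)]_\star$); without these identities, ``comparison with Section \ref{section: L-infty-structure}'' does not show that Kapranov's data solves any component of the Fedosov equation, nor that $\frac{1}{\hbar}[I,-]_\star$ preserves $\mathcal{W}_X$ and acts there as the derivation $\sum_n\tilde{R}_n^*$ (for that, $I$ must be exactly linear in the $\bar{y}$'s, with precisely the coefficients $L(R_n^*)$).

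Second, the gauge condition $\delta^{-1}I^{0,1}=0$ selects the wrong solution. The paper stresses (Proposition \ref{proposition-fedosov-gauge-conditions} and the surrounding discussion) that its connection satisfies $(\delta^{1,0})^{-1}(I)=0$ and $\pi_{0,*}(I)=0$ but \emph{not} Fedosov's classical normalization $\delta^{-1}(I)=0$: for instance $\delta^{*}I_2=\frac{\sqrt{-1}}{2}R_{i\bar{j}k\bar{l}}\,\bar{y}^l y^iy^k\bar{y}^j\neq 0$ because $R_{i\bar{j}k\bar{l}}$ is symmetric in $\bar{j},\bar{l}$. A recursion run with the standard contracting homotopy produces a gauge-inequivalent representative of the same star product, and there is no reason for that representative to restrict to $D_K$: the restriction property requires the correction beyond $I=\sum_n L(R_n^*)$ to contain \emph{no} $\bar{y}$'s at all, so that its Wick bracket with sections of $\mathcal{W}_X$ vanishes identically. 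The paper achieves this not by a generic $\delta^{-1}$-iteration but by the explicit formula $J_\alpha=\sum_{k\geq 1}\bigl((\delta^{1,0})^{-1}\circ\nabla^{1,0}\bigr)^k(\bar{\partial}g)$ with $\alpha=\bar{\partial}\partial g$ locally, which stays inside $\mathcal{A}^{0,1}_X(\mathcal{W}_X)$, is annihilated by $D_K$ together with $\bar{\partial}g$ by Proposition \ref{proposition: flat-section-closed-0-q-form} (whence $\nabla J_\alpha-\delta J_\alpha+\frac{1}{\hbar}[I,J_\alpha]_\star=\alpha$), and satisfies $J_\alpha\star J_\alpha=0$ for type reasons. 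To close the gap you would need to replace your contracting homotopy by this polarized one and supply the $\partial\bar{\partial}$-lemma argument showing that $J_\alpha$ is independent of the choice of $g$ and hence globally defined.
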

There are some interesting features of the resulting star products $\star_\alpha$ on $X$.
First of all, they are of so-called {\em Wick type}, which roughly means that the corresponding bidifferential operators respect the complex polarization (Proposition \ref{proposition: Fedosov-quantization-1-1-class-Wick-type}). By computing the Karabegov form associated to our star products, we can also show that every Wick type star product arises from our construction (Corollary \ref{corollary:Wick-type}).
More importantly, our solutions of the Fedosov equation \eqref{eqn:Fedosov-equation-Wick-intro} satisfy a gaugue condition (Proposition \ref{proposition-fedosov-gauge-conditions}) which is different from all previous constructions of Fedosov quantization on K\"ahler manifolds. Because of this, our construction is more consistent with the Berezin-Toeplitz quantization\footnote{Combining with the results in \cite{CLL-PartI}, we can show that these Fedosov star products actually coincide with the Berezin-Toeplitz star products \cite{CLL-PartIII}.} and also the local picture that $z$ acts as the {\em creation operator} (which is classical) while $\bar{z}$ acts as the {\em annihilation operator} $\hbar\frac{\partial}{\partial z}$ (which is quantum).
Furthremore, our Fedosov quantization is, in a certain sense, {\em polarized} because only half of the functions, namely, the anti-holomorphic ones, receive quantum corrections.
See Section \ref{section: Fedosov-quantization} for more details.

Next we go from deformation quantization to quantum field theory (QFT).
From the QFT viewpoint, the Fedosov quantization describes the local data of a quantum mechanical system, namely, the cochain complex 
$$
(\A_X^\bullet(\mathcal{W}_{X,\C}), D_{F,\alpha})
$$
gives the {\em cochain complex of local quantum observables} of a sigma model from $S^1$ to the target manifold $X$.
To obtain {\em global quantum observables} and also define the {\em correlation functions}, we construct the {\em Batalin-Vilkovisky (BV) quantization} \cite{BV} of this quantum mechanical system, which comes with a map from local to global quantum observables called the {\em factorization map}.
We will mainly follow Costello's approach to the BV formalism \cite{Kevin-book} and rely on Costello-Gwilliam's foundational work on factorization algebras in QFT \cite{Kevin-Owen, Kevin-Owen-2}.

To construct a BV quantization, one wants to produce solutions of the quantum master equation (QME) (see Lemma \ref{lemma: BV-operator-differential}, Definition \ref{definition: QME} and the equation \eqref{equation: quantum-master-equation}):
\begin{equation}\label{equation: quantum-master-equation-intro}
Q_{BV}(e^{r/\hbar})=0,
\end{equation}
where $Q_{BV}:=\nabla+\hbar\Delta+\frac{1}{\hbar}d_{TX}R_{\nabla}$ is the so-called {\em BV differential}, by running the {\em homotopy group flow} which is defined by choosing a suitable propagator. For general symplectic manifolds, it was shown in the joint work \cite{GLL} of the third author with Grady and Si Li that canonical solutions of the QME can be constructed by applying the homotopy group flow operator to solutions of the Fedosov equation \eqref{eqn:Fedosov-equation-Wick-intro}.

In the K\"ahler setting, it is desirable to choose the propagator differently to make it more compatible with the complex polarization. This leads to what we call the {\em polarized propagator} (see Definition \ref{definition: propagator-polarized}) and hence a slightly different form of the canonical solutions to the QME. More precisely, in Theorem \ref{theorem: Fedosov-connection-RG-flow-QME}, we will construct the canonical solution $e^{\tilde{R}_\nabla/2\hbar}\cdot e^{\gamma_\infty/\hbar}$ of the QME \eqref{equation: quantum-master-equation-intro} from a solution $\gamma$ of the Fedosov equation \eqref{equation: Fedosov-equation-gamma} (which is equivalent to \eqref{eqn:Fedosov-equation-Wick-intro}); here the curvature term $\tilde{R}_\nabla$ appears precisely because of the different choice of the propagator.
The second main result of this paper is that the resulting BV quantization is {\em one-loop exact}, meaning that the above canonical solution of the QME admits a Feynman graph expansion that involves {\em only trees and one-loop graphs}:
\begin{thm}[=Theorem \ref{theorem: gamma-infty-one-loop}]\label{theorem: gamma-infty-one-loop-intro}
	Let $\gamma$ be a solution of the Fedosov equation \eqref{equation: Fedosov-equation-gamma}.
	Then the Feynman weight associated to a graph $\mathcal{G}$ with two or more loops vanishes, i.e.,
	$$W_{\mathcal{G}}(P, d_{TX}\gamma) = 0\quad \text{ whenever $b_1(\mathcal{G}) \geq 2$}.$$
	Hence, the graph expansion of the canonical solution of the QME associated to the Fedosov connection $D_{F,\alpha}$ by Theorem \ref{theorem: Fedosov-connection-RG-flow-QME} involves only trees and one-loop graphs:
	$$
	\gamma_\infty=\sum_{\mathcal{G}:\ \text{connected},\ b_1(\mathcal{G})=0,1}\frac{\hbar^{g(\mathcal{G})}}{|\text{Aut}(\mathcal{G})|}W_{\mathcal{G}}(P, d_{TX}\gamma). 
	$$
\end{thm}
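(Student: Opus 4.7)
The plan is a bidegree-counting argument that exploits the polarized structures of both the propagator $P$ and the vertex decoration $d_{TX}\gamma$. The crux is that each propagator insertion costs one unit of antiholomorphic fiber coordinate $\bar{y}$, whereas each vertex can supply only a sharply bounded amount; this forces $b_1(\mathcal{G}) \leq 1$ for nonvanishing weights.

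To set up the bookkeeping, I would equip the complexified Weyl bundle $\W_{X,\C}$ with a bigrading by declaring the holomorphic fiber coordinates $y^i$ to have bidegree $(1,0)$ and their conjugates $\bar{y}^j$ to have bidegree $(0,1)$. Under the Wick product $\star$, a single contraction pairs $\partial/\partial y^i$ with $\partial/\partial \bar{y}^j$ via $g^{i\bar{j}}$. By the construction of the polarized propagator (Definition \ref{definition: propagator-polarized}), its kernel encodes exactly one such contraction, so each Feynman edge depletes one unit of $y$-degree from one endpoint and one unit of $\bar{y}$-degree from the other.

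The pivotal input is the bound that $d_{TX}\gamma$ is at most linear in $\bar{y}$. This is where the polarized Fedosov construction, and in particular the gauge condition of Proposition \ref{proposition-fedosov-gauge-conditions}, plays an essential role. Kapranov's connection $D_K$ lives on the holomorphic Weyl bundle $\W_X$ with no $\bar{y}$-dependence, so the only $\bar{y}$-content of the lifted $I_\alpha$, and hence of $\gamma$, must be produced by the curvature term $R_\nabla$ and by the chosen representative $\alpha$, both of which are inputs of controlled $\bar{y}$-degree. I would then run an induction along the Fedosov iteration for $\gamma$, using that the polarized gauge-fixing variant of $\delta^{-1}$ preserves $\bar{y}$-degree, and that in the Wick bracket each nontrivial contraction consumes at least one $\bar{y}$; this propagates the $\bar{y}$-degree bound through every step and shows that $\gamma$, hence $d_{TX}\gamma$, is indeed at most linear in $\bar{y}$.

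With the bound in place the combinatorial conclusion is immediate. For a connected Feynman graph $\mathcal{G}$ with $V$ vertices and $E$ edges, one has $E = V - 1 + b_1(\mathcal{G})$. Each vertex contributes at most one $\bar{y}$, so the total $\bar{y}$-reservoir is at most $V$; each edge demands exactly one $\bar{y}$-derivative, giving a demand of $E$. A nonvanishing weight therefore forces $E \leq V$, equivalently $b_1(\mathcal{G}) \leq 1$, which yields $W_{\mathcal{G}}(P, d_{TX}\gamma) = 0$ whenever $b_1(\mathcal{G}) \geq 2$, and the truncated sum for $\gamma_\infty$ then follows directly from Theorem \ref{theorem: Fedosov-connection-RG-flow-QME}. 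The main technical obstacle is the propagation of the $\bar{y}$-degree bound through the Fedosov iteration; this relies crucially on the precise form of the polarized gauge condition and the compatibility of the Wick product with the bigrading, and is where I expect the bulk of the work to lie.
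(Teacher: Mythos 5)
Your proposal is correct and takes essentially the same approach as the paper: the paper introduces exactly this $\bar{y}$-counting as a ``weight'' on the BV bundle ($|\bar{y}^i|=|d\bar{y}^i|=1$, everything else of weight $0$), shows each term of $\gamma_0$ has weight at most $1$, notes that each propagator has weight $-1$, and concludes from $|V(\mathcal{G})|-(|V(\mathcal{G})|+b_1(\mathcal{G})-1)\geq 0$ that $b_1(\mathcal{G})\leq 1$. The only cosmetic difference is that the paper reads the linearity of $\gamma$ in $\bar{y}$ directly off the explicit formula $I_\alpha=\sum_n L(R_n^*)+J_\alpha$ (each $I_n$ carries exactly one $\bar{y}$ by construction of the lifting operator $L$, and $J_\alpha$ carries none), rather than propagating the bound through the Fedosov iteration as you suggest.
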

 This is in sharp contrast with the general symplectic case studied in \cite{GLL} where the BV quantization involves quantum corrections from graphs with any number of loops.
The same kind of one-loop exactness has been observed in a few cases before, including the holomorphic Chern-Simons theory studied by Costello \cite{Kevin-CS} and a sigma model from $S^1$ to the target $T^*Y$ (cotangent bundle of a smooth manifold $Y$) studied by Gwilliam-Grady \cite{Gwilliam-Grady}. Theorem \ref{theorem: gamma-infty-one-loop-intro} shows that K\"ahler manifolds provide a natural geometric ground for producing such one-loop exact QFTs.

\begin{rmk}
	If the Feynman weights associated to graphs of higher genera ($\geq 2$) give rise to {\em exact} differential forms and thereby contributing trivially to the computation of correlation functions, we may call the quantization {\em cohomologically one-loop exact}. This is much more commonly found in the mathematical physics literature and should be distinguished from our notion of one-loop exactness here.
\end{rmk}

As in the symplectic case \cite{GLL}, from the canonical QME solution $e^{\tilde{R}_\nabla/2\hbar}\cdot e^{\gamma_\infty/\hbar}$, we obtain the local-to-global factorization map, which can be used to define the correlation function $\langle f\rangle$ of a smooth function $f \in C^\infty(X)[[\hbar]]$ (see Definition \ref{definition:correlation-functions} and Proposition \ref{proposition: leading-term-correlation-function}). The association $\Tr: f \mapsto \langle f\rangle$ then gives a {\em trace} of the star product $\star_\alpha$ associated to the Fedosov connection $D_{F, \alpha}$ (Corollary \ref{corollary:trace}).

%As an application of the one-loop exactness of our BV quantization in Theorem \ref{theorem: gamma-infty-one-loop-intro}, we deduce a simple {\em cochain level} formula for the trace $\Tr: f \mapsto \langle f\rangle$: 
%\begin{thm}
%	(A cochain level recursive formula for the trace)
%\end{thm}

As an application of the one-loop exactness of our BV quantization in Theorem \ref{theorem: gamma-infty-one-loop-intro}, we deduce a novel {\em cochain level} formula for the for the {\em algebraic index} $\Tr(1)$, which is the correlation function of the constant function $1$, and thus a particularly neat presentation of the {\em algebraic index theorem}:
\begin{thm}[=Theorem \ref{theorem:algebraic-index-theorem} \& Corollary \ref{corollary:algebraic-index-theorem}]
	We have
	$$\sigma\left(e^{\hbar\iota_{\Pi}} (e^{\tilde{R}_\nabla/2\hbar}e^{\gamma_\infty/\hbar}) \right)
	= \hat{A}(X)\cdot e^{-\frac{\omega_\hbar}{\hbar}+\frac{1}{2}\Tr(\mathcal{R}^+)} = \text{Td}(X)\cdot e^{-\frac{\omega_\hbar}{\hbar}+\Tr(\mathcal{R}^+)},$$
	where $Td(X)$ is the Todd class of $X$ and $\mathcal{R}^+$ is the curvature of the holomorphic tangent bundle defined in \eqref{equation: R-plus}. In particular, we obtain the {\em algebraic index theorem}, namely, the trace of the function $1$ is given by 
	\begin{align*}
	\Tr(1) = \int_X\hat{A}(X)\cdot e^{-\frac{\omega_\hbar}{\hbar}+\frac{1}{2}\Tr(\mathcal{R}^+)}
	=  \int_X\text{Td}(X)\cdot e^{-\frac{\omega_\hbar}{\hbar}+\Tr(\mathcal{R}^+)}.
	\end{align*}
\end{thm}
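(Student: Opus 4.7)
The plan is to exploit the one-loop exactness of Theorem \ref{theorem: gamma-infty-one-loop-intro}: once $\gamma_\infty$ is a sum of tree and one-loop Feynman weights, the operator $e^{\hbar\iota_{\Pi}}$ performs additional $\Pi$-contractions that can themselves close up loops, so the main task is to track how many total loops survive after taking the de Rham symbol $\sigma$, and then to identify each piece geometrically.

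First I would organize the diagrams contributing to $\sigma\bigl(e^{\hbar\iota_\Pi}(e^{\tilde{R}_\nabla/2\hbar}e^{\gamma_\infty/\hbar})\bigr)$ by the first Betti number of the resulting connected graph. The diagrams that remain of genus $0$ after the $\iota_\Pi$-contractions assemble into the classical contribution: using the Fedosov gauge condition of Proposition \ref{proposition-fedosov-gauge-conditions} together with the Fedosov equation \eqref{equation: Fedosov-equation-gamma}, the sum of trees collapses to the symplectic volume factor $e^{-\omega_\hbar/\hbar}$, reproducing the classical piece of the desired formula.

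The heart of the argument is to identify the sum over connected diagrams of genus exactly $1$ with the characteristic class $\hat{A}(X)$. Writing the one-loop Feynman weight as a trace over the internal edge variables, I would package the sum as $\exp\bigl(-\tfrac{1}{2}\Tr\log(1-K)\bigr)$ for an endomorphism $K$ built from Kapranov's higher $L_\infty$-brackets, equivalently from iterated applications of the Atiyah class of $TX$. Identifying $K$ with the holomorphic curvature matches the resulting series with $\hat{A}(X)$. The prefactor $e^{\tilde{R}_\nabla/2\hbar}$ is quadratic in the Weyl variables and contributes an extra Gaussian integration which, after $\iota_\Pi$-contraction, yields precisely the shift $e^{\frac{1}{2}\Tr(\mathcal{R}^+)}$; the second equality in the theorem is then the classical identity $\hat{A}(X)\cdot e^{\frac{1}{2}\Tr(\mathcal{R}^+)} = \text{Td}(X)\cdot e^{\Tr(\mathcal{R}^+)}$.

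I expect the loop computation to be the main obstacle: one must carefully match Kapranov's higher brackets cycled around a loop with the Taylor expansion of the $\hat{A}$-genus, paying close attention to the symmetry factors $1/|\text{Aut}(\mathcal{G})|$ so that they combine correctly into the $\log\det$ identity, and to the role of the polarized propagator in ensuring that only anti-holomorphic vertices contribute quantum corrections. Granting the symbol formula, the algebraic index theorem follows formally by applying the local-to-global factorization map to the constant local observable $1$ and integrating over $X$: by Proposition \ref{proposition: leading-term-correlation-function} one has $\Tr(1) = \int_X \sigma\bigl(e^{\hbar\iota_\Pi}(e^{\tilde{R}_\nabla/2\hbar}e^{\gamma_\infty/\hbar})\bigr)$, which combined with the symbol identity yields both forms of the index formula.
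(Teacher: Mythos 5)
Your overall skeleton matches the paper's: organize the graph sum for $\sigma\bigl(e^{\hbar\iota_\Pi}(e^{\tilde{R}_\nabla/2\hbar}e^{\gamma_\infty/\hbar})\bigr)$ by first Betti number, let the trees produce $e^{-\omega_\hbar/\hbar}$, let the $P$-wheels produce $\hat{A}(X)$ (this is exactly the Gwilliam--Grady identification the paper invokes in Proposition \ref{prop: one-loop-logarithm-A-roof-genus}), and read off the second equality from $\hat{A}(X)=\mathrm{Td}(X)\cdot e^{\frac{1}{2}\Tr(\mathcal{R}^+)}$. But there is a genuine gap in your treatment of the prefactor $e^{\tilde{R}_\nabla/2\hbar}$. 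You assert that this quadratic Gaussian factor, after $\iota_\Pi$-contraction, ``yields precisely the shift $e^{\frac{1}{2}\Tr(\mathcal{R}^+)}$.'' That accounts only for the single-contraction tadpole (Lemma \ref{lemma: weight-tadple-graph}). Exponentiating a quadratic vertex and hitting it with $e^{\hbar\iota_\Pi}$ also produces, for every $m\geq 2$, a wheel of $m$ copies of $\tilde{R}_\nabla/2$ joined by $\iota_\Pi$-edges, contributing terms proportional to $\Tr\bigl((\omega^{-1}R)^m\bigr)$; these do \emph{not} vanish individually and would spoil the formula if left unaccounted for. The paper's key step here is Proposition \ref{prop:1-loop-graphs}: these $\iota_\Pi$-wheels cancel exactly against a second family of one-loop graphs built from the cubic curvature vertex of $\gamma_0'$ joined by $\partial_P$-edges with $\iota_\Pi$ contractions closing the loop. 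Your proposal has no mechanism for this cancellation, and without it the genus-one sector does not reduce to $\log\hat{A}(X)+\frac{1}{2}\Tr(\mathcal{R}^+)$.

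Two further points you would need to supply. First, identifying the surviving $P$-wheels with $\log\hat{A}(X)$ presupposes that all ``mixed'' wheels vanish: wheels whose edges carry both $P$ and $\iota_\Pi$ (Proposition \ref{proposition: all-possible-wheels}, which uses the tail-counting argument of Proposition \ref{proposition: tree-external-edges}), and wheels mixing the antisymmetric part $P_1$ with the constant part $P_2$ of the polarized propagator, which die because $\int_0^1(u-\tfrac{1}{2})\,du=0$. Your proposal does not address either vanishing, and the second one is specific to the polarized propagator introduced in this paper. Second, the reduction $\Tr(1)=\int_X\sigma\bigl(e^{\hbar\iota_\Pi}(\cdots)\bigr)$ is Lemma \ref{lemma: Berezin-integral-graph} (a degree-counting statement about the Berezin integral), not Proposition \ref{proposition: leading-term-correlation-function}; this is a minor misattribution but worth fixing.
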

This theorem can be regarded as a cochain level enhancement of the result in \cite{GLL}.
In the forthcoming work \cite{CLL-PartIII}, by combining with the results in \cite{CLL-PartI}, we will prove that the star product $\star_\alpha$ associated to $\alpha = \hbar\cdot\Tr(\mathcal{R}^+)$ is precisely the {\em Berezin-Toeplitz star product} on a prequantizable K\"ahler manifold studied in \cite{Bordemann-Meinrenken, Bordemann, Karabegov}. In this case, the algebraic index theorem is of the simple form:
$$
\Tr(1)=\int_X\text{Td}(X)\cdot e^{\omega/\hbar}.
$$
%traces are defined by correlation functions of local quantum observables.

%Our results reveal intriguing relations among various important structures associated to a K\"ahler manifold: Kapranov, Fedosov, BV...

\subsection*{Acknowledgement}
\

We thank Si Li and Siye Wu for useful discussions. The first named author thanks Martin Schlichenmaier and Siye Wu for inviting him to attend the conference GEOQUANT 2019 held in September 2019 in Taiwan, in which he had stimulating and very helpful discussions with both of them as well as Jørgen Ellengard Andersen, Motohico Mulase, Georgiy Sharygin and Steve Zelditch.

K. Chan was supported by a grant of the Hong Kong Research Grants Council (Project No. CUHK14303019) and direct grant (No. 4053395) from CUHK.
N. C. Leung was supported by grants of the Hong Kong Research Grants Council (Project No. CUHK14301117 \& CUHK14303518) and direct grant (No. 4053400) from CUHK.
Q. Li was supported by a grant from National Natural Science Foundation of China  (Project No. 12071204), and Guangdong Basic and Applied Basic Research Foundation (Project No. 2020A1515011220).

\section{From Kapranov to Fedosov}

There have been extensive studies on the Fedosov quantization of K\"ahler manifolds. In this section, we give a new construction of the Fedosov quantization (i.e., solutions of the Fedosov equation for abelian connections on the Weyl bundle) as a natural quantization of Kapranov's $L_\infty$-algebra structure on a K\"ahler manifold.
%Throughout this paper, we will let $X$ denote a K\"ahler manifold of complex dimension $n$. 

%In the previous section, we described representation of the BT deformation quantization algebra parametrized by points on $X$. In this construction, we have used a special holomorphic coordinate system (K-coordinate) and the K-frame of the line bundle $L$. In this section, we will give a more intrinsic description of these representations, in terms of Weyl and Fock bundles on $X$. In particular, the Fedosov connection on the Fock bundle describes how these representations corresponding to different points are related. 

The organization of this section is as follows: In Section \ref{section: kalher-geom}, we review some basic K\"ahler geometry, including the geometry of Weyl bundles on a K\"ahler manifold $X$. In Section \ref{section: L-infty-structure}, we recall the  $L_\infty$-algebra structure introduced by Kapranov \cite{Kapranov}, reformulated using the geometry of Weyl bundles on $X$. 
In Section \ref{section: Fedosov-quantization}, we construct Fedosov's flat connections as quantizations of Kapranov's $L_\infty$-algebra structure, which produce star products on $X$ of Wick type. We also compute the Karabegov forms associated to such star products and prove that every Wick type star product arises from our construction.

\subsection{Preliminaries in K\"ahler geometry}\label{section: kalher-geom}

\subsubsection{Some basic identities}
\

We first collect some basic identities in K\"ahler geometry, which are needed in later computations.

First of all, writing the K\"ahler form as
$$
\omega=\omega_{\alpha\bar{\beta}}dz^\alpha\wedge d\bar{z}^\beta=\sqrt{-1}g_{\alpha\bar{\beta}}dz^\alpha\wedge d\bar{z}^\beta,
$$
where we adopt the convention that $\omega^{\bar{\gamma}\alpha}\omega_{\alpha\bar{\beta}}=\delta_{\bar{\beta}}^{\bar{\gamma}}$, then a simple computation shows that $g^{\alpha\bar{\beta}}=-\sqrt{-1}\omega^{\alpha\bar{\beta}}$.

In local coordinates, the curvature of the Levi-Civita connection is given by 
$$
\nabla^2(\partial_{x^k})=R_{ijk}^ldx^i\wedge dx^j\otimes\partial_{x^l},
$$
or in complex coordinates:
$$
\nabla^2(\partial_{z^k})=R_{i\bar{j}k}^l dz^i\wedge d\bar{z}^j\otimes\partial_{z^l},
$$
where the coefficients can be written as the derivatives of Christoffel symbols:
$$
R_{i\bar{j}k}^l=-\partial_{\bar{z}^j}(\Gamma_{ik}^l).
$$

We define $R_{i\bar{j}k\bar{l}}$ by 
\begin{align*}
R_{i\bar{j}k\bar{l}}:=g(\mathcal{R}(\partial_{z^i},\partial_{\bar{z}^j})\partial_{z^k},\partial_{\bar{z}^l})
=g(R_{i\bar{j}k}^m\partial_{z^m},\partial_{\bar{z}^l})
=R_{i\bar{j}k}^m g_{m\bar{l}}.
\end{align*}
We can also compute the curvature on the cotangent bundle:
\begin{align*}
\mathcal{R}(\partial_{z^i},\partial_{\bar{z}^j})(y^k) & = -\mathcal{R}(\partial_{\bar{z}^j},\partial_{z^i})(y^k) = -\partial_{\bar{z}^j}(-\Gamma_{il}^kdz^i\otimes y^l)\\
& = \partial_{\bar{z}^j}(\Gamma_{ij}^k)d\bar{z}^j\wedge dz^i\otimes y^l = R_{i\bar{j}k}^ldz^i\wedge d\bar{z}^j\otimes y^l. 
\end{align*}
The following computation shows that the curvature operator $\mathcal{R}$ can be expressed as a bracket:
\begin{align*}
\frac{\sqrt{-1}}{\hbar}[R_{i\bar{j}k\bar{l}}dz^i\wedge d\bar{z}^j\otimes y^k\bar{y}^l,y^m]_\star
& = -\frac{\sqrt{-1}}{\hbar}R_{i\bar{j}k\bar{l}}dz^i\wedge d\bar{z}^j\otimes\left(y^m\star y^k\bar{y}^l-y^my^k\bar{y}^l\right)\\
& = -\sqrt{-1}R_{i\bar{j}k\bar{l}}\omega^{m\bar{l}}dz^i\wedge d\bar{z}^j\otimes y^k\\
& = -\sqrt{-1}R_{i\bar{j}k}^ng_{n\bar{l}}\omega^{m\bar{l}}dz^i\wedge d\bar{z}^j\otimes y^k\\
& = -\sqrt{-1}R_{i\bar{j}k}^n(-\sqrt{-1}\omega_{n\bar{l}})\omega^{m\bar{l}}dz^i\wedge d\bar{z}^j\otimes y^k\\
& = R_{i\bar{j}k}^mdz^i\wedge d\bar{z}^j\otimes y^k\\
& = \nabla^2(y^m). 
\end{align*}

For later computations, we also use the notation $\mathcal{R}^+$ to denote the curvature of the holomorphic tangent bundle:
\begin{equation}\label{equation: R-plus}
\mathcal{R}^+:=R_{i\bar{j}k}^mdz^i\wedge d\bar{z}^j\otimes(\partial_{z^m}\otimes y^k). 
\end{equation}
In particular, we have an explicit formula for its trace:
\begin{equation}\label{equation: trace-R-plus}
\Tr(\mathcal{R}^+)=R_{i\bar{j}k}^k=R_{i\bar{j}k\bar{l}}g^{k\bar{l}}=-\sqrt{-1}R_{i\bar{j}k\bar{l}}\omega^{k\bar{l}}.
\end{equation}

\subsubsection{Weyl bundles on K\"ahler manifolds}%\label{section: Weyl-bundle}
\

Here we recall the definitions and basic properties of various types of Weyl bundles on symplectic and K\"ahler manifolds. 
\begin{defn}\label{definition: real-Weyl-bundle}
For a symplectic manifold $(M,\omega)$, its {\em (real) Weyl bundle} is defined as
$$\W_{M,\R} := \widehat{\Sym}(T^*M_{\mathbb{R}})[[\hbar]],$$
where $\widehat{\Sym}(T^*M_{\mathbb{R}})$ is the completed symmetric power of the cotangent bundle $T^*M_{\mathbb{R}}$ of $M$ and $\hbar$ is a formal variable.
A (smooth) section $a$ of this infinite rank bundle is given locally by a formal series
\[
a(x,y)=\sum_{k, l\geq 0} \sum_{i_1,\dots,i_l \geq0} \hbar^k a_{k,i_1\cdots i_l}(x)y^{i_1}\cdots y^{i_l},
\]
where the $a_{k,i_1\cdots i_l}(x)'s$ are smooth functions on $M$.
%We will simply write $\W$ for the Weyl bundle when the manifold $M$ is clear from the context.
\end{defn}
\begin{rmk}\label{remark: symmetric-tensor-product}
  We use the following notation for the symmetric tensor power:
  $$
  y^{i_1}\cdots y^{i_l}:=\sum_{\tau\in S_l}y^{i_{\tau(1)}}\otimes\cdots\otimes y^{i_{\tau(l)}}.
  $$
  Here the product on the tensor algebra is given by
  $$
  (y^{i_1}\otimes\cdots\otimes y^{i_k})\cdot (y^{i_{k+1}}\otimes\cdots\otimes y^{i_{k+l}}):=\sum_{\tau\in\text{Sh}(k,l)}y^{i_{\tau(1)}}\otimes\cdots\otimes y^{i_{\tau(k+l)}},
  $$
  where $\text{Sh}(k,l)$ denotes the set of all $(k,l)$-shuffles.
  %In later computations, we will use both these two notations for symmetric tensor powers. 
\end{rmk}

There is a canonical (classical) fiberwise multiplication, denoted as $\cdot$, which makes $\mathcal{W}_{M,\R}$ an (infinite rank) algebra bundle over $M$. We will also consider differential forms with values in $\mathcal{W}_{M,\R}$, i.e., $\A_M^\bullet(\mathcal{W}_{M,\R})$. %We will also call this Weyl bundle. 
\begin{rmk}
In this subsection, we are only concerned with the classical geometry of Weyl bundles; the formal variable $\hbar$ is included in Definition \ref{definition: real-Weyl-bundle} for discussing their quantum geometry in later sections. 
\end{rmk}

\begin{defn}\label{defn:complexified-Weyl-bundle}
There are two important operators on $\A_M^\bullet(\mathcal{W}_{M,\R})$ which are $\A_M^\bullet$-linear:\footnote{The Einstein summation rule will be used throughout this paper.}
\begin{equation*}\label{equation: delta-delta-star}
\delta a=dx^k\wedge\frac{\partial a}{\partial y^k},\quad \delta^*a=y^k\cdot \iota_{\partial_{x^k}}a.
\end{equation*}
Here $\iota_{\partial_{x^k}}$ denotes the contraction of differential forms by the vector field $\frac{\partial}{\partial x^k}$. We can normalize the operator $\delta^*$ by letting $\delta^{-1} := \frac{1}{l+m}\delta^*$ when acting on the monomial
$
y^{i_1}\cdots y^{i_l}dx^{j_1}\wedge\cdots\wedge dx^{j_m}.
$
Then for any form $a\in\A_M^\bullet(\mathcal{W}_{M,\R})$, we have
\begin{equation*}\label{equation: Hodge-de-Rham-decomposition-Moyal}
 a=\delta\delta^{-1}a+\delta^{-1}\delta a+a_{00},
\end{equation*}
where $a_{00}$ denotes the constant term, i.e., the term without any $dx^i$'s or $y^j$'s in $a$. 
\end{defn}

\begin{defn}
For a K\"ahler manifold $X$, we define the {\em holomorphic and anti-holomorphic Weyl bundles} respectively by
\begin{align*}
\mathcal{W}_X := \widehat{\Sym}(T^*X)[[\hbar]],\quad \overline{\mathcal{W}}_X := \widehat{\Sym}(\overline{T^*X})[[\hbar]],
\end{align*}
where $T^*X$ and $\overline{T^*X}$ are the holomorphic and anti-holomorphic cotangent bundles of $X$ respectively. With respect to a local holomorphic coordinate system $\{z^1,\dots, z^n\}$, we let $y^i$'s and $\bar{y}^j$'s denote the local frames of $T^*X$ and $\overline{T^*X}$ respectively. A local section of the complexification $\W_{X,\mathbb{C}} := \W_{X,\R} \otimes_\mathbb{R} \mathbb{C}$ of the real Weyl bundle is then of the form:
$$
 \sum_{k,m\geq 0}\sum_{i_1,\dots,i_m \geq0}\sum_{j_1,\dots,j_l \geq0}a_{k,i_1,\cdots,i_m,j_1,\cdots,j_l} \hbar^k y^{i_1}\cdots y^{i_m}\bar{y}^{j_1}\cdots\bar{y}^{j_l},
$$
from which we see that $\W_{X,\mathbb{C}}= \mathcal{W}_X\otimes_{\mathcal{C}^{\infty}_X}\overline{\mathcal{W}}_{X}$. The {\em symbol map}
\begin{equation*}\label{equation: definition-symbol-map}
\sigma: \A_X^\bullet\otimes\W_{X,\mathbb{C}}\rightarrow \A_X^\bullet[[\hbar]]
\end{equation*}
is defined by setting $y^i$'s and $\bar{y}^j$'s to be $0$.
\end{defn}
%\begin{rmk}
%Throughout this paper, we will assume that $X$ is a K\"ahler manifolds of complex dimension $n$. And we will use the Einstein summation rule.  
%\end{rmk}

\begin{notn}
We will use the notation $\mathcal{W}_{p,q}$ to denote the component of $\mathcal{W}_{X,\mathbb{C}}$ of type $(p,q)$. 
Also we will often abuse the names ``Weyl bundle'' and ``symbol map'' when there is no ambiguity. 
\end{notn}

We introduce several operators on $\A_X^\bullet(\mathcal{W}_{X,\mathbb{C}})$, similar to those in Definition \ref{defn:complexified-Weyl-bundle}. 
\begin{defn}
There are 4 natural operators acting as derivations on $\A_X^\bullet(\mathcal{W}_{X,\mathbb{C}})$:
\begin{align*}
\delta^{1,0} a  = dz^i\wedge\frac{\partial a}{\partial y^i},\quad 
\delta^{0,1}a  = d\bar{z}^j\wedge\frac{\partial a}{\partial\bar{y}^j},
\end{align*}
as well as
\begin{align*}
(\delta^{1,0})^*a  = y^k\cdot \iota_{\partial_{z^k}}a, \quad
(\delta^{0,1})^*a  = \bar{y}^j\cdot \iota_{\partial_{\bar{z}^j}}a.
\end{align*}
We define the operators $(\delta^{1,0})^{-1}$ and $(\delta^{0,1})^{-1}$ by normalizing $(\delta^{1,0})^{*}$ and $(\delta^{1,0})^{*}$ respectively: 
\begin{equation*}\label{equation: delta-1-0-inverse}
 (\delta^{1,0})^{-1}:=\frac{1}{p_1+p_2}(\delta^{1,0})^*\ \text{on $\A_X^{p_1,q_1}(\mathcal{W}_{p_2,q_2})$},
\end{equation*}
\begin{equation*}\label{equation: delta-0-1-inverse}
 (\delta^{0,1})^{-1}:=\frac{1}{q_1+q_2}(\delta^{0,1})^*\ \text{on $\A_X^{p_1,q_1}(\mathcal{W}_{p_2,q_2})$}.
\end{equation*}
\end{defn}

\begin{rmk}
	We use the same notation for the operator $(\delta^{1,0})^{-1}$ as in Fedosov's original paper \cite{Fed}, although it could be confusing since it is actually {\em not} inverse to $\delta^{1,0}$.  
\end{rmk}

\begin{lem}
We have the following identities:
\begin{align*}
\delta & = \delta^{1,0} + \delta^{0,1},\\
 %\delta^{-1}&=(\delta^{1,0})^{-1}+(\delta^{0,1})^{-1},
\text{id}-\pi_{0,*} & = \delta^{1,0}\circ(\delta^{1,0})^{-1}+(\delta^{1,0})^{-1}\circ\delta^{1,0},\\
\text{id}-\pi_{*,0} & = \delta^{0,1}\circ(\delta^{0,1})^{-1}+(\delta^{0,1})^{-1}\circ\delta^{0,1},
\end{align*}
where $\pi_{0,*}$ and $\pi_{*,0}$ denote the natural projections from $\A_X^\bullet(\mathcal{W}_{X,\mathbb{C}})$ to $\A_X^{0,\bullet}(\overline{\mathcal{W}}_X)$ and  $\A_X^{\bullet,0}(\mathcal{W}_X)$ respectively.
\end{lem}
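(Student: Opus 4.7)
The first identity is essentially unpacking notation. The plan is to expand the real expression $\delta = dx^k\wedge \partial/\partial y^k$ in a local holomorphic coordinate system: after splitting the real index $k$ into its holomorphic part (yielding $dz^i\wedge \partial/\partial y^i$) and anti-holomorphic part (yielding $d\bar z^j\wedge\partial/\partial\bar y^j$), the result is exactly $\delta^{1,0}+\delta^{0,1}$.

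The second and third identities are mirror images of each other under the obvious complex conjugation swapping $(1,0)$ with $(0,1)$, so I would prove only the second; the third then follows by the same argument with roles interchanged. The key step is a standard Koszul/Cartan homotopy: I plan to show that on $\A_X^{p_1,q_1}(\W_{p_2,q_2})$, one has the identity
\[
\delta^{1,0}\circ(\delta^{1,0})^*+(\delta^{1,0})^*\circ\delta^{1,0} \;=\; (p_1+p_2)\cdot\text{id}.
\]
To verify this I would apply both compositions to a monomial of the form $y^{I}\bar y^{J}\, dz^{K}\wedge d\bar z^{L}$ with $|I|=p_2$, $|J|=q_2$, $|K|=p_1$, $|L|=q_1$, using the convention for symmetric tensor products from the earlier remark. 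The operator $\delta^{1,0}$ acts only on the $y^{I}$ and adds a $dz$, while $(\delta^{1,0})^*$ acts only on the $dz^{K}$ and adds a $y$; the anti-holomorphic factors $\bar y^{J}$ and $d\bar z^{L}$ are inert throughout. A Leibniz-rule computation then produces $p_1$ contributions of the form ``contract the $k$-th $dz$ and reinsert a $y^{k}$'' together with $p_2$ contributions of the form ``remove a $y^{i}$ and reinsert the $dz^{i}$.'' These reassemble to the original monomial with coefficient $(p_1+p_2)$; the sole subtlety is tracking signs from the wedge product and combinatorial factors from the symmetric product convention, which I expect to be routine.

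Given this Euler-type identity, dividing by $p_1+p_2$ wherever it is nonzero yields
\[
\delta^{1,0}\circ(\delta^{1,0})^{-1}+(\delta^{1,0})^{-1}\circ\delta^{1,0} \;=\; \text{id}
\]
on $\bigoplus_{p_1+p_2\geq 1}\A_X^{p_1,q_1}(\W_{p_2,q_2})$. The only subspace on which this fails is the kernel $p_1=p_2=0$, consisting of forms with no $dz^i$'s and no $y^i$'s, i.e. sections of $\A_X^{0,\bullet}(\overline{\W}_X)$; but this is precisely the image of $\pi_{0,*}$. Hence the identity must be corrected to $\text{id}-\pi_{0,*}$, giving the claimed formula.

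The only genuine work is the anticommutator calculation, which is purely multilinear algebra; the main thing to be careful about is the combinatorial factor of $\tfrac{1}{p_1+p_2}$ matching cleanly against the symmetrization conventions in the definition of the symmetric product of the $y^{i}$'s. I do not anticipate a real obstacle beyond bookkeeping.
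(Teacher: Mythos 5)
Your proposal is correct, and it supplies exactly the standard Koszul-homotopy argument that the paper itself omits (the lemma is stated without proof, as the type-decomposed analogue of the classical identity $a=\delta\delta^{-1}a+\delta^{-1}\delta a+a_{00}$ from Fedosov's construction). The one point worth making explicit in a write-up is why dividing the Euler identity $\delta^{1,0}(\delta^{1,0})^*+(\delta^{1,0})^*\delta^{1,0}=(p_1+p_2)\operatorname{id}$ by $p_1+p_2$ is consistent across the two compositions: both $\delta^{1,0}$ and $(\delta^{1,0})^*$ preserve the total weight $p_1+p_2$ on $\A_X^{p_1,q_1}(\W_{p_2,q_2})$, so the normalizing factor in $(\delta^{1,0})^{-1}$ is the same whether it is applied before or after $\delta^{1,0}$, and on the exceptional subspace $p_1=p_2=0$ (where $(\delta^{1,0})^{-1}$ is set to zero by convention) both sides of the claimed identity vanish, matching $\operatorname{id}-\pi_{0,*}$.
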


\subsection{A reformulation of Kapranov's $L_{\infty}$ structure on a K\"ahler manifold}\label{section: L-infty-structure}
\

\noindent In this subsection, we reformulate Kapranov's $L_\infty$-algebra structure \cite{Kapranov} on a K\"ahler manifold $X$ in terms of the holomorphic Weyl bundle $\W_X$ on $X$. Let us start with Kapranov's original theorem:
\begin{thm}[Theorem 2.6 and Reformulation 2.8.1 in \cite{Kapranov}]\label{thm:Kapranov-L-infinity}
	Let $X$ be a K\"ahler manifold. Then there exist
	$$
	R_n^*\in\mathcal{A}^{0,1}_X(\Hom(T^*X,\Sym^n(T^*X))),\qquad n\geq 2
	$$
	such that their extensions $\tilde{R}^*_n$ to the holomorphic Weyl bundle $\W_X$ by derivation satisfy
	$$\left(\bar{\partial}+\sum_{n\geq 2}\tilde{R}_n^*\right)^2 = 0,$$
	or equivalently,
\begin{equation}\label{equation: square-zero-0-1-part}
	\bar{\partial}\tilde{R}_n^*+\sum_{j+k=n+1}\tilde{R}_j^*\circ\tilde{R}_k^* = 0
\end{equation}
	for any $n \geq 2$.
\end{thm}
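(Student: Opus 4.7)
The plan is to construct the operators $R_n^*$ by transporting the tautological flat $\bar\partial$-operator on the infinite jet bundle of holomorphic functions to the holomorphic Weyl bundle $\W_X$ via the formal exponential map of the Chern connection. Since $X$ is Kähler, the Chern connection coincides with the Levi-Civita connection, and this coincidence is precisely what ensures that the resulting correction terms live in the tensor subbundles claimed by the theorem.

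First, for each point $x\in X$, the Chern connection gives a formal exponential map $\exp_x$ expressed in local holomorphic coordinates $\{z^1,\dots,z^n\}$ as a formal power series in the $y^i$'s whose coefficients are polynomial in the Christoffel symbols $\Gamma^l_{ik}$ and their holomorphic derivatives. Dualizing on holomorphic Taylor expansions produces a smooth bundle isomorphism
\begin{equation*}
\Phi:\ \W_X \xrightarrow{\cong} \Jet^{\infty}(\OO_X^{\mathrm{hol}}).
\end{equation*}
Because $\Jet^{\infty}(\OO_X^{\mathrm{hol}})$ is a genuine holomorphic bundle, it carries a canonical Dolbeault operator $\bar\partial^{\Jet}$ with $(\bar\partial^{\Jet})^2=0$; pulling back via $\Phi$ yields an operator on $\W_X$ of the form $\bar\partial + A$, where $A\in\A^{0,1}_X(\Der(\W_X))$ is a $(0,1)$-form with values in fiberwise derivations. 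Decomposing $A$ according to the symmetric degree it raises, the piece which raises degree by $n-1$ defines $\tilde R_n^*$. Flatness $(\bar\partial + A)^2 = 0$ is then automatic from $(\bar\partial^{\Jet})^2=0$, and matching pieces by derivation-degree recovers \eqref{equation: square-zero-0-1-part}.

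It remains to check that each $R_n^*$ lies in $\A^{0,1}_X(\Hom(T^*X,\Sym^n(T^*X)))$ and that no degree-preserving term $R_1^*$ appears. Both facts reduce to the Kähler identity $\partial_{\bar z^j}\Gamma^l_{ik}=-R^l_{i\bar j k}$ recorded in Section \ref{section: kalher-geom}: in holomorphic coordinates, $\bar\partial$ hits the Taylor coefficients of $\exp_x$ only through Christoffel symbols, so the resulting correction has type $(0,1)$ and points in purely holomorphic symmetric directions; moreover, the leading $n=2$ contribution comes directly from $R^l_{i\bar j k}$ rather than the identity, so $R_1^*$ vanishes and $R_2^*$ can be identified with a Dolbeault representative of the Atiyah class of $TX$. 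The main obstacle I anticipate is the explicit bookkeeping in this type-check, which requires unraveling $\Phi$ to all orders and carefully identifying how the symmetric pieces of $A$ organize themselves; higher $R_n^*$'s should then be expressible, inductively, in terms of symmetrized covariant derivatives of $R_2^*$. A purely algebraic alternative that avoids the exponential map is to set $R_2^*$ to be the Atiyah class representative and solve $\bar\partial R_{n+1}^* = -\sum_{j+k=n+2,\,j,k\geq 2}\tilde R_j^*\circ\tilde R_k^*$ at each stage, using that the right-hand side is $\bar\partial$-closed by induction and admits a fiberwise primitive in the formal symmetric algebra.
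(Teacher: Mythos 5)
First, a point of information: the paper does not prove Theorem \ref{thm:Kapranov-L-infinity} at all --- it is imported verbatim from \cite{Kapranov}, and the text that follows merely records the explicit formulas $R_2^*=\frac{1}{2}R_{i\bar{j}k}^m\,d\bar{z}^j\otimes(y^iy^k\otimes\partial_{z^m})$ and $R_{n+1}^*=(\delta^{1,0})^{-1}\circ\nabla^{1,0}(R_n^*)$, referring to \cite[Section 2.5]{Kapranov} for the verification of \eqref{equation: square-zero-0-1-part}. So there is no in-paper proof to compare against; what can be said is that your primary strategy --- transporting the canonical flat $(0,1)$-operator on holomorphic jets to $\widehat{\Sym}(T^*X)$ via a formal exponential map and reading off the correction terms --- is essentially Kapranov's own argument, and it is implicitly present in this paper run in reverse: the assignment $f\mapsto J_f$ of \eqref{equation: flat-section-D_K}, together with Lemma \ref{lemma: symmetry-property-Taylor-expansion} and Proposition \ref{proposition:flat-sections-holomorphic-Weyl}, is exactly your isomorphism $\Phi^{-1}$, and the K\"ahler condition enters where you say it does (torsion-freeness plus the fact that the $(0,1)$-part of the connection on $T^{1,0}X$ is $\bar{\partial}$ kill the degree-lowering and degree-preserving components of $A$). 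Two cautions on the main route: you should use the \emph{formal} exponential map determined algebraically by the $(1,0)$-part of the connection (i.e.\ the $(\delta^{1,0})^{-1}\circ\nabla^{1,0}$ recursion) rather than the actual Riemannian exponential, since a real geodesic with $(1,0)$ initial velocity does not stay adapted to the polarization; and the deferred ``bookkeeping'' identifying the homogeneous pieces of $A$ with symmetrized covariant derivatives of the curvature is genuine work --- it is precisely the content of \cite[2.5--2.8]{Kapranov} --- though it is not needed for the bare existence statement of the theorem.

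The ``purely algebraic alternative'' in your last sentence, however, has a real gap. From $\bar{\partial}$-closedness of $\sum_{j+k=n+2}\tilde{R}_j^*\circ\tilde{R}_k^*$ you cannot conclude $\bar{\partial}$-exactness: that would require the vanishing of a Dolbeault cohomology group, which fails in general. Moreover $\bar{\partial}$ is a differential along the base, not a fiberwise operator on the formal symmetric algebra, so it admits no ``fiberwise primitive''; the operator that does admit a fiberwise contracting homotopy is $\delta^{1,0}$. The correct inductive scheme is therefore the one the paper records, $R_{n+1}^*=(\delta^{1,0})^{-1}\circ\nabla^{1,0}(R_n^*)$, with the relation \eqref{equation: square-zero-0-1-part} appearing as a consequence (via the symmetry of $\nabla^{1,0}(R_n^*)$ in its lower indices and the Bianchi identities) rather than as the defining equation to be solved for $R_{n+1}^*$.
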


%\begin{rmk}
%Here we will explain how our formulation is related to Kapranov's original formulation in \cite{Kapranov}: Kapranov defined $\tilde{R}_n^*$ inductively as 
%\begin{align*}
%\tilde{R}_2^*:=&R\in\A_X^{1,1}(T^*X\otimes TX)\cong\A_X^{0,1}(\Hom(T^*X,(T^*X)^{\otimes 2})), \\
%\tilde{R}_{n+1}^*:=&\nabla^{1,0}(\tilde{R}_n)\in\A_X^{1,1}(\Hom(T^*X,(T^*X)^{\otimes n}))\cong\A_X^{ 0,1}(\Hom(T^*X,(T^*X)^{\otimes n+1})).
%\end{align*}
%It is shown that there are the symmetry that $\tilde{R}^*_n\in\A_X^{ 0,1}(\Hom(T^*X,\Sym^n(T^*X))$. It is easy to see that this is the same as our inductive definition of $\tilde{R}_n^*$. 
%\end{rmk}
The $R_n^*$'s are defined as partial transposes of the higher covariant derivatives of the curvature tensor:
$$
R_2^*=\frac{1}{2}R_{i\bar{j}k}^m d\bar{z}^j\otimes (y^iy^k\otimes\partial_{z^m}),\qquad R_n^*=(\delta^{1,0})^{-1}\circ\nabla^{1,0}(R_{n-1}^*),
$$
where, by abuse of notations, we use $\nabla$ to denote the Levi-Civita connection on the (anti)holomorphic (co)tangent bundle of $X$, as well as their tensor products including the Weyl bundles. 
We can write these $R_n^*$'s locally in a more consistent way as:
\begin{equation}\label{equation: terms-L-infty-structure}
 R_n^*=R_{i_1\cdots i_n,\bar{l}}^j d\bar{z}^l\otimes (y^{i_1}\cdots y^{i_n}\otimes \partial_{z^j}).
\end{equation}
Readers are referred to \cite[Section 2.5]{Kapranov} for a detailed exposition. 

The $L_\infty$ relations \eqref{equation: square-zero-0-1-part} can be reformulated as the flatness of a natural connection $D_K$ (where the subscript ``K'' stands for ``Kapranov'') on the holomorphic Weyl bundle, whose $(0,1)$-part is exactly the differential operator in Theorem \ref{thm:Kapranov-L-infinity}:
\begin{prop}
	The operator
	\begin{equation*}\label{eqn:classical-flat-connection-holomorphic}
	D_K=\nabla-\delta^{1,0}+\sum_{n\geq 2}\tilde{R}_n^*
	\end{equation*}
	defines a flat connection on the holomorphic Weyl bundle $\mathcal{W}_X$, which is compatible with the classical (commutative) product.
\end{prop}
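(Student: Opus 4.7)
The proposition has two parts: that $D_K$ is a derivation with respect to the classical product, and that $D_K^2=0$. The derivation property is immediate since each summand is already a (super)derivation -- $\nabla$ is the Levi-Civita connection, $\delta^{1,0}=dz^i\wedge \partial/\partial y^i$ is an odd derivation by inspection, and each $\tilde R_n^*$ is by construction the derivation extension of $R_n^*:T^*X\to \A_X^{0,1}\otimes \Sym^n T^*X$. For flatness, I would split $D_K$ by form bidegree: set $D_K^{(1,0)}:=\nabla^{1,0}-\delta^{1,0}$ and $D_K^{(0,1)}:=\bar\partial+\sum_{n\geq 2}\tilde R_n^*$, and use the anticommutator $\{A,B\}:=AB+BA$ of odd operators to write $D_K^2=(D_K^{(1,0)})^2+\{D_K^{(1,0)},D_K^{(0,1)}\}+(D_K^{(0,1)})^2$, which decomposes $D_K^2$ into the three pieces of bidegree $(2,0)$, $(1,1)$, and $(0,2)$.

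The $(0,2)$-component is $(D_K^{(0,1)})^2$, which vanishes by Kapranov's Theorem~\ref{thm:Kapranov-L-infinity}. The $(2,0)$-component $(\nabla^{1,0})^2-\{\nabla^{1,0},\delta^{1,0}\}+(\delta^{1,0})^2$ vanishes term by term: $(\nabla^{1,0})^2=0$ because on a K\"ahler manifold the Levi-Civita curvature is of pure type $(1,1)$; $(\delta^{1,0})^2=0$ is immediate; and a direct computation on a generator $y^k$ shows $\{\nabla^{1,0},\delta^{1,0}\}(y^k)$ reduces to $\Gamma^k_{ij}\,dz^i\wedge dz^j$, which vanishes by the symmetry $\Gamma^k_{ij}=\Gamma^k_{ji}$ of the K\"ahler Christoffel symbols (torsion-freeness).

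The heart of the proof is the $(1,1)$-component. Expanding, $\{D_K^{(1,0)},D_K^{(0,1)}\}=\{\nabla^{1,0},\bar\partial\}-\{\delta^{1,0},\bar\partial\}+\{\nabla^{1,0}-\delta^{1,0},\sum_{n\geq 2}\tilde R_n^*\}$. The first bracket equals the K\"ahler curvature $R_\nabla$ (since $\nabla^2=\{\nabla^{1,0},\bar\partial\}$ on K\"ahler), while the second vanishes on any generator $y^k$ because both $\bar\partial y^k=0$ and $\bar\partial\,dz^k=0$. The remaining sum must therefore produce $-R_\nabla$. I would establish this inductively. A direct computation from the explicit formula $R_2^*=\tfrac12 R_{i\bar j k}^m\,d\bar z^j\otimes y^iy^k\otimes\partial_{z^m}$, together with the K\"ahler symmetry $R_{i\bar j k}^m=R_{k\bar j i}^m$, gives $\{\delta^{1,0},\tilde R_2^*\}=R_\nabla$ as derivations on $\W_X$. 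For $n\geq 3$, apply $\delta^{1,0}$ to the recursion $R_n^*=(\delta^{1,0})^{-1}\nabla^{1,0}(R_{n-1}^*)$ and use the Hodge decomposition $\delta^{1,0}(\delta^{1,0})^{-1}+(\delta^{1,0})^{-1}\delta^{1,0}=\mathrm{id}-\pi_{0,*}$ combined with $\{\nabla^{1,0},\delta^{1,0}\}=0$; since $\nabla^{1,0}(R_{n-1}^*)$ has nontrivial $(1,0)$-form degree it lies in $\ker\pi_{0,*}$, and the inductive hypothesis $\delta^{1,0}R_{n-1}^*=\nabla^{1,0}R_{n-2}^*$ kills the remaining double-derivative term. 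One deduces $\{\delta^{1,0},\tilde R_n^*\}=\{\nabla^{1,0},\tilde R_{n-1}^*\}$, whence the sum $\{\delta^{1,0},\sum_n \tilde R_n^*\}$ telescopes to $R_\nabla+\{\nabla^{1,0},\sum_n \tilde R_n^*\}$, producing precisely the required cancellation.

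The main obstacle is promoting the tensor-level identity $\delta^{1,0}R_n^*=\nabla^{1,0}R_{n-1}^*$ in $\A_X^{1,1}(TX\otimes\Sym^{n-1}T^*X)$ to the corresponding anticommutator identity among derivations on the Weyl bundle. One must carefully track the extra Leibniz contributions from $\tilde R_{n-1}^*$ acting on $\nabla^{1,0}y^l=-\Gamma^l_{ij}dz^i\otimes y^j$, and verify that they cancel precisely against the corresponding contraction term in the covariant derivative of $R_{n-1}^*$. This bookkeeping closes up thanks once more to the torsion-freeness of the K\"ahler Levi-Civita connection.
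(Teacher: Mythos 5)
Your overall strategy coincides with the paper's: split $D_K^2$ by form bidegree, dispose of the $(0,2)$ part by Kapranov's theorem, of the $(2,0)$ part by torsion-freeness (and $(\nabla^{1,0})^2=0$ on a K\"ahler manifold), and handle the $(1,1)$ part via the telescoping identity $\delta^{1,0}(R_{n+1}^*)=\nabla^{1,0}(R_n^*)$ together with $\delta^{1,0}(R_2^*)=\nabla^2$. Your closing remark about promoting tensor-level identities to anticommutator identities of derivations is a legitimate point of care that the paper elides.

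The one place you genuinely diverge is in how you kill the term $(\delta^{1,0})^{-1}\delta^{1,0}\nabla^{1,0}(R_{n-1}^*)$ left over from the Hodge decomposition. The paper invokes Kapranov's result that $\nabla^{1,0}(R_{n-1}^*)$ is totally symmetric in its lower indices, which gives $\delta^{1,0}\nabla^{1,0}(R_{n-1}^*)=0$ directly. You instead argue
\begin{equation*}
\delta^{1,0}\nabla^{1,0}(R_{n-1}^*)=-\nabla^{1,0}\delta^{1,0}(R_{n-1}^*)=-(\nabla^{1,0})^2(R_{n-2}^*)=0
\end{equation*}
using the inductive hypothesis. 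This is clean for $n\geq 4$, but at the first inductive step $n=3$ the hypothesis reads $\delta^{1,0}(R_2^*)=\nabla^2$ (the curvature itself, not $\nabla^{1,0}$ of some $R_1^*$), so the term you must kill is $-\nabla^{1,0}(\nabla^2)$. Its vanishing is the second Bianchi identity for a connection whose curvature is of pure type $(1,1)$ --- a true statement, but not a consequence of $(\nabla^{1,0})^2=0$, and your write-up does not supply it. Either insert the Bianchi identity at the base case, or fall back on the paper's appeal to the symmetry of $\nabla^{1,0}(R_n^*)$; with that patch the argument closes.
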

\begin{proof}
	To show the vanishing of the $(2,0), (0,2)$ and $(1,1)$ parts of $D_K^2$, we let $D_K^{1,0}$ and $D_K^{0,1}$ denote the $(1,0)$ and $(0,1)$ parts of the connection $D_K$ respectively, and note that
	the Levi-Civita connection $\nabla$ on $\mathcal{W}_X$ has the type decomposition $\nabla=\nabla^{1,0}+\bar{\partial}$. 
	First, it is clear that $D_K^{0,1}$ is exactly Kapranov's differential in Theorem \ref{thm:Kapranov-L-infinity}. Thus the vanishing of the $(0,2)$ part of $D_K^2$ follows from Theorem \ref{thm:Kapranov-L-infinity}. 
	Next, the vanishing of the square of $D_K^{1,0}=\nabla^{1,0}-\delta^{1,0}$ follows from the following computation:
	\begin{align*}
	(\nabla^{1,0}-\delta^{1,0})^2
	& = (\nabla^{1,0})^2+(\delta^{1,0})^2-(\nabla^{1,0}\circ\delta^{1,0}+\delta^{1,0}\circ\nabla^{1,0})\\
	& = -(\nabla^{1,0}\circ\delta^{1,0}+\delta^{1,0}\circ\nabla^{1,0}) = 0,
	\end{align*}
	where the last equality follows from the torsion-freeness of $\nabla$. Finally, for the $(1,1)$ part, we have
	$$
	D_K^{1,0}\circ D_K^{0,1}+D_K^{0,1}\circ D_K^{1,0}=(\nabla^{1,0}-\delta^{1,0})\left(\sum_{n\geq 2}R_n^*\right)+\nabla^2.
	$$
	Also, $\delta^{1,0}(R_2^*)=\nabla^2$. So we only need to show that $\nabla^{1,0}(R_n^*)=\delta^{1,0}(R_{n+1}^*)$ for $n\geq 2$.
	Recall that $R_n^*$ is inductively defined by $R_{n+1}^*=(\delta^{1,0})^{-1}\circ\nabla^{1,0}(R_n^*)$ for $n\geq 2$. It follows that 
	\begin{align*} 
 	\delta^{1,0}(R_{n+1}^*) & = \delta^{1,0}\circ(\delta^{1,0})^{-1}\left(\nabla^{1,0}(R_n^*)\right)\\
 	& = \nabla^{1,0}(R_n^*)-(\delta^{1,0})^{-1}\circ\delta^{1,0}\left(\nabla^{1,0}(R_n^*)\right)\\
 	& = \nabla^{1,0}(R_n^*),
	\end{align*}
	as desired; here the last equality follows from the fact that $\nabla^{1,0}(R_n^*)$ is symmetric in all lower subscripts, which was shown in \cite{Kapranov}.
\end{proof}

%First of all, there is the following Lemma:

Now if $\alpha$ is a local flat section of $\mathcal{W}_X$ under $D_K$, it is easy to see that $\sigma(\alpha)$ must be a holomorphic function. Actually, the symbol map defines an isomorphism: 
\begin{prop}\label{proposition:flat-sections-holomorphic-Weyl}
	The space of (local) flat sections of the holomorphic Weyl bundle with respect to the connection $D_K$ is isomorphic to the space of holomorphic functions. More precisely, the symbol map
	$
	\sigma: \Gamma^{\text{flat}}(U,\mathcal{W}_X)\rightarrow\mathcal{O}_X(U)[[\hbar]]
	$
	is an isomorphism for any open subset $U \subset X$.
\end{prop}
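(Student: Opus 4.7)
The plan is to use a Fedosov-type fixed-point argument. The key structural observation is that the operator $(\delta^{1,0})^{-1}\nabla^{1,0}$ strictly raises the $y$-degree by one, so any equation of the form $\alpha = \alpha_0 + (\delta^{1,0})^{-1}\nabla^{1,0}\alpha$ admits a unique solution in $\Gamma(U,\W_X)$ determined by its $y$-free part $\alpha_0$. I would first verify that the symbol of any flat section is automatically holomorphic: taking the $(0,1)$-part of $D_K\alpha = 0$ gives $\bar\partial\alpha + \sum_{n\geq 2}\tilde R_n^*\alpha = 0$, and since each $\tilde R_n^*$ raises $y$-degree by $n-1 \geq 1$, applying $\sigma$ (the $y$-degree zero projection) collapses this to $\bar\partial\sigma(\alpha) = 0$, placing $\sigma(\alpha)$ in $\mathcal{O}_X(U)[[\hbar]]$.

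For injectivity, I would rewrite the $(1,0)$-part $\nabla^{1,0}\alpha = \delta^{1,0}\alpha$ using the Hodge-type decomposition of the preceding lemma: since $\alpha \in \Gamma(U,\W_X)$ has form-degree zero, $(\delta^{1,0})^{-1}\alpha = 0$ and $\pi_{0,*}\alpha = \sigma(\alpha)$, yielding the fixed-point equation
$$
\alpha = \sigma(\alpha) + (\delta^{1,0})^{-1}\delta^{1,0}\alpha = \sigma(\alpha) + (\delta^{1,0})^{-1}\nabla^{1,0}\alpha.
$$
This forces $\alpha = 0$ whenever $\sigma(\alpha) = 0$, by iterating in $y$-degree. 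For surjectivity, given $f \in \mathcal{O}_X(U)[[\hbar]]$, I construct a candidate by the iteration $\alpha_0 := f$, $\alpha_{k+1} := (\delta^{1,0})^{-1}\nabla^{1,0}\alpha_k$, and $\alpha := \sum_k \alpha_k$, which converges in the $y$-adic topology since $\alpha_k$ has $y$-degree $k$. To verify the $(1,0)$-flatness, set $\beta := (\nabla^{1,0}-\delta^{1,0})\alpha$; the recursion yields $(\delta^{1,0})^{-1}\beta = 0$, and the identities $(\nabla^{1,0})^2 = 0$ and $\{\nabla^{1,0},\delta^{1,0}\} = 0$ (both established in the proof of the preceding proposition) give $\delta^{1,0}\beta = \nabla^{1,0}\beta$. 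Hodge decomposition for the $(1,0)$-form $\beta$ (with $\pi_{0,*}\beta = 0$) then produces $\beta = (\delta^{1,0})^{-1}\nabla^{1,0}\beta$, and fixed-point uniqueness forces $\beta = 0$.

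The remaining task is the $(0,1)$-flatness of the candidate $\alpha$, and this is where I expect the main obstacle, since one must disentangle mixed form-type and $y$-degree bookkeeping on objects in $\mathcal{A}_X^{0,1}(\W_X)$ rather than in $\Gamma(\W_X)$. I would exploit $D_K^2 = 0$: setting $\eta := D_K^{0,1}\alpha$, the anticommutation $D_K^{0,1}D_K^{1,0} + D_K^{1,0}D_K^{0,1} = 0$ together with $D_K^{1,0}\alpha = 0$ gives $D_K^{1,0}\eta = 0$, while projecting to $y$-degree zero yields $\sigma(\eta) = \bar\partial f = 0$. Now $\eta \in \mathcal{A}_X^{0,1}(\W_X)$ satisfies $(\delta^{1,0})^{-1}\eta = 0$ (since $\eta$ has pure $(0,1)$ form type) and $\pi_{0,*}\eta = \sigma(\eta) = 0$, so the Hodge decomposition combined with $\delta^{1,0}\eta = \nabla^{1,0}\eta$ once again produces the fixed-point relation $\eta = (\delta^{1,0})^{-1}\nabla^{1,0}\eta$, forcing $\eta = 0$ by the $y$-degree argument. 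Once the degree-raising property of $(\delta^{1,0})^{-1}\nabla^{1,0}$ is correctly aligned with the form-type Hodge projections, the same fixed-point mechanism closes all three parts of the proof uniformly.
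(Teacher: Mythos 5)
Your proof is correct, and the overall skeleton (injectivity by a degree/weight argument, surjectivity by constructing $J_f=\sum_k((\delta^{1,0})^{-1}\nabla^{1,0})^k(f)$, then killing $D_K^{0,1}J_f$ using $D_K^{1,0}J_f=0$, the anticommutation $D_K^{1,0}D_K^{0,1}=-D_K^{0,1}D_K^{1,0}$, and the vanishing of $\sigma(D_K^{0,1}J_f)=\bar\partial f$) matches the paper's. The one genuinely different ingredient is how you establish $D_K^{1,0}J_f=0$: the paper proves this via Lemma \ref{lemma: symmetry-property-Taylor-expansion}, showing by induction that the coefficients of $\nabla^{1,0}(\alpha_k)$ are totally symmetric (using $(\nabla^{1,0})^2=0$ and torsion-freeness), whence $\delta^{1,0}\nabla^{1,0}\alpha_k=0$ and the telescoping sum collapses. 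You instead set $\beta:=(\nabla^{1,0}-\delta^{1,0})J_f$ and show $(\delta^{1,0})^{-1}\beta=0$, $\pi_{0,*}\beta=0$ and $\delta^{1,0}\beta=\nabla^{1,0}\beta$ (from $(\nabla^{1,0})^2=(\delta^{1,0})^2=0$ and the anticommutator), so that the Hodge-type identity gives the fixed-point relation $\beta=(\delta^{1,0})^{-1}\nabla^{1,0}\beta$ and hence $\beta=0$ by $y$-degree induction; this is the classical Fedosov uniqueness mechanism, applied uniformly to all three vanishing statements. Your route trades the explicit symmetry computation for purely formal operator identities, which is arguably cleaner and more robust (it would survive in settings where one does not want to track index symmetries), at the cost of invoking the $(2,0)$-flatness identities $(\nabla^{1,0})^2=0$ and $\{\nabla^{1,0},\delta^{1,0}\}=0$ as black boxes — though the paper needs $(\nabla^{1,0})^2=0$ inside Lemma \ref{lemma: symmetry-property-Taylor-expansion} anyway, so the inputs are essentially the same. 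One caveat: the paper states and later uses Lemma \ref{lemma: symmetry-property-Taylor-expansion} in the more general form for $\bar\partial$-closed $(0,q)$-forms (Proposition \ref{proposition: flat-section-closed-0-q-form} and the construction of $J_\alpha$ in Theorem \ref{proposition: Fedosov-connection-general}), so if you bypass that lemma here you should check your fixed-point argument extends to that case — it does, verbatim, since nothing in it uses $q=0$.
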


To prove this proposition, we mimic Fedosov's arguments in \cite{Fed}. First we need a lemma:
\begin{lem}\label{lemma: symmetry-property-Taylor-expansion}
	Let $\alpha_0\in\A_X^{0,q}$ be a smooth differential form on $X$ of type $(0,q)$. Define a sequence $\{\alpha_k\}$ by $\alpha_k := ((\delta^{1,0})^{-1}\circ\nabla^{1,0})^k(\alpha_0) \in \A_X^{0,q}(\mathcal{W}_{k,0})$ for $k \geq 1$. Then we have $(\delta^{1,0}\circ\nabla^{1,0})(\alpha_k)=0$ for all $k\geq 1$, and hence $D_K^{1,0}\left(\sum_{k\geq 0}\alpha_k\right)=0$. 
\end{lem}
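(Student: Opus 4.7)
The plan is to prove the identity $(\delta^{1,0}\circ\nabla^{1,0})(\alpha_k)=0$ by induction on $k$, using two key structural facts about the K\"ahler setting: the Kähler identity $(\nabla^{1,0})^{2}=0$ on $\mathcal{A}_X^{\bullet}(\mathcal{W}_X)$ (since the curvature of a Kähler metric is purely of type $(1,1)$, so the $(2,0)$-part vanishes), and the graded anti-commutation $\{\delta^{1,0},\nabla^{1,0}\}=0$, which follows from the torsion-freeness of the Levi-Civita connection exactly as in Fedosov's original argument for the symplectic analogue $\{\delta,\nabla\}=0$.

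The bridge between the two statements of the lemma is the Hodge-type decomposition
\[
\nabla^{1,0}(\alpha_k)=\delta^{1,0}(\delta^{1,0})^{-1}\nabla^{1,0}(\alpha_k)+(\delta^{1,0})^{-1}\delta^{1,0}\nabla^{1,0}(\alpha_k)+\pi_{0,*}\bigl(\nabla^{1,0}(\alpha_k)\bigr).
\]
The first term is $\delta^{1,0}(\alpha_{k+1})$ by definition of the sequence, and the projection term vanishes because $\nabla^{1,0}$ produces a $dz$-factor. Hence once $(\delta^{1,0}\circ\nabla^{1,0})(\alpha_k)=0$ is established, the middle term drops out and we obtain the telescoping identity $\nabla^{1,0}(\alpha_k)=\delta^{1,0}(\alpha_{k+1})$. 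Summing over $k\geq 0$ then yields
\[
D_K^{1,0}\Bigl(\sum_{k\geq 0}\alpha_k\Bigr)=\sum_{k\geq 0}\bigl(\nabla^{1,0}(\alpha_k)-\delta^{1,0}(\alpha_k)\bigr)=-\delta^{1,0}(\alpha_0)=0,
\]
where the last equality uses that $\alpha_0\in\mathcal{A}_X^{0,q}$ contains no $y$-variables.

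For the induction itself, the base case $k=1$ runs as follows: apply the Hodge decomposition to $\nabla^{1,0}(\alpha_0)$ (whose middle and projection terms vanish trivially, using $\delta^{1,0}(\alpha_0)=0$) to conclude $\delta^{1,0}(\alpha_1)=\nabla^{1,0}(\alpha_0)$; then
\[
\delta^{1,0}\nabla^{1,0}(\alpha_1)=-\nabla^{1,0}\delta^{1,0}(\alpha_1)=-(\nabla^{1,0})^{2}(\alpha_0)=0.
\]
The inductive step is identical in structure: the hypothesis $\delta^{1,0}\nabla^{1,0}(\alpha_{k-1})=0$ gives $\delta^{1,0}(\alpha_k)=\nabla^{1,0}(\alpha_{k-1})$ by the same Hodge-decomposition argument, and then $\delta^{1,0}\nabla^{1,0}(\alpha_k)=-\nabla^{1,0}\delta^{1,0}(\alpha_k)=-(\nabla^{1,0})^{2}(\alpha_{k-1})=0$.

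The only genuine subtlety is ensuring that the two K\"ahler-specific ingredients truly hold on the Weyl-bundle-valued forms (as opposed to just on $\mathcal{A}_X^{\bullet}$): one needs the Levi-Civita connection extended to $\mathcal{W}_X$ to still satisfy $(\nabla^{1,0})^{2}=0$ (which is fine since the curvature endomorphism on $T^{*1,0}X$ is still of pure type $(1,1)$) and $\{\delta^{1,0},\nabla^{1,0}\}=0$ (which reduces via the Leibniz rule to the symmetry of the holomorphic Christoffel symbols $\Gamma^{m}_{ij}=\Gamma^{m}_{ji}$). Both are standard in the K\"ahler setting, so no real obstacle arises; the main conceptual point is simply to recognize that the whole argument is a direct holomorphic analogue of Fedosov's iterative construction of flat sections, with the Kähler condition doing all the work that in the real symplectic case would require additional curvature corrections.
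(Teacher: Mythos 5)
Your proof is correct, and it organizes the argument differently from the paper's. The paper works in coordinates: writing $\nabla^{1,0}(\alpha_k)=b_{i_1\cdots i_{k+1}}dz^{i_1}\otimes(y^{i_2}\otimes\cdots\otimes y^{i_{k+1}})$, it proves by induction that the coefficients $b_{i_1\cdots i_{k+1}}$ are \emph{totally} symmetric in all indices --- using $(\nabla^{1,0})^2=0$ for the term where the coefficients are differentiated and torsion-freeness for the term where the $y$'s are differentiated --- and then reads off $\delta^{1,0}\nabla^{1,0}(\alpha_k)=0$ from the symmetry in the first two indices alone. You instead run the induction directly on the identity $\delta^{1,0}\nabla^{1,0}(\alpha_k)=0$, packaging the same two geometric inputs as the operator identities $\{\delta^{1,0},\nabla^{1,0}\}=0$ and $(\nabla^{1,0})^2=0$, and you invoke the Hodge-type decomposition \emph{inside} the inductive step (to convert the hypothesis $\delta^{1,0}\nabla^{1,0}(\alpha_{k-1})=0$ into $\delta^{1,0}(\alpha_k)=\nabla^{1,0}(\alpha_{k-1})$) rather than only afterwards. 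The ingredients are identical --- the anticommutator identity is exactly what the paper verifies, via torsion-freeness, when it shows $(\nabla^{1,0}-\delta^{1,0})^2=0$ in the proof that $D_K$ is flat, so you are entitled to it --- but your version is coordinate-free and shorter. What the paper's version buys is the stronger intermediate statement that $\nabla^{1,0}(\alpha_k)$ is symmetric in \emph{all} lower indices, which the authors quote again elsewhere (for $\nabla^{1,0}(R_n^*)$ in the flatness proof); your argument only extracts the symmetry in the first two indices, which is all this lemma needs. The telescoping computation of $D_K^{1,0}\bigl(\sum_{k\geq 0}\alpha_k\bigr)$ at the end agrees with the paper's.
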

\begin{proof}
Without loss of generality we can assume that $q=0$, i.e., $\alpha_0$ is a function on $X$. Let us write $\alpha_k$ as
$
\alpha_k=a_{i_1\cdots i_k}y^{i_1}\otimes\cdots\otimes y^{i_k},
$
where the coefficients $a_{i_1\cdots i_k}$ are totally symmetric with respect to all indices. We also write %$\nabla^{1,0}(\alpha_k)$ as
$
\nabla^{1,0}(\alpha_k)=b_{i_1\cdots i_{k+1}}dz^{i_1}\otimes (y^{i_2}\otimes\cdots\otimes y^{i_{k+1}}).
$
%To prove the statement of the lemma, we only need to show that $\nabla^{1,0}(\alpha_k)$ is symmetric in all indices for all $k\geq 1$ (including that of the differential form $\A_X$). 
We will show by induction that the coefficients $b_{i_1\cdots i_{k+1}}$ are totally symmetric with respect to all indices $i_1,\cdots,i_{k+1}$. This is clearly true for $k=1$. For general $k\geq 1$, it is clear that $\nabla^{1,0}(\alpha_k)=b_{i_1\cdots i_{k+1}}dz^{i_1}\otimes (y^{i_2}\otimes\cdots\otimes y^{i_{k+1}})$ is symmetric in $i_2,\cdots,i_{k+1}$, so we only need to show that it is symmetric in the first two indices $i_1$ and $i_2$. We know from construction that  $\nabla^{1,0}(\alpha_{k-1}) = 
a_{i_1\cdots i_k}dz^{i_1}\otimes y^{i_2}\cdots\otimes y^{i_k}$.
Since $(\nabla^{1,0})^2 = 0$, we have
\begin{equation}\label{eqn:nabla-square-vanish}
-dz^{i_1}\wedge \nabla^{1,0}(a_{i_1\cdots i_k}y^{i_2}\otimes\cdots\otimes y^{i_k})
= \nabla^{1,0}(a_{i_1\cdots i_k}dz^{i_1}\otimes y^{i_2}\cdots\otimes y^{i_k})
= 0.
\end{equation}
We then compute the covariant derivative:
\begin{align*}
 \nabla^{1,0}(\alpha_k)=&\nabla^{1,0}(a_{i_1\cdots i_k}y^{i_1}\otimes\cdots\otimes y^{i_k})\\
 =&y^{i_1}\otimes\nabla^{1,0}(a_{i_1\cdots i_k}y^{i_2}\otimes\cdots\otimes y^{i_k})+\nabla^{1,0}(y^{i_1})\otimes(a_{i_1\cdots i_k}y^{i_2}\otimes\cdots\otimes y^{i_k}).
\end{align*}
Since the Levi-Civita connection is torsion-free, the second term in the last expression is symmetric in the first two indices. Now the first term has the same symmetric property by \eqref{eqn:nabla-square-vanish}. Hence we have $(\delta^{1,0}\circ\nabla^{1,0})(\alpha_k)=0$.

Now we have
\begin{align*}
\nabla^{1,0}(\alpha_k)
& = (\delta^{1,0}\circ(\delta^{1,0})^{-1}+(\delta^{1,0})^{-1}\circ\delta^{1,0})(\nabla^{1,0}(\alpha_k))\\
& = (\delta^{1,0}\circ(\delta^{1,0})^{-1})(\nabla^{1,0}(\alpha_k))
= \delta^{1,0}(\alpha_{k+1}).
\end{align*}
Therefore
\begin{align*}
D_K^{1,0}\left(\sum_{k\geq 0}\alpha_k\right)
& = (\nabla^{1,0}-\delta^{1,0})\left(\alpha_0+\alpha_1+\alpha_2+\cdots\right)\\
& = -\delta^{1,0}(\alpha_0)+(\nabla^{1,0}(\alpha_0)-\delta^{1,0}(\alpha_1))+(\nabla^{1,0}(\alpha_1)-\delta^{1,0}(\alpha_2))+\cdots\\
& = 0.
\end{align*}
\end{proof}

%It is clear that this output is still symmetric in the last $k$ components. We now only need to show the symmetry between the first two components. For this, we only need to see the difference between these two derivatives. We rewrite $\alpha_k$ as
%$$
%\alpha_k=\delta^{i_1}\otimes\gamma_{i_1},
%$$
%where $\gamma_{i_1}=a_{i_1\cdots i_k}\delta^{i_2}\otimes\cdots\delta^{i_k}$. Now $\nabla^{1,0}$ is a derivation with respect to the tensor product: suppose $\nabla^{1,0}$ acts on $\delta_{i_1}$, then the output is symmetric with respect to the first two components by the torsion-free condition of $\nabla$. If $\nabla^{1,0}$ acts on $\gamma_{i_1}$, then it is also symmetric with the first two components, which follows from the vanishing of the exterior covariant derivative. 

%Notice that $(\delta^{-1}\circ\nabla^{1,0})^n(f)\in\Sym^n(T^*X)$. Then we can think of $\nabla^{1,0}\circ(\delta^{-1}\circ\nabla^{1,0})^n(f)\in T^*X\otimes\Sym^n(T^*X)$, where the first component $T^*X$ denotes the $(1,0)$-form part. It is clear that the statement of this proposition follows from the symmetry in all these components of $T^*X$. 

%\begin{rmk}
%An example illustrating this Lemma is the following:
%$$
%\alpha_n=\delta^i=\delta^{-1}(\nabla^{1,0}z^i),
%$$
%and there is 
%$$
%\delta(\nabla^{1,0}\delta^i)=\delta(\Gamma^i_{jk}dz^j\otimes\delta^k)=\Gamma^i_{jk}dz^j\wedge dz^k=0,
%$$
%which follows from the symmetry property of the Christoffel symbols.
%\end{rmk}
%There is the following Proposition:

\begin{proof}[Proof of Proposition \ref{proposition:flat-sections-holomorphic-Weyl}]
The injectivity follows by observing that any nonzero section $s$ of $\mathcal{W}_X$ with zero constant term cannot be flat since $\delta^{1,0}(s_0)$, where $s_0$ is the leading term (i.e. of least weight) in $s$, is of smaller weight and nonzero.

For the surjectivity, we claim that given a holomorphic function $f$, the section
\begin{equation}\label{equation: flat-section-D_K}
J_f := \sum_{k\geq 0}((\delta^{1,0})^{-1}\circ\nabla^{1,0})^k(f),
\end{equation}
with leading term $f$ is flat with respect to $D_K$. From Lemma \ref{lemma: symmetry-property-Taylor-expansion}, we know that $D_K^{1,0}(J_f)=0$. It follows that $D_K^{1,0}\circ D_K^{0,1}(J_f)=-D_K^{0,1}\circ D_K^{1,0}(J_f)=0$.
Observe that $\sigma(D_K^{0,1}(J_f))=\bar{\partial}f=0$. Thus, by the same reason as in the proof of the injectivity of $\sigma$, we must have $D_K^{0,1}(J_f)=0$. 
\end{proof}

Similar arguments give the following proposition, which will be useful later:
\begin{prop}\label{proposition: flat-section-closed-0-q-form}
Let $\alpha$ be a $\bar{\partial}$-closed $(0,q)$ form on $X$. Then the section
$$
\alpha+\sum_{k\geq 1}((\delta^{1,0})^{-1}\circ\nabla^{1,0})^k(\alpha)
$$
is flat with respect to the connection $D_K$.
\end{prop}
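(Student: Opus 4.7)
The plan is to mimic the proof of Proposition \ref{proposition:flat-sections-holomorphic-Weyl} almost verbatim, replacing the hypothesis $\bar{\partial}f = 0$ by $\bar{\partial}\alpha = 0$. Denote the candidate flat section by
$$\tilde{J}_\alpha := \alpha + \sum_{k\geq 1}\bigl((\delta^{1,0})^{-1}\circ\nabla^{1,0}\bigr)^k(\alpha) \in \A_X^{0,q}(\mathcal{W}_X).$$

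First I would check $D_K^{1,0}(\tilde{J}_\alpha) = 0$, which is a direct application of Lemma \ref{lemma: symmetry-property-Taylor-expansion}; although that lemma's proof is carried out under the reduction $q=0$, it extends to general $q$ essentially for free because on a K\"ahler manifold $\nabla^{1,0}(d\bar{z}^j)=0$, so the anti-holomorphic form coefficients are inert under every operator involved and simply come along for the ride. Combining $D_K^{1,0}(\tilde{J}_\alpha)=0$ with the flatness $D_K^2=0$ established in the previous proposition then yields
$$D_K^{1,0}\bigl(D_K^{0,1}\tilde{J}_\alpha\bigr) = -D_K^{0,1}\bigl(D_K^{1,0}\tilde{J}_\alpha\bigr) = 0,$$
while the symbol computation $\sigma(D_K^{0,1}\tilde{J}_\alpha)=\bar{\partial}\alpha=0$ shows that $\eta := D_K^{0,1}(\tilde{J}_\alpha)\in\A_X^{0,q+1}(\mathcal{W}_X)$ has vanishing constant term.

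The final step concludes $\eta=0$ by the same leading-term injectivity argument as in Proposition \ref{proposition:flat-sections-holomorphic-Weyl}. Decompose $\eta = \sum_{k\geq 1}\eta_k$ by symmetric weight in the $y^i$'s; since $\nabla^{1,0}$ preserves weight while $-\delta^{1,0}$ strictly lowers it, the lowest non-zero component $\eta_{k_0}$ must satisfy $\delta^{1,0}(\eta_{k_0}) = 0$. Because $\eta_{k_0}\in\A_X^{0,q+1}(\mathcal{W}_{k_0,0})$ carries no $dz^i$ in its form part, $(\delta^{1,0})^{-1}$ annihilates it, so the Hodge-type identity $\mathrm{id}-\pi_{0,*} = \delta^{1,0}(\delta^{1,0})^{-1}+(\delta^{1,0})^{-1}\delta^{1,0}$ collapses to $\eta_{k_0}=(\delta^{1,0})^{-1}\delta^{1,0}(\eta_{k_0})=0$, contradicting the choice of $k_0$. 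Iterating gives $\eta=0$, hence $D_K(\tilde{J}_\alpha)=0$. I do not anticipate a real obstacle; the only point deserving care is verifying that Lemma \ref{lemma: symmetry-property-Taylor-expansion} really applies with nontrivial form degree $q$, where the K\"ahler identity $\nabla^{1,0}(d\bar{z}^j)=0$ is the key input.
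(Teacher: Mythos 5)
Your proof is correct and follows exactly the route the paper intends: the paper gives no separate argument for this proposition, stating only that it follows by ``similar arguments'' to Proposition \ref{proposition:flat-sections-holomorphic-Weyl}, and your write-up is precisely that adaptation (Lemma \ref{lemma: symmetry-property-Taylor-expansion} is already stated for $(0,q)$-forms, so it applies directly). Your spelled-out version of the leading-term injectivity step via the Hodge-type identity is a correct elaboration of what the paper leaves implicit.
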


%Next we will introduce two extensions of the connection $D_K$ in equation \eqref{eqn:classical-flat-connection-holomorphic} to the complexified Weyl bundle $\mathcal{W}_{X,\mathbb{C}}$, one ``classical'' and the other ``quantum''. We will also describe the sheaf of flat sections under these two extended connections. 

\subsection{Fedosov quantization from Kapranov's $L_\infty$ structure}\label{section: Fedosov-quantization}
\

\noindent In \cite{Fed}, Fedosov gave an elegant geometric construction of deformation quantization on a general symplectic manifold (see also \cite{Fedbook, Fed-index}). 
%For more details on Fedosov's original construction on symplectic manifolds, we refer the readers to \cite{Fed}.
The goal of this section is to show that Fedosov's quantization arises naturally as a quantization of Kapranov's $L_\infty$ structure. The key is to adapt Fedosov's construction in the K\"ahler setting by incorporating the complex structure, or complex polarization, on $X$. We first recall the notion of deformation quantization:
\begin{defn}
A deformation quantization (star product) of smooth functions on a symplectic manifold $X$ is an associative multiplication $\star$ on $C^\infty(X)[[\hbar]]$ of the following form:
$$
f\star g=f\cdot g+\sum_{k\geq 1}\hbar^k C_k(f,g),\  \text{for}\  f,g\in C^\infty(X),
$$
where the $C_k$'s are bi-differential operators on $C^\infty(X)$ such that
$$
\{f,g\}=\lim_{\hbar\rightarrow 0}\frac{1}{\hbar}\left(f\star g-g\star f\right).
$$
A star product $\star$ on a K\"ahler manifold is said to be of {\em Wick type} if the bi-differential operators $C_k(-,-)$ take holomorphic derivatives to $f$ and anti-holomorphic derivatives to $g$.  
\end{defn}

We begin by considering the complexified Weyl bundle $\mathcal{W}_{X,\mathbb{C}}$, equipped with the {\em fiberwise Wick product} induced by the K\"ahler form $\omega$: if $\alpha,\beta$ are sections of $\mathcal{W}_{X,\mathbb{C}}$, the Wick product is explicitly defined by:
\begin{equation*}\label{equation: Wick-product}
 \alpha\star\beta:=\sum_{k\geq 0}\frac{\hbar^k}{k!}\cdot\omega^{i_1\bar{j}_1}\cdots\omega^{i_k\bar{j}_k}\cdot\frac{\partial^k\alpha}{\partial y^{i_1}\cdots\partial y^{i_k}}\frac{\partial^k\beta}{\partial \bar{y}^{j_1}\cdots\partial \bar{y}^{j_k}}.
\end{equation*}
With respect to this product, the operator $\delta$ on $\mathcal{W}_{X,\C}$ can be expressed as:
$$
\delta =\frac{1}{\hbar}\left[\omega_{i\bar{j}}(dz^i\otimes\bar{y}^j-d\bar{z}^j\otimes y^i),-\right]_{\star},
$$
where $[-,-]_{\star}$ denotes the bracket associated to the Wick product.
\begin{defn}
A connection on $\W_{X,\C}$ of the form 
$$
D=\nabla-\delta+\frac{1}{\hbar}[I,-]_{\star}
$$
is called a {\em Fedosov abelian connection} if $D^2=0$.  Here $\nabla$ is the Levi-Civita connection, and $I\in\A^1(X,\W_{X,\C})$ is a $1$-form valued section of $\W_{X,\C}$. 
\end{defn}

Recall the following calculation in \cite{Bordemann}:
\begin{lem}[Proposition 4.1 in \cite{Bordemann}]\label{lemma: curvature-on-Weyl-bundle}
 The curvature of the Levi-Civita connection on the Weyl bundle is given by
 \begin{equation*}\label{equation: curvature-on-Weyl-bundle}
  \nabla^2=\frac{1}{\hbar}[R_\nabla,-]_{\star},
 \end{equation*}
 where $R_\nabla:=\sqrt{-1}\cdot R_{i\bar{j}k\bar{l}}dz^i\wedge d\bar{z}^j\otimes y^k\bar{y}^l$.
\end{lem}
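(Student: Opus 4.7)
The desired identity is an equality between two $\A_X^\bullet$-linear, 2-form-valued operators on the Weyl bundle, so it suffices to verify it fiberwise on local sections of $\W_{X,\C}$. The strategy is the standard derivation-on-generators argument: establish that both sides act as derivations of the fiberwise Wick product $\star$, and then match them on the algebra generators.

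\emph{Step 1 (derivation property).} The Wick product on $\W_{X,\C}$ is built by contracting with the inverse K\"ahler bivector $\omega^{i\bar{j}}$, and the K\"ahler condition $\nabla\omega=0$ implies $\nabla\omega^{i\bar{j}}=0$. Hence $\nabla$ is a derivation of $\star$. A short graded Leibniz computation then shows that its square $\nabla^{2}$ remains a (now even) derivation of $\star$ on $\A_X^\bullet(\W_{X,\C})$. On the other side, $\tfrac{1}{\hbar}[R_\nabla,-]_\star$ is a derivation of $\star$ by the universal algebraic fact that the inner adjoint action of any element of an associative algebra is a derivation of the multiplication.

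\emph{Step 2 (match on generators).} The fiberwise Weyl algebra is generated over $\A_X^\bullet[[\hbar]]$ by the linear fiber coordinates $y^k$ and $\bar{y}^l$, and both operators preserve the $(y,\bar y)$-adic filtration, so agreement on generators suffices. On $\A_X^\bullet[[\hbar]]$-valued scalars both sides vanish: $\nabla^{2}$ annihilates scalars, and $[R_\nabla,f]_\star=0$ since $f$ has no $y$ or $\bar{y}$ dependence to contract against. The identity on $y^m$ is precisely the chain of equalities
\[
\tfrac{\sqrt{-1}}{\hbar}\bigl[R_{i\bar{j}k\bar{l}}\,dz^i\wedge d\bar{z}^j\otimes y^k\bar{y}^l,\,y^m\bigr]_\star \;=\; R_{i\bar{j}k}^{m}\,dz^i\wedge d\bar{z}^j\otimes y^k \;=\; \nabla^{2}(y^m),
\]
which is the computation already written out in the preamble just before Section 2.1.2, the essential inputs being $g_{n\bar{l}}=-\sqrt{-1}\,\omega_{n\bar{l}}$ together with $\omega^{m\bar l}\omega_{n\bar l}=\delta^m_n$. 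The case of $\bar{y}^m$ is entirely parallel, with one bookkeeping asymmetry: since the Wick product differentiates the left factor in $y$ and the right factor in $\bar{y}$, in $[R_\nabla,\bar{y}^m]_\star$ the quantum correction arises solely from $R_\nabla\star\bar{y}^m$ (pairing $\partial_{y^k}R_\nabla$ with $\partial_{\bar{y}^l}\bar{y}^m$), while $\bar{y}^m\star R_\nabla$ contributes nothing. The same $g$-$\omega$ conversion then reproduces $\nabla^{2}(\bar{y}^m)$ on the antiholomorphic cotangent bundle, whose Levi-Civita curvature on a K\"ahler manifold is the complex conjugate of that on $T^*X$.

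\emph{Main obstacle.} The proof is essentially bookkeeping: the core calculation for $y^m$ is already in the preamble, and the $\bar{y}^m$ variant is a mechanical adaptation. The only nontrivial care is in the asymmetric Wick contraction in the $\bar{y}^m$ case -- one must keep straight which factor of the bracket produces the quantum correction -- together with the sign conventions relating $g$ and $\omega$ that convert the coefficient $R_{i\bar j k\bar l}$ appearing in $R_\nabla$ into the form $R_{i\bar j k}^{m}$ appearing in $\nabla^{2}$.
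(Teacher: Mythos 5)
Your proof is correct and matches what the paper itself does: the paper cites Bordemann--Waldmann for the lemma but displays exactly your generator computation for $y^m$ (the chain of equalities ending in $\nabla^2(y^m)$) in Section 2.1.1, and your Step 1 (both sides are $\star$-derivations, using $\nabla\omega=0$ and the inner-derivation property) together with the parallel $\bar{y}^m$ case is the standard scaffolding that completes it. The one point you rightly flag --- that in $[R_\nabla,\bar y^m]_\star$ the $\hbar$-correction comes from $R_\nabla\star\bar y^m$ rather than $\bar y^m\star R_\nabla$, mirroring the $y^m$ case --- is handled correctly.
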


A simple computation together with Lemma \ref{lemma: curvature-on-Weyl-bundle} shows that the flatness of $D$ is equivalent to the {\em Fedosov equation}:
\begin{equation}\label{eqn:Fedosov-equation-Wick}
\nabla I-\delta I+\frac{1}{\hbar}I\star I+ R_\nabla=\alpha,
\end{equation}
where $\alpha=\sum_{k\geq 1}\hbar^k \omega_k\in\A_X^{2}[[\hbar]]$ is a closed formal $2$-form on $X$.
Letting $\gamma:=I+\omega_{i\bar{j}}(dz^i\otimes \bar{y}^j-d\bar{z}^j\otimes y^i)$ and $\omega_\hbar:=-\omega+\alpha$, then equation \eqref{eqn:Fedosov-equation-Wick} is equivalent to
\begin{equation}\label{equation: Fedosov-equation-gamma}
 \nabla \gamma+\frac{1}{\hbar}\gamma\star\gamma+ R_\nabla=\omega_\hbar.
\end{equation}

%We will show that there is a flat connection on $\mathcal{W}_{\mathbb{C}}$ of the following form:
%$$D=\nabla-\delta+\frac{1}{\hbar}[I,-]_{\mathcal{W}}.$$
%Here $\nabla$ denotes the ordinary connection on $\mathcal{W}_{\mathbb{C}}$ induced by the Levi-Civita connection on $X$, and $[-,-]_{\mathcal{W}}$ denotes the bracket associated to the fiberwise Wick product. 

%Similar to Fedosov's original construction, $D$ is flat if and only if the following equation is satisfied:

%where $\omega_{\hbar}=\sum_{k\geq 1}\hbar^k\omega_k$ is a formal series of closed $2$-forms on $X$. 
%Then it is clear that the same iteration formula as in Fedosov's original proof works here to give a solution to equation \eqref{eqn:Fedosov-equation-Wick}, as there is also the following initial condition:
%$$
%\delta(R)=\delta(R_{i\bar{j}k\bar{l}}\delta^i\bar{\delta^j}\otimes dz^k\wedge d{\bar{z}}^l)=0
%$$
%by the symmetry property of the curvature tensor. 

%In this paper, we will consider solutions of Fedosov equation \eqref{eqn:Fedosov-equation-Wick} satisfying the following gauge fixing condition:
%$$
%(\delta^{1,0})^{-1}I=0.
%$$
%We first show the following uniqueness section, and in later section, we will show the explicit solution of Fedosov equation.

We will show that the flat connection $D_K$ on $\mathcal{W}_X$ gives rise to a Fedosov abelian connection $D_F$ (where the subscript ``F'' stands for ``Fedosov''), which is a {\em quantization} (or {\em quantum extension}) because the Wick product $\star$ is non-commutative. First recall that $$D_K=\nabla-\delta^{1,0}+\sum_{n\geq 2}\tilde{R}_n^*,$$
where each $\tilde{R}_n^*$ is an $\A_X^\bullet$-linear derivation on $\mathcal{W}_{X,\C}$ via $R_n^*\in\A_X^{0,1}\left(\Sym^n(T^*X)\otimes TX\right)$ (see equation \eqref{equation: terms-L-infty-structure}). 
Consider the $\A_X^{\bullet}$-linear operator
$$L:\A_X^\bullet\left(\widehat{\Sym}(T^*X)\otimes TX\right)\rightarrow \A_X^\bullet\left(\widehat{\Sym}(T^*X)\otimes \overline{T^*X}\right)$$
given by ``lifting the last subscript'' using the K\"ahler form $\omega$. Then we can define
$$
I_n:=L(R_n^*)=R_{i_1\cdots i_n,\bar{l}}^j\omega_{j\bar{k}}d\bar{z}^l\otimes (y^{i_1}\cdots y^{i_n}\bar{y}^{k})\in\mathcal{A}_X^{0,1}(\mathcal{W}_{X,\mathbb{C}}). 
$$
\begin{lem}\label{lemma: commutation-relations}
We have
\begin{equation}\label{equation: connection-compatible-with-lifting-subscript}
 \tilde{R}_n^*=\frac{1}{\hbar}[I_n,-]_\star|_{\mathcal{W}_{X}};
\end{equation}
\begin{equation}\label{equation: connection-commutes-with-L}
 \nabla\circ L=L\circ\nabla;
\end{equation}
\begin{equation}\label{equation: L-commutes-with-delta}
 L\circ\delta^{1,0}=\delta^{1,0}\circ L;
\end{equation}
\begin{equation}\label{equation: curvature-first-term-L-infty}
 \delta^{1,0}\circ L(R_2^*)=R_\nabla;
\end{equation}
\begin{equation}\label{equation: L-Lie-algebra-homomorphism}
 L([R_m^*,R_n^*])=[L(R_m^*),L(R_n^*)]_{\star}.
\end{equation}
\end{lem}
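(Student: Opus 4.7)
The plan is to prove the five identities in sequence, with \eqref{equation: connection-compatible-with-lifting-subscript} serving as the crucial bridge between the derivation action on $\W_X$ and the Wick bracket on $\W_{X,\C}$, and \eqref{equation: L-Lie-algebra-homomorphism} following from this bridge combined with the super-Jacobi identity for the Wick bracket. The unifying principle throughout is that $L$ is nothing but contraction of the $\partial_{z^j}$ ``tail'' with $\omega_{j\bar k}\bar y^k$, so all five statements ultimately reduce to one of the following inputs: the covariant constancy $\nabla\omega=0$, the disjointness of tensor factors on which $\delta^{1,0}$ and $L$ act, or the non-degeneracy identity $\omega^{m\bar q}\omega_{j\bar q}=-\delta^m_j$ that the paper already used in the preamble to express $\nabla^2=\frac{1}{\hbar}[R_\nabla,-]_\star$.

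For \eqref{equation: connection-compatible-with-lifting-subscript}, both sides are $\mathcal A_X^\bullet$-linear derivations of $\W_X$, so it suffices to verify agreement on the generators $y^m$. Unpacking the Wick product gives $I_n\star y^m=I_n\cdot y^m$ (because $\partial_{\bar y^q}y^m=0$) while $y^m\star I_n=y^m\cdot I_n+\hbar\omega^{m\bar q}\partial_{\bar y^q}I_n$, so $[I_n,y^m]_\star=-\hbar\omega^{m\bar q}\partial_{\bar y^q}I_n$. Inserting the explicit formula for $I_n$ and using $\omega^{m\bar q}\omega_{j\bar q}=-\delta^m_j$ collapses this to $\hbar R^m_{i_1\cdots i_n,\bar l}\,d\bar z^l\,y^{i_1}\cdots y^{i_n}=\hbar\,\tilde R_n^*(y^m)$, and dividing by $\hbar$ finishes the identity.

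The three intermediate identities are tensor-calculus checks. For \eqref{equation: connection-commutes-with-L}, since the K\"ahler form is parallel ($\nabla\omega=0$) and $L$ is contraction with $\omega_{j\bar k}$, the two operations commute. For \eqref{equation: L-commutes-with-delta}, the derivation $\delta^{1,0}=dz^p\wedge\partial_{y^p}$ acts only on the $\widehat{\Sym}(T^*X)$-factor of a section while $L$ modifies only the tail factor, so the two commute on the nose. For \eqref{equation: curvature-first-term-L-infty}, I would expand $L(R_2^*)=\tfrac{1}{2}R^m_{i\bar jk}\omega_{m\bar l}\,d\bar z^j\,y^iy^k\bar y^l$, apply $\delta^{1,0}$ to turn $y^iy^k$ into $dz^iy^k+dz^ky^i$, use the symmetry of $\Gamma^m_{ik}$ (and hence of $R^m_{i\bar jk}$) in $i,k$ to merge the two resulting terms into a single contribution $R^m_{i\bar jk}\omega_{m\bar l}\,dz^i\wedge d\bar z^j\,y^k\bar y^l$, and finally apply $\omega_{m\bar l}=\sqrt{-1}g_{m\bar l}$ to recognize this as $R_\nabla=\sqrt{-1}R_{i\bar jk\bar l}\,dz^i\wedge d\bar z^j\otimes y^k\bar y^l$.

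Identity \eqref{equation: L-Lie-algebra-homomorphism} is the most delicate and is where I expect to spend the most effort. The graded Jacobi identity for the Wick bracket yields
\[
\bigl[[I_m,I_n]_\star,\,y^j\bigr]_\star=\bigl[I_m,[I_n,y^j]_\star\bigr]_\star+\bigl[I_n,[I_m,y^j]_\star\bigr]_\star
\]
(the usual minus in the Jacobi relation flips to a plus because $I_m$ and $I_n$ are both form-odd); dividing by the appropriate power of $\hbar$ and applying \eqref{equation: connection-compatible-with-lifting-subscript} twice to each iterated bracket on the right identifies the whole expression with the super-commutator of derivations $\widetilde{[R_m^*,R_n^*]}(y^j)$. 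A second application of \eqref{equation: connection-compatible-with-lifting-subscript} to $[R_m^*,R_n^*]$ (which is itself of tail-type $T^*X\to\Sym(T^*X)$) identifies $\frac{1}{\hbar}[L([R_m^*,R_n^*]),-]_\star|_{\W_X}$ with the same super-commutator, so the two Wick-bracket derivations coincide on $\W_X$. To upgrade this equality of derivations to an equality of elements, one observes that both candidates lie in the one-$\bar y$ sector of $\mathcal A^{0,2}_X(\W_{X,\C})$ -- the classical part of $[I_m,I_n]_\star$ vanishes by form-antisymmetry and each $I_n$ carries only a single $\bar y$-factor so higher Wick corrections are impossible -- and an element whose Wick bracket with every $y^j$ vanishes must be free of $\bar y$'s entirely; the difference of the two candidates therefore has to be zero. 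The main obstacle in this step is purely combinatorial: carefully tracking the $\hbar$-factor, the super-sign and the symmetric-tensor derivative coefficients, all of which ultimately reduce to the same $\omega$-index manipulation already performed in the verification of \eqref{equation: connection-compatible-with-lifting-subscript}.
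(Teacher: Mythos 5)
Your treatment of \eqref{equation: connection-compatible-with-lifting-subscript}--\eqref{equation: curvature-first-term-L-infty} matches the paper's proof in substance; the paper is simply terser (it asserts \eqref{equation: connection-compatible-with-lifting-subscript} ``by construction'', derives \eqref{equation: connection-commutes-with-L} from $\nabla\omega=0$, calls \eqref{equation: L-commutes-with-delta} obvious, and verifies \eqref{equation: curvature-first-term-L-infty} by the same expansion you describe, first commuting $L$ past $\delta^{1,0}$). Your generator-level check of \eqref{equation: connection-compatible-with-lifting-subscript} via $[I_n,y^m]_\star=-\hbar\,\omega^{m\bar q}\partial_{\bar y^q}I_n$ usefully fills in what the paper leaves implicit.

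For \eqref{equation: L-Lie-algebra-homomorphism} you take a genuinely different route. The paper computes both sides in coordinates: it expands $L(\tilde R_m^*(R_n^*))$ as a sum over the $n$ slots of $R_n^*$, separately expands $L(R_n^*)\star L(R_m^*)$ through the first-order Wick term, and matches the two sums index by index. You instead test both sides against the generators $y^j$ using the graded Jacobi identity together with \eqref{equation: connection-compatible-with-lifting-subscript} (applied once more to the $(0,2)$-form-valued tensor $[R_m^*,R_n^*]$ --- a harmless extension, since your generator computation never used the form degree of the tail tensor), and then upgrade the equality of derivations to an equality of elements by noting that both candidates lie in the one-$\bar y$ sector, on which $w\mapsto[w,y^j]_\star=-\hbar\,\omega^{j\bar q}\partial_{\bar y^q}w$ is jointly injective by non-degeneracy of $\omega$. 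Your two supporting observations are correct: the classical part of $[I_m,I_n]_\star$ cancels by form-antisymmetry, and the Wick expansion truncates at first order because each $I_k$ carries a single $\bar y$. What your route buys is freedom from the shuffle bookkeeping; what the paper's buys is an explicit formula for $I_n\star I_m$, which is however not reused elsewhere.

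One point that your phrase ``dividing by the appropriate power of $\hbar$'' glosses over, and which you should make explicit: with the paper's normalization of the Wick product, whose first-order term carries an explicit $\hbar$, the bracket $[L(R_m^*),L(R_n^*)]_\star$ is of order $\hbar$ while $L([R_m^*,R_n^*])$ is of order $1$. The identity that actually holds --- and the one invoked in the proof of Theorem \ref{theorem: quantization-of-Kapranov-is-Fedosov-connection}, where $\tfrac1\hbar[I,I]_\star$ is identified with $L$ of the bracket of the $\tilde R_k^*$'s --- is $L([R_m^*,R_n^*])=\tfrac{1}{\hbar}[L(R_m^*),L(R_n^*)]_\star$; the displayed computation in the paper's proof silently drops this $\hbar$ from the first-order Wick term. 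Your Jacobi bookkeeping ($\hbar^2$ on one side of the test against $y^j$, $\hbar$ on the other) makes the discrepancy visible, which is a point in favor of your route, but you should state the corrected normalization rather than leave it implicit.
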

\begin{proof}
The construction of $I_n$'s implies equation \eqref{equation: connection-compatible-with-lifting-subscript}.
Equation \eqref{equation: connection-commutes-with-L} follows from the fact that  $\nabla(\omega)=0$. Equation \eqref{equation: L-commutes-with-delta} is obvious.  Equation \eqref{equation: curvature-first-term-L-infty} follows from a straightforward computation:
\begin{align*}
 \delta^{1,0}\circ L(R_2^*)=&L\circ \delta^{1,0}\left(\frac{1}{2}R_{i\bar{j}k}^m d\bar{z}^j\otimes(y^iy^k\otimes\partial_{z^m})\right)\\
 =&L\left(R_{i\bar{j}k}^m dz^i\wedge d\bar{z}^j\otimes(y^k\otimes\partial_{z^m})\right)\\
 =&R_{i\bar{j}k}^m\omega_{m\bar{l}}dz^i\wedge d\bar{z}^j\otimes y^ky^l\\
 =&R_{i\bar{j}k}^m\sqrt{-1}g_{m\bar{l}}dz^i\wedge d\bar{z}^j\otimes y^ky^l\\
 =&R_\nabla.
\end{align*}
To show equation \eqref{equation: L-Lie-algebra-homomorphism}, notice that $L([R_m^*,R_n^*])=L\left(\tilde{R}_m^*(R_n^*)+\tilde{R}_n^*(R_m^*)\right)$.
We then have the explicit computation:
\begin{align*}
L\left(\tilde{R}_m^*(R_n^*)\right)
& = L\left(R_{i_1\cdots i_m,\bar{l}}^jd\bar{z}^l\otimes(y^{i_1}\cdots y^{i_m}\otimes\partial_{z^j})(R_{i'_1\cdots i'_n,\bar{l}'}^{j'}d\bar{z}^{l'}\otimes(y^{i'_1}\cdots y^{i'_n}\otimes\partial_{z^{j'}}))\right)\\
& = L\left(\sum_{1\leq \alpha\leq n}R_{i_1\cdots i_m,\bar{l}}^jR_{i'_1\cdots i'_n,\bar{l}'}^{j'}d\bar{z}^l\wedge d\bar{z}^{l'}\otimes(y^{i_1}\cdots y^{i_m}y^{i'_1}\cdots\widehat{y^{i'_\alpha}}\cdots y^{i'_n}\otimes\partial_{z^{j'}})\right)\\
& = \sum_{1\leq \alpha\leq n}\omega_{j'\bar{k}}R_{i_1\cdots i_m,\bar{l}}^{i'_\alpha}R_{i'_1\cdots i'_n,\bar{l}'}^{j'}d\bar{z}^l\wedge d\bar{z}^{l'}\otimes(y^{i_1}\cdots y^{i_m}y^{i'_1}\cdots\widehat{y^{i'_\alpha}}\cdots y^{i'_n}\bar{y}^k).
\end{align*}
On the other hand, we have
\begin{align*}
 &L(R_n^*)\star L(R_m^*)\\
 =&\left(R_{i'_1\cdots i'_n,\bar{l}'}^{j'}\omega_{j'\bar{k}'}d\bar{z}^{l'}\otimes(y^{i'_1}\cdots y^{i'_n} y^{\bar{k'}})\right)\star\left(R_{i_1\cdots i_m,\bar{l}}^j\omega_{j\bar{k}}d\bar{z}^l\otimes(y^{i_1}\cdots y^{i_m} y^{\bar{k}})\right)\\
 =&L(R_n^*)\cdot L(R_m^*)\\
 &\qquad + \sum_{1\leq \alpha\leq n}\omega_{j\bar{k}}\omega_{j'\bar{k}'}\omega^{i'_\alpha\bar{k}}R_{i_1\cdots i_m,\bar{l}}^jR_{i'_1\cdots i'_n,\bar{l}'}^{j'}d\bar{z}^{l'}\wedge d\bar{z}^{l}\otimes(y^{i_1}\cdots y^{i_m}y^{i'_1}\cdots\widehat{y^{i'_\alpha}}\cdots y^{i'_n}\bar{y}^{k'}) \\
 =&-L(R_m^*)\cdot L(R_n^*)\\
 &\qquad + \sum_{1\leq \alpha\leq n}(-1)\omega_{j'\bar{k}'}R_{i_1\cdots i_m,\bar{l}}^{i'_\alpha}R_{i'_1\cdots i'_n,\bar{l}'}^{j'}d\bar{z}^{l'}\wedge d\bar{z}^{l}\otimes(y^{i_1}\cdots y^{i_m}y^{i'_1}\cdots\widehat{y^{i'_\alpha}}\cdots y^{i'_n}\otimes\partial_{z^{j'}})\\
 =&-L(R_m^*)\cdot L(R_n^*)+L\left(\tilde{R}_m^*(R_n^*)\right).
\end{align*}
\end{proof}
Let $I := \sum_{n\geq 2}I_n$. Then
\begin{equation*}\label{equation: Fedosov-connection-Wick-type}
D_F:=\nabla-\delta+\frac{1}{\hbar}[I,-]_{\star}
\end{equation*}
defines a connection on $\mathcal{W}_{X,\mathbb{C}}$.
Equation \eqref{equation: connection-compatible-with-lifting-subscript} says that $D_F$ is an extension of the $D_K$, namely,
\begin{equation*}\label{equation: Fedosov-connection-equals-Kapranov-connection-holomorphic-Weyl}
D_F|_{\mathcal{W}_X}=D_K.
\end{equation*}

\begin{lem}\label{lemma: delta-0-1-annihilates-I}
We have $\delta^{0,1}I=0$.
\end{lem}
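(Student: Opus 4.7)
The plan is to prove $\delta^{0,1} I_n = 0$ for each $n \geq 2$ separately, by induction on $n$, exploiting the inductive formula $R_n^* = (\delta^{1,0})^{-1} \nabla^{1,0}(R_{n-1}^*)$ together with the commutation identities already established in Lemma \ref{lemma: commutation-relations}.

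For the base case $n=2$, the key observation is that $I_n$ contains exactly one $\bar{y}$-factor, so
$$\delta^{0,1}I_2 = R_{i\bar{l}k}^{m}\omega_{m\bar{r}}\, d\bar{z}^l \wedge d\bar{z}^r \otimes y^i y^k,$$
which after lowering the index with $\omega_{m\bar{r}} = \sqrt{-1}\,g_{m\bar{r}}$ becomes (up to a constant) $R_{i\bar{l}k\bar{r}}\, d\bar{z}^l \wedge d\bar{z}^r \otimes y^i y^k$. This vanishes because the K\"ahler curvature tensor satisfies $R_{i\bar{l}k\bar{r}} = R_{i\bar{r}k\bar{l}}$, i.e., it is symmetric in the two anti-holomorphic indices, and this symmetric part is contracted against the antisymmetric $d\bar{z}^l \wedge d\bar{z}^r$.

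For the inductive step, first I would rewrite the definition of $I_n$ intrinsically. Applying the operator $L$ to $R_n^* = (\delta^{1,0})^{-1} \nabla^{1,0}(R_{n-1}^*)$ and using equations \eqref{equation: connection-commutes-with-L} and \eqref{equation: L-commutes-with-delta} from Lemma \ref{lemma: commutation-relations} (together with the obvious fact that $L$ commutes with $(\delta^{1,0})^{*}$, hence with $(\delta^{1,0})^{-1}$, since $L$ only modifies the last tangent factor while $(\delta^{1,0})^{*}$ only modifies the symmetric cotangent factor), I obtain the recursion
$$I_n = (\delta^{1,0})^{-1} \circ \nabla^{1,0}(I_{n-1}).$$
Thus it suffices to show that $\delta^{0,1}$ commutes with $(\delta^{1,0})^{-1} \circ \nabla^{1,0}$.

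The commutation $[\delta^{0,1}, \nabla^{1,0}] = 0$ is where the K\"ahler condition enters crucially: since the mixed Christoffel symbols vanish on a K\"ahler manifold, $\nabla^{1,0}(\bar{y}^j) = 0$ and $\nabla^{1,0}(d\bar{z}^j) = 0$, so the two operators act on disjoint tensor factors (one on the $\bar{y}$-fiber variables and the other on the $y$-fiber variables and on $dz$-type form indices). The commutation $[\delta^{0,1}, (\delta^{1,0})^{-1}] = 0$ is formal: the underlying operators $\delta^{0,1}$ and $(\delta^{1,0})^*$ act on disjoint tensor factors and commute, and a direct bookkeeping check shows that the normalizing factor $\frac{1}{p_1 + p_2}$ in the definition of $(\delta^{1,0})^{-1}$ is preserved under $\delta^{0,1}$ (which changes $q_1$ and $q_2$ but not $p_1$ or $p_2$). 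Combining these two commutations with the inductive hypothesis $\delta^{0,1} I_{n-1} = 0$ yields $\delta^{0,1} I_n = 0$.

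The main subtlety I expect is in verifying the commutations carefully, especially $[\delta^{0,1}, (\delta^{1,0})^{-1}] = 0$, where one must confirm that the renormalization factors match across the two orders of composition. The rest is essentially bookkeeping with the K\"ahler-type decomposition of the Levi-Civita connection.
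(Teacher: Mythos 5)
Your proposal is correct and follows essentially the same route as the paper: the base case is killed by the symmetry of $R_{i\bar{j}k\bar{l}}$ in its two anti-holomorphic indices against the antisymmetric $d\bar{z}^j\wedge d\bar{z}^l$, and the inductive step uses the recursion $I_n=(\delta^{1,0})^{-1}\circ\nabla^{1,0}(I_{n-1})$ together with $[\delta^{0,1},(\delta^{1,0})^{-1}]=[\delta^{0,1},\nabla^{1,0}]=0$. The only difference is that you spell out the justification of the recursion (via the commutation of $L$ with $\nabla^{1,0}$ and $(\delta^{1,0})^{-1}$) and of the two commutators, which the paper simply asserts.
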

\begin{proof}
Recall that $I_2=\frac{\sqrt{-1}}{2}R_{i\bar{j}k\bar{l}}d\bar{z}^l\otimes y^iy^k\bar{y}^j$, where $R_{i\bar{j}k\bar{l}}$ is symmetric in $\bar{j}$ and $\bar{l}$, from which we know that $\delta^{0,1}I_2=0$. The statement for the general $I_n$'s follows from the iterative equation $I_n=(\delta^{1,0})^{-1}\circ\nabla^{1,0}(I_{n-1})$ and the commutativity relations
$[\delta^{0,1},(\delta^{1,0})^{-1}]=[\delta^{0,1},\nabla^{1,0}]=0$.
\end{proof}

Here comes the first main result of this paper:
\begin{thm}\label{theorem: quantization-of-Kapranov-is-Fedosov-connection}
The connection $D_F$ is flat. 
\end{thm}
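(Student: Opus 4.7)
The plan is to establish flatness by directly verifying the Fedosov equation \eqref{eqn:Fedosov-equation-Wick} with the scalar correction $\alpha=0$. Concretely, I will show that
\[
\Omega := R_\nabla + \nabla I - \delta I + \tfrac{1}{\hbar} I \star I = 0
\]
as a section of $\A_X^2(\W_{X,\C})$. A short computation using Lemma \ref{lemma: curvature-on-Weyl-bundle} together with the torsion-freeness identity $\nabla\delta + \delta\nabla = 0$ gives $D_F^2 = \tfrac{1}{\hbar}[\Omega,-]_\star$, so the vanishing of $\Omega$ suffices (and is in fact stronger than needed: $\Omega$ only has to be central for $\star$). Since $I \in \A_X^{0,1}(\W_{X,\C})$, $R_\nabla$ is of type $(1,1)$, and $\delta^{0,1} I = 0$ by Lemma \ref{lemma: delta-0-1-annihilates-I}, only the $(1,1)$- and $(0,2)$-components of $\Omega$ can contribute, and I will treat them separately.

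For the $(1,1)$-component $R_\nabla + \nabla^{1,0} I - \delta^{1,0} I$, I first establish the Bianchi-type recursion $\nabla^{1,0} R_n^* = \delta^{1,0} R_{n+1}^*$ for $n \geq 2$. This follows from the iterative definition $R_{n+1}^* = (\delta^{1,0})^{-1}\nabla^{1,0} R_n^*$ combined with the total symmetry of $\nabla^{1,0} R_n^*$ in its lower subscripts, a fact already exploited in the proof that $D_K^{1,0}$ is flat and in the spirit of Lemma \ref{lemma: symmetry-property-Taylor-expansion}. Because $L$ commutes with both $\nabla$ and $\delta^{1,0}$ by \eqref{equation: connection-commutes-with-L} and \eqref{equation: L-commutes-with-delta}, this upgrades to $\nabla^{1,0} I_n = \delta^{1,0} I_{n+1}$. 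Telescoping over $n \geq 2$ and applying \eqref{equation: curvature-first-term-L-infty} then yields $\nabla^{1,0} I - \delta^{1,0} I = -\delta^{1,0} I_2 = -R_\nabla$, so the $(1,1)$-part of $\Omega$ vanishes.

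For the $(0,2)$-component $\bar\partial I + \tfrac{1}{\hbar} I \star I$, the plan is to transport Kapranov's $L_\infty$-relation \eqref{equation: square-zero-0-1-part} from $\W_X$ to $\W_{X,\C}$ through the lift $L$. Read at the level of sections, \eqref{equation: square-zero-0-1-part} asserts that $\bar\partial R_n^* + \sum_{j+k=n+1}\tilde R_j^*(R_k^*) = 0$ in $\A_X^{0,2}(\Hom(T^*X,\Sym^n T^*X))$. Applying $L$ and invoking the two intertwining facts---that $L$ commutes with $\bar\partial$ by \eqref{equation: connection-commutes-with-L} and that it converts the derivation bracket of the $R_j^*$'s into the Wick bracket of the $I_j$'s by \eqref{equation: L-Lie-algebra-homomorphism}---yields $\bar\partial I_n + \tfrac{1}{\hbar}\sum_{j+k=n+1} I_j \star I_k = 0$. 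Summing over $n \geq 2$ and noting that the classical wedge $I \cdot I = 0$ (since $I$ is an odd $(0,1)$-form, so only the $k = 1$ Wick contraction survives in $I \star I$) then finishes the $(0,2)$-case.

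The main obstacle is the bookkeeping in the $(0,2)$-step: one must carefully translate Kapranov's operator-level identity on $\W_X$ into an element-level identity in $\A_X^{0,2}(\W_{X,\C})$, matching the composition-of-derivations bracket appearing in \eqref{equation: square-zero-0-1-part} with the Wick bracket on the complexified Weyl bundle up to the correct $\hbar$-scaling. This is precisely the content of \eqref{equation: L-Lie-algebra-homomorphism}, and combined with the vanishing of $I\cdot I$ it converts the classical $L_\infty$-relation into the quantum Fedosov equation. Once both components of $\Omega$ are shown to vanish, $D_F^2 = 0$ follows immediately.
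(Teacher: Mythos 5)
Your proposal is correct and takes essentially the same route as the paper's own proof: both reduce flatness to the vanishing of $R_\nabla+\nabla I-\delta I+\frac{1}{\hbar}I\star I$ componentwise, handle the $(1,1)$-part by the telescoping identity $\nabla^{1,0}I_n=\delta^{1,0}I_{n+1}$ together with $\delta^{1,0}I_2=R_\nabla$ (using that $L$ commutes with $\nabla$ and $\delta^{1,0}$), and handle the $(0,2)$-part by transporting Kapranov's relation \eqref{equation: square-zero-0-1-part} through $L$ via \eqref{equation: L-Lie-algebra-homomorphism} and Lemma \ref{lemma: delta-0-1-annihilates-I}. No gaps.
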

\begin{proof}
Notice that $I=\sum_{n\geq 2}I_{n}$ is a $(0,1)$-form valued in $\mathcal{W}_{X,\mathbb{C}}$. We only need to show the vanishing of the $(2,0)$, $(0,2)$ and $(1,1)$ parts of $D_F^2$. 
The vanishing of the $(2,0)$ part of $D_F^2$ follows from that of $D_K$.  The $(0,2)$ part of $D_F^2$ is given by 
\begin{equation*}
\frac{1}{\hbar}\left[\nabla^{0,1}I-\delta^{0,1}I+\frac{1}{\hbar}[I,I]_{\star},-\right]_{\star}.
\end{equation*}
By Lemma \ref{lemma: commutation-relations} and the flatness of $D_K$, we have
\begin{align*}
\nabla^{0,1}I-\delta^{0,1}I+\frac{1}{\hbar}[I,I]_{\star}
=  \nabla^{0,1}I+\frac{1}{\hbar}[I,I]_{\star}
=  L\left(\nabla^{0,1}(\sum_{k\geq 2}\tilde{R}_k^*)+\left[\sum_{k\geq 2}\tilde{R}_k^*, \sum_{k\geq 2}\tilde{R}_k^*\right]\right)
=  0;
\end{align*}
here the first equality follows from Lemma \ref{lemma: delta-0-1-annihilates-I}.   
The $(1,1)$ part of $D_F^2$ is given by 
\begin{align*}
&\hspace{5mm}\nabla^2+\frac{1}{\hbar}\left[\nabla^{1,0}I-\delta^{1,0}I, -\right]_{\star}\\
& = \frac{1}{\hbar}\left[R_\nabla+\nabla^{1,0}I-\delta^{1,0}I,\ -\right]_{\star}\\
& = \frac{1}{\hbar}\left[R_\nabla+(\nabla^{1,0}-\delta^{1,0})\circ L\left(\sum_{k\geq 2}R_k^*\right),\ -\right]_\star\\
& \overset{(*)}{=} \frac{1}{\hbar}\left[\delta^{1,0}\circ L(R_2^*)+(\nabla^{1,0}-\delta^{1,0})\circ L\left(\sum_{k\geq 2}R_k^*\right),\ -\right]_\star\\
& = \frac{1}{\hbar}\left[L\circ\delta^{1,0}(R_2^*)+L\circ(\nabla^{1,0}-\delta^{1,0})\left(\sum_{k\geq 2}R_k^*\right),\ -\right]_\star\\
& = \frac{1}{\hbar}\left[L\left(\delta^{1,0}(R_2^*)-\delta^{1,0}(R_2^*)+(\nabla^{1,0}R^*_2-\delta^{1,0}R^*_3)+\cdots+(\nabla^{1,0}R^*_k-\delta^{1,0}R^*_{k+1})+\cdots\right),-\right]_\star\\
& = 0;
\end{align*}
where we have used equation \eqref{equation: curvature-first-term-L-infty} in the equality $(*)$, and also the fact that $L$ commutes with both $\delta^{1,0}$ and $\nabla^{1,0}$. Hence $D_F$ is a Fedosov abelian connection. A simple computation shows that $I$ actually satisfies the Fedosov equation \eqref{eqn:Fedosov-equation-Wick} with $\alpha = 0$:
$$
\nabla I-\delta I+\frac{1}{\hbar}I\star I+ R_\nabla=0. 
$$
\end{proof}

\begin{rmk}
 It is worth pointing out that this flat connection $D_F$ only has a ``classical'' part, i.e., the sections $I_n\in\A_X^{\bullet}(\mathcal{W}_{X,\C})$ have no higher order terms in the $\hbar$-power expansion. This is very different from Fedosov's original solutions of his equation. 
\end{rmk}
%We will now consider more general Fedosov connections. 

More generally, given any closed formal $2$-form $\alpha$ on $X$ with $[\alpha]\in\hbar H^2_{dR}(X)[[\hbar]]$ of type $(1,1)$, the following theorem produces an explicit solution of the Fedosov equation: \eqref{eqn:Fedosov-equation-Wick}:
\begin{thm}\label{proposition: Fedosov-connection-general}
 Let $\alpha$ be a representative of a formal cohomology class in $\hbar H^2_{dR}(X)[[\hbar]]$ of type $(1,1)$. Then there exists a solution of the Fedosov equation of the form $I_\alpha = I+ J_\alpha \in \mathcal{A}_X^{0,1}(\mathcal{W}_{X,\mathbb{C}})$:
 \begin{equation*}
\nabla I_\alpha - \delta I_\alpha + \frac{1}{\hbar} I_\alpha\star I_\alpha + R_\nabla=\alpha.
\end{equation*}
We denote the corresponding Fedosov abelian connection by $D_{F,\alpha}$. 
\end{thm}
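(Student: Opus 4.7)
The plan is to construct $J_\alpha$ by a Fedosov-type iteration that uses only the K\"ahler-friendly operators $(\delta^{1,0})^{-1}$ and $\nabla^{1,0}$, so that $J_\alpha$ automatically lies in $\mathcal{A}_X^{0,1}(\mathcal{W}_X) \subset \mathcal{A}_X^{0,1}(\mathcal{W}_{X,\mathbb{C}})$ (i.e.\ contains no $\bar{y}$'s), and then to verify the remaining part of the Fedosov equation by bootstrapping via the flatness of $D_K$. Without loss of generality (by Hodge theory) I take $\alpha$ to be of type $(1,1)$. Writing $I_\alpha = I + J_\alpha$ and subtracting the Fedosov equation for $I$ from Theorem \ref{theorem: quantization-of-Kapranov-is-Fedosov-connection}, the problem reduces to finding $J_\alpha \in \mathcal{A}_X^{0,1}(\mathcal{W}_{X,\mathbb{C}})$ satisfying
$$
(\nabla - \delta)(J_\alpha) + \tfrac{1}{\hbar}\bigl(I\star J_\alpha + J_\alpha \star I + J_\alpha \star J_\alpha\bigr) = \alpha,
$$
whose bidegree decomposition yields the $(1,1)$-equation $(\nabla^{1,0} - \delta^{1,0})(J_\alpha) = \alpha$ together with a $(0,2)$-equation involving the quadratic bracket terms.

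For the $(1,1)$-part, I impose the gauge $(\delta^{1,0})^{-1}J_\alpha = 0$ and $\pi_{0,*}(J_\alpha) = 0$ and use the Hodge-type decomposition of $\delta^{1,0}$ to convert the equation into the fixed-point form
$$
J_\alpha = (\delta^{1,0})^{-1}\nabla^{1,0}(J_\alpha) - (\delta^{1,0})^{-1}(\alpha).
$$
Since $(\delta^{1,0})^{-1}$ strictly raises the total weight (with each $y^i, \bar{y}^j$ counted as weight one and $\hbar$ as weight two) while $\nabla^{1,0}$ preserves weight, and since $\alpha$ starts at weight two, Picard iteration from $J_\alpha^{(0)} = 0$ converges weight-by-weight to the explicit formula
$$
J_\alpha = -\sum_{k\geq 0}\bigl((\delta^{1,0})^{-1}\nabla^{1,0}\bigr)^k(\delta^{1,0})^{-1}(\alpha).
$$
None of the operators involved ever introduces a $\bar{y}$, so this $J_\alpha$ actually lands in $\mathcal{A}_X^{0,1}(\mathcal{W}_X)$; a direct check using $\delta^{1,0}\nabla^{1,0} = -\nabla^{1,0}\delta^{1,0}$, $(\nabla^{1,0})^2 = 0$ on K\"ahler manifolds, and $\nabla^{1,0}\alpha = \partial\alpha = 0$ then confirms that $(\nabla^{1,0}-\delta^{1,0})(J_\alpha) = \alpha$.

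The main obstacle is to verify the $(0,2)$-equation, since the iteration only solves the $(1,1)$-equation directly. Because $J_\alpha$ carries no $\bar{y}$'s, the Wick formula gives $\delta^{0,1}J_\alpha = 0$ and $J_\alpha \star J_\alpha = 0$ immediately (the wedge of two $(0,1)$-forms antisymmetrizes a symmetric Weyl coefficient), while the Wick expansion of $I\star J_\alpha + J_\alpha \star I$ terminates at first order because each $I_n$ contains exactly one $\bar{y}$, and matching coefficients via $\omega^{a\bar{b}}\omega_{j\bar{b}} = \delta^a_j$ yields $\tfrac{1}{\hbar}(I\star J_\alpha + J_\alpha \star I) = \sum_n \tilde{R}_n^*(J_\alpha)$. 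Thus the $(0,2)$-equation collapses to $D_K^{0,1}(J_\alpha) = 0$, where $D_K^{0,1} = \bar{\partial} + \sum_n \tilde{R}_n^*$. To prove this, I apply $D_K^{0,1}$ to the identity $D_K^{1,0}(J_\alpha) = \alpha$: the relation $D_K^{0,1}D_K^{1,0} + D_K^{1,0}D_K^{0,1} = 0$ (coming from $D_K^2 = 0$) together with $D_K^{0,1}(\alpha) = \bar{\partial}\alpha + 0 = 0$ forces $D_K^{1,0}\bigl(D_K^{0,1}(J_\alpha)\bigr) = 0$. Since $J_\alpha$ has strictly positive $y$-weight, so does $D_K^{0,1}(J_\alpha)$; the injectivity argument used in the proof of Proposition \ref{proposition:flat-sections-holomorphic-Weyl} (triviality of $\ker\delta^{1,0}$ on positive-$y$-weight $(0,q)$-forms) then forces $D_K^{0,1}(J_\alpha) = 0$, completing the proof.
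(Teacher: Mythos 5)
Your proposal is correct, and your $J_\alpha$ is in fact literally the same section as the paper's: the paper sets $J_\alpha=\sum_{k\geq 1}((\delta^{1,0})^{-1}\circ\nabla^{1,0})^k(\bar{\partial}g)$ for a local potential $g$ with $\alpha=\bar{\partial}\partial g$, and since $(\delta^{1,0})^{-1}\nabla^{1,0}(\bar{\partial}g)=(\delta^{1,0})^{-1}(\partial\bar{\partial}g)=-(\delta^{1,0})^{-1}(\alpha)$, the two formulas agree term by term. The difference is in how the two arguments are organized. The paper routes everything through the local potential: it invokes the $\partial\bar{\partial}$-lemma, applies Proposition \ref{proposition: flat-section-closed-0-q-form} to conclude that $\bar{\partial}g+J_\alpha$ is $D_K$-flat, reads off the equation for $J_\alpha$ from $D_F(\bar{\partial}g+J_\alpha)=0$, and must separately argue that the locally defined $J_\alpha$'s are independent of the choice of $g$ and hence patch. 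Your potential-free formula is manifestly global, so you skip the patching step, but you then have to re-verify the flatness by hand: your induction for the $(1,1)$-part (using $(\nabla^{1,0})^2=0$, the anticommutation of $\nabla^{1,0}$ with $\delta^{1,0}$, and $\partial\alpha=0$) replays the content of Lemma \ref{lemma: symmetry-property-Taylor-expansion}, and your $(0,2)$-part bootstrap (apply $D_K^{0,1}$ to $D_K^{1,0}J_\alpha=\alpha$, use $D_K^2=0$, then kill $D_K^{0,1}J_\alpha$ by the injectivity of $\delta^{1,0}$ on positive-$y$-weight elements of $\A_X^{0,q}(\W_X)$) is exactly the mechanism in the proofs of Propositions \ref{proposition:flat-sections-holomorphic-Weyl} and \ref{proposition: flat-section-closed-0-q-form}. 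The remaining ingredients --- $J_\alpha\star J_\alpha=0$ and $\frac{1}{\hbar}[I,J_\alpha]_\star=\sum_n\tilde{R}_n^*(J_\alpha)$ by type reasons via Lemma \ref{lemma: commutation-relations} --- are identical in both treatments. In short, your argument is a self-contained, globally formulated version of the paper's proof; what it buys is the elimination of the $\partial\bar{\partial}$-lemma and the independence-of-potential step, at the cost of redoing the verification that the paper delegates to its earlier propositions.
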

\begin{proof}
  The $\partial\bar{\partial}$-lemma guarantees the local existence of a formal function $g$ such that $\alpha=\bar{\partial}{\partial}(g)$. We define a section $J_\alpha$ of $\mathcal{A}_X^{0,1}(\mathcal{W}_X)$ by
 \begin{equation*}\label{equation: I-alpha-formula}
 J_\alpha:=\sum_{k\geq 1}\left((\delta^{1,0})^{-1}\circ\nabla^{1,0}\right)^k(\bar{\partial}g).
 \end{equation*} 
 Such a function $g$ is unique up to a sum of purely holomorphic and purely anti-holomorphic functions. It follows that $J_\alpha$ is independent of the choice of $g$. In particular, this implies that these local sections $J_\alpha$'s patch together to give a global section over $X$.
 
 By Proposition \ref{proposition: flat-section-closed-0-q-form}, $\bar{\partial}g + J_\alpha$ is closed under $D_K$, so it is also closed under $D_F$ by the fact that $D_F|_{\mathcal{W}_X}=D_K$. Now
 \begin{align*}
 D_F(\bar{\partial}g + J_\alpha)
 = \partial\bar{\partial}g+\nabla J_\alpha-\delta(J_\alpha)+\frac{1}{\hbar}[I, J_\alpha]_{\star}
= -\alpha+\nabla J_\alpha-\delta J_\alpha +\frac{1}{\hbar}[I, J_\alpha]_{\star},
 \end{align*}
 so $\nabla J_\alpha-\delta J_\alpha + \frac{1}{\hbar}[I, J_\alpha]_{\star}=\alpha$.
 Together with Theorem \ref{theorem: quantization-of-Kapranov-is-Fedosov-connection} and the fact that $J_\alpha$ has only holomorphic components in $\mathcal{W}_{X,\mathbb{C}}$ which implies that $J_\alpha \star J_\alpha=0$ by type reasons, we deduce that
 $\nabla I_\alpha -\delta I_\alpha + \frac{1}{\hbar} I_\alpha \star I_\alpha + R_\nabla
 = \nabla J_\alpha - \delta J_\alpha + \frac{1}{\hbar}\left([I, J_\alpha]_{\star} + J_\alpha\star J_\alpha\right) = \alpha$.
\end{proof}

We now briefly recall the construction of star products from Fedosov connections. 
\begin{prop}[Theorem 3.3 in \cite{Fed}]\label{proposition: iteration-equation-quantum-flat-section}
	Let $D=\nabla-\delta+\frac{1}{\hbar}[I,-]_{\star}$ be a Fedosov abelian connection. Then there is a one-to-one correspondence induced by the symbol map $\sigma$:
	$$
	\sigma: \Gamma^{flat}(X,\mathcal{W}_{X,\C})\xrightarrow{\sim} C^\infty(X)[[\hbar]].
	$$
	The flat section $O_f$ corresponding to $f\in C^\infty(X)$ is the unique solution of the iterative equation: 
	\begin{equation}\label{equation: iteration-equation-quantum-flat-section}
	O_f=f+\delta^{-1}\left(\nabla O_f+\frac{1}{\hbar}[I,O_f]_{\star}\right).
	\end{equation}
\end{prop}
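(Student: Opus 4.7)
The plan is to follow Fedosov's original argument, the key input being the weight filtration on $\mathcal{A}_X^\bullet(\mathcal{W}_{X,\C})$ and the Hodge-type decomposition $\mathrm{id}-\pi_{00}=\delta\delta^{-1}+\delta^{-1}\delta$, where $\pi_{00}$ projects onto the constant (i.e.\ no $y$, no $\bar y$) component. Assign weight $1$ to each $y^i$ and $\bar y^j$ and weight $2$ to $\hbar$. Then $\delta$ lowers weight by $1$, $\delta^{-1}$ raises weight by $1$, the Levi-Civita connection $\nabla$ preserves weight (the Christoffel contributions are of the schematic form $y\,\partial_y$), and a direct inspection of the Wick product shows that $\tfrac1\hbar[I,-]_\star$ raises weight by at least $1$ because $I=\sum_{n\geq 2}I_n$ with $I_n$ of weight $n+1\geq 3$ while the $\star$-commutator removes one factor of $\hbar$ and differentiates one $y$ and one $\bar y$.

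First I would address existence and uniqueness of the iterative equation \eqref{equation: iteration-equation-quantum-flat-section}. Define the map $T_f(a):=f+\delta^{-1}\bigl(\nabla a+\tfrac1\hbar[I,a]_\star\bigr)$. By the weight analysis above, $T_f$ is a strict contraction in the weight-adic topology: $T_f(a)-T_f(b)$ has weight strictly greater than that of $a-b$. Starting from $O_f^{(0)}=f$ and iterating, the sequence $O_f^{(k+1)}=T_f(O_f^{(k)})$ converges to a unique fixed point $O_f$, and uniqueness for the iterative equation is automatic from the same weight argument. Note that since $\delta^{-1}$ outputs sections without any constant term, one has $\sigma(O_f)=f$.

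Next I would verify that the iterated section $O_f$ is genuinely $D$-flat. Apply $\delta$ to \eqref{equation: iteration-equation-quantum-flat-section} and use $a=\delta\delta^{-1}a+\delta^{-1}\delta a+a_{00}$, noting that the $(\nabla O_f+\tfrac1\hbar[I,O_f]_\star)$ piece is a $1$-form so its $a_{00}$-component vanishes; this yields
\[
\delta O_f=\nabla O_f+\tfrac1\hbar[I,O_f]_\star-\delta^{-1}\delta\bigl(\nabla O_f+\tfrac1\hbar[I,O_f]_\star\bigr),
\]
which rearranges to $DO_f=\delta^{-1}\delta(DO_f)$ after using $\delta^2=0$. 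Now invoke $D^2=0$, which expands as $\delta(DO_f)=\nabla(DO_f)+\tfrac1\hbar[I,DO_f]_\star$, to obtain the self-referential identity
\[
DO_f=\delta^{-1}\bigl(\nabla(DO_f)+\tfrac1\hbar[I,DO_f]_\star\bigr).
\]
The same weight analysis then forces $DO_f$ to lie in arbitrarily high weight, hence $DO_f=0$.

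Finally, for the bijective correspondence, surjectivity of $\sigma$ on flat sections follows from the preceding paragraph. For injectivity, any flat $O$ with $\sigma(O)=0$ satisfies $\delta O=\nabla O+\tfrac1\hbar[I,O]_\star$, and applying $\delta^{-1}$ together with the Hodge decomposition gives $O=\delta^{-1}\bigl(\nabla O+\tfrac1\hbar[I,O]_\star\bigr)=T_0(O)$, so by uniqueness of the fixed point of $T_0$ we conclude $O=0$. The only delicate step is the flatness verification, since it quietly uses both the Hodge identity in the right direction and the hypothesis $D^2=0$; everything else is a contraction-mapping bookkeeping exercise.
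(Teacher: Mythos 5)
Your argument is correct and is exactly the classical Fedosov iteration scheme (weight filtration, contraction mapping for the fixed-point equation, Hodge-type decomposition plus $D^2=0$ to upgrade the fixed point to a flat section); the paper offers no proof of its own here, citing \cite{Fed}*{Theorem 3.3}, and your write-up reproduces that standard argument faithfully. The only cosmetic point is that you justify the weight-raising property of $\frac{1}{\hbar}[I,-]_\star$ using the specific $I=\sum_{n\geq 2}I_n$, whereas the proposition is stated for a general abelian connection; the same bound holds for all the $I_\alpha$ appearing in the paper since every term of $I_\alpha$ has weight at least $3$.
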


%This identification induces a star product on $C^\infty(X)[[\hbar]]$.
For the Fedosov connection $D_{F,\alpha}$ defined in Theorem \ref{proposition: Fedosov-connection-general}, the associated deformation quantization (star product) $\star_{\alpha}$ on $C^\infty(X)[[\hbar]]$ is given by 
\begin{equation}\label{equation: formal-star-product}
f\star_{\alpha} g:=\sigma(O_f\star O_g).
\end{equation}
%In this paper, we are interested in a special type of star products on K\"ahler manifolds:
\begin{defn}\label{definition: Wick-type-deformation-quantization}
	We say that a deformation quantization $(C^\infty(X)[[\hbar]],\star)$ is of {\em Wick type} (or {\em with separation of variables}) if we have $f \star g = f\cdot g$ whenever $f$ is antiholomorphic or $g$ is holomorphic.
	This is equivalent to requiring the corresponding bi-differential operators $C_i(-,-)$ to take holomorphic and anti-holomorphic derivatives of the first and second arguments respectively. 
\end{defn}

%Next we show that the star products associated to Fedosov connections in Theorem \ref{proposition: Fedosov-connection-general} are all of Wick type. This explains the reason we restrict ourselves to those $\alpha$ of type $(1,1)$. In section \ref{section: Karabegov-form}, we will show that they actually give rise to all Wick type deformation quantizations on $X$. 

\begin{prop}\label{proposition: Fedosov-quantization-1-1-class-Wick-type}
	For every closed formal differential form $\alpha$ of type $(1,1)$, the star product $\star_{\alpha}$ defined in \eqref{equation: formal-star-product} is of Wick type.  
\end{prop}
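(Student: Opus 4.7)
My plan is to analyze the Fedosov flat section $O_f$ associated to $f$ via Proposition \ref{proposition: iteration-equation-quantum-flat-section} and to establish two structural facts from which the Wick-type property follows immediately: (i) when $g$ is holomorphic, $O_g$ lies in the holomorphic Weyl subbundle $\mathcal{W}_X \subset \mathcal{W}_{X,\mathbb{C}}$; and (ii) when $f$ is antiholomorphic, $O_f - f$ lies in the ideal $V_0 \subset \Gamma(\mathcal{W}_{X,\mathbb{C}})$ generated by the $\bar y^j$'s. Granting (i) and (ii), the Wick formula for $\star$ together with the fact that the symbol map extracts the $y=\bar y=0$ component forces only the $k=0$ term to survive in
\[
\sigma(O_f \star O_g) = \sum_{k \geq 0}\frac{\hbar^k}{k!}\,\omega^{i_1\bar j_1}\cdots\omega^{i_k\bar j_k}\,\sigma\bigl(\partial_{y^{i_1}}\cdots\partial_{y^{i_k}} O_f\bigr)\,\sigma\bigl(\partial_{\bar y^{j_1}}\cdots\partial_{\bar y^{j_k}} O_g\bigr),
\]
yielding $f \star_\alpha g = fg$ as required.

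For (i), I would first verify that $D_{F,\alpha}$ restricts to the Kapranov connection $D_K$ on $\mathcal{W}_X$. For any $s \in \mathcal{W}_X$, the operator $\delta^{0,1}$ vanishes on $s$; the bracket $\frac{1}{\hbar}[I, s]_\star$ collapses to $\sum_{n\geq 2}\tilde R_n^* s$ by \eqref{equation: connection-compatible-with-lifting-subscript}; and $\frac{1}{\hbar}[J_\alpha, s]_\star = 0$ because $J_\alpha$ has no $\bar y$-component, so both $J_\alpha \star s$ and $s \star J_\alpha$ degenerate to classical products. The Kapranov flat section $J_g \in \Gamma(\mathcal{W}_X)$ defined in \eqref{equation: flat-section-D_K} is therefore $D_{F,\alpha}$-flat with symbol $g$, and uniqueness in Proposition \ref{proposition: iteration-equation-quantum-flat-section} forces $O_g = J_g \in \Gamma(\mathcal{W}_X)$.

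For (ii), I would proceed via the Picard iteration $O_f^{(n+1)} = f + T(O_f^{(n)})$ with $T(s) := \delta^{-1}\bigl(\nabla s + \tfrac{1}{\hbar}[I_\alpha, s]_\star\bigr)$, proving inductively that $O_f^{(n)} - f \in V_0$. The base case is $T(f) = \delta^{-1}(\bar\partial f) = \bar y^j\partial_{\bar z^j} f \in V_0$. For the inductive step I must establish $T(V_0) \subseteq V_0$: if $r \in V_0$, then $\delta^{-1}(\nabla^{1,0} r) \in V_0$ since $\delta^{-1}$ only adjoins Weyl variables and preserves the pre-existing $\bar y$ inherited from $r$, while $\delta^{-1}\bigl(\nabla^{0,1} r + \tfrac{1}{\hbar}[I_\alpha, r]_\star\bigr) \in V_0$ because both summands are $(0,1)$-forms (using that $I_\alpha$ has no $(1,0)$-component) and $\delta^{-1}$ converts $d\bar z \to \bar y$, producing a new $\bar y$ all by itself. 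Passing to the limit gives $O_f - f \in V_0$.

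The principal obstacle is the asymmetry between (i) and (ii). The argument for (i) succeeds because $D_{F,\alpha}$ genuinely restricts to $D_K$ on $\mathcal{W}_X$; however, since each $I_n$ carries a factor of $\bar y$, there is no analogous restriction to $\overline{\mathcal{W}}_X$ and no ``anti-Kapranov'' flat section in $\overline{\mathcal{W}}_X$ that could play the role of $J_g$. I therefore settle for the weaker conclusion (ii), which crucially exploits the fact that $I_\alpha \in \mathcal{A}^{0,1}_X(\mathcal{W}_{X,\mathbb{C}})$ is of pure type $(0,1)$: the final $\delta^{-1}$ in each iteration step then always introduces a new $\bar y$, regardless of whether the bracket itself introduced any.
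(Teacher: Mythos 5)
Your proposal is correct and follows essentially the same route as the paper: the paper likewise shows that $D_{F,\alpha}|_{\mathcal{W}_X}=D_K$ (since $J_\alpha\in\A_X^{0,1}(\mathcal{W}_X)$ brackets trivially with $\mathcal{W}_X$), so $O_g=J_g\in\Gamma(\mathcal{W}_X)$ for holomorphic $g$, and uses "a simple induction on the iterative equation \eqref{equation: iteration-equation-quantum-flat-section}" to see that $O_f$ for antiholomorphic $f$ has no nonconstant purely holomorphic monomials, concluding by type reasons in the Wick product. Your write-up merely makes explicit the induction ($T(V_0)\subseteq V_0$ via the $(0,1)$-type of $I_\alpha$) and the fact that these two structural statements suffice for the full Wick condition with the other argument arbitrary, both of which the paper leaves implicit.
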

\begin{proof}
	To show that $\star_{\alpha}$ is of Wick type, we only need to show that if both $f$ and $g$ are (anti-)holomorphic functions, then $f\star_{\alpha} g=f\cdot g$.
	
	If $f,g$ are holomorphic, then both $O_f$ and $O_g$ are sections of the holomorphic Weyl bundle $\W_X$. Now $D_{F,\alpha}=\nabla-\delta+\frac{1}{\hbar}[I_\alpha,-]_\star$ where $I_\alpha = I + J_\alpha$ and since $J_\alpha\in\A_X^{0,1}\otimes\mathcal{W}_X$, we have $D_{F,\alpha}|_{\mathcal{W}_X}=D_F|_{\mathcal{W}_X}=D_K$.
	It follows that $O_f=J_f$ (where $J_f$ is defined in \eqref{equation: flat-section-D_K}) and must be of the desired type. Thus we have $\sigma(O_f\star O_g)=f\cdot g$ by type reasons. 
	
	If $f, g$ are anti-holomorphic functions, then we are in the opposite situation. By a simple induction using equation \eqref{equation: iteration-equation-quantum-flat-section}, we can see that both $O_f$ and $O_g$ do {\em not} contain any term that has only holomorphic components in $\mathcal{W}_{X,\mathbb{C}}$. Hence it also follows that $\sigma(O_f\star O_g)=f\cdot g$. 
\end{proof}

\begin{rmk}
	The fact that $O_f=J_f$ for any (local) holomorphic function $f$ says that holomorphic functions do {\em not} receive any quantum corrections in these Fedosov quantizations. 
\end{rmk}

%\begin{rmk}\label{remark: conjugate-of-Fedosov-connection}
%Suppose we take the conjugate $\bar{D}_F$ of the Fedosov connection in the previous part. It is clear that there is a similar Fedosov connection which we call $D_F'$ which is ``conjugate'' to $D_F$. More precisely, this connection satisfies the following condition:$$D_F'|_{\mathcal{W}_X^{0,1}}=\bar{D}_K.$$ In other words, those flat sections corresponding to (local) anti-holomorphic functions receives no ``quantum correction'', and is simply given by their Taylor expansion under the K\"ahler normal coordinates. 
%\end{rmk}

\subsubsection{Gauge fixing conditions and comparison with previous works}
\ 

\noindent Fedosov's original solutions \cite{Fed} of his equation satisfy the gauge fixing condition that $\delta^{-1}(I)=0$. On the other hand, by a simple type reason argument, it is easy to see that our solutions of the Fedosov equation satisfy instead the following gauge fixing condition:
$$
(\delta^{1,0})^{-1}(I)=0.
$$
This condition alone cannot guarantee uniqueness, but we have the following:
%\begin{lem}
% Suppose $\omega_\hbar\in\hbar\cdot\A_X^{1,1}(X)[[\hbar]]$ is a closed $(1,1)$-form, and let $I$ be a solution of the Fedosov equation 
% $$
% \nabla I-\delta I+\frac{1}{\hbar}[I,I]_{\mathcal{W}}=\omega_{\hbar}.
% $$
% Suppose $I$ satisfies the condition that there is no term in $I$ which is a pure differential form (i.e., weight $0$ in $\mathcal{W}_{\mathbb{C}}$). Then there must be $\pi_{0,*}I=0$.
%\end{lem}
\begin{prop}\label{proposition-fedosov-gauge-conditions}
	The solution $I$ of the Fedosov equation \eqref{eqn:Fedosov-equation-Wick} satisfying the two conditions:
	\begin{equation}\label{equation: holomorphic-gauge-fixing}
	(\delta^{1,0})^{-1}(I)=0,%\text{ and }
	\end{equation}
	\begin{equation}\label{equation: normalization-condition}
	\pi_{0,*}(I)=0
	\end{equation}
	is unique.  
\end{prop}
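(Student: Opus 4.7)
The plan is to reduce uniqueness to a simple $y$-degree induction via the $\delta^{1,0}$-Hodge decomposition. I would first unpack the two gauge conditions: since $(\delta^{1,0})^{-1}=\frac{1}{p_1+p_2}(\delta^{1,0})^{*}$ with $(\delta^{1,0})^{*}(a)=y^k\iota_{\partial_{z^k}}(a)$, condition \eqref{equation: holomorphic-gauge-fixing} forces the $1$-form $I$ to carry no $dz^i$ in its form part, so $I\in\A_X^{0,1}(\mathcal{W}_{X,\C})$. Condition \eqref{equation: normalization-condition} then removes the component of $I$ lying in $\A_X^{0,1}(\overline{\mathcal{W}}_X)$, so every remaining monomial of $I$ carries at least one $y^i$. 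Plugging both conditions into the Hodge identity
$$\mathrm{id}=\pi_{0,*}+\delta^{1,0}\circ(\delta^{1,0})^{-1}+(\delta^{1,0})^{-1}\circ\delta^{1,0}$$
yields $I=(\delta^{1,0})^{-1}\delta^{1,0}(I)$, so $I$ is completely determined by $\delta^{1,0}(I)$.

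Next I would extract the $(1,1)$-piece of the Fedosov equation \eqref{eqn:Fedosov-equation-Wick}. Since $I$ is purely of type $(0,1)$, the Wick square $\frac{1}{\hbar}I\star I$ is of type $(0,2)$ and contributes nothing to the $(1,1)$-piece, while $R_\nabla$ and $\alpha$ are both of type $(1,1)$ (the latter by our choice of representative). Projecting onto bidegree $(1,1)$ gives
$$\delta^{1,0}(I)=\nabla^{1,0}(I)+R_\nabla-\alpha,$$
which combined with the Hodge identity above produces the iterative formula $I=(\delta^{1,0})^{-1}\bigl(\nabla^{1,0}(I)+R_\nabla-\alpha\bigr)$.

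Now let $I$ and $I'$ be two solutions of \eqref{eqn:Fedosov-equation-Wick} satisfying \eqref{equation: holomorphic-gauge-fixing} and \eqref{equation: normalization-condition} with the same $\alpha$, and set $J:=I-I'$. Then $J$ still satisfies both gauge conditions, and subtracting the two iterative identities eliminates the inhomogeneity to yield the purely linear equation
$$J=(\delta^{1,0})^{-1}\nabla^{1,0}(J).$$
Since $\nabla^{1,0}$ preserves the $y$-degree whereas $(\delta^{1,0})^{-1}$ strictly raises it by one, if $J$ were nonzero and $m_0$ denoted the smallest $y$-degree appearing in $J$, the right-hand side would have lowest $y$-degree at least $m_0+1$, contradicting the equality. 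Hence $J=0$, which proves uniqueness.

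The only real content lies in the initial observation that gauge condition \eqref{equation: holomorphic-gauge-fixing} pins $I$ down to bidegree $(0,1)$, since this is precisely what kills the nonlinear Wick-square contribution in the $(1,1)$-piece of the Fedosov equation and linearizes the difference equation on $J$. After that the argument collapses to the standard Fedosov-style $y$-weight induction, so I do not anticipate any serious obstacle.
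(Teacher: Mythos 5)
There is a genuine gap at the very first step, and it is exactly the step you flag as carrying ``the only real content'': the claim that the gauge condition \eqref{equation: holomorphic-gauge-fixing} forces $I$ to carry no $dz^i$ and hence lie in $\A_X^{0,1}(\W_{X,\C})$ is false. Writing $I^{1,0}=A_i\,dz^i$, the condition $(\delta^{1,0})^{-1}(I)=0$ only says $y^iA_i=0$ in the commutative symmetric algebra, and this kernel is large: for instance $I^{1,0}=c\,(y^2\,dz^1-y^1\,dz^2)$ satisfies $y^1(cy^2)+y^2(-cy^1)=0$ without vanishing. (The true implication runs the other way: the solution constructed in the paper happens to lie in $\A_X^{0,1}(\W_{X,\C})$, and \emph{therefore} satisfies \eqref{equation: holomorphic-gauge-fixing} by type reasons.) Since $\pi_{0,*}$ vanishes identically on $(1,0)$-form components, condition \eqref{equation: normalization-condition} gives no further control on $I^{1,0}$ either. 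Everything downstream of this claim --- discarding $\frac{1}{\hbar}I\star I$ from the $(1,1)$-piece, and hence the linear difference equation $J=(\delta^{1,0})^{-1}\nabla^{1,0}(J)$ --- is therefore unjustified as written.

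The repair is essentially the paper's own argument: do not project onto the $(1,1)$-piece and do not try to linearize. From \eqref{equation: normalization-condition} and \eqref{equation: holomorphic-gauge-fixing} you correctly obtain $I=(\delta^{1,0})^{-1}\circ\delta^{1,0}(I)$; now apply $(\delta^{1,0})^{-1}$ to the \emph{full} equation \eqref{eqn:Fedosov-equation-Wick}, using the commutation $(\delta^{1,0})^{-1}\circ\delta^{0,1}=\delta^{0,1}\circ(\delta^{1,0})^{-1}$ together with \eqref{equation: holomorphic-gauge-fixing} to kill the $\delta^{0,1}I$ term, which yields the iterative equation
$$
I=(\delta^{1,0})^{-1}\Bigl(\nabla I+\tfrac{1}{\hbar}\,I\star I+R_\nabla-\alpha\Bigr),
$$
with the nonlinear term retained. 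Uniqueness then follows from the same kind of filtration argument you use at the end, but with the standard Fedosov weight (counting $y$'s and $\bar{y}$'s and twice the power of $\hbar$): $(\delta^{1,0})^{-1}$ strictly raises the weight, $\nabla$ preserves it, and the difference $\frac{1}{\hbar}(I\star I-I'\star I')$ at a given weight is controlled by the agreement of $I$ and $I'$ at strictly lower weights, so two solutions satisfying both gauge conditions must coincide. The nonlinearity is harmless for uniqueness, so there was never a need to establish that $I$ is of type $(0,1)$.
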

\begin{proof}
	Equation \eqref{equation: normalization-condition} implies that $(\delta^{1,0}\circ(\delta^{1,0})^{-1}+(\delta^{1,0})^{-1}\circ\delta^{1,0})(I)=I$.
	Together with the gauge fixing condition \eqref{equation: holomorphic-gauge-fixing}, we have
	$I=(\delta^{1,0})^{-1}\circ\delta^{1,0} (I)$.
	By applying the operator $(\delta^{1,0})^{-1}$ to equation \eqref{eqn:Fedosov-equation-Wick}, we see that $I$ must satisfy the following iterative equation:
	\begin{align*}
	I = (\delta^{1,0})^{-1}\left(\nabla I+\frac{1}{\hbar}[I,I]_{\star}+R_\nabla-\omega_{\hbar}-\delta^{0,1}I\right) 
	= (\delta^{1,0})^{-1}\left(\nabla I+\frac{1}{\hbar}[I,I]_{\star}+R_\nabla-\omega_{\hbar}\right),
	\end{align*}
	where we have used the fact that the two operators $\delta^{0,1}$ and $(\delta^{1,0})^{-1}$ commute with each other in the second equality. This iterative equation clearly has a unique solution.  
\end{proof}
It is clear that our solution $I = \sum_{n\geq 2}I_n$ of the Fedosov equation \eqref{eqn:Fedosov-equation-Wick} is exactly the unique one satisfying the conditions \eqref{equation: holomorphic-gauge-fixing} and \eqref{equation: normalization-condition}.

There were a number of works on the Fedosov construction of Wick type deformation quantizations on (pseudo-)K\"ahler manifolds \cites{Karabegov00, Bordemann, Neumaier}. Notice that, in all these works, the authors were using the same gauge condition, namely, Fedosov's original condition $\delta^{-1}I=0$, when solving the Fedosov equation. 
For the purpose of deformation quantization, there is no essential difference between these two choices of gauge conditions.
However, here are some interesting features of our construction which were not found in previous ones:
\begin{enumerate}
	\item As we have emphasized, our Fedosov connection is a quantization of Kapranov's $L_\infty$-algebra structure on a K\"ahler manifold.
	\item In our construction, the sections $O_f$ corresponding to holomorphic functions $f$ do not receive any quantum corrections. This is consistent with the Berezin-Toeplitz quantization, and also the local picture where $z$ acts as the {\em creation operator} which is classical while $\bar{z}$ acts as the {\em annihilation operator} $\hbar\frac{\partial}{\partial z}$ which is quantum.
	\item Our quantization is in a certain sense ``polarized'': only half of the functions, i.e., the anti-holomorphic ones receive quantum corrections.
\end{enumerate}

\subsubsection{The Karabegov form}\label{section: Karabegov-form}
\

In \cite{Karabegov96}, Karabegov gives a complete classification of deformation quantizations of Wick type on a K\"ahler manifold:\footnote{Actually the roles of the holomorphic and anti-holomorphic variables in \cite{Karabegov96} were reversed, but this does not affect the results.}
\begin{thm}[Theorem 2 in \cite{Karabegov96}]\label{theorem: Karabegov-classification-Wick-star-product}
Deformation quantizations of Wick type  (or with separation of variables) on a K\"ahler manifold $X$ are in one-to-one correspondence with formal deformations of the K\"ahler metric $\omega$ on $X$. 
\end{thm}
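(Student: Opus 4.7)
The plan is to establish the correspondence through local formal Kähler potentials, following the standard pattern for quantizations ``with separation of variables''. For the forward direction, given a Wick type star product $\star$, I would work on a contractible chart $U$ with holomorphic coordinates $\{z^k\}$ and a Kähler potential $\Phi$, so $\omega|_U = \sqrt{-1}\,\partial\bar{\partial}\Phi$. The Wick condition forces $L_f(g) := f \star g$ to involve only holomorphic derivatives of $g$, and dually $R_f(g) := g \star f$ only antiholomorphic derivatives of $g$ when $f$ is holomorphic. Using these constraints together with associativity, I would show that there exists a formal potential $\Phi_\hbar = \Phi + \sum_{k \geq 1}\hbar^k \Phi_k$, unique up to addition of formal holomorphic plus formal antiholomorphic functions, characterized by the relation $L_{\partial_{z^k}\Phi_\hbar} = \partial_{z^k}\Phi_\hbar + \hbar\,\partial_{z^k}$ as formal differential operators. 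Setting $\omega_\star := \sqrt{-1}\,\partial\bar{\partial}\Phi_\hbar$ then yields a global formal closed $(1,1)$-form, since the ambiguity in $\Phi_\hbar$ is killed by $\partial\bar{\partial}$ and the defining relation is coordinate-invariant.

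For the reverse direction, given a formal closed $(1,1)$-form $\omega_\hbar$ deforming $\omega$, I would choose local formal potentials and reconstruct $\star$ by solving, order by order in $\hbar$, for operators $L_f$ that (i) take only holomorphic derivatives of their argument, (ii) have leading symbol equal to $f$, and (iii) satisfy the commutation relation $[L_f,\,\hbar\partial_{\bar{z}^l} - L_{\partial_{\bar{z}^l}\Phi_\hbar}] = 0$, which encodes Wick compatibility with right multiplication by antiholomorphic coordinates. Defining $f\star g := L_f(g)$, one checks associativity inductively using these commutation relations, and independence of the chosen potential follows from the uniqueness built into the forward construction.

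The main obstacle is verifying that the operators $L_f$ produced by the reverse scheme actually assemble into an \emph{associative} product: the recursion determines each $L_f$ uniquely in each chart, but associativity at order $\hbar^N$ must be shown to follow from the closedness-and-pure-type condition on $\omega_\hbar$, and this reduces to a Hochschild cocycle/coboundary argument that vanishes precisely because $d\omega_\hbar = 0$ and $\omega_\hbar \in \mathcal{A}_X^{1,1}[[\hbar]]$. A secondary point is bijectivity: the two constructions are mutually inverse by the uniqueness in each step, but one must track carefully how the gauge freedom in $\Phi_\hbar$ enters on both sides. As an alternative, one could attempt the proof within the Fedosov framework of Section \ref{section: Fedosov-quantization}: by Theorem \ref{proposition: Fedosov-connection-general} every formal closed $(1,1)$-class gives a Wick type star product via $D_{F,\alpha}$, and once the Karabegov form of $\star_\alpha$ is computed in the next subsection, it should match $-\omega_\hbar$ from \eqref{equation: Fedosov-equation-gamma} up to a universal curvature correction coming from $R_\nabla$, thereby realizing both directions of the bijection directly through the Fedosov construction.
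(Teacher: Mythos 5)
You should first note that the paper does not prove this statement at all: it is imported verbatim as Theorem 2 of Karabegov's 1996 paper, so the only meaningful comparison is with Karabegov's original argument. Your forward direction is a faithful reconstruction of that argument: the existence of a local formal potential $\Phi_\hbar$ characterized by $L_{\partial_{z^k}\Phi_\hbar}=\partial_{z^k}\Phi_\hbar+\hbar\,\partial_{z^k}$, unique up to holomorphic plus antiholomorphic summands, is exactly the content of the paper's Proposition \ref{proposition: function-u-k} and Lemma \ref{lemma: function-u-k} (with $u_k=\partial_{z^k}\Phi_\hbar$), and passing to $\partial\bar\partial\Phi_\hbar$ gives the global Karabegov form.

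The genuine gap is in your reverse direction, precisely at the point you flag as the ``main obstacle.'' Associativity of the reconstructed product is not obtained by a Hochschild cocycle/coboundary argument tied to $d\omega_\hbar=0$; it is not clear such an argument closes, since the recursion for $L_f$ produces operators, not a priori a cocycle in a complex whose cohomology you control. Karabegov's mechanism is different and cleaner: on each chart one considers the algebra $\mathcal{L}$ of formal differential operators that commute with the fixed right-multiplication operators (multiplication by antiholomorphic coordinates and the operators $\partial_{\bar z^l}\Phi_\hbar+\hbar\,\partial_{\bar z^l}$, in the appropriate convention). Associativity is then automatic because the product is operator composition inside $\mathcal{L}$; the actual work is showing that the symbol map $L\mapsto L(1)$ is a bijection $\mathcal{L}\to C^\infty(U)[[\hbar]]$, which is done order by order in $\hbar$ and also yields the uniqueness statement (two Wick type products with the same Karabegov form coincide). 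Without this commutant framing, your step (iii) determines each $L_f$ but leaves associativity genuinely unproved. Your alternative route through Theorems \ref{proposition: Fedosov-connection-general} and \ref{theorem: Karabegov-form-Fedosov-deformation-quantization} also cannot replace this: the Fedosov construction shows every formal deformation of $\omega$ is realized as the Karabegov form of some $\star_\alpha$ (surjectivity), but injectivity of the correspondence --- that the Karabegov form is a complete invariant --- is exactly the local uniqueness argument above, and indeed the paper's Corollary \ref{corollary:Wick-type} consumes Karabegov's theorem as an input rather than reproving it.
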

To see how a formal deformation of $\omega$ is constructed from a Wick type star product, let us recall the following proposition in \cite{Karabegov96}:
\begin{prop}[Proposition 1 in \cite{Karabegov96}]\label{proposition: function-u-k}
	Let $X$ be a K\"ahler manifold with K\"ahler form $\omega$ and $\star$ be a formal star product with separation of variables. Then, on each contractible coordinate chart $U\subset X$ with any holomorphic coordinate system $(z^1,\cdots, z^n)$, there exist formal functions $u_1,\cdots, u_m\in C^\infty(U)[[\hbar]]$ such that $[u_k, z^{k'}]_\star = \hbar\delta_{kk'} $.
\end{prop}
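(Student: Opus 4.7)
The plan is to construct $u_k = \sum_{N\geq 0}\hbar^N u_k^{(N)}$ order by order in $\hbar$, by reducing the commutation relation $[u_k, z^{k'}]_\star = \hbar\delta_{kk'}$ to a sequence of $\bar\partial$-equations on the contractible chart $U$ and applying the Dolbeault lemma.

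First I would exploit the separation of variables: since $z^{k'}$ is holomorphic, $u_k\star z^{k'} = u_k\cdot z^{k'}$, so expanding
\[
z^{k'}\star u_k = z^{k'}u_k + \sum_{j\geq 1}\hbar^j C_j(z^{k'}, u_k)
\]
reduces the equation to $\sum_{j\geq 1}\hbar^j C_j(z^{k'}, u_k) = -\hbar\delta_{kk'}$. Note that each $C_j(z^{k'},\cdot)$ is a finite-order anti-holomorphic differential operator, since $C_j$ takes anti-holomorphic derivatives in its second argument while higher-order holomorphic derivatives of $z^{k'}$ vanish. At order $\hbar^1$ the equation becomes $C_1(z^{k'}, u_k^{(0)}) = -\delta_{kk'}$. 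Using the standard identity $C_1(f,g)-C_1(g,f) = \{f,g\}$ together with $C_1(u_k^{(0)}, z^{k'}) = 0$ (Wick type), this is the Poisson equation $\{z^{k'}, u_k^{(0)}\} = -\delta_{kk'}$. A K\"ahler potential $\Phi$ exists on the contractible chart $U$ (with $g_{\alpha\bar\beta} = \partial_{z^\alpha}\partial_{\bar z^\beta}\Phi$), and one checks directly that $u_k^{(0)} := \sqrt{-1}\,\partial_{z^k}\Phi$ solves this leading equation.

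For the inductive step, suppose $u_k^{(0)}, \ldots, u_k^{(N-1)}$ have been constructed so that the truncated commutator equals $\hbar\delta_{kk'} + \hbar^{N+1}F_{k'}^{(N)} + O(\hbar^{N+2})$ for some functions $F_{k'}^{(N)}$ on $U$. The order $\hbar^{N+1}$ step then demands $\{z^{k'}, u_k^{(N)}\} = F_{k'}^{(N)}$ for all $k'$. Since the Hamiltonian vector field of $z^{k'}$ is $X_{z^{k'}} = \sqrt{-1}\,g^{\bar\beta k'}\partial_{\bar z^\beta}$, this linear system uniquely determines the anti-holomorphic derivatives $\partial_{\bar z^\beta}u_k^{(N)}$ and is equivalent to a single $\bar\partial$-equation $\bar\partial u_k^{(N)} = \eta^{(N)}$, where $\eta^{(N)} = -\sqrt{-1}\,g_{k'\bar\beta}F_{k'}^{(N)} d\bar z^\beta$. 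By the Dolbeault lemma on $U$, this admits a solution provided $\bar\partial\eta^{(N)} = 0$.

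The main obstacle is verifying this integrability condition. My plan is to apply the Jacobi identity to the nested star commutator $[[u_k^{\leq N-1}, z^{k'}]_\star, z^{k''}]_\star$: since both $z^{k'}$ and $z^{k''}$ are holomorphic, $[z^{k'}, z^{k''}]_\star = 0$, so the expression is symmetric in $k', k''$. Expanding both sides modulo $\hbar^{N+3}$ via the inductive hypothesis, the surviving order $\hbar^{N+2}$ contributions produce the algebraic compatibility $\{z^{k''}, F_{k'}^{(N)}\} = \{z^{k'}, F_{k''}^{(N)}\}$. Unwinding this identity through the definition of $\eta^{(N)}$, and using the K\"ahler identity $\partial_{\bar z^{\beta'}}g_{k\bar\beta} = \partial_{\bar z^\beta}g_{k\bar{\beta'}}$ (an immediate consequence of the existence of the potential $\Phi$), one verifies $\bar\partial\eta^{(N)} = 0$, closing the induction and producing the desired formal functions $u_k$.
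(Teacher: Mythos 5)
The paper does not actually prove this statement: it is imported verbatim as Proposition~1 of Karabegov's 1996 paper, so there is no in-paper argument to compare against. Your proof is a correct, self-contained reconstruction of Karabegov's original inductive argument, and the logic checks out at every stage: separation of variables kills $u_k\star z^{k'}-u_kz^{k'}$ and reduces $C_j(z^{k'},-)$ to purely anti-holomorphic differential operators (with no zeroth-order part, since $1\star g=g$ follows from the Wick-type condition); the order-$\hbar^{N+1}$ coefficient involves $u_k^{(N)}$ only through $C_1(z^{k'},u_k^{(N)})=\{z^{k'},u_k^{(N)}\}$, which, because $\{z^{k'},-\}=\omega^{k'\bar\beta}\partial_{\bar z^\beta}$ with $\omega^{k'\bar\beta}$ invertible, is exactly a prescription of $\bar\partial u_k^{(N)}$; and your integrability check is sound --- the Jacobi identity with $[z^{k'},z^{k''}]_\star=0$ yields $\{z^{k''},F^{(N)}_{k'}\}=\{z^{k'},F^{(N)}_{k''}\}$ at order $\hbar^{N+2}$, and contracting twice with the metric and invoking $\partial_{\bar z^{\beta'}}g_{k\bar\beta}=\partial_{\bar z^{\beta}}g_{k\bar\beta'}$ gives $\bar\partial\eta^{(N)}=0$, so the Dolbeault lemma on the contractible chart closes the induction.

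Two small remarks. First, the precise constant in $u_k^{(0)}=\sqrt{-1}\,\partial_{z^k}\Phi$ depends on the sign conventions for $\omega^{i\bar j}$ and the normalization $C_1(f,g)-C_1(g,f)=\{f,g\}$; with the conventions of this paper the leading term should match $\partial_{z^k}\rho$ for $\rho$ a potential of $\omega$ itself (cf.\ Lemma \ref{lemma: function-u-k}, where the authors take $u_k=\partial_{z^k}(\rho+\sum_i\hbar^i\rho_i)$), so you may need to adjust the factor of $\sqrt{-1}$; this is cosmetic. Second, it is worth noting that your use of the Jacobi identity for $[-,-]_\star$ is just associativity of $\star$ in disguise, which is how Karabegov phrases the compatibility in the original reference; so while the paper itself offers no proof, your route coincides in substance with the cited one.
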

This proposition gives a locally defined formal differential form $\bar{\partial}(-u_kdz^k)$ of type $(1,1)$ on each chart $U$, which patch together to a globally defined closed formal differential form, called the {\em Karabegov form} associated to $\star$. Moreover, this formal $(1,1)$-form is a deformation of $\omega$, i.e., of the form $\omega+O(\hbar)$.
\begin{rmk}
In \cite{Karabegov96}, the Karabegov form is defined as $\sqrt{-1}\cdot\bar{\partial}(-u_kdz^k)$. We are using a different normalization since our star products satisfy the condition that $C_1(f,g)-C_1(g,f)=\{f,g\}$, instead of $C_1(f,g)-C_1(g,f)=\sqrt{-1}\cdot\{f,g\}$.
\end{rmk}

Now let $\alpha=\sum_{i\geq 1}\hbar^i\omega_i$ be a closed formal differential form of type $(1,1)$, and $D_{F,\alpha}$ and $\star_\alpha$ be respectively the associated Fedosov abelian connection and Wick type star product constructed in Theorem \ref{proposition: Fedosov-connection-general}. To calculate the Karabegov form associated to $\star_{\alpha}$, we begin with a lemma.
\begin{lem}\label{lemma: function-u-k}
Let $\alpha=\sum_{i\geq 1}\hbar^i\omega_i$ be a closed formal differential form of type $(1,1)$ and $-\sum_{i\geq 1}\hbar^i\rho_i$ be a potential of $\alpha$ (i.e., $\bar{\partial}\partial\rho_i=\omega_i$), and let $\rho$ be a potential of $\omega$. For the (locally defined) formal functions
$$
u_k=\frac{\partial}{\partial z^k}\left(\rho + \sum_{i\geq 1}\hbar^i \rho_i\right),
$$
the terms in $O_{u_k}$ which live in $\overline{\mathcal{W}}_X$ (which we call ``terms of purely anti-holomorphic type'') are given by  
$$
u_k+\omega_{k\bar{m}}\bar{y}^m.
$$
\end{lem}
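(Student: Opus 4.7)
The plan is to derive a self-contained iterative equation for the purely anti-holomorphic part $\tilde{u}_k := \pi_{0,*}(O_{u_k})$ of the flat section, and then verify that the ansatz $u_k + \omega_{k\bar{m}}\bar{y}^m$ satisfies it. Starting from the iterative equation \eqref{equation: iteration-equation-quantum-flat-section} and projecting onto the $y$-free, $(0,1)$-form component, the decisive observation is that the bracket $\frac{1}{\hbar}[I_\alpha, O_{u_k}]_\star$ contributes to this projection \emph{only} through Wick contractions which fully saturate the $y$-degree of $I_\alpha$: indeed every component $I_n$ and $(J_\alpha)_n$ has positive $y$-degree, so the classical product already carries $y$'s, and one needs exactly $n$ contractions to kill all of them. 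Moreover, these saturating contractions extract $\bar{y}$'s only from the $y$-free summand $\tilde{u}_k$ of $O_{u_k}$. Consequently the projected flatness equation decouples and yields a closed equation for $\tilde{u}_k$, schematically
\[
(\nabla^{0,1}-\delta^{0,1})\tilde{u}_k + \frac{1}{\hbar}\,\pi_{0,*}(I_\alpha\star \tilde{u}_k) = 0,
\]
with $\sigma(\tilde{u}_k) = u_k$; iteration on $\bar{y}$-degree then uniquely determines $\tilde{u}_k$ from $u_k$.

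Plugging $\tilde{u}_k = u_k + \omega_{k\bar{m}}\bar{y}^m$ into this equation, I would verify it $\bar{y}$-degree by $\bar{y}$-degree. The K\"ahler condition $\partial_{\bar{j}}\omega_{k\bar{m}} = \omega_{k\bar{l}}\overline{\Gamma^l_{jm}}$, which is just $\nabla\omega=0$, immediately yields $\bar{\partial}^\nabla(\omega_{k\bar{m}}\bar{y}^m) = 0$, so all relations at $\bar{y}$-degree $\geq 1$ reduce to $0=0$ (given that the ansatz has no $\bar{y}^p$ piece for $p\geq 2$). At $\bar{y}$-degree $0$ the nontrivial cancellation is a three-way match: $\bar{\partial}u_k$ produces $(\omega_{k\bar{m}}-\alpha_{k\bar{m}})d\bar{z}^m$ from the definition $u_k = \partial_{z^k}(\rho+\sum\hbar^i\rho_i)$ together with the potential conventions $\partial\bar{\partial}\rho=\omega$ and $\bar{\partial}\partial\rho_i=\omega_i$; the term $-\delta^{0,1}(\omega_{k\bar{m}}\bar{y}^m)$ gives $-\omega_{k\bar{m}}d\bar{z}^m$; and the only surviving Wick term comes from $(J_\alpha)_1=(\delta^{1,0})^{-1}\nabla^{1,0}(\bar{\partial}g) = -\alpha_{k\bar{m}}y^k d\bar{z}^m$ contracted against $\omega_{k\bar{m}}\bar{y}^m$, which, after using $\omega^{a\bar{b}}\omega_{k\bar{b}}=-\delta^a_k$ and dividing by $\hbar$, evaluates to $+\alpha_{k\bar{m}}d\bar{z}^m$. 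The three pieces sum to zero, confirming the ansatz.

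The main obstacle is the bookkeeping of Wick contractions. Most terms vanish for simple reasons -- contractions with $I_n$ for $n\geq 2$ require differentiating $\omega_{k\bar{m}}\bar{y}^m$ twice or more in $\bar{y}$ and so vanish, and contractions with $(J_\alpha)_n$ for $n\geq 2$ likewise need more $\bar{y}$-derivatives than the ansatz provides -- but one must still enumerate every possible coupling and handle the sign conventions carefully, especially the relation $\omega^{\alpha\bar{\beta}}\omega_{\gamma\bar{\beta}} = -\delta^\alpha_\gamma$ forced by $\omega^{\bar{\gamma}\alpha}\omega_{\alpha\bar{\beta}}=\delta^{\bar{\gamma}}_{\bar{\beta}}$ and the antisymmetry of $\omega$, together with the mixed $\partial\bar{\partial}$/$\bar{\partial}\partial$ conventions for the various K\"ahler potentials. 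Once these are pinned down, the verification collapses to the single algebraic identity described above.
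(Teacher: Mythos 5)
Your argument is correct and is essentially the paper's own proof: both hinge on the observation that the purely anti-holomorphic component of $O_{u_k}$ satisfies a decoupled, self-contained equation (because $\nabla+\tfrac{1}{\hbar}[I_\alpha,-]_\star$ cannot produce terms in $\A_X^\bullet(\overline{\mathcal{W}}_X)$ from monomials not already of that type), and both reduce to the same cancellation among $\bar{\partial}u_k$, $\delta^{0,1}(\omega_{k\bar{m}}\bar{y}^m)$, and the single Wick contraction with the weight-one piece of $J_\alpha$, with $\nabla\omega=0$ killing everything in higher $\bar{y}$-degree. The one imprecision is your uniqueness step: the induction should be run on the Fedosov weight (with $\hbar$ counted with weight $2$), as the paper does, rather than on bare $\bar{y}$-degree, since the bracket with $I_n$ ($n\geq 2$) feeds the $\bar{y}$-degree-$n$ component of the unknown back into the $\bar{y}$-degree-$1$ equation — these contributions are suppressed by $\hbar^{n-1}$, not by $\bar{y}$-degree.
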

\begin{proof}
%We want to find those terms in $O_{u_k}$ which are of purely anti-holomorphic type. 
Recall that  $O_{u_k}$ is the unique solution of the iterative equation:
\begin{align*}
O_{u_k} = u_k+\delta^{-1}\circ\left(\nabla+\frac{1}{\hbar}[I_\alpha,-]_{\star}\right)(O_{u_k}).
\end{align*}
Observe that if a monomial $A$ does not live in $\A_X^{\bullet}(\overline{\mathcal{W}}_{X})$, then $\nabla A+\frac{1}{\hbar}[I_\alpha,A]_{\star}$ does not have terms living in $\A_X^{\bullet}(\overline{\mathcal{W}}_{X})$.  So we can prove the theorem by an induction on the weights of `` terms of purely anti-holomorphic type'' in $O_{u_k}$.

The terms in $O_{u_k}$ of weight $1$ are given by
$$
\frac{\partial^2\rho}{\partial\bar{z}^l\partial z^k}\bar{y}^l=\omega_{k\bar{l}}\bar{y}^l.
$$
We know from the iterative equation that the weight $2$ terms are given by
$$
\delta^{-1}\circ\nabla^{0,1}(\omega_{k\bar{l}}\bar{y}^l),
$$
which vanish since the Levi-Civita connection is compatible with both the symplectic form and the complex structure. The next terms are 
\begin{align*}
&\delta^{-1}\left(\nabla^{0,1}\left(\hbar\frac{\partial\rho_1}{\partial z^k}\right)+\frac{1}{\hbar}\left[\hbar\frac{\partial^2\rho_1}{\partial\bar{z}^n\partial z^m}d\bar{z}^n\otimes y^m,\omega_{k\bar{l}}\bar{y}^l\right]_{\star}\right)\\
=&\delta^{-1}\left(\hbar\frac{\partial^2\rho_1}{\partial z^k\partial\bar{z}^l}d\bar{z}^l+\hbar\frac{\partial^2\rho_1}{\partial\bar{z}^n\partial z^m}d\bar{z}^n\omega_{k\bar{l}}\omega^{m\bar{l}}\right)=0.
\end{align*}
Thus the weight $3$ terms in $O_{u_k}$ of purely anti-holomorphic type vanish. This argument can be generalized to all such terms of higher weights. 
\end{proof}

%Now we are ready to prove the main result of this subsection:
\begin{thm}\label{theorem: Karabegov-form-Fedosov-deformation-quantization}
For every closed formal differential form $\alpha$ of type $(1,1)$, the star product $\star_{\alpha}$ defined in \eqref{equation: formal-star-product} has Karabegov form $\omega-\alpha$. 
\end{thm}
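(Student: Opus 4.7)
The plan is to use Proposition \ref{proposition: function-u-k} directly: exhibit explicit formal functions $u_k$ satisfying $[u_k, z^{k'}]_{\star_\alpha} = \hbar\delta_{kk'}$, and then compute $\bar{\partial}(-u_k\, dz^k)$. The candidates are handed to us by Lemma \ref{lemma: function-u-k}: I would take $u_k = \partial_{z^k}(\rho + \sum_{i\geq 1}\hbar^i\rho_i)$, and exploit the Lemma's key structural output that the ``purely anti-holomorphic part'' of the associated flat section $O_{u_k}$ is $u_k + \omega_{k\bar{m}}\bar{y}^m$ --- crucially, \emph{linear} in $\bar{y}$.

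The first task is to verify the commutation relation $[u_k, z^{k'}]_{\star_\alpha} = \hbar\delta_{kk'}$. Since $z^{k'}$ is holomorphic, $O_{z^{k'}}$ lies in $\mathcal{W}_X$ (by the argument in the proof of Proposition \ref{proposition: Fedosov-quantization-1-1-class-Wick-type}), and I would iterate \eqref{equation: iteration-equation-quantum-flat-section} once to identify $O_{z^{k'}} = z^{k'} + y^{k'} + (\text{weight} \geq 2 \text{ in } y)$. Because $O_{z^{k'}}$ carries no $\bar{y}$, the Wick product formula collapses $O_{u_k} \star O_{z^{k'}}$ to the ordinary product, giving $\sigma(O_{u_k}\star O_{z^{k'}}) = u_k\, z^{k'}$. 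For the reverse ordering, the Wick formula forces $\sigma(\partial_{\bar{y}^{j_1}}\cdots\partial_{\bar{y}^{j_p}}O_{u_k}) = 0$ for all $p \geq 2$ by the linearity-in-$\bar{y}$ observation from the Lemma, so the entire $\hbar$-series truncates at order $\hbar^1$ and the commutator is read off from the single surviving coefficient $\omega^{k'\bar{j}}\omega_{k\bar{j}} = \delta_k^{k'}$. This is the main substantive step.

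With the $u_k$'s in hand, the Karabegov form is simply $\bar{\partial}(-u_k\, dz^k)$, which unwinds by a one-line computation:
\begin{equation*}
\bar{\partial}(-u_k\, dz^k) = \partial_{z^k}\partial_{\bar{z}^l}\Bigl(\rho + \sum_{i\geq 1}\hbar^i\rho_i\Bigr)\, dz^k \wedge d\bar{z}^l = \omega - \alpha,
\end{equation*}
using $\omega_{k\bar{l}} = \partial_{z^k}\partial_{\bar{z}^l}\rho$ for a K\"ahler potential $\rho$ and $\partial_{z^k}\partial_{\bar{z}^l}\rho_i = -(\omega_i)_{k\bar{l}}$ from $\bar{\partial}\partial\rho_i = \omega_i$.

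The main obstacle is the Wick-product bookkeeping in computing $\sigma(O_{z^{k'}}\star O_{u_k})$. What makes it collapse cleanly is precisely Lemma \ref{lemma: function-u-k}'s content: the purely anti-holomorphic part of $O_{u_k}$ being merely linear in $\bar{y}$ forces the $\hbar$-expansion to truncate after the first term. Without that input, one would have to control higher-order quantum corrections hidden in $O_{u_k}$ --- a considerably more delicate computation.
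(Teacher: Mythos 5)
Your proposal is correct and follows essentially the same route as the paper's own proof: both take the $u_k$'s from Lemma \ref{lemma: function-u-k}, verify $[u_k,z^{k'}]_{\star_\alpha}=\hbar\delta_{kk'}$ by combining the fact that $O_{z^{k'}}$ lies in $\mathcal{W}_X$ with the lemma's identification of the purely anti-holomorphic part of $O_{u_k}$ as $u_k+\omega_{k\bar{m}}\bar{y}^m$ (so the Wick expansion truncates at first order), and then conclude with the same one-line computation of $\bar{\partial}(-u_k\,dz^k)=\omega-\alpha$.
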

\begin{proof}
  Let $U$ be any contractible coordinate chart in $X$, with local holomorphic coordinates $(z^1,\cdots, z^k)$. We define the functions $u_k$ as in Lemma \ref{lemma: function-u-k}. Then the flat section $O_{z^k}$ can be explicitly written as $O_{z^k}=z^k+y^k+\cdots$, where all the terms live in $\mathcal{W}_X$. From the definition of the fiberwise Wick product on $\mathcal{W}_{X,\C}$ and that of the symbol map, we only need those terms in $O_{u_k}$ which are of ``purely anti-holomorphic type'' for the following computation: 
 	$$O_{u_k}\star O_{z^{k'}}-O_{z^{k'}}\star O_{u_k}
  = \omega_{k\bar{m}}\bar{y}^m\star y^{k'}-y^{k'}\star (\omega_{k\bar{m}}\bar{y}^m)
  = -y^{k'}\star (\omega_{k\bar{m}}\bar{y}^m)
  = \hbar\delta_{kk'}.$$
 Thus we have shown that the functions $u_k$'s satisfy the condition in Proposition \ref{proposition: function-u-k}. According to its construction, the Karabegov form is then given by
 $$
 \bar{\partial}(-u_kdz^k)
  = -\frac{\partial u_k}{\partial\bar{z}^l}d\bar{z}^l\wedge dz^k
  = \left(\frac{\partial^2\rho}{\partial z^k\partial\bar{z}^l}+\sum_{i\geq 1}\hbar^i\frac{\partial^2\rho_i}{\partial z^k\partial\bar{z}^l}\right)dz^k\wedge d\bar{z}^l
  = \omega-\alpha.
 $$
\end{proof}

Combining Theorems \ref{theorem: Karabegov-classification-Wick-star-product} and \ref{theorem: Karabegov-form-Fedosov-deformation-quantization}, we see that any star product of Wick type on a K\"ahler manifold arises from our construction:
\begin{cor}\label{corollary:Wick-type}
On a K\"ahler manifold $X$, any deformation quantization of Wick type is of the form $\star_{\alpha}$ for some closed formal $(1,1)$ form $\alpha$.
%The star products $\star_{\alpha}$ for all closed formal $(1,1)$ forms $\alpha$ give rise to all deformation quantization on $X$ of Wick type. 
\end{cor}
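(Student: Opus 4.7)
The plan is to combine Theorem \ref{theorem: Karabegov-classification-Wick-star-product} with Theorem \ref{theorem: Karabegov-form-Fedosov-deformation-quantization} and read off the corollary essentially for free. Let $\star$ be any Wick type deformation quantization on $X$. By Karabegov's classification theorem, $\star$ is uniquely determined by a formal deformation $\tilde{\omega}$ of the K\"ahler form, i.e., a closed formal differential form of type $(1,1)$ of the shape $\tilde{\omega} = \omega + \sum_{i\geq 1}\hbar^i \omega_i$, where each $\omega_i \in \A_X^{1,1}$ is closed.

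Next, I would set $\alpha := \omega - \tilde{\omega} = -\sum_{i\geq 1}\hbar^i \omega_i$. This is a closed formal $(1,1)$-form with $[\alpha] \in \hbar H^2_{dR}(X)[[\hbar]]$, so it satisfies the hypothesis of Theorem \ref{proposition: Fedosov-connection-general}. Applying that theorem yields a Fedosov abelian connection $D_{F,\alpha}$ and, by Proposition \ref{proposition: Fedosov-quantization-1-1-class-Wick-type}, an associated Wick type star product $\star_\alpha$. By Theorem \ref{theorem: Karabegov-form-Fedosov-deformation-quantization}, the Karabegov form of $\star_\alpha$ is exactly $\omega - \alpha = \tilde{\omega}$.

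Finally, since $\star$ and $\star_\alpha$ are both Wick type star products carrying the same Karabegov form $\tilde{\omega}$, the bijection in Theorem \ref{theorem: Karabegov-classification-Wick-star-product} forces $\star = \star_\alpha$, establishing the corollary. There is no genuine obstacle in this proof: the only thing to check is that the form $\alpha = \omega - \tilde{\omega}$ satisfies the hypotheses of Theorem \ref{proposition: Fedosov-connection-general}, which is immediate from the three properties guaranteed by Karabegov's theorem (closedness, type $(1,1)$, and starting at order $\hbar$). Thus the content of the corollary is purely organizational: it records that our construction $\alpha \mapsto \star_\alpha$ exhausts Karabegov's classification space and is therefore surjective onto all Wick type quantizations.
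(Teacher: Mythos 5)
Your proposal is correct and follows exactly the route the paper intends: the corollary is obtained by combining Karabegov's classification (Theorem \ref{theorem: Karabegov-classification-Wick-star-product}) with the computation of the Karabegov form of $\star_\alpha$ (Theorem \ref{theorem: Karabegov-form-Fedosov-deformation-quantization}), setting $\alpha = \omega - \tilde{\omega}$ and invoking the bijectivity of Karabegov's correspondence. Your write-up simply makes explicit the details that the paper leaves implicit.
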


\section{From Fedosov to Batalin-Vilkovisky}

Let us first recall the definition of traces of a star product:
\begin{defn}
	Let $(C^\infty(X)[[\hbar]],\star)$ denote a deformation quantization of a symplectic manifold $X$ of dimension $2n$. A {\em trace} of the star product $\star$ is a linear map $\Tr:C^\infty(X)[[\hbar]]\rightarrow\C[[\hbar]]/\hbar^n$ such that 
	\begin{enumerate}
		\item $\Tr(f\star g)=\Tr(g\star f)$;
		\item $\hbar^n\cdot\Tr(f)=\int_X f\cdot\omega^n+O(\hbar)$.
	\end{enumerate}
	In particular, $\Tr(1)$ is called the {\em algebraic index} of $\star$. 
\end{defn}
From the point of view of quantum field theory (QFT), traces are defined by correlation functions of local quantum observables. The Fedosov quantization describes the local data of a quantum mechanical system, namely, the cochain complex 
$$
(\A_X^\bullet(\mathcal{W}_{X,\C}), D_{F,\alpha})
$$
gives the {\em cochain complex of local quantum observables} of a sigma model from $S^1$ to the target $X$. To get global quantum observables and define the correlation functions properly, we study the Batalin-Vilkovisky (BV) quantization \cite{BV} of this quantum mechanical system, from which we would obtain a factorization map from local to global quantum observables.
For a detailed explanation of the physical background, we refer the readers to \cite{GLL}. 

Mathematically, a BV quantization can be formulated as a solution of the quantum master equation (QME). Our main result in this section is that, the canonical solution of the QME associated to the Fedosov abelian connection $D_{F,\alpha}$ is one-loop exact. This leads to a very neat cochain level formula for the algebraic index.

%We will give an explicit formula of such a trace using the method of Batalin-Vilkovisky (BV) quantization. We give a short description of the idea and the corresponding physical background. First of all, we can generalize the result in the previous section as the construction of cochain complex with Fedosov connection the differential $(\A_X^\bullet\otimes\mathcal{W}_{X,\mathbb{C}}, D_{F,\alpha})$. This is the cochain complex of local quantum observables of a quantum mechanical system, which is a sigma model from $S^1$ to the target space $X$ (see \cite{GLL} for a detailed explanation). 

 %and the correlation functions of global observables. The mathematical description of these structures is exactly provided by the trace (correlation function) of the star product (local quantum observables). 

The organization of this section is as follows:
Section \ref{section: BV-integration} is a review of the construction of BV quantization from Fedosov abelian connections. We mainly follow the treatment in \cite[Sections 2.3-2.5]{GLL} but there is one key difference, namely, the choice of the propagtor, in order to reflect the K\"ahlerian condition.
In Section \ref{section: computation-Feynman-graphs}, we prove the main result of this section saying that our solutions of the QME are all one-loop exact, and we explain how this can lead to the cochain level formula for the trace.

%we set up the QME in BV quantization using the geometry of BV bundles and formulate a definition of the correlation functions using Berezin integration.
%In Section \ref{section: homotopy-group-flow}, we define the local-to-global factorization map using a differential geometric formulation of the homotopy group flow operator. 
%In Section \ref{section: Fedosov-to-QME}, we construct canonically a solution of the QME from a Fedosov connection, using the homotopy group flow operator.
%Sections \ref{section: BV-integration}-\ref{section: Fedosov-to-QME} is a slight modification of , we show that  by an explicit computation of Feynman graphs.
%This in particular gives rise to the algebraic index theorem for all Wick type star products. 

%The next goal is to use the method of Batalin-Vilkovisky quantization to study the trace of Wick type deformation quantization. This method is first developed in \cite{GLL} in the study of general symplectic manifolds. We will show that with the special Fedosov connections $D_{F,\alpha}$ in the previous section, there is a one-loop exact BV quantization of K\"ahler manifolds. 

\subsection{BV quantization}\label{section: BV-integration}
\

This subsection is largely a review of the construction of BV quantizations from Fedosov quantizations in \cite[Sections 2.3-2.5]{GLL}. The main difference lies in the choice of the propagator -- we will give a construction of the so-called {\em polarized propagator}, which is more compatible with the K\"ahlerian condition and leads to some special features of the resulting BV quantizations.

\subsubsection{Geometry of BV bundles and the QME}
\

The cochain complex of global quantum observables can be described in a differential geometric way. We start with the BV bundle on $X$:
\begin{defn}[cf. Definition 2.19 in \cite{GLL}]\label{definition: BV-bundle}
 The {\em BV bundle} of a K\"ahler manifold $X$ is defined to be 
 $$
 \widehat{\Omega}^{-\bullet}_{TX}:=\widehat{\Sym}(T^*X_{\C})\otimes\wedge^{-\bullet}(T^*X_{\C}),\quad \wedge^{-\bullet}(T^*X_{\C}):=\bigoplus_k\wedge^k(T^*X_{\C})[k],
 $$
 where $\wedge^k(T^*X_{\C})$ has cohomological degree $-k$.
\end{defn}

For any tensor power of the BV bundle, we have the canonically defined {\em multiplication}:
$$
\text{Mult}:(\widehat{\Omega}^{-\bullet}_{TX})^{\otimes k} := \widehat{\Omega}^{-\bullet}_{TX}\otimes\cdots\otimes\widehat{\Omega}^{-\bullet}_{TX}\rightarrow\widehat{\Omega}^{-\bullet}_{TX},
$$
which can be extended $\A_X^{\bullet}$-linearly to $\text{Mult}: \A_X^{\bullet}(\widehat{\Omega}^{-\bullet}_{TX})^{\otimes k}\rightarrow \A_X^{\bullet}(\widehat{\Omega}^{-\bullet}_{TX})$.
To describe the differential on the BV bundle, we consider the fiberwise de Rham operator
$d_{TX}:\widehat{\Omega}_{TX}^{-\bullet}\rightarrow\widehat{\Omega}^{-(\bullet+1)}_{TX}$,
and the contraction
$\iota_{\Pi}:\widehat{\Omega}_{TX}^{-\bullet}\rightarrow\widehat{\Omega}^{-(\bullet-2)}_{TX}$
by the Poisson tensor. We also have similarly defined operators $\partial_{TX}, \bar{\partial}_{TX}$.
There is also the {\em BV operator} defined by 
$$
\Delta:=[d_{TX},\iota_{\Pi}].
$$
The operators $d_{TX},\iota_{\Pi}$ and $\Delta$ all extend $\A_X^{\bullet}$-linearly to operators on $\A_X^{\bullet}(\widehat{\Omega}^{-\bullet}_{TX})^{\otimes k}$.
%, and in local coordinates, the operators $\iota_{\Pi}$ and $\Delta$ are given explicitly as
%$$
%\iota_{\Pi}=\omega^{i\bar{j}}\iota_{\partial_{z^i}}\iota_{\partial_{\bar{z}^j}}=\frac{1}{2}\omega^{i\bar{j}}(\iota_{\partial_{z^i}}\iota_{\partial_{\bar{z}^j}}-\iota_{\partial_{\bar{z}^j}}\iota_{\partial_{z^i}}), \qquad \Delta=\omega^{i\bar{j}}(\mathcal{L}_{\partial_{z^i}}\iota_{\partial_{\bar{z}^j}}-\mathcal{L}_{\partial_{\bar{z}^j}}\iota_{\partial_{z^i}}).
%$$
The failure of the BV operator $\Delta$ being a derivation is known as the {\em BV bracket}:
\begin{equation*}%\label{equation: BV-bracket}
\{A,B\}_\Delta:=\Delta(A\cdot B)-\Delta(A)\cdot B\pm A\cdot\Delta(B).
\end{equation*}
%by: 
%\begin{align*}
% d_{TX}(a_1\otimes\cdots\otimes a_k) := & \sum_{1\leq\alpha\leq k}\pm a_1\otimes\cdots d_{TX}a_{\alpha}\otimes\cdots\otimes a_k\\
% \iota_{\Pi}(a_1\otimes\cdots\otimes a_k) := & \frac{1}{2} \sum_{1\leq\alpha,\beta\leq k}\pm \omega^{i\bar{j}}(a_1\otimes\cdots\otimes\iota_{\partial_{z^i}}a_\alpha\otimes\cdots\iota_{\partial_{\bar{z}^j}}a_\beta\otimes\cdots\otimes a_k-\\
% &\qquad\qquad\qquad a_1\otimes\cdots\otimes\iota_{\partial_{\bar{z}^j}}a_\alpha\otimes\cdots\iota_{\partial_{z^i}}a_\beta\otimes\cdots\otimes a_k) \\
% \Delta(a_1\otimes\cdots\otimes a_k) := & \sum_{1\leq\alpha,\beta\leq k}\pm \omega^{i\bar{j}}(a_1\otimes\cdots\otimes\mathcal{L}_{\partial_{z^i}}a_\alpha\otimes\cdots\iota_{\partial_{\bar{z}^j}}a_\beta\otimes\cdots\otimes a_k-\\
% &\qquad\qquad\qquad a_1\otimes\cdots\otimes\mathcal{L}_{\partial_{\bar{z}^j}}a_\alpha\otimes\cdots\iota_{\partial_{z^i}}a_\beta\otimes\cdots\otimes a_k) 
%\end{align*}
It is clear that the operators $d_{TX}, \iota_{\Pi}$ and $\Delta$ all commute with the multiplication map $\text{Mult}$. We also have $[\Delta, \nabla]=0$ since $\nabla(\omega)=0$, and $\Delta^2=0$ by the Jacobi identity for the Poisson tensor.

%The following lemma was shown in \cite{GLL}. 
\begin{lem}[Lemma 2.21 in \cite{GLL}]\label{lemma: BV-operator-differential}
The operator 
$$Q_{BV}:=\nabla+\hbar\Delta+\frac{1}{\hbar}d_{TX}R_{\nabla}$$
is a differential on the BV bundle (i.e., $Q_{BV}^2=0$), which we call the {\em BV differential}. 
\end{lem}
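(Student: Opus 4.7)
The plan is to expand $Q_{BV}^2 = \tfrac{1}{2}[Q_{BV},Q_{BV}]$ into six graded (anti-)commu\-ta\-tors and verify each piece. Since all three summands of $Q_{BV}$ carry cohomological degree $+1$,
\[
Q_{BV}^2 = \nabla^2 + \hbar^2\Delta^2 + \tfrac{1}{\hbar^2}M_{d_{TX}R_\nabla}^2 + \hbar\{\nabla,\Delta\} + \tfrac{1}{\hbar}\{\nabla,M_{d_{TX}R_\nabla}\} + \{\Delta,M_{d_{TX}R_\nabla}\},
\]
where $M_\alpha$ denotes multiplication by $\alpha$ in the BV algebra. Four of these six terms vanish almost tautologically: (i) $\Delta^2=0$ and (ii) $\{\nabla,\Delta\}=0$ are already recorded in the excerpt (from the Jacobi identity for $\Pi$ and from $\nabla\omega=0$, respectively); (iii) since $\nabla$ is a derivation of the BV product, $\{\nabla,M_{d_{TX}R_\nabla}\}=M_{\nabla(d_{TX}R_\nabla)}$, and because $\nabla$ commutes with the fiberwise de Rham differential, this equals $M_{d_{TX}(\nabla R_\nabla)}$, which vanishes by the second Bianchi identity $\nabla R_\nabla = 0$; (iv) $M_{d_{TX}R_\nabla}^2 = 0$ because $d_{TX}R_\nabla$ has odd total cohomological degree $2+(-1)=1$ and the product on $\A_X^\bullet(\widehat{\Omega}_{TX}^{-\bullet})$ is graded-commutative.

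The essential remaining identity is the cancellation $\nabla^2 + \{\Delta,M_{d_{TX}R_\nabla}\} = 0$. Since $\Delta=[d_{TX},\iota_\Pi]$ is a second-order differential operator whose defect of being a derivation is precisely the BV bracket, the general formula $\{\Delta,M_\alpha\} = M_{\Delta\alpha} \pm \{\alpha,-\}_\Delta$ applies. For $\alpha = d_{TX}R_\nabla$, we have $\iota_\Pi(d_{TX}R_\nabla) = 0$ (the operator $\iota_\Pi$ requires at least two fiberwise wedge factors to act non-trivially, while $d_{TX}R_\nabla$ has fiberwise wedge degree only $1$), which, together with $d_{TX}^2 = 0$, forces $\Delta(d_{TX}R_\nabla) = 0$. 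The cancellation thus reduces to the operator equality $\nabla^2 = \mp\{d_{TX}R_\nabla,-\}_\Delta$.

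The main obstacle is verifying this last identity. Both sides are derivations of the fiber algebra, so it suffices to evaluate them on the fiberwise generators $y^m$ and $\bar y^m$. On one hand, Lemma \ref{lemma: curvature-on-Weyl-bundle} identifies $\nabla^2(y^m) = \tfrac{1}{\hbar}[R_\nabla,y^m]_\star$, which expands via the Wick product into an explicit tensorial expression in $R_{i\bar{j}k\bar{l}}$ and the inverse K\"ahler metric. On the other hand, a direct computation from the definitions gives $\{d_{TX}R_\nabla,y^m\}_\Delta = \iota_\Pi\bigl((d_{TX}R_\nabla)\wedge d_{TX}y^m\bigr)$, and unpacking this using $\iota_\Pi(dy^a\wedge d\bar y^b) = \omega^{a\bar b}$ together with the index manipulations relating $R_{i\bar{j}k\bar{l}}$ to $R_{i\bar{j}k}^m$ from Section \ref{section: kalher-geom} yields precisely the same tensorial expression, with the sign required for the cancellation. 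The $\bar y^m$ case is symmetric. This coincidence is the semiclassical shadow of the correspondence between the $\hbar^{-1}$-rescaled Wick bracket on $\W_{X,\C}$ and the Poisson BV bracket on the fibers of $\widehat{\Omega}_{TX}^{-\bullet}$, and closes the argument.
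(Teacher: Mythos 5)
The paper itself offers no proof of this lemma --- it is imported verbatim as Lemma 2.21 of \cite{GLL} --- so there is no internal argument to compare against; judged on its own terms, your proof is essentially correct and is, in substance, the computation that \cite{GLL} carries out. The six-term expansion of $Q_{BV}^2$ and the four ``easy'' vanishings (i)--(iv) are all fine, and you have correctly isolated the real content in the cancellation $\nabla^2+\{d_{TX}R_\nabla,-\}_\Delta=0$: using the local formula $\Delta=\omega^{p\bar{q}}(\mathcal{L}_{\partial_{z^p}}\iota_{\partial_{\bar{z}^q}}-\mathcal{L}_{\partial_{\bar{z}^q}}\iota_{\partial_{z^p}})$ one gets $\{d_{TX}R_\nabla,y^m\}_\Delta=\sqrt{-1}\,R_{i\bar{j}k\bar{l}}\,\omega^{m\bar{l}}\,dz^i\wedge d\bar{z}^j\otimes y^k=-\nabla^2(y^m)$, matching the bracket computation of $\nabla^2(y^m)$ in Section \ref{section: kalher-geom}. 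Two points should be tightened. First, the fiber algebra of $\widehat{\Omega}^{-\bullet}_{TX}$ is generated by $y^m,\bar{y}^m$ \emph{and} the odd generators $dy^m,d\bar{y}^m$, on which $\nabla^2$ also acts by the curvature; your check as written omits these. The computation does go through for $dy^m,d\bar{y}^m$ with the same coefficients (or one can note that both $\nabla^2$ and $\{d_{TX}R_\nabla,-\}_\Delta$ commute with $d_{TX}$, which carries the even generators to the odd ones), but this needs to be said. Second, the reduction to generators presupposes that $\{d_{TX}R_\nabla,-\}_\Delta$ is a derivation; this is exactly the statement that $\Delta$ is a second-order operator, which holds because $\iota_\Pi$ is second order, and deserves a sentence. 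With those additions the argument is complete.
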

%\begin{proof}
% There is the following straightforward computation:
% \begin{align*}
%  &(\nabla+\hbar\Delta+\frac{1}{\hbar}d_{TX}R_\nabla)^2\\
%  =&\nabla^2+\hbar[\nabla,\Delta]+\frac{1}{\hbar}[\nabla,d_{TX}R_\nabla]+\hbar\Delta^2+[\Delta, d_{TX}R_{\nabla}]+\frac{1}{\hbar^2}R_{\nabla}^2    \\
%  =&\nabla^2+\frac{1}{\hbar}[\nabla,d_{TX}R_\nabla]+[\Delta, d_{TX}R_{\nabla}]\\
%  =&\nabla^2+\frac{1}{\hbar}\nabla(d_{TX}R_\nabla)+\Delta(d_{TX}R_{\nabla})+\{d_{TX}R_{\nabla},-\}_{BV}\\
%  =&\Delta(d_{TX}R_{\nabla})\\
%  =&0.
% \end{align*}
% Here the second equality follows from the previous Lemma. In the fourth equality we use the fact that $\nabla(d_{TX}R_{\nabla})=0$. The last line follows from the following computation:
% \begin{align*}
% \Delta(d_{TX}R_{\nabla})&=[d_{TX},\iota_{\Pi}](d_{TX}R_{\nabla})\\
% &=-\iota_{\Pi}\circ d_{TX}^2(R_{\nabla})+d_{TX}\circ\iota_{\Pi}\circ d_{TX}(R_{\nabla})\\
% &=0,
%\end{align*}
%where the vanishing of the first term is because $d_{TX}^2=0$, and the vanishing of the second term is by type reason. (Since the operator $\iota_{\Pi}$ will kill exactly one $dy^i$ and one $d\bar{y}^j$.) 
%\end{proof}

%Here we can make the definition of solution of Quantum Master Equation (QME):
\begin{defn}[Definition 2.22 in \cite{GLL}]\label{definition: QME}
A section $r$ of the BV bundle is said to {\em satisfy the quantum master equation (QME)} if
\begin{equation}\label{equation: quantum-master-equation}
 Q_{BV}(e^{r/\hbar})=0.
\end{equation}
\end{defn}
This is equivalent to $\nabla r+\hbar\Delta r+\{r,r\}_{\Delta}+d_{TX}R_{\nabla}=0$ since
\begin{align*}
Q_{BV}(e^{r/\hbar}) =\left(\nabla+\hbar\Delta+\frac{1}{\hbar}d_{TX}R_{\nabla}\right)(e^{r/\hbar})
=\frac{1}{\hbar}\left(\nabla r+\hbar\Delta r+\{r,r\}_{\Delta}+d_{TX}R_{\nabla}\right)\cdot e^{r/\hbar}.
\end{align*}

\begin{lem}[Lemma 2.23 in \cite{GLL}]\label{lemma: global-quantum-observable}
Given a solution $\gamma_\infty$ of the QME \eqref{equation: quantum-master-equation}, the operator 
\begin{equation}\label{equation: differential-global-quantum-observable}
\nabla + \hbar\Delta + \{\gamma_\infty,-\}_\Delta
\end{equation}
is a differential on the BV bundle.
The {\em cochain complex of global quantum observables} is defined as $(\A_X^\bullet\left(\hat{\Omega}_{TX}^{-\bullet}\right)[[\hbar]], \nabla  + \hbar\Delta + \{\gamma_\infty,-\}_\Delta)$. 
\end{lem}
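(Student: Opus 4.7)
The plan is to show $D^2=0$ for $D:=\nabla+\hbar\Delta+\{\gamma_\infty,-\}_\Delta$ by exhibiting $D$ as a ``conjugation'' of the BV differential $Q_{BV}$ by left multiplication with $\Psi:=e^{\gamma_\infty/\hbar}$; the conclusion will then be a one-line consequence of $Q_{BV}^2=0$ (Lemma \ref{lemma: BV-operator-differential}), bypassing any direct recomputation of $\nabla^2$, $[\nabla,\Delta]$, $\Delta^2$, and the Jacobi identity for $\{-,-\}_\Delta$.

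First I would establish the conjugation identity
\[
Q_{BV}(\Psi\cdot A)=(Q_{BV}\Psi)\cdot A+\Psi\cdot D(A)
\]
for every section $A$ of the BV bundle. Unpacking $Q_{BV}$, this has three ingredients: the ordinary Leibniz rule $\nabla(\Psi A)=(\nabla\Psi)\cdot A+\Psi\cdot\nabla A$ (trivial sign since $\Psi$ is even); the defining BV Leibniz formula $\Delta(\Psi A)=(\Delta\Psi)\cdot A+\Psi\cdot\Delta A+\{\Psi,A\}_\Delta$ together with the derivation-in-the-first-slot identity
\[
\{e^{\gamma_\infty/\hbar},A\}_\Delta=\tfrac{1}{\hbar}\{\gamma_\infty,A\}_\Delta\cdot e^{\gamma_\infty/\hbar},
\]
which follows by expanding the exponential and iterating the Leibniz property of $\{-,-\}_\Delta$ in its first slot; and the observation that $\tfrac{1}{\hbar}d_{TX}R_\nabla$ acts by multiplication, so it contributes only to the $(Q_{BV}\Psi)\cdot A$ piece after commuting $\Psi$ (which is even) past it using the graded commutativity of the BV bundle.

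Since $\gamma_\infty$ solves the QME, $Q_{BV}\Psi=0$, and the identity collapses to $\Psi\cdot D(A)=Q_{BV}(\Psi\cdot A)$. Because $\Psi$ has a formal inverse $e^{-\gamma_\infty/\hbar}$, I may rewrite this as $D(A)=\Psi^{-1}\cdot Q_{BV}(\Psi\cdot A)$. Applying $D$ once more,
\[
D^2(A)=\Psi^{-1}\cdot Q_{BV}\bigl(\Psi\cdot D(A)\bigr)=\Psi^{-1}\cdot Q_{BV}^2(\Psi\cdot A)=0
\]
by Lemma \ref{lemma: BV-operator-differential}.

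The main obstacle is verifying the conjugation identity, in particular tracking the graded signs in the BV Leibniz rule and justifying the derivation-in-first-slot computation on the formal exponential $\Psi$. Once that bookkeeping is dispatched, the remainder is purely algebraic and uses no further geometric input beyond $Q_{BV}^2=0$ and the QME.
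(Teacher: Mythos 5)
Your proposal is correct and is the standard conjugation argument: writing $\nabla+\hbar\Delta+\{\gamma_\infty,-\}_\Delta$ as $e^{-\gamma_\infty/\hbar}\cdot Q_{BV}(e^{\gamma_\infty/\hbar}\cdot-)$ modulo the QME, so that $D^2=0$ follows from $Q_{BV}^2=0$. The paper itself gives no proof but simply cites Lemma 2.23 of \cite{GLL}, where this same conjugation identity is the proof, so your route coincides with the intended one.
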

%\begin{proof}
%Let $\alpha$ be a section of the BV bundle. We simply twist the operator $Q_{BV}$ by a conjugation of $e^{-\gamma_\infty/\hbar}$. Then
%\begin{align*}
%e^{-\gamma_\infty/\hbar}\cdot Q_{BV}(e^{\gamma_\infty/\hbar}\cdot\alpha)
%= & e^{-\gamma_\infty/\hbar}\cdot\left(\nabla+\hbar\Delta+\frac{1}{\hbar}d_{TX}R_{\nabla}\right)\cdot( e^{\gamma_\infty/\hbar}\cdot\alpha)\\
%= & e^{-\gamma_\infty/\hbar}\cdot\left(\nabla\alpha+\hbar\Delta\alpha+\{\gamma_\infty,\alpha\}_\Delta\right)\cdot e^{\gamma_\infty/\hbar}\\
%& \qquad +e^{-\gamma_\infty/\hbar}\cdot\left(\nabla+\hbar\Delta+\frac{1}{\hbar}d_{TX}R_\nabla\right)(e^{\gamma_\infty/\hbar})\cdot\alpha\\
%=&\nabla\alpha+\hbar\Delta\alpha+\{\gamma_\infty,\alpha\}_\Delta.
%\end{align*}
%The result follows from the fact that $Q_{BV}^2=0$.
%\end{proof}

%We have the following Lemma from \cite{GLL}:
\begin{lem}[Lemma 2.24 in \cite{GLL}]
	The {\em fiberwise Berezin integration}, defined by taking the top degree component in odd variables and setting the even variables to $0$:
	\begin{equation*}\label{equation: Berezin-integration}
	\int_{Ber}:\A_X^\bullet\left(\hat{\Omega}_{TX}^{-\bullet}\right) \rightarrow \A_X^\bullet,
	\qquad a \mapsto\frac{1}{n!}(\iota_{\Pi})^n(a)\bigg|_{y^i=\bar{y}^j=0},
	\end{equation*}
	is a cochain map, with respect to the BV differential $Q_{BV}$ on $\A_X^\bullet\left(\hat{\Omega}_{TX}^{-\bullet}\right)$ and the de Rham differential on $\A_X^\bullet$.   
\end{lem}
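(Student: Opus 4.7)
The plan is to verify the cochain identity $d \circ \int_{Ber} = \int_{Ber} \circ Q_{BV}$ by handling the three summands of $Q_{BV} = \nabla + \hbar\Delta + \tfrac{1}{\hbar}\, d_{TX} R_\nabla$ separately; the outcome will be that the last two contribute zero under Berezin integration, and the first produces exactly $d \circ \int_{Ber}$. The K\"ahler condition enters at just one step (to obtain $\nabla \Pi = 0$), and the rest is elementary bookkeeping in the fiber variables.

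For the $\nabla$-piece, $\nabla \omega = 0$ gives $\nabla \Pi = 0$, hence $[\nabla, \iota_\Pi] = 0$ on the BV bundle, so $(\iota_\Pi)^n \nabla a = \nabla \bigl((\iota_\Pi)^n a\bigr)$. After the contraction, $(\iota_\Pi)^n a$ has no odd fiber variables left and only depends on $y^i, \bar y^j$; writing $\nabla = d + \tilde\Gamma$, the Christoffel part $\tilde\Gamma$ is schematically $-\Gamma^k_{ij}\, dz^i \otimes y^j \partial_{y^k}$ plus its complex conjugate, which preserves $y$-degree and carries an explicit $y$- or $\bar y$-factor, so it vanishes upon evaluation at $y = \bar y = 0$. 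Thus $\int_{Ber}(\nabla a) = d\bigl((\iota_\Pi)^n a|_{y = \bar y = 0}\bigr)/n! = d\bigl(\int_{Ber} a\bigr)$.

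For the $\hbar\Delta$-piece, use $\Delta = [d_{TX}, \iota_\Pi] = d_{TX}\iota_\Pi - \iota_\Pi d_{TX}$ to write
\[
(\iota_\Pi)^n \Delta = (\iota_\Pi)^n d_{TX}\, \iota_\Pi - (\iota_\Pi)^{n+1} d_{TX}.
\]
Both terms vanish by the fiberwise wedge-degree bound $2n = \dim_\R X$: the operator $\iota_\Pi$ lowers fiber wedge degree by $2$, so $(\iota_\Pi)^{n+1} \equiv 0$ outright; and for any $a$ of fiber wedge degree $k \leq 2n$, the composition $(\iota_\Pi)^n d_{TX} \iota_\Pi$ produces output of wedge degree $k - 2n - 1 \leq -1$, hence zero. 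Therefore $\int_{Ber}(\Delta a) = 0$.

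For the $\tfrac{1}{\hbar}\, d_{TX} R_\nabla$-piece, since $R_\nabla = \sqrt{-1}\, R_{i\bar j k \bar l}\, dz^i \wedge d\bar z^j \otimes y^k \bar y^l$, every monomial of $d_{TX} R_\nabla$ carries an explicit $y^k$ or $\bar y^l$. Since $(\iota_\Pi)^n$ acts only on the odd fiber variables, it leaves such factors intact, and the final evaluation at $y = \bar y = 0$ annihilates the result. Hence $\int_{Ber}\bigl(\tfrac{1}{\hbar}\, d_{TX} R_\nabla \cdot a\bigr) = 0$. Combining the three computations yields the desired cochain identity. There is no genuine obstacle; the only delicate point is recognizing that $\tilde\Gamma$ contributes nothing along the zero section $y = \bar y = 0$, which is precisely where the K\"ahler condition $\nabla \omega = 0$ is used.
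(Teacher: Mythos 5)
Your proof is correct. The paper itself does not prove this lemma --- it is imported verbatim as Lemma 2.24 of \cite{GLL} --- but your three-way split of $Q_{BV}=\nabla+\hbar\Delta+\tfrac{1}{\hbar}d_{TX}R_\nabla$, with $[\nabla,\iota_\Pi]=0$ from $\nabla\omega=0$ for the first piece, the wedge-degree bound $(\iota_\Pi)^{n+1}=0$ (and degree counting for $(\iota_\Pi)^n d_{TX}\iota_\Pi$) for the second, and the explicit $y$- or $\bar y$-factor in $d_{TX}R_\nabla$ for the third, is exactly the expected argument and each step checks out. One small correction to your closing sentence: the vanishing of the Christoffel part along $y=\bar y=0$ is automatic (those operators preserve polynomial degree in $y,\bar y$) and does not use the K\"ahler condition; $\nabla\omega=0$ enters only in establishing $[\nabla,\iota_\Pi]=0$, as you correctly state earlier in the proof.
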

From this lemma we get a well-defined composition map on cohomology classes:
\begin{equation*}\label{equation: BV-integration-composite-ordinary-integration}
H^*(\A_X^\bullet\left(\hat{\Omega}_{TX}^{-\bullet}\right)[[\hbar]])\overset{\int_{Ber}}{\longrightarrow} H^*_{dR}(X)[[\hbar]]\overset{\int_{X}}{\longrightarrow}\mathbb{C}[[\hbar]].
\end{equation*}
We can thus define the correlation functions (or expectation values) of global quantum observables:
\begin{defn}\label{definition: correlation-function}
Let $\gamma_\infty$ be a solution of the QME \eqref{equation: quantum-master-equation} and let $O$ be a global quantum observable. The {\em correlation function of $O$} is defined as
\begin{equation*}\label{equation: correlation-function}
  \langle O\rangle:=\int_X \int_{Ber} O \cdot e^{\gamma_\infty/\hbar}.
\end{equation*}
\end{defn}

\subsubsection{Configuration spaces and propagator}\label{section: homotopy-group-flow}
\

To construct solutions of the QME and define the local-to-global factorization map, we need the {\em homotopy group flow operator} using the technique of configuration spaces. Our formulation here follows that in \cite[Section 2.4]{GLL} but with a significant modification of the definition of the propagator in order to adapt to the K\"ahler setting.

We give a brief introduction to configuration spaces, and refer to \cites{AS2,GLL} for more details on the construction and basic facts.  Let $M$ be a smooth manifold and let $V:=\{1,\cdots,n\}$ for some integer $n\geq 2$.  The compactified configuration space $M[V]$ is  a smooth manifold with corners as we now recall. 
Let $S\subset V$ be any subset with $|S|\geq 2$. We denote by $M^S$ the set of all maps from $S$ to $M$ and by $\Delta_S \subset M^S$ the small diagonal. The real oriented blow up of $M^S$ along $\Delta_S$, denoted as $\text{Bl}(M^S,\Delta_S)$, is a manifold with boundary whose interior is diffeomorphic to $M^S\setminus\Delta_S$ and whose boundary is diffeomorphic to the unit sphere bundle associated to the normal bundle of $\Delta_S$ inside $M^S$.

\begin{defn}\label{defn-configuration}
Let $V$ be as above and let $M_0^V$ be the configuration space of $n=|V|$ pairwise different points in $M$,
\[
M_0^V:=\{(x_1,\cdots, x_n)\in M^{V}: x_i\not=x_j\ \text{for}\ i\not=j\}.
\]
There is the following embedding:
\[
M_0^V\hookrightarrow M^V\times\prod_{|S|\geq 2}\text{Bl}(M^S,\Delta_S).
\]
The space $M[V]$ is defined as the closure of the above embedding. For $V=\{1,\cdots, n\}$, we will denote $M[V]$ by $M[n]$.
\end{defn}
It follows from the definition of $M[V]$ that for every subset $S\subset V$, there is a natural projection
\[
\pi_S: M[V]\rightarrow M[S].
\]
and
\[
\pi:  M[n]\to M^n. 
\]
\begin{eg}\label{example: two-point-configuration}
For $M=S^1$, the space $S^1[2]$ is a cylinder $S^1\times [0,1]$, which can be explicitly constructed by cutting along the diagonal in $S^1\times S^1$. Its boundary is given by $\partial S^1[2]=S^1\times S^0$, i.e.,  two copies of $S^1$.  More explicitly, let us fix a parametrization  
\[
  S^1=\{e^{2\pi i \theta}|0\leq \theta <1\}.
\]
Then $S^1[2]$ is parametrized by a cylinder 
\begin{equation}\label{equation: parametrization-S-2}
  S^1[2]=\{(e^{2\pi i\theta},u)| 0\leq \theta<1, 0\leq u\leq 1\}. 
\end{equation}
Further, 
\[
   \pi: S^1[2]\to (S^1)^2, \quad (e^{2\pi i \theta},u)\mapsto (e^{2\pi i (\theta+u)}, e^{2\pi i \theta}).
\]
\end{eg}
We now consider the lifting of propagators to compactifications of configuration spaces.
\begin{prop}[cf. Proposition B.4 in \cite{GLL}]\label{prop:propagator-step-function}
Let $S^1$ be the interval $[0,1]$ with $0$ and $1$ identified. In terms of the polar coordinates on the two copies of $S^1$, the propagator $P(\theta_1,\theta_2)$ is the following periodic  function of $\theta_1-\theta_2\in\R\backslash\mathbb{Z}$
\begin{equation}\label{eqn:propagator-formula} 
P(\theta_1, \theta_2)=\theta_1-\theta_2-\frac{1}{2},\quad \text{if}\hspace{2mm}  0< \theta_1-\theta_2<1. 
\end{equation}
In terms of the parametrization of $S^1[2]$ in equation \eqref{equation: parametrization-S-2}, we also write the propagator as
\begin{equation}\label{definition: propagator-no-polarization}
P(\theta,u) := u-\frac{1}{2}.
\end{equation}
\end{prop}

%which is a manifold with corners constructed by successive real-oriented blowups of $(S^1)^k$.
%\footnote{For the construction and basic facts of compactified configuration spaces, see \cite{AS2}.}
%Let $\pi: S^1[k] \to (S^1)^k$ denote the blow down map.
The function $P$ on $S^1[2]$ is called the {\em propagator} (see \cite[Definition 2.30]{GLL}) because it is the derivative of the Green's function on $S^1$ with respect to the standard flat metric, and thus represents the propagator of topological quantum mechanics on $S^1$ (see \cite[Remarks 2.31 and B.6]{GLL}).
When restricted to the open subset $S^1\times S^1\setminus\Delta \subset S^1[2]$, the propagator $P$ is anti-symmetric in the two copies of $S^1$: if $0<\theta_1-\theta_2<1$, then
$$
P(\theta_2,\theta_1)=\theta_2-\theta_1+1-\frac{1}{2}=-P(\theta_1,\theta_2).
$$

\subsubsection{Homotopy group flow operator}
\

We can now define the {\em polarized propagator} in the K\"ahler setting as a combination of $P$ and the K\"ahler form on $X$:
\begin{defn}[cf. Definition 2.32 in \cite{GLL}]\label{definition: propagator-polarized}
 We define the $\A_{S^1[k]}^{\bullet}$-linear operators 
 $$\partial_P, D: \A_{S^1[k]}^{\bullet}\otimes_{\mathbb{C}}\A_X^{\bullet}(\widehat{\Omega}_{TX}^{-\bullet})^{\otimes k}\rightarrow \A_{S^1[k]}^{\bullet}\otimes_{\mathbb{C}}\A_X^{\bullet}(\widehat{\Omega}_{TX}^{-\bullet})^{\otimes k}$$
 by
 \begin{enumerate}
 \item %$\partial_P$ is explicitly given by the formula:
 $\partial_P(a_1\otimes\cdots\otimes a_k)$
 \begin{align*}
 %&\partial_P(a_1\otimes\cdots\otimes a_k)\\
 \hspace{2cm}:=&\sum_{1\leq\alpha<\beta\leq k}\pi_{\alpha\beta}^*(P+\frac{1}{2})\otimes_{\mathbb{C}}(\omega^{i\bar{j}}(x)a_1\otimes\cdots\otimes\mathcal{L}_{\partial_{z^i}}a_\alpha\otimes\cdots\otimes\mathcal{L}_{\partial_{\bar{z}^j}}a_\beta\otimes\cdots\otimes a_k)\\
 &\ -\sum_{1\leq\alpha<\beta\leq k}\pi_{\alpha\beta}^*(P-\frac{1}{2})\otimes_{\mathbb{C}}(\omega^{i\bar{j}}(x)a_1\otimes\cdots\otimes\mathcal{L}_{\partial_{\bar{z}^j}}a_\alpha\otimes\cdots\otimes\mathcal{L}_{\partial_{z^i}}a_\beta\otimes\cdots\otimes a_k);
 \end{align*}
 \item $\displaystyle D(a_1\otimes\cdots\otimes a_k):=\sum_{1\leq i\leq k}\pm d\theta_{i}\otimes_{\mathbb{C}} (a_1\otimes\cdots\otimes d_{TX}a_i\otimes\cdots a_k)$.
 \end{enumerate}
Here $a_i \in \A_X^{\bullet}(\widehat{\Omega}_{TX}^{-\bullet})$, $\pi_{\alpha\beta}: S^1[k] \to S^1[2]$ is the forgetful map to the two points indexed by $\alpha, \beta$, $\theta_i \in [0,1)$ is the parameter on the $S^1$ indexed by $1 \leq i \leq k$ and $d\theta_i$ is a 1-form on $S^1[k]$ via the pullback $\pi_i:S^1[k] \to S^1$, and finally $\pm$ are appropriate Koszul signs.
\end{defn}
\begin{rmk}
 In the definition of the operator $\partial_P$, the sum over indices $\alpha<\beta$ is saying that $\partial_P$ applies to labeling of different vertices. This rule here follows from our later computation in Theorem 3.19 that homotopy group flow operator without tadpole graphs turns solutions of Fedosov equation to solution of Quantum master equation.
\end{rmk}

We have the following decomposition of the operator $\partial_P$:
\begin{lem}\label{remark: propagator-comparision-with-symplectic-case}
The operator $\partial_P$ in Definition \ref{definition: propagator-polarized} can be written as the sum $\partial_{P}=\partial_{P_1}+\partial_{P_2}$, where  
\begin{align*}
  \partial_{P_1}(a_1&\otimes\cdots\otimes a_k)=\sum_{1\leq\alpha<\beta\leq k}\pi_{\alpha\beta}^*(P)\otimes_{\mathbb{C}}(\omega^{i\bar{j}}\cdot a_1\otimes\cdots\otimes\mathcal{L}_{\partial_{z^i}}a_\alpha\otimes\cdots\otimes\mathcal{L}_{\partial_{\bar{z}^j}}a_\beta\otimes\cdots\otimes a_k)\\
  &-\sum_{1\leq\alpha<\beta\leq k}\pi_{\alpha\beta}^*(P)\otimes_{\mathbb{C}}(\omega^{i\bar{j}}\cdot a_1\otimes\cdots\otimes\mathcal{L}_{\partial_{\bar{z}^j}}a_\alpha\otimes\cdots\otimes\mathcal{L}_{\partial_{z^i}}a_\beta\otimes\cdots\otimes a_k),
\end{align*}
and
\begin{align*}
  \partial_{P_2}(a_1&\otimes\cdots\otimes a_k)=\frac{1}{2}\sum_{1\leq\alpha<\beta\leq k}(\omega^{i\bar{j}}\cdot a_1\otimes\cdots\otimes\mathcal{L}_{\partial_{z^i}}a_\alpha\otimes\cdots\otimes\mathcal{L}_{\partial_{\bar{z}^j}}a_\beta\otimes\cdots\otimes a_k)\\
  &+\frac{1}{2}\sum_{1\leq\alpha<\beta\leq k}(\omega^{i\bar{j}}\cdot a_1\otimes\cdots\otimes\mathcal{L}_{\partial_{\bar{z}^j}}a_\alpha\otimes\cdots\otimes\mathcal{L}_{\partial_{z^i}}a_\beta\otimes\cdots\otimes a_k).
\end{align*}
\end{lem}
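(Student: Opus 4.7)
The plan is to verify this decomposition by a direct algebraic expansion: split $\pi_{\alpha\beta}^*(P \pm \tfrac12) = \pi_{\alpha\beta}^*(P) \pm \tfrac12$ in the defining formula of $\partial_P$, then collect terms by whether they carry a $\pi_{\alpha\beta}^*(P)$ factor or a constant $\tfrac12$ factor. Because $P$ itself is a degree $0$ form on $S^1[2]$ (just the pulled-back Green's function), and $\tfrac12$ is a genuine constant, this separation is well-defined and corresponds to pulling apart the shift that encodes the complex polarization from the honest propagator.

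Concretely, introduce the shorthand
\[
T_1^{\alpha\beta}(a_1,\dots,a_k) := \omega^{i\bar{j}}\, a_1\otimes\cdots\otimes\mathcal{L}_{\partial_{z^i}}a_\alpha\otimes\cdots\otimes\mathcal{L}_{\partial_{\bar{z}^j}}a_\beta\otimes\cdots\otimes a_k,
\]
\[
T_2^{\alpha\beta}(a_1,\dots,a_k) := \omega^{i\bar{j}}\, a_1\otimes\cdots\otimes\mathcal{L}_{\partial_{\bar{z}^j}}a_\alpha\otimes\cdots\otimes\mathcal{L}_{\partial_{z^i}}a_\beta\otimes\cdots\otimes a_k.
\]
The definition of $\partial_P$ then reads $\partial_P = \sum_{\alpha<\beta}\pi_{\alpha\beta}^*(P+\tfrac12)\otimes T_1^{\alpha\beta} - \sum_{\alpha<\beta}\pi_{\alpha\beta}^*(P-\tfrac12)\otimes T_2^{\alpha\beta}$. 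Expanding the shifts and using $-(P-\tfrac12) = -P + \tfrac12$, I would regroup as
\[
\partial_P = \sum_{\alpha<\beta}\pi_{\alpha\beta}^*(P)\otimes\bigl(T_1^{\alpha\beta} - T_2^{\alpha\beta}\bigr) + \tfrac12\sum_{\alpha<\beta}\bigl(T_1^{\alpha\beta} + T_2^{\alpha\beta}\bigr).
\]
The first sum matches the stated formula for $\partial_{P_1}$, and the second matches $\partial_{P_2}$.

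The only bookkeeping step requiring any care is the sign tracking: the minus sign in front of the second summand of $\partial_P$ combines with the $-\tfrac12$ in $P-\tfrac12$ to produce a $+\tfrac12$ constant, which is exactly what makes both $T_1^{\alpha\beta}$ and $T_2^{\alpha\beta}$ appear with the same positive sign in $\partial_{P_2}$. Thus there is no genuine obstacle here; the lemma is essentially a definitional rewriting whose role is to isolate the ``polarization'' piece $\partial_{P_2}$ from the ``propagator'' piece $\partial_{P_1}$, preparing for the weight-by-weight analysis of Feynman graphs in the subsequent proof of one-loop exactness.
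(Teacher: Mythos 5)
Your proof is correct and is exactly the intended argument: the paper states this lemma without proof precisely because it is the definitional regrouping you carry out, splitting $\pi_{\alpha\beta}^*(P\pm\tfrac12)=\pi_{\alpha\beta}^*(P)\pm\tfrac12$ and collecting the $\pi_{\alpha\beta}^*(P)$-terms into $\partial_{P_1}$ and the constant terms into $\partial_{P_2}$. Your sign bookkeeping (the overall minus on the second summand turning $-\tfrac12$ into $+\tfrac12$) is the only point of care and you handle it correctly.
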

Notice that both $\partial_{P_1}$ and $\partial_{P_2}$ satisfy the property that they maps symmetric tensors to symmetric ones.  And that $\partial_{P_1}$ coincides with the propagator used in \cite{GLL}. 

%It is not difficult to see that the polarized propagator $\partial_P$ is symmetric in $\alpha$ and $\beta$. %Also there is the following lemma:
\begin{lem}[cf. Lemma 2.33 in \cite{GLL}]\label{lemma: differential-propogator-equal-BV}
 As operators on the BV bundle, we have
 \begin{align*}
 [d_{S^1}, \partial_P]=[\Delta, D], \qquad D^2=0,\qquad
 [\partial_P, D]=0.
 \end{align*}
\end{lem}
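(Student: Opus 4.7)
The plan is to verify the three identities separately, organising each operator into a ``form part'' acting on $\A_{S^1[k]}^\bullet$ and a ``bundle part'' acting on $\A_X^\bullet(\widehat\Omega_{TX}^{-\bullet})^{\otimes k}$. Under this decomposition the two easier identities are essentially formal, while the main content of the lemma lies in the last one, which is where the specific form of the propagator $P$ plays a role.

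For $D^2=0$, write $D=\sum_i (\pm)\,d\theta_i\otimes d_{TX}^{(i)}$, where $d_{TX}^{(i)}$ acts only on the $i$-th tensor factor. The square splits as a sum over ordered pairs $(i,j)$. Diagonal terms vanish because $d\theta_i\wedge d\theta_i=0$ and $(d_{TX})^2=0$; for $i\neq j$, the $(i,j)$ and $(j,i)$ contributions cancel after accounting for the Koszul signs produced by interchanging the odd operators $d\theta_i$ and $d_{TX}^{(i)}$. For $[\partial_P,D]=0$, the form part of $\partial_P$ is multiplication by the $0$-form $\pi_{\alpha\beta}^*(P\pm \tfrac12)$ while the form part of $D$ is wedging with $d\theta_i$, and $0$-forms and $1$-forms on $S^1[k]$ commute. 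For the bundle parts, $\partial_P$ acts by the Poisson pairing $\omega^{i\bar j}\mathcal L_{\partial_{z^i}}\otimes \mathcal L_{\partial_{\bar z^j}}$ across two distinct factors while $D$ acts by the fiberwise de Rham $d_{TX}$ on a single factor; in local coordinates $d_{TX}$ only differentiates the fiber variables $y^i, \bar y^j, dy^i, d\bar y^j$ whereas $\mathcal L_{\partial_{z^i}}$ and $\mathcal L_{\partial_{\bar z^j}}$ are base-direction derivatives, and the coefficient $\omega^{i\bar j}(x)$ is annihilated by $d_{TX}$. Hence the bundle parts commute factor by factor, and a parity check confirms that the overall Koszul signs agree.

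The main identity $[d_{S^1},\partial_P]=[\Delta,D]$ is proved by direct computation. On the left-hand side, apply $d_{S^1[k]}$ to the defining formula for $\partial_P$; since $d(P\pm \tfrac12)=dP$, the constant $\pm\tfrac12$ pieces (giving the $\partial_{P_2}$ summand of Lemma~\ref{remark: propagator-comparision-with-symplectic-case}) contribute nothing, and one is left with terms of the form $\pi_{\alpha\beta}^*(dP)\otimes \omega^{i\bar j}(\mathcal L_{\partial_{z^i}}\otimes \mathcal L_{\partial_{\bar j}} - \mathcal L_{\partial_{\bar j}}\otimes \mathcal L_{\partial_{z^i}})$ on the $\alpha$-th and $\beta$-th factors. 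On the right-hand side, expand $\Delta=[d_{TX},\iota_\Pi]$: since $\iota_\Pi$ acts on the multiplied BV element, when applied after tensoring it contracts a Poisson pair $\omega^{i\bar j}$ across any two distinct factors, and the commutator with $D$ places the $d_{TX}$ of $D$ opposite the $\iota_\Pi$ of $\Delta$, producing precisely the Lie derivative pairings via Cartan's magic formula $\mathcal L = [d,\iota]$. Matching indices, the bundle parts on the two sides agree. The $1$-form parts also agree because on the open stratum $\pi_{\alpha\beta}^*(dP)=\pi_\beta^*d\theta-\pi_\alpha^*d\theta$, and the two summands of the graded commutator $\Delta D - (-1)^{|D|}D\Delta$ deposit exactly these two $d\theta$'s on the $\alpha$-th and $\beta$-th factors with opposite signs.

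The main obstacle is the careful bookkeeping of Koszul signs among the many tensor factors, combined with verifying that the specific normalisation $P+\tfrac12$ versus $P-\tfrac12$ in the defining formula for $\partial_P$ is precisely the one for which boundary contributions from the compactified configuration space $S^1[k]$ are absorbed cleanly and the equality of operators holds as stated, rather than up to a correction term. Once the signs and normalisations are pinned down, the identification of Lie-derivative pairings on the two sides via $\mathcal L=[d_{TX},\iota_\Pi]$ is the conceptual heart of the proof.
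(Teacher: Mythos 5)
Your argument is correct and is essentially the standard one: the paper itself gives no proof of this lemma, deferring to Lemma 2.33 of \cite{GLL}, and your computation is that proof adapted to the polarized propagator. You correctly handle the only genuinely new feature here, namely that the constant shifts $\pm\tfrac{1}{2}$ (the $\partial_{P_2}$ part) are killed by $d_{S^1}$ and commute with $D$ for the same Cartan-formula reasons as $\partial_{P_1}$, so all three identities reduce to the symplectic case; the only slight misplacement is your closing remark about boundary contributions of $S^1[k]$, which plays no role in this lemma (it is the content of Lemma \ref{lemma: BV-action-boundary-configuration-space}).
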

%\begin{proof}
% This basically follows from the fact that $d_{S^1}(P)=d\theta_1-d\theta_2$.
%\end{proof}

%\subsubsection{The homotopy group flow operator}

%In this subsection, we describe the group flow operator which gives rise to the homotopy we need. Most of the proof in this part is similar to \cite[section 2.4]{GLL}, except that we use the polarized propagator instead of the one in \cite{GLL}. 

Lemma \ref{lemma: differential-propogator-equal-BV} says that the operators $\hbar\partial_P$ and $D$ commute, so we can formally define the following operator on the BV bundle:
\begin{equation*}\label{equation: homotopy-group-flow-operator}
e^{\hbar\partial_P+D} := \sum_{k\geq 0}\frac{1}{k!}(\hbar\partial_P+D)^k.
\end{equation*}
Here is the definition of the {\em homotopy group flow operator} on the BV bundle:
\begin{defn}[cf. Definition 2.34 in \cite{GLL}]\label{definition: gamma-infty}
 Given any $\gamma\in\A_X^\bullet(\mathcal{W}_{X,\C})$, we define $\gamma_\infty\in\A_X^\bullet(\hat{\Omega}_{TX}^{-\bullet})[[\hbar]]$ by 
 $$
 e^{\gamma_{\infty}/\hbar}:=\text{Mult}\int_{S^1[*]}e^{\hbar\partial_P+D}e^{\otimes\gamma/\hbar},
 $$
 where $e^{\otimes\gamma/\hbar}:=\sum_{k\geq 0}\frac{1}{k!\hbar^k}\gamma^{\otimes k}, \gamma^{\otimes k}\in\A_X^\bullet(\hat{\Omega}_{TX}^{-\bullet})^{\otimes k})$, and $\int_{S^1[k]}:\A_{S^1[k]}^{\bullet}\otimes_{\mathbb{C}}\A_X^{\bullet}(\widehat{\Omega}_{TX}^{-\bullet})^{\otimes k}\rightarrow \A_X^{\bullet}(\widehat{\Omega}_{TX}^{-\bullet})^{\otimes k}$ is the integration $\int_{S^1[k]}$ over the  configuration space $S^1[k]$.
\end{defn}
\begin{notn}
 We will use the notation $\int_{S^1[*]}$ to denote the sum of the above integral operators $\int_{S^1[k]}$ for mixed type tensors. 
\end{notn}

%We now give the following Lemma, showing how the BV operator $\Delta$ acts on $e^{\gamma_{\infty}/\hbar}$:
\begin{lem}[cf. Lemma 2.37 in \cite{GLL}]\label{lemma: BV-action-boundary-configuration-space}
For any $\gamma\in\A_X^\bullet(\mathcal{W}_{X,\C})$, we have
 \begin{equation}\label{equation: BV-operator-equal-boundary-integral}
 \hbar\Delta e^{\gamma_\infty/\hbar}=\text{Mult}\int_{S^1[*]}e^{\hbar\partial_P+D}\frac{1}{\hbar}(\gamma\star\gamma)\otimes e^{\otimes\gamma/\hbar}. 
 \end{equation}
\end{lem}
\begin{proof}
The proof is very similar to that of \cite[Lemma 2.37]{GLL}, except that here we use the polarized propagator $\partial_P$ instead of the propagator $\partial_{P_1}$ in \cite{GLL}.
	
Using the commutation relations in Lemma \ref{lemma: differential-propogator-equal-BV}, we have
\begin{align*}
 \hbar\Delta e^{\gamma_\infty/\hbar}&=\text{Mult}\int_{S^1[*]}\hbar\Delta e^{\hbar\partial_P+D}e^{\otimes\gamma/\hbar}\\
 &=\text{Mult}\int_{S^1[*]}d_{S^1}e^{\hbar\partial_P+D}e^{\otimes\gamma/\hbar}+\text{Mult}\int_{S^1[*]}(\hbar\Delta-d_{S^1})e^{\hbar\partial_P+D}e^{\otimes\gamma/\hbar}\\
 &=\text{Mult}\int_{\partial S^1[*]}e^{\hbar\partial_P+D}e^{\otimes\gamma/\hbar}+\text{Mult}\int_{S^1[*]}e^{\hbar\partial_P+D}(\hbar\Delta-d_{S^1})e^{\otimes\gamma/\hbar}\\
 &=\text{Mult}\int_{\partial S^1[*]}e^{\hbar\partial_P+D}e^{\otimes\gamma/\hbar},
\end{align*}
where in the last step we used the fact that $\hbar\Delta \left(e^{\otimes\gamma/\hbar}\right)=d_{S^1}\left(e^{\otimes\gamma/\hbar}\right)=0$ (here $d_{S^1}$ annihilates $e^{\otimes\gamma/\hbar}$ because $e^{\otimes\gamma/\hbar}$ does not depend on the configuration space $S^1[*]$ and $\Delta$ annihilates $e^{\otimes\gamma/\hbar}$ by type reasons).

We now consider the term $\text{Mult}\int_{\partial S^1[*]}e^{\hbar\partial_P+D}e^{\otimes\gamma/\hbar}$. Here we need to apply the stratification of the compactified configuration spaces, and refer to \cite{GLL} for more details. For $M=S^1$, there is an explicit description of the boundary (i.e., codimension $1$ strata) of the configuration space:
$$
\partial S^1[k]=\bigcup_{I\subset\{1,\cdots,k\}, |I|\geq 2}\pi^{-1}(D_I),
$$
where $D_I\subset(S^1)^k$ is the small diagonal where points in those $S^1$'s indexed by $I$ coincide (see e.g. \cite[Appendix B]{GLL} for more details). Similar to the real symplectic case, only components corresponding to those indices with $|I|=2$ will contribute non-trivially to the boundary integral.  
Every such index $I$ corresponds to a pair $\alpha<\beta\in\{1,\cdots,k\}$. The corresponding boundary strata has two components, each isomorphic to $S^1[k-1]$. So we have
\begin{align*}
\text{Mult}\int_{\partial S^1[k]}e^{\hbar\partial_P+D}e^{\otimes\gamma/\hbar}&=\text{Mult}\int_{S^1[k-1]}e^{\hbar\partial_P+D}\frac{1}{2\hbar}[\gamma,\gamma]_{\star}\otimes e^{\otimes\gamma/\hbar}\\
&=\text{Mult}\int_{S^1[k-1]}e^{\hbar\partial_P+D}\frac{1}{\hbar}(\gamma\star\gamma)\otimes e^{\otimes\gamma/\hbar}.
\end{align*}
By adding the above equalities for all $k\geq 2$, we obtain equation \eqref{equation: BV-operator-equal-boundary-integral}.  
We emphasize that the polarized propagator plays a key role here: the operators on the two connected components precisely correspond to $A\star B$ and $B\star A$, and this explains why the bracket $[-,-]_\star$ shows up. 
\end{proof}

\subsubsection{Solutions of the QME from Fedosov abelian connections}\label{section: Fedosov-to-QME}
\

From the perspective of BV quantization, a solution of the QME \eqref{equation: quantum-master-equation} is the $\infty$-scale effective interaction of the quantum mechanical system on $X$. Such a solution can be constructed by running the homotopy group flow.
%, which also gives rise to a cochain map from local quantum observables to global ones. 

We first consider the following section of the BV bundle:
\begin{equation*}\label{equation: curvature-BV-bundle}
 \tilde{R}_\nabla := (\partial_{TX}\circ\bar{\partial}_{TX})(R_\nabla)=\sqrt{-1}\cdot R_{i\bar{j}k\bar{l}}dz^i\wedge d\bar{z}^j\otimes dy^{k}\wedge d\bar{y}^{l}.
\end{equation*}
\begin{lem}\label{lemma: tilde-R-properties}
We have $\Delta(\tilde{R}_\nabla) = \nabla(\tilde{R}_\nabla) = 0$. 
\end{lem}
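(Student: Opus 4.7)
The plan is to prove the two vanishings separately, treating $\tilde{R}_\nabla$ as the image of the curvature $R_\nabla$ under the composition $\partial_{TX}\circ\bar{\partial}_{TX}$ of the fiberwise holomorphic and antiholomorphic de Rham operators on the BV bundle.

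For $\nabla(\tilde{R}_\nabla)=0$, the idea is that $\nabla$, being a connection on the tensor bundle $\widehat{\Omega}_{TX}^{-\bullet}$ built from $TX$, differentiates in the base variable while $\partial_{TX}$ and $\bar{\partial}_{TX}$ differentiate in the fiber variables. I would first observe that $[\nabla,\partial_{TX}]=[\nabla,\bar{\partial}_{TX}]=0$, so that $\nabla(\tilde{R}_\nabla)=\partial_{TX}\bar{\partial}_{TX}(\nabla R_\nabla)$. It then suffices to show $\nabla R_\nabla=0$. This is a geometric form of the second Bianchi identity: one way to obtain it cleanly is to use Lemma \ref{lemma: curvature-on-Weyl-bundle}, which identifies $\nabla^2=\tfrac{1}{\hbar}[R_\nabla,-]_\star$, and then apply the Jacobi identity $[\nabla,\nabla^2]=0$ as operators on $\A_X^\bullet(\W_{X,\C})$. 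Since $\nabla$ is a derivation of the Wick product (as $\nabla(\omega)=0$), $[\nabla,[R_\nabla,-]_\star]=[\nabla R_\nabla,-]_\star$, so $\nabla R_\nabla$ is central in $\W_{X,\C}$. Inspecting types (a $3$-form valued in the fiber quadratic in $y,\bar y$), centrality forces the coefficient to vanish, i.e.\ $\nabla R_\nabla=0$.

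For $\Delta(\tilde{R}_\nabla)=0$, I would exploit the definition $\Delta=[d_{TX},\iota_\Pi]$ together with the crucial feature that $\tilde{R}_\nabla$ is \emph{linear in} $dy^k$ and $d\bar y^l$ and contains \emph{no} factors of $y$ or $\bar y$. Because $d_{TX}$ sends $y^k\mapsto dy^k$ and kills $dy^k$, we get $d_{TX}(\tilde{R}_\nabla)=0$ for free. On the other side, $\iota_\Pi$ contracts pairs of fiberwise differentials by $\omega^{i\bar j}$, so $\iota_\Pi(\tilde{R}_\nabla)$ is a pure base $(1,1)$-form on $X$ with no fiber variables; consequently $d_{TX}\iota_\Pi(\tilde{R}_\nabla)=0$ as well. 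Adding the two terms gives $\Delta(\tilde{R}_\nabla)=0$.

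Neither step looks substantively hard; the only thing requiring a touch of care is verifying the commutation $[\nabla,\partial_{TX}]=[\nabla,\bar{\partial}_{TX}]=0$ on the BV bundle, which uses the Kähler condition $\nabla(\omega)=0$ together with the fact that $\nabla$ preserves the splitting $T^*X_\C = T^*X\oplus\overline{T^*X}$ (so it preserves the bidegree in the fiber and commutes with the fiberwise Dolbeault decomposition of $d_{TX}$). Once this is in hand, both equalities drop out by inspection.
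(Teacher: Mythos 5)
Your proof is correct and follows essentially the same route as the paper, which disposes of $\Delta(\tilde{R}_\nabla)=0$ by the type of the BV operator (exactly your observation that $\tilde{R}_\nabla$ contains only $dy,d\bar y$ and no $y,\bar y$) and of $\nabla(\tilde{R}_\nabla)=0$ by the Bianchi identity. Your derivation of $\nabla R_\nabla=0$ from $[\nabla,\nabla^2]=0$ together with the centrality argument in the Wick algebra is just a self-contained way of establishing that Bianchi identity, so it fills in the details of the paper's one-line citation rather than taking a different path.
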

\begin{proof}
The vanishing of the first term follows from the type of the BV operator $\Delta$, and the second one is the Bianchi identity. 
\end{proof}

\begin{lem}\label{lemma: partial-P-equal-BV-bracket}
We have $\partial_{P_2}(d_{TX}R_\nabla\otimes\gamma)=-\frac{1}{2}\{\tilde{R}_\nabla,\gamma\}_{\Delta}$.
\end{lem}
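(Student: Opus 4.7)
My plan is to expand both sides of the asserted identity explicitly and match term by term.

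First I would unpack the left hand side via Lemma \ref{remark: propagator-comparision-with-symplectic-case}. Since only the pair $(\alpha,\beta)=(1,2)$ appears for $k=2$, after applying the multiplication $\mathrm{Mult}$ we get
\begin{equation*}
\partial_{P_2}(d_{TX}R_\nabla\otimes\gamma) = \tfrac{1}{2}\omega^{i\bar{j}}\bigl(\mathcal{L}_{\partial_{z^i}}(d_{TX}R_\nabla)\cdot \mathcal{L}_{\partial_{\bar{z}^j}}\gamma + \mathcal{L}_{\partial_{\bar{z}^j}}(d_{TX}R_\nabla)\cdot \mathcal{L}_{\partial_{z^i}}\gamma\bigr).
\end{equation*}
Using $R_\nabla=\sqrt{-1}\,R_{a\bar{b}k\bar{l}}\,dz^a\wedge d\bar{z}^b\otimes y^k\bar{y}^l$, one has $d_{TX}R_\nabla=\sqrt{-1}\,R_{a\bar{b}k\bar{l}}\,dz^a\wedge d\bar{z}^b\otimes(dy^k\bar{y}^l+y^kd\bar{y}^l)$, and the fiber derivatives compute to
\begin{align*}
\mathcal{L}_{\partial_{z^i}}(d_{TX}R_\nabla)&=\sqrt{-1}\,R_{a\bar{b}i\bar{l}}\,dz^a\wedge d\bar{z}^b\otimes d\bar{y}^l,\\
\mathcal{L}_{\partial_{\bar{z}^j}}(d_{TX}R_\nabla)&=\sqrt{-1}\,R_{a\bar{b}k\bar{j}}\,dz^a\wedge d\bar{z}^b\otimes dy^k,
\end{align*}
because only the $y^k$ (respectively $\bar{y}^l$) factor in $d_{TX}(y^k\bar{y}^l)$ survives.

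Next I would attack the right hand side. By Lemma \ref{lemma: tilde-R-properties} we have $\Delta\tilde{R}_\nabla=0$, and a type check shows $\Delta\gamma=0$ since $\gamma\in\mathcal{A}_X^\bullet(\mathcal{W}_{X,\mathbb{C}})$ carries no fiberwise differentials $dy^k,d\bar{y}^l$ for $\iota_\Pi$ to contract. Hence
\begin{equation*}
\{\tilde{R}_\nabla,\gamma\}_\Delta = \Delta(\tilde{R}_\nabla\cdot\gamma),
\end{equation*}
and since $\Delta=[d_{TX},\iota_\Pi]$ is a second order operator governed by $\Pi=\omega^{i\bar{j}}\partial_{y^i}\wedge\partial_{\bar{y}^j}$, the only contributions are the ``cross'' pairings: $\iota_\Pi$ eats a $dy^k\wedge d\bar{y}^l$ out of $\tilde{R}_\nabla$ against the $y$'s and $\bar{y}$'s produced by $d_{TX}$ acting on $\gamma$. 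Book-keeping these pairings shows that the two summands in $\Delta(\tilde{R}_\nabla\cdot\gamma)$ correspond precisely to the two ways of contracting either $(dy^k,\bar{y}^s)$ or $(d\bar{y}^l,y^r)$ via $\omega^{\cdot\bar{\cdot}}$.

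Finally I would compare coefficients. Each pairing in $\Delta(\tilde{R}_\nabla\cdot\gamma)$ produces exactly one of the two terms appearing in $\partial_{P_2}(d_{TX}R_\nabla\otimes\gamma)$, but without the prefactor $\frac{1}{2}$ and with an extra overall sign: $\Delta(\tilde{R}_\nabla\cdot\gamma)=-2\,\partial_{P_2}(d_{TX}R_\nabla\otimes\gamma)$, which rearranges to the claim. The main obstacle will be pinning down the Koszul signs, since the differentials $dy^k,d\bar{y}^l$ of odd degree must be permuted past $\gamma$ and past each other, and these signs need to combine correctly with the $\sqrt{-1}$ in $R_\nabla$, the symmetries $R_{a\bar{b}k\bar{l}}=R_{k\bar{b}a\bar{l}}$ etc., and the identity $g^{\alpha\bar{\beta}}=-\sqrt{-1}\omega^{\alpha\bar{\beta}}$ to yield precisely the factor $-\tfrac{1}{2}$ on the right hand side.
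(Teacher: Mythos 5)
Your proposal is correct and follows essentially the same route as the paper: the authors likewise expand $\partial_{P_2}$ on the pair $(d_{TX}R_\nabla,\gamma)$, compute the same fiberwise Lie derivatives of $d_{TX}R_\nabla$, and identify $\{\tilde{R}_\nabla,\gamma\}_\Delta$ with the cross-contraction terms of $\Delta=\omega^{p\bar{q}}\bigl(\mathcal{L}_{\partial_{z^p}}\iota_{\partial_{\bar{z}^q}}-\mathcal{L}_{\partial_{\bar{z}^q}}\iota_{\partial_{z^p}}\bigr)$, arriving at exactly your relation $\{\tilde{R}_\nabla,\gamma\}_\Delta=-2\,\partial_{P_2}(d_{TX}R_\nabla\otimes\gamma)$. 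Your intermediate reduction $\{\tilde{R}_\nabla,\gamma\}_\Delta=\Delta(\tilde{R}_\nabla\cdot\gamma)$ (using $\Delta\tilde{R}_\nabla=\Delta\gamma=0$) is a valid, purely cosmetic variant of the paper's direct evaluation of the bracket.
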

\begin{proof}
This is a straightforward computation. For the LHS, we have
\begin{align*}
&\partial_{P_2}(d_{TX}R_\nabla\otimes\gamma)\\
=&\frac{1}{2}\omega^{p\bar{q}}\left(\mathcal{L}_{\partial_{z^p}}(d_{TX}R_\nabla)\otimes\mathcal{L}_{\partial_{\bar{z}^q}}\gamma+\mathcal{L}_{\partial_{\bar{z}^q}}(d_{TX}R_\nabla)\otimes\mathcal{L}_{\partial_{z^p}}\gamma\right) \\
=&\frac{\sqrt{-1}}{2}\omega^{p\bar{q}}\left(\mathcal{L}_{\partial_{z^p}}(R_{i\bar{j}k\bar{l}}dz^i\wedge d\bar{z}^j\otimes y^kd\bar{y}^l)\otimes\mathcal{L}_{\partial_{\bar{z}^q}}\gamma+\mathcal{L}_{\partial_{\bar{z}^q}}(R_{i\bar{j}k\bar{l}}dz^i\wedge d\bar{z}^j\otimes \bar{y}^ldy^k)\otimes\mathcal{L}_{\partial_{z^p}}\gamma\right)\\
=&\frac{\sqrt{-1}}{2}\omega^{p\bar{q}}\left((R_{i\bar{j}p\bar{l}}dz^i\wedge d\bar{z}^j\otimes d\bar{y}^l)\otimes\mathcal{L}_{\partial_{\bar{z}^q}}\gamma+(R_{i\bar{j}k\bar{q}}dz^i\wedge d\bar{z}^j\otimes dy^k)\otimes\mathcal{L}_{\partial_{z^p}}\gamma\right).
\end{align*}
On the other hand, recall that 
$\Delta=\omega^{p\bar{q}}\left(\mathcal{L}_{\partial_{z^p}}\iota_{\partial_{\bar{z}^q}}-\mathcal{L}_{\partial_{\bar{z}^q}}\iota_{\partial_{z^p}}\right)$.
Hence the RHS is given by 
\begin{align*}
&\{\tilde{R}_\nabla,\gamma\}_\Delta\\
=&\sqrt{-1}\omega^{p\bar{q}}\left(\iota_{\partial_{\bar{z}^q}}(R_{i\bar{j}k\bar{l}}dz^i\wedge d\bar{z}^j\otimes dy^k\wedge d\bar{y}^l)\otimes\mathcal{L}_{\partial_{z^p}}(\gamma)-\iota_{\partial_{z^p}}(R_{i\bar{j}k\bar{l}}dz^i\wedge d\bar{z}^j\otimes dy^k\wedge d\bar{y}^l)\otimes \mathcal{L}_{\partial_{\bar{z}^q}}(\gamma)\right)\\
=&-\sqrt{-1}\omega^{p\bar{q}}\left((R_{i\bar{j}k\bar{q}}dz^i\wedge d\bar{z}^j\otimes dy^k)\otimes\mathcal{L}_{\partial_{z^p}}(\gamma)+(R_{i\bar{j}p\bar{l}}dz^i\wedge d\bar{z}^j\otimes d\bar{y}^l)\otimes \mathcal{L}_{\partial_{\bar{z}^q}}(\gamma)\right)
\end{align*}
\end{proof}

We are now ready to construct our solutions of the QME:
%The first main result of this section is the following Theorem: 
\begin{thm}[cf. Theorem 2.26 in \cite{GLL}]\label{theorem: Fedosov-connection-RG-flow-QME}
Suppose that $\gamma$ is a solution of the Fedosov equation \eqref{equation: Fedosov-equation-gamma} and let $\gamma_\infty$ be defined as in Definition \ref{definition: gamma-infty}.
Then $e^{\tilde{R}_\nabla/2\hbar}\cdot e^{\gamma_\infty/\hbar}$ is a solution of the QME \eqref{equation: quantum-master-equation}, i.e., $Q_{BV}\left(e^{\tilde{R}_\nabla/2\hbar}\cdot e^{\gamma_\infty/\hbar}\right)=0$.
\end{thm}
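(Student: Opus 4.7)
The plan is to reduce the QME to a single differential identity that then emerges from running the homotopy group flow on the Fedosov equation~\eqref{equation: Fedosov-equation-gamma}, with the extra $e^{\tilde{R}_\nabla/2\hbar}$ factor absorbing precisely the novel contribution of the polarized part $\partial_{P_2}$ of the propagator.

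First I would unpack $Q_{BV}(A\cdot B)$ for $A = e^{\tilde{R}_\nabla/2\hbar}$ and $B = e^{\gamma_\infty/\hbar}$. Since $\nabla$ is a derivation, the failure of $\Delta$ to be one is the BV bracket, and $\tfrac{1}{\hbar}d_{TX}R_\nabla$ is plain multiplication, one has
\begin{equation*}
Q_{BV}(AB) = Q_{BV}(A)\cdot B + A\cdot Q_{BV}(B) - \tfrac{1}{\hbar}d_{TX}R_\nabla\cdot AB + \hbar\{A,B\}_\Delta.
\end{equation*}
Lemma~\ref{lemma: tilde-R-properties} gives $\nabla\tilde{R}_\nabla = \Delta\tilde{R}_\nabla = 0$, and $\{\tilde{R}_\nabla,\tilde{R}_\nabla\}_\Delta=0$ by type (both factors are saturated in $dy\wedge d\bar{y}$, leaving nothing for $\iota_\Pi$ to contract against). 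Thus $Q_{BV}(A) = \tfrac{1}{\hbar}d_{TX}R_\nabla\cdot A$ and $\{A,B\}_\Delta = \tfrac{1}{2\hbar^2}\{\tilde{R}_\nabla,\gamma_\infty\}_\Delta\cdot AB$, which collapses the above expansion to $Q_{BV}(AB) = A\cdot\bigl(Q_{BV}(B) + \tfrac{1}{2\hbar}\{\tilde{R}_\nabla,\gamma_\infty\}_\Delta\cdot B\bigr)$. Hence the QME is equivalent to the reduced identity
\begin{equation*}
\nabla\gamma_\infty + \hbar\Delta\gamma_\infty + \tfrac{1}{2}\{\gamma_\infty,\gamma_\infty\}_\Delta + \tfrac{1}{2}\{\tilde{R}_\nabla,\gamma_\infty\}_\Delta + d_{TX}R_\nabla = 0. \qquad (\star)
\end{equation*}

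To establish $(\star)$ I would run the homotopy flow $e^{\hbar\partial_P+D}$ on each term of the Fedosov equation $\nabla\gamma + \tfrac{1}{\hbar}\gamma\star\gamma + R_\nabla = \omega_\hbar$. Because $\omega$ is parallel, $[\nabla,\partial_P]=[\nabla,D]=0$, so $\nabla$ passes through the flow and yields $\nabla\gamma_\infty\cdot e^{\gamma_\infty/\hbar}$. Lemma~\ref{lemma: BV-action-boundary-configuration-space} identifies the flow of $\tfrac{1}{\hbar}\gamma\star\gamma$ with $\hbar\Delta e^{\gamma_\infty/\hbar}$, which unpacks to $(\hbar\Delta\gamma_\infty+\tfrac{1}{2}\{\gamma_\infty,\gamma_\infty\}_\Delta)\cdot e^{\gamma_\infty/\hbar}/\hbar$; the polarized propagator is essential here, since the two collision boundary components of $S^1[k]$ assemble the ordered Wick product $\gamma\star\gamma$ rather than just the bracket. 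Since $\omega_\hbar$ carries no fiber variables, $D\omega_\hbar = \partial_P(\omega_\hbar\otimes\cdot)=0$, so the flow of $\omega_\hbar$ is simply $\omega_\hbar\cdot e^{\gamma_\infty/\hbar}$; this scalar contribution is cancelled by an identical piece that appears from the flow of $R_\nabla$.

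The heart of the argument is the flow of $R_\nabla$. Decomposing $\partial_P = \partial_{P_1}+\partial_{P_2}$ by Lemma~\ref{remark: propagator-comparision-with-symplectic-case}, the $\partial_{P_1}$-part reproduces the symplectic computation of \cite{GLL} and contributes $d_{TX}R_\nabla\cdot e^{\gamma_\infty/\hbar}$, arising from one $D$ applied to $R_\nabla$ together with $\partial_{P_1}$-contractions with the surrounding $\gamma$'s. The $\partial_{P_2}$-part is the K\"ahlerian novelty: by Lemma~\ref{lemma: partial-P-equal-BV-bracket}, $\partial_{P_2}(d_{TX}R_\nabla\otimes\gamma) = -\tfrac{1}{2}\{\tilde{R}_\nabla,\gamma\}_\Delta$, and summing this over all insertion positions and gluing with the background flow of the remaining $\gamma$'s produces exactly $\tfrac{1}{2}\{\tilde{R}_\nabla,\gamma_\infty\}_\Delta\cdot e^{\gamma_\infty/\hbar}$. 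Collecting all contributions yields $(\star)$.

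The main obstacle is the last step, the bookkeeping of the $R_\nabla$-flow: one must verify that iterated $\partial_{P_2}$ applications (which would feed higher covariant derivatives of $R_\nabla$ against multiple $\gamma$'s) either vanish by the $d_{TX}^2=0$ nilpotence on $R_\nabla$ or collectively resum to the single bracket $\tfrac{1}{2}\{\tilde{R}_\nabla,\gamma_\infty\}_\Delta$, and that the Koszul signs on the configuration spaces $S^1[k]$ line up. The subsidiary checks $[\nabla,\partial_P]=[\nabla,D]=0$ (from $\nabla\omega=0$) and $\{\tilde{R}_\nabla,\tilde{R}_\nabla\}_\Delta=0$ (by type) are routine.
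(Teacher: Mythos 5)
Your proposal follows essentially the same route as the paper's proof: it reduces the QME to a statement about $Q_{BV}(e^{\gamma_\infty/\hbar})$ using $\nabla\tilde{R}_\nabla=\Delta\tilde{R}_\nabla=\{\tilde{R}_\nabla,\tilde{R}_\nabla\}_\Delta=0$, and then computes $Q_{BV}(e^{\gamma_\infty/\hbar})$ by running the homotopy flow on the Fedosov equation, with Lemma \ref{lemma: BV-action-boundary-configuration-space} handling the $\hbar\Delta$-term and Lemma \ref{lemma: partial-P-equal-BV-bracket} producing the bracket correction from the single possible $\partial_{P_2}$-contraction against $d_{TX}R_\nabla$ (single because each monomial of $d_{TX}R_\nabla$ is linear in $y,\bar y$). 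Two bookkeeping points differ from the paper and would need repair in a full write-up: the $\omega_\hbar$ cancellation actually arises inside the $\hbar\Delta$-term (the flow of $\tfrac{1}{\hbar}\gamma\star\gamma$ reproduces $\hbar\Delta e^{\gamma_\infty/\hbar}$ only up to the scalar $-\omega_\hbar$), not from the flow of $R_\nabla$ as you state; and the sign you assign to the resummed $\tfrac{1}{2}\{\tilde{R}_\nabla,\gamma_\infty\}_\Delta$ contribution is opposite to what Lemma \ref{lemma: partial-P-equal-BV-bracket} literally gives, so the sign conventions around $(\star)$ versus the $R_\nabla$-flow must be tracked carefully for the final cancellation.
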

\begin{proof}
Recall that $Q_{BV}=\nabla+\hbar\Delta+\hbar^{-1}d_{TX}R_\nabla$. We compute each term in $Q_{BV}(e^{\gamma_\infty/\hbar})$: 
\begin{enumerate}
 \item The first term is given by:
 \begin{align*}
\nabla(e^{\gamma_\infty/\hbar}) & = \nabla\left(\text{Mult}\int_{S^1[*]}e^{\hbar\partial_P+D}e^{\otimes\gamma/\hbar}\right)
 = \text{Mult}\int_{S^1[*]}e^{\hbar\partial_P+D}\nabla(e^{\otimes\gamma/\hbar})\\
& = \text{Mult}\int_{S^1[*]}e^{\hbar\partial_P+D}(\frac{1}{\hbar}\nabla\gamma\otimes e^{\gamma/\hbar}),
\end{align*}
where we are using the relations
$[\nabla, \hbar\partial_P+D]=\left[\nabla, \int_{S^1[*]}\right]=0$.
\item Using Lemma \ref{lemma: BV-action-boundary-configuration-space}, the second term is given by 
\begin{align*}
\hbar\Delta(e^{\gamma_\infty/\hbar})=&\text{Mult}\int_{S^1[*]}e^{\hbar\partial_P+D}\frac{1}{\hbar}(\gamma\star\gamma)\otimes e^{\otimes\gamma/\hbar}\\
=&\text{Mult}\int_{S^1[*]}e^{\hbar\partial_P+D}\frac{1}{\hbar}(\gamma\star\gamma-\omega_\hbar)\otimes e^{\otimes\gamma/\hbar}
.
\end{align*}
Note that the $-\omega_\hbar$ term appears because $D(-\omega_\hbar)=0$ by type reasons. 
\item For the third term, we consider the following term
$$
\text{Mult}\int_{S^1[*]}e^{\hbar\partial_P+D}\frac{R_{\nabla}}{\hbar}\otimes e^{\otimes\gamma/\hbar}=\text{Mult}\int_{S^1[*]}e^{\hbar\partial_P+D}(\frac{1}{\hbar}d\theta d_{TX}R_{\nabla})\otimes e^{\otimes\gamma/\hbar} 
$$
Notice that $\hbar\partial_P$ can be applied to $d_{TX}R_\nabla$ once, which gives rise to the following difference:
\begin{align*}
&\text{Mult}\int_{S^1[*]}e^{\hbar\partial_P+D}(\frac{1}{\hbar}d\theta d_{TX}R_{\nabla})\otimes e^{\otimes\gamma/\hbar}- \text{Mult}\int_{S^1[*]}(\frac{1}{\hbar}d\theta d_{TX}R_{\nabla})\otimes e^{\hbar\partial_P+D}\left(e^{\otimes\gamma/\hbar}\right)\\
=&\text{Mult}\int_{S^1[*]}e^{\hbar\partial_P+D}(\frac{1}{\hbar}d\theta d_{TX}R_{\nabla})\otimes e^{\otimes\gamma/\hbar}- \text{Mult}\int_{S^1[*]}(\frac{1}{\hbar}d_{TX}R_{\nabla})\otimes e^{\hbar\partial_P+D} e^{\otimes\gamma/\hbar}\\
=&\text{Mult}\int_{S^1[*]}e^{\hbar\partial_P+D}d\theta_1\cdot\frac{1}{\hbar}\cdot(-\frac{1}{2})\cdot\{\tilde{R}_{\nabla},d_{TX}\gamma\}_{\Delta}\otimes e^{\otimes\gamma/\hbar}\\
=&-\frac{1}{2\hbar}\{\tilde{R}_\nabla, \gamma_\infty\}_\Delta\cdot e^{\gamma_\infty/\hbar},
\end{align*}
where we have used Lemma \ref{lemma: partial-P-equal-BV-bracket} and the relations
$\left[\{\tilde{R}_{\nabla},-\}_\Delta,\hbar\partial_P\right]=\left[\{\tilde{R}_{\nabla},-\}_\Delta, D\right]=0$ in the last equality.
\end{enumerate}
By the above computations and Lemma \ref{lemma: tilde-R-properties}, we obtain that
\begin{align*}
 &Q_{BV}\left(e^{\tilde{R}_\nabla/2\hbar}\cdot e^{\gamma_\infty/\hbar}\right)\\
 =&e^{\tilde{R}_\nabla/2\hbar}\cdot Q_{BV}(e^{\gamma_\infty/\hbar})+\left((\nabla+\hbar\Delta)e^{\tilde{R}_\nabla/2\hbar}\right)\cdot e^{\gamma_\infty/\hbar}+\frac{1}{2\hbar}\{\tilde{R}_\nabla,\gamma_\infty\}_\Delta\cdot\left(e^{\tilde{R}_\nabla/2\hbar}\cdot e^{\gamma_\infty/\hbar}\right)\\
 =&e^{\tilde{R}_\nabla/2\hbar}\cdot Q_{BV}(e^{\gamma_\infty/\hbar})+\frac{1}{2\hbar}\{\tilde{R}_\nabla,\gamma_\infty\}_\Delta\cdot\left(e^{\tilde{R}_\nabla/2\hbar}\cdot e^{\gamma_\infty/\hbar}\right)\\
 =&\frac{1}{\hbar}\cdot e^{\tilde{R}_\nabla/2\hbar}\cdot\text{Mult}\int_{S^1[*]}e^{\hbar\partial_P+D}\left(\nabla\gamma+\frac{1}{\hbar}\gamma\star\gamma-\omega_\hbar+R_\nabla\right)\otimes e^{\otimes\gamma/\hbar}\\
 &\hspace{5mm}-\left(\frac{1}{2}\{\tilde{R}_\nabla,\gamma_\infty\}_\Delta+\frac{1}{2}\{\tilde{R}_\nabla,\gamma_\infty\}_\Delta\right)\cdot\left(e^{\tilde{R}_\nabla/2\hbar}\cdot e^{\gamma_\infty/\hbar}\right)\\
 =&\frac{1}{\hbar}\cdot e^{\tilde{R}_\nabla/2\hbar}\cdot\text{Mult}\int_{S^1[*]}e^{\hbar\partial_P+D}\left(\nabla\gamma+\frac{1}{\hbar}\gamma\star\gamma-\omega_\hbar+R_\nabla\right)\otimes e^{\otimes\gamma/\hbar}=0.
\end{align*}
\end{proof}

\begin{rmk}
	Compared with \cite[Theorem 2.26]{GLL}, our QME solution has the additional factor $e^{\tilde{R}_\nabla/2\hbar}$ due to our different choice of the propagator. We will see in the next subsection that this leads to nontrivial contribution from the tadpole graph in computing the partition function, which we do not see in the general symplectic case in \cite{GLL}.
\end{rmk}
%The homotopy group flow also induces the {\em factorization map} from local to global quantum observables, which is a cochain map:
We are now ready to define the local-to-global factorization map of quantum observables:
\begin{thm}[cf. Theorem 2.40 in \cite{GLL}]\label{theorem: local-to-global-factorization-map}
The {\em factorization map} defined by
\begin{align*}
[-]_\infty: \A_X^{\bullet}(\mathcal{W}_{X,\C})&\rightarrow\A_X^\bullet(\hat{\Omega}_{TX}^{-\bullet})[[\hbar]]\\
O&\mapsto [O]_\infty:=e^{-\gamma_\infty/\hbar}\cdot \left(\text{Mult}\int_{S^1[*]}e^{\hbar\partial_P+D}(O d\theta_1\otimes e^{\otimes\gamma/\hbar})\right)
\end{align*}
is a cochain map from the complex $(\A_X^{\bullet}(\mathcal{W}_{X,\C}), D_{F,\alpha})$ of local quantum observables to the complex $(\A_X^\bullet(\hat{\Omega}_{TX}^{-\bullet})[[\hbar]], \nabla +\{\gamma_\infty,-\}_\Delta+\hbar\Delta)$ of global quantum observables.
\end{thm}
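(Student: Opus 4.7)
The plan is to adapt the argument in the proof of Theorem \ref{theorem: Fedosov-connection-RG-flow-QME}, now carrying one additional observable $O\,d\theta_1$ through the homotopy group flow. Write
\begin{equation*}
F(O) := \text{Mult}\int_{S^1[*]} e^{\hbar\partial_P+D}\bigl(O\,d\theta_1 \otimes e^{\otimes\gamma/\hbar}\bigr),
\end{equation*}
so that $[O]_\infty = e^{-\gamma_\infty/\hbar}\cdot F(O)$. The goal is to verify
\begin{equation*}
\bigl(\nabla + \hbar\Delta + \{\gamma_\infty,-\}_\Delta\bigr)\,[O]_\infty = [D_{F,\alpha}O]_\infty.
\end{equation*}
Expanding the left-hand side via the Leibniz rule for $\nabla$ and $\{\gamma_\infty,-\}_\Delta$ and the BV bracket defect $\hbar\Delta(AB) = \hbar\Delta(A)B + (-1)^{|A|}A\,\hbar\Delta(B) + \hbar\{A,B\}_\Delta$, the identity reduces, after absorbing the QME satisfied by $e^{\tilde R_\nabla/2\hbar}e^{\gamma_\infty/\hbar}$, to a statement purely about $(\nabla+\hbar\Delta)F(O)$ and its $\tilde R_\nabla$-corrections.

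The term $\nabla F(O)$ is straightforward: since $\nabla$ commutes with $\hbar\partial_P+D$, $\text{Mult}$, and $\int_{S^1[*]}$, it yields $F(\nabla O)$ together with an insertion of $\nabla\gamma$ along the flow. For $\hbar\Delta F(O)$, I would follow the proof of Lemma \ref{lemma: BV-action-boundary-configuration-space}: use Lemma \ref{lemma: differential-propogator-equal-BV} to trade $\hbar\Delta$ for $d_{S^1}$ under the integral, then apply Stokes's theorem on the compactified configuration space $S^1[k+1]$. The codimension-one boundary strata contribute in three ways: (i) a collision of two $\gamma$-points produces an insertion of $\gamma\star\gamma/\hbar$; (ii) a collision of the distinguished $O\,d\theta_1$ point with a $\gamma$-point produces $[\gamma,O]_\star/\hbar$, which, using $\gamma = I_\alpha + \omega_{i\bar j}(dz^i\otimes\bar y^j - d\bar z^j\otimes y^i)$ together with the commutator description of $\delta$, splits cleanly as $\tfrac{1}{\hbar}[I_\alpha,O]_\star - \delta O$; (iii) the $\partial_{P_2}$ correction of the polarized propagator (Lemma \ref{remark: propagator-comparision-with-symplectic-case}), paired with the $\hbar^{-1}d_{TX}R_\nabla$ term in $Q_{BV}$, produces $\{\tilde R_\nabla,-\}_\Delta$-type corrections exactly as in Lemma \ref{lemma: partial-P-equal-BV-bracket}.

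Collecting contributions, the $\nabla\gamma$, $\gamma\star\gamma/\hbar$, and $R_\nabla$ insertions merge via the Fedosov equation \eqref{equation: Fedosov-equation-gamma} into the central form $\omega_\hbar$, which commutes through the Wick product and cancels against the $\omega_\hbar$-piece already built into $\gamma_\infty$; the $\tilde R_\nabla$-corrections from (iii) cancel against the QME residue on the other side, using $\nabla\tilde R_\nabla = \Delta\tilde R_\nabla = 0$ from Lemma \ref{lemma: tilde-R-properties}. What survives is precisely $F(\nabla O - \delta O + \tfrac{1}{\hbar}[I_\alpha,O]_\star) = F(D_{F,\alpha}O)$, which is the desired identity. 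The main obstacle is bookkeeping: tracking Koszul signs and orientations at the two connected components of each collision boundary in $S^1[k+1]$. As already observed in the proof of Lemma \ref{lemma: BV-action-boundary-configuration-space}, the polarized propagator $\partial_P$ is essential here --- it is precisely what ensures the two boundary orientations combine into the $\star$-commutator $[\gamma,O]_\star$ rather than a symmetrized product, which is crucial for the $-\delta O$ term, and hence the full Fedosov differential $D_{F,\alpha}$, to emerge with the correct sign.
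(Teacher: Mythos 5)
The paper states this theorem without proof, deferring entirely to \cite{GLL}*{Theorem 2.40}, and your outline is precisely the intended adaptation of that argument to the polarized propagator: $\nabla$ commutes with the flow operator, $\hbar\Delta$ becomes a boundary integral over $S^1[k+1]$ via Stokes and Lemma \ref{lemma: differential-propogator-equal-BV}, the collision of the distinguished point with a $\gamma$-point produces the commutator $\tfrac{1}{\hbar}[\gamma,O]_\star$ (supplying the $-\delta+\tfrac{1}{\hbar}[I_\alpha,-]_\star$ part of $D_{F,\alpha}$), and the remaining insertions are absorbed by the Fedosov equation and the QME exactly as in the proof of Theorem \ref{theorem: Fedosov-connection-RG-flow-QME}. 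The one point I would promote from ``bookkeeping'' to an explicit verification is your step (iii): since the differential on global observables as stated carries no $\{\tilde{R}_\nabla,-\}_\Delta$ term, you must check that the $\partial_{P_2}$-contractions of the $d_{TX}R_\nabla$ insertion with the observable slot itself cancel against the corresponding QME residue, via the mechanism of Lemma \ref{lemma: partial-P-equal-BV-bracket}; this does not change the correctness of the approach.
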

\begin{cor}
Let $f\in C^\infty(X)[[\hbar]]$ be any smooth function and $O_f$ be the corresponding flat section under the Fedosov connection. Then $Q_{BV}\left([O_f]_\infty\cdot e^{\tilde{R}_\nabla/2\hbar}\cdot e^{\gamma_\infty/\hbar}\right)=0$. 
\end{cor}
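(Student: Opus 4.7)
The plan is to reduce the corollary to a formal combination of two results already established in the paper: Theorem \ref{theorem: Fedosov-connection-RG-flow-QME}, which yields $Q_{BV}(Z)=0$ for $Z:=e^{\tilde{R}_\nabla/2\hbar}\cdot e^{\gamma_\infty/\hbar}$, and Theorem \ref{theorem: local-to-global-factorization-map}, which says $[-]_\infty$ is a cochain map into the complex with differential $\nabla+\hbar\Delta+\{\gamma_\infty,-\}_\Delta$. Since $O_f$ is constructed as the unique flat section $D_{F,\alpha}(O_f)=0$ (Proposition \ref{proposition: iteration-equation-quantum-flat-section}), the cochain map statement immediately gives
\[
(\nabla+\hbar\Delta+\{\gamma_\infty,-\}_\Delta)\bigl([O_f]_\infty\bigr)=0.
\]

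With these two ingredients in hand, I would expand $Q_{BV}\bigl([O_f]_\infty\cdot Z\bigr)$ using the graded derivation property of $\nabla$, the second-order identity $\Delta(AB)=\Delta(A)\cdot B+(-1)^{|A|}A\cdot\Delta(B)+\{A,B\}_\Delta$, and the fact that the curvature term $\hbar^{-1}d_{TX}R_\nabla$ in $Q_{BV}$ is just multiplication by an even-degree form. The contributions of $Q_{BV}$ that land on $Z$ assemble into $\pm[O_f]_\infty\cdot Q_{BV}(Z)=0$, while the two separate multiplications by $\hbar^{-1}d_{TX}R_\nabla$ cancel against each other after commuting past $[O_f]_\infty$. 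What remains is
\[
Q_{BV}\bigl([O_f]_\infty\cdot Z\bigr) \;=\; (\nabla+\hbar\Delta)([O_f]_\infty)\cdot Z \;+\; \hbar\bigl\{[O_f]_\infty,\, Z\bigr\}_\Delta.
\]
Applying the Leibniz rule for the BV bracket to the exponential $Z=e^{(\tilde{R}_\nabla/2+\gamma_\infty)/\hbar}$ converts the second term into $\bigl\{[O_f]_\infty,\tfrac12\tilde{R}_\nabla+\gamma_\infty\bigr\}_\Delta\cdot Z$, and combining with the closedness statement for $[O_f]_\infty$ together with the graded symmetry of the BV bracket causes the $\gamma_\infty$-pieces to cancel.

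The main obstacle will be the residual term $\tfrac12\{[O_f]_\infty,\tilde{R}_\nabla\}_\Delta\cdot Z$. Heuristically this is absorbed by the same mechanism that forces the appearance of the $e^{\tilde{R}_\nabla/2\hbar}$ prefactor in Theorem \ref{theorem: Fedosov-connection-RG-flow-QME}: the polarized propagator is \emph{not} anti-symmetric (compare Lemma \ref{remark: propagator-comparision-with-symplectic-case}), so a tadpole-type curvature residue of the form $-\tfrac12\{\tilde{R}_\nabla,-\}_\Delta$ appears and must be gathered into the exponential factor. Making this precise amounts either to a mild sharpening of Theorem \ref{theorem: local-to-global-factorization-map}, upgrading its global differential in the K\"ahler setting to $\nabla+\hbar\Delta+\{\gamma_\infty,-\}_\Delta+\tfrac12\{\tilde{R}_\nabla,-\}_\Delta$, or to a direct Feynman-graph verification via Lemma \ref{lemma: partial-P-equal-BV-bracket} that this bracket term cancels as desired. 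Once this is confirmed, the corollary follows as a formal consequence of the two theorems cited above, with no new computation required.
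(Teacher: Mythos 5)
The paper states this corollary without proof, so there is no in-text argument to compare against; the intended derivation is presumably the formal combination of Theorem \ref{theorem: Fedosov-connection-RG-flow-QME} and Theorem \ref{theorem: local-to-global-factorization-map} that you outline, and your bookkeeping up to the residual term is correct: writing $Z=e^{\tilde{R}_\nabla/2\hbar}e^{\gamma_\infty/\hbar}$ and using $Q_{BV}(Z)=0$ together with the second-order identity for $\Delta$, one is left with
$$Q_{BV}\bigl([O_f]_\infty\cdot Z\bigr)=\Bigl((\nabla+\hbar\Delta)[O_f]_\infty+\{\gamma_\infty,[O_f]_\infty\}_\Delta+\tfrac12\{\tilde{R}_\nabla,[O_f]_\infty\}_\Delta\Bigr)\cdot Z.$$
(Your phrase about ``two separate multiplications by $\hbar^{-1}d_{TX}R_\nabla$ cancelling'' is misleading --- there is only one such multiplication term and it is simply absorbed into $\pm[O_f]_\infty\cdot Q_{BV}(Z)$ --- but this does not affect the computation.)

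The genuine gap is exactly the step you defer: the term $\tfrac12\{\tilde{R}_\nabla,[O_f]_\infty\}_\Delta$ does \emph{not} vanish for type reasons (e.g.\ the leading contribution to $[O_f]_\infty$ is $O_f$ itself, which depends on $y^i,\bar{y}^j$, while $\iota_{\partial_{y^k}}\tilde{R}_\nabla\neq 0$), so the corollary is not a formal consequence of the two theorems \emph{as stated}. You correctly observe that the fix is to note that the actual QME solution here is $r=\gamma_\infty+\tilde{R}_\nabla/2$, so that Lemma \ref{lemma: global-quantum-observable} produces the differential $\nabla+\hbar\Delta+\{\gamma_\infty+\tfrac12\tilde{R}_\nabla,-\}_\Delta$ on global observables, and the cochain-map statement must be proved with respect to \emph{this} differential; the mechanism is the same boundary/tadpole analysis via Lemma \ref{lemma: partial-P-equal-BV-bracket} that produces the $e^{\tilde{R}_\nabla/2\hbar}$ prefactor in Theorem \ref{theorem: Fedosov-connection-RG-flow-QME}, now applied with the extra vertex $O_f\,d\theta_1$ inserted. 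But you only assert that one of two possible resolutions ``should'' work; neither is carried out, and this is precisely the non-formal content of the corollary in the K\"ahler setting, where the polarized propagator fails to be antisymmetric. To complete the proof you should redo the configuration-space computation of the proof of Theorem \ref{theorem: Fedosov-connection-RG-flow-QME} with the observable insertion present, checking that the boundary strata containing the marked point contribute $D_{F,\alpha}(O_f)=0$ and that the curvature cross-terms assemble into $\tfrac12\{\tilde{R}_\nabla,[O_f]_\infty\}_\Delta$ with the sign needed to cancel the residue above.
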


\subsection{The trace}\label{subsection: trace-star-products}
\

For the above BV quantization, we can define the correlation function of a local quantum observable:
\begin{defn}\label{definition:correlation-functions}
 The {\em correlation function} of a formal smooth function $f\in C^\infty(X)[[\hbar]]$ is defined by
$$
\langle f\rangle := \int_X \int_{Ber} [O_f]_\infty\cdot e^{\tilde{R}_\nabla/2\hbar}\cdot e^{\gamma_\infty/\hbar}.
$$
\end{defn}
The proofs of the following propositions, which we omit, are the same as that in \cite{GLL}:
\begin{prop}[cf. Proposition 2.43 in \cite{GLL}]\label{proposition: correlation-function-vanish-on-commutators}
Let $f,g\in C^\infty(X)[[\hbar]]$ be two smooth functions on $X$, and let $h=[f,g]_{\star_\alpha}$ denote the commutator of $f,g$ under the Fedosov star product $\star_\alpha$. Then $[O_h]_\infty$ is exact under the differential \eqref{equation: differential-global-quantum-observable} and hence $\langle f*_\alpha g\rangle=\langle g*_\alpha f\rangle$.
\end{prop}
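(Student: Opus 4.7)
The strategy is to realize $[O_h]_\infty$ as a total boundary contribution from the collision of two marked insertion points on the circle, and then identify that boundary term as a coboundary under the global observable differential $Q_{\mathrm{glob}} := \nabla + \hbar\Delta + \{\gamma_\infty,-\}_\Delta$. Concretely, I would study the two-insertion version of the homotopy group flow,
$$
\Phi := e^{-\gamma_\infty/\hbar}\cdot \text{Mult}\int_{S^1[*]} e^{\hbar\partial_P + D}\left(O_f\, d\theta_1 \otimes O_g\, d\theta_2 \otimes e^{\otimes\gamma/\hbar}\right),
$$
and apply Stokes' theorem to the $S^1$-direction differential. As in the proof of Lemma \ref{lemma: BV-action-boundary-configuration-space}, the only codimension-one boundary strata that contribute are those where $\theta_1$ and $\theta_2$ collide. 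Using the polarized propagator of Definition \ref{definition: propagator-polarized}, on one side of the collision $P + \tfrac12 \to 0$ while $P - \tfrac12 \to -1$, and on the other side the roles are reversed; the surviving term on each boundary then inserts, at the merged point, exactly one of the two Wick orderings $O_f \star O_g$ or $O_g \star O_f$. Translating through the definition of $[\,-\,]_\infty$, this yields $[O_f\star O_g]_\infty - [O_g\star O_f]_\infty = [O_h]_\infty$ from the boundary integral.

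Next I would convert $d_{S^1}\Phi$ into a $Q_{\mathrm{glob}}$-exact expression. Using the operator identities $[d_{S^1}, \partial_P] = [\Delta, D]$, $[d_{S^1}, D] = 0$, and $[\nabla, \hbar\partial_P + D] = 0$ from Lemma \ref{lemma: differential-propogator-equal-BV}, the manipulations mirror those already carried out in the proof of Theorem \ref{theorem: Fedosov-connection-RG-flow-QME}: moving $d_{S^1}$ past $e^{\hbar\partial_P+D}$ introduces $\hbar\Delta$; the $D$-factor acting on $\gamma$-insertions feeds back $\nabla\gamma$; and $\hbar\Delta$ applied to the $\gamma^{\otimes k}$ factors produces new $\gamma\star\gamma$ vertices that combine with $R_\nabla - \omega_\hbar$ via the Fedosov equation \eqref{equation: Fedosov-equation-gamma}. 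The $\{\gamma_\infty,-\}_\Delta$ piece arises from dividing by $e^{\gamma_\infty/\hbar}$. Collecting everything gives $[O_h]_\infty = Q_{\mathrm{glob}}(\Phi)$, establishing the exactness asserted in the proposition.

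For the trace identity, the construction of $Q_{\mathrm{glob}}$ in Lemma \ref{lemma: global-quantum-observable} is designed precisely so that $Q_{BV}(O\cdot e^{s/\hbar}) = Q_{\mathrm{glob}}(O)\cdot e^{s/\hbar}$ whenever $e^{s/\hbar}$ solves the QME. Applying this to $s = \tilde R_\nabla/2 + \gamma_\infty$ (a QME solution by Theorem \ref{theorem: Fedosov-connection-RG-flow-QME}) and to $O = [O_h]_\infty = Q_{\mathrm{glob}}(\Phi)$ shows that $[O_h]_\infty \cdot e^{\tilde R_\nabla/2\hbar}\cdot e^{\gamma_\infty/\hbar}$ is $Q_{BV}$-exact. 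Since Berezin integration is a cochain map to $(\A_X^\bullet, d_{dR})$ and $X$ is closed, the integral $\int_X\int_{Ber}$ of a $Q_{BV}$-exact section vanishes, giving $\langle f\star_\alpha g\rangle = \langle g\star_\alpha f\rangle$. The main obstacle will be Step~1: carefully tracking the Koszul signs and verifying that the ``diagonal'' piece $\partial_{P_2}$ of the polarized propagator (cf. Lemma \ref{remark: propagator-comparision-with-symplectic-case}), which has no analogue in the symplectic treatment of \cite{GLL}, contributes in such a way that the two boundary strata assemble precisely into the Wick products $O_f\star O_g$ and $O_g\star O_f$ without leftover $\tilde R_\nabla$-dependent corrections.
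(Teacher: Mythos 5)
Your strategy is, in substance, exactly the one the paper intends: the paper does not write out a proof of this proposition at all, but defers to Proposition 2.43 of \cite{GLL}, and your reconstruction is that Stokes-on-configuration-space argument transported to the polarized propagator. Two technical points deserve attention. First, as written your $\Phi$ carries both $d\theta_1$ and $d\theta_2$, so once $D$ distributes $d\theta$'s to the $\gamma$-insertions the integrand is already of top degree on $S^1[k]$ and there is nothing for $d_{S^1}$ to act on (and $d_{S^1}\Phi$ itself is meaningless, since $\Phi$ no longer depends on the configuration space). The Stokes argument must instead be run on the degree-$(k-1)$ component, i.e.\ with only one of the two observables smeared against a one-form, say $O_f\,d\theta_1\otimes O_g\otimes e^{\otimes\gamma/\hbar}$, exactly as in the proof of Lemma \ref{lemma: BV-action-boundary-configuration-space}; the two boundary components of the blow-up where the $O_g$-point approaches the $O_f$-point from either side then produce $O_f\star O_g$ and $-O_g\star O_f$ via the values of $P\pm\tfrac12$ on the two boundary circles, which you have identified correctly. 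Second, the ``obstacle'' you flag at the end is real but resolves in your favor rather than against you: the $\partial_{P_2}$-contractions of $d_{TX}R_\nabla$ against the other insertions assemble, by Lemma \ref{lemma: partial-P-equal-BV-bracket}, into $-\tfrac{1}{2}\{\tilde{R}_\nabla,-\}_\Delta$ exactly as in the proof of Theorem \ref{theorem: Fedosov-connection-RG-flow-QME}, so the exactness one actually obtains is with respect to the differential twisted by the full QME solution $\tilde{R}_\nabla/2+\gamma_\infty$ rather than by $\gamma_\infty$ alone (the statement's reference to \eqref{equation: differential-global-quantum-observable} is loose on this point). That corrected differential is precisely the one for which your final step --- multiplying by $e^{\tilde{R}_\nabla/2\hbar}\cdot e^{\gamma_\infty/\hbar}$, using $Q_{BV}$-exactness, and the fact that Berezin integration is a cochain map --- goes through to give $\langle f\star_\alpha g\rangle=\langle g\star_\alpha f\rangle$.
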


%The following Lemma gives the leading term of the correlation function of $f$:
\begin{prop}[cf. Proposition 2.44 in \cite{GLL}]\label{proposition: leading-term-correlation-function}
The correlation function of $f\in C^\infty(X)[[\hbar]]$ is of the form
$$
\langle f\rangle=\int_X f\cdot\omega^n+O(\hbar).
$$
\end{prop}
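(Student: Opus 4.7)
The proof follows the same strategy as \cite[Proposition 2.44]{GLL}, with a single additional bookkeeping step required to handle the factor $e^{\tilde{R}_\nabla/2\hbar}$ coming from our polarized propagator. First I would unwind the definitions of $[O_f]_\infty$ and the correlation function, using the cancellation of $e^{\pm\gamma_\infty/\hbar}$, to rewrite
$$
\langle f\rangle=\int_X \int_{Ber} e^{\tilde{R}_\nabla/2\hbar}\cdot \text{Mult}\int_{S^1[*]}e^{\hbar\partial_P+D}\bigl(O_f\, d\theta_1\otimes e^{\otimes\gamma/\hbar}\bigr),
$$
and then expand the right-hand side as a sum over Feynman graphs whose vertices are insertions of $\gamma$, of $O_f$, or of $\tilde{R}_\nabla$, and whose edges are applications of $\hbar\partial_P$.

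The core of the argument is a standard $\hbar$-weight counting. Each $\gamma$-vertex carries an $\hbar^{-1}$, each propagator $\hbar^{+1}$, and each $\tilde{R}_\nabla/2\hbar$-insertion $\hbar^{-1}$. The Berezin integration simultaneously forces $y=\bar y=0$ and saturates the fiber form degree to $(n,n)$ in $dy,d\bar y$. Thus every $y,\bar y$ appearing at a vertex must be removed by a propagator, which costs an extra $\hbar$. This observation shows that the only vertices contributing at order $\hbar^0$ after Berezin integration are the lowest-weight ones; for $\gamma$, this is the linear piece $\omega_{i\bar j}(dz^i\otimes\bar y^j-d\bar z^j\otimes y^i)$, whose $d_{TX}$-image $\omega_{i\bar j}(dz^i\wedge d\bar y^j-d\bar z^j\wedge dy^i)$ is precisely the restriction of the fiber symplectic form $\omega$. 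For the same reason, $O_f$ contributes only through its leading symbol $f$, and $\tilde{R}_\nabla$ (being quadratic in the fiber forms but carrying a $\gamma$-absent factor $\hbar^{-1}$) does not participate until higher orders.

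The final step is to assemble the $\hbar^0$ contribution. It comes from Wick-contracting $2n$ copies of the lowest-weight term of $d_{TX}\gamma$ via the Berezin operator $\tfrac{1}{n!}\iota_\Pi^n$, which amounts to a Gaussian integration in the fiber against the exponential of the fiber symplectic form with inverse Poisson tensor $\Pi=\omega^{-1}$. This evaluates to the classical volume form $\omega^n$ on $X$, and multiplied by $f$ and integrated over $X$ produces $\int_X f\cdot\omega^n$. I expect the main technical obstacle to be the careful verification of combinatorial factors and Koszul signs, in particular the interplay of the $1/n!$ in the Berezin operator, the factorials in the exponential series $e^{\gamma_\infty/\hbar}$, and the symmetry factors of the relevant graphs; together with the verification that, unlike in the algebraic index theorem where $\tilde R_\nabla$ contributes Todd-class corrections at each higher order in $\hbar$, here its insertions never appear in the $\hbar^0$ piece because they cannot be paired with enough $\gamma$-vertices to generate the needed fiber symplectic form without incurring a positive power of $\hbar$.
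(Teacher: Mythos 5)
Your proposal is correct and follows essentially the same route as the paper, which omits the proof entirely by deferring to \cite[Proposition 2.44]{GLL}: the argument there is exactly this Feynman-graph expansion with $\hbar$-weight counting, where the leading term comes from the linear piece of $d_{TX}\gamma$ saturating the Berezin integral to give $\omega^n$ (up to the normalization conventions already implicit in the paper's statement) and $O_f$ contributing only its symbol $f$. The one genuinely new ingredient relative to \cite{GLL} --- checking that the extra factor $e^{\tilde{R}_\nabla/2\hbar}$ cannot contribute at leading order, since each $\tilde{R}_\nabla/2\hbar$ insertion supplies two fiber odd forms at cost $\hbar^{-1}$ in place of two $\gamma$-vertices at cost $\hbar^{-2}$ and hence raises the net power of $\hbar$ --- is correctly identified and handled in your proposal.
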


\begin{cor}\label{corollary:trace}
	The association $\Tr: f \mapsto \langle f\rangle$ is the trace of the Fedosov star product $\star_\alpha$.
\end{cor}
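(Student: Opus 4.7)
The plan is to verify directly that the linear map $\Tr:C^\infty(X)[[\hbar]] \to \C[[\hbar]]$, $f \mapsto \langle f \rangle$, satisfies the two defining axioms of a trace of the Fedosov star product $\star_\alpha$ from the definition given at the start of Section 3, namely (i) the cyclic (or commutator-vanishing) property $\Tr(f \star_\alpha g) = \Tr(g \star_\alpha f)$, and (ii) the semiclassical normalization $\Tr(f) = \int_X f \cdot \omega^n + O(\hbar)$. Both of these are packaged as the immediately preceding propositions, so the corollary is essentially an assembly result: there is nothing new to compute, only to match definitions.

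For axiom (i), I would invoke Proposition \ref{proposition: correlation-function-vanish-on-commutators} verbatim. Given $f,g \in C^\infty(X)[[\hbar]]$ and $h := [f,g]_{\star_\alpha} = f \star_\alpha g - g \star_\alpha f$, that proposition tells us that the global quantum observable $[O_h]_\infty$ is exact for the differential $\nabla + \{\gamma_\infty,-\}_\Delta + \hbar\Delta$ on the BV bundle. Since $\int_X \int_{Ber}(-\cdot e^{\tilde R_\nabla/2\hbar}\cdot e^{\gamma_\infty/\hbar})$ descends to the cohomology of this differential (because $e^{\tilde R_\nabla/2\hbar}e^{\gamma_\infty/\hbar}$ solves the QME by Theorem \ref{theorem: Fedosov-connection-RG-flow-QME}, and Berezin integration composed with $\int_X$ is a cochain map), the correlation function of $h$ vanishes. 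This immediately gives $\Tr(f \star_\alpha g) = \Tr(g \star_\alpha f)$.

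For axiom (ii), I would simply cite Proposition \ref{proposition: leading-term-correlation-function}, which asserts precisely the required semiclassical asymptotics $\langle f \rangle = \int_X f \cdot \omega^n + O(\hbar)$. Combining the two, the association $\Tr: f \mapsto \langle f \rangle$ satisfies both axioms and thus defines a trace of $\star_\alpha$, as claimed. Since the two propositions do the heavy lifting, the only potential obstacle is purely bookkeeping: making sure that the factor $e^{\tilde R_\nabla/2\hbar}$ appearing in our QME solution (absent in the symplectic case of \cite{GLL}) is correctly carried through the cochain-map argument for axiom (i) — but this is automatic from Theorem \ref{theorem: Fedosov-connection-RG-flow-QME}, which guarantees $Q_{BV}(e^{\tilde R_\nabla/2\hbar}e^{\gamma_\infty/\hbar}) = 0$, and does not affect the leading $\hbar^0$ contribution needed for axiom (ii).
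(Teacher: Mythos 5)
Your proposal is correct and matches the paper's (implicit) argument exactly: the corollary is stated without proof precisely because it is the assembly of Proposition \ref{proposition: correlation-function-vanish-on-commutators} (giving the cyclicity axiom via exactness of $[O_h]_\infty$ and the cochain-map property of Berezin integration against the QME solution) and Proposition \ref{proposition: leading-term-correlation-function} (giving the normalization axiom). Nothing further is needed.
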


To summarize, the BV quantization method gives rise to a way to explicitly compute an integral density of the trace of a Wick type star product $\star_\alpha$: for any smooth function $f\in C^\infty(X)$, we
\begin{enumerate}
 \item compute $O_f$ using the iterative equation \eqref{equation: iteration-equation-quantum-flat-section},
 \item obtain $\gamma_\infty$ as a Feynman graph expansion (see Lemma \ref{lemma: gamma-infty-graph-expression} below),
 \item compute $[O_f]_\infty$ via Theorem \ref{theorem: local-to-global-factorization-map}, and
 \item finally take the Berezin integral of $[O_f]_\infty\cdot e^{\tilde{R}_\nabla/2\hbar}\cdot e^{\gamma_\infty/\hbar}$ to obtain the density for $\Tr(f)$.
\end{enumerate}
It follows easily that this density for $Tr(f)$ satisfies a locality: at every point $x\in X$, it only depends on the Taylor expansion of $f$, the curvature of $X$ and the formal cohomology class $[\alpha]$ at $x$.

%\begin{thm}\label{theorem: iterative-formula-trace}
%For every Wick type star product $\star_\alpha$ on a K\"ahler manifold $X$ and every smooth function $f\in C^\infty(X)$, there is a closed iterative formula for the integrand giving $\Tr(f)$.  
%\end{thm}
%\begin{proof}
%We have seen that an integrand for $\Tr(f)$ is explicitly
%$$\int_{Ber} [O_f]_\infty\cdot e^{\tilde{R}_\nabla/2\hbar}\cdot e^{\gamma_\infty/\hbar}.
%$$
%And the theorem follows from the fact that  
%$$
%[O_f]_\infty\cdot e^{\gamma_\infty/\hbar}=\text{Mult}\int_{S^1[*]}e^{\hbar\partial_P+D}(O d\theta_1\otimes e^{\otimes\gamma/\hbar})
%$$
%and the Berezin integral $\int_{Ber}$ can be determined iteratively by graph expansions. 
%\end{proof}

\subsection{One-loop exactness and the algebraic index}\label{section: computation-Feynman-graphs}
\

We present the explicit computation of $\Tr(1)$ as an example, whose formula is also known as the algebraic index theorem. First of all, a solution $\gamma_\infty$ of the QME \eqref{equation: quantum-master-equation} is obtained by running the homotopy group flow. This can be expressed as a Feynman graph expansion:\footnote{For some basic facts on Feynman graph expansion and Feynman weights, see Appendix \ref{section: Feynman graphs}. For more details and a proof of this lemma, see Costello's book \cite{Kevin-book}.}
\begin{lem}\label{lemma: gamma-infty-graph-expression}
	Let $\gamma\in\A_X^\bullet(\mathcal{W}_{X,\C})$ and $\gamma_\infty$ be defined as in Definition \ref{definition: gamma-infty}. Then $\gamma_\infty$ can be expressed as a sum of Feynman weights:
	\begin{equation*}\label{equation: gamma-infty-Feynman-weights}
	\gamma_\infty=\sum_{\mathcal{G}:\ \text{connected}}\frac{\hbar^{g(\mathcal{G})}}{|\text{Aut}(\mathcal{G})|}W_{\mathcal{G}}(P, d_{TX}\gamma),
	\end{equation*}
	where the sum is over all connected, stable graphs $\mathcal{G}$, and $g(\mathcal{G})$ denotes the genus of $\mathcal{G}$.
\end{lem}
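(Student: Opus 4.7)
The plan is to prove this by direct Feynman-diagrammatic expansion of the operator $e^{\hbar\partial_P + D}$ acting on $e^{\otimes \gamma/\hbar}$, in the spirit of Costello's treatment of perturbative QFT. Since the lemma is essentially a standard Wick-type expansion adapted to our polarized propagator, the work lies in carefully tracking combinatorics and $\hbar$-weights.

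First I would expand
\[
e^{\otimes\gamma/\hbar}=\sum_{k\geq 0}\frac{1}{k!\,\hbar^{k}}\,\gamma^{\otimes k},
\]
and use the commutation relation $[\hbar\partial_P, D]=0$ from Lemma \ref{lemma: differential-propogator-equal-BV} to factor $e^{\hbar\partial_P+D}=e^{D}e^{\hbar\partial_P}$. The operator $D$ inserts, for each tensor slot it hits, a factor $d\theta_i$ on the configuration space together with the fiberwise de Rham differential $d_{TX}$ applied to $\gamma$. Because $\dim S^1[k]=k$, only terms containing exactly one factor $d\theta_i$ per vertex survive under $\int_{S^1[*]}$, so on a term with $k$ tensor factors the operator $e^D$ effectively produces $(d_{TX}\gamma)^{\otimes k}$ together with the top form $d\theta_1\wedge\cdots\wedge d\theta_k$. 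This reduces the problem to computing the action of $e^{\hbar\partial_P}$ on $(d_{TX}\gamma)^{\otimes k}$ and then integrating.

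Next I would interpret each application of $\hbar\partial_P$ as creating an edge: by its definition, $\partial_P$ selects an ordered pair of tensor factors, contracts them with the Poisson tensor $\omega^{i\bar j}$ (the bivector associated to the Wick product), and attaches the pullback $\pi_{\alpha\beta}^{*}P$ of the propagator on $S^1[2]$. Expanding $e^{\hbar\partial_P}$, a term with $V$ factors of $\gamma$ and $E$ factors of $\hbar\partial_P$ is indexed by a graph $\mathcal{G}$ with $V$ vertices carrying the decoration $d_{TX}\gamma$ and $E$ edges carrying propagators; the Multiply-and-Berezin-integrate step produces exactly the Feynman weight
\[
W_{\mathcal{G}}(P,\,d_{TX}\gamma)=\text{Mult}\int_{S^1[V]} \Big(\prod_{e=(\alpha,\beta)\in E}\pi_{\alpha\beta}^{*}P\cdot \omega^{i_e \bar j_e}\partial_{z^{i_e}}^{(\alpha)}\partial_{\bar z^{j_e}}^{(\beta)}\Big)\big((d_{TX}\gamma)^{\otimes V}\big)\prod_{i=1}^{V}d\theta_i,
\]
up to Koszul signs that are absorbed into the conventions. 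Standard combinatorics says that the $1/k!$ from $e^{\otimes\gamma/\hbar}$ and $1/E!$ from $e^{\hbar\partial_P}$, together with the multiplicity of graph realizations, collapse to $1/|\mathrm{Aut}(\mathcal{G})|$.

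Finally I would count $\hbar$: each vertex carries $\hbar^{-1}$ and each edge carries $\hbar^{+1}$, so a graph contributes $\hbar^{E-V}=\hbar^{L-1}$ where $L=b_1(\mathcal{G})$. Taking the logarithm via the standard linked-cluster (exponential) theorem, which expresses $\log$ of a sum over all graphs as the sum over connected graphs, gives
\[
\frac{\gamma_\infty}{\hbar}=\sum_{\mathcal{G}\ \text{connected}}\frac{\hbar^{g(\mathcal{G})-1}}{|\mathrm{Aut}(\mathcal{G})|}W_{\mathcal{G}}(P,d_{TX}\gamma),
\]
which is the desired formula after multiplying through by $\hbar$. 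The main obstacle, and the reason the excerpt defers to Costello's book, is not any one conceptual ingredient but the careful bookkeeping: verifying the identification of the combinatorial prefactors with $1/|\mathrm{Aut}(\mathcal{G})|$, tracking Koszul signs arising from the odd variables in the BV bundle, and justifying the exchange of infinite sums with the configuration space integrals (which is the usual stability condition ensuring only stable graphs appear).
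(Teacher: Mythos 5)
The paper itself offers no proof of this lemma --- the footnote defers to Costello's book, and Appendix~\ref{section: Feynman graphs} essentially takes the graph expansion as the definition of the HRG operator via $e^{W(P,F)/\hbar}=e^{\hbar\partial_P}(e^{F/\hbar})$ --- and your argument is precisely the standard expansion that reference supplies: the dimension count on $S^1[k]$ forcing exactly one $D$-insertion per tensor slot, $\hbar\partial_P$ creating edges, the collapse of the $1/k!$ and $1/E!$ factors to $1/|\mathrm{Aut}(\mathcal{G})|$, the $\hbar^{E-V}=\hbar^{b_1(\mathcal{G})-1}$ count, and the linked-cluster theorem to pass to connected graphs. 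The one imprecision is your displayed formula for $W_{\mathcal{G}}$: in this paper each edge carries the full polarized kernel $\pi^*_{\alpha\beta}(P+\tfrac{1}{2})\,\omega^{i\bar j}\partial^{(\alpha)}_{z^{i}}\partial^{(\beta)}_{\bar z^{j}}-\pi^*_{\alpha\beta}(P-\tfrac{1}{2})\,\omega^{i\bar j}\partial^{(\alpha)}_{\bar z^{j}}\partial^{(\beta)}_{z^{i}}$ from Definition~\ref{definition: propagator-polarized} rather than the single term you wrote, though this does not change the structure of the argument.
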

It turns out that, in the K\"ahler case, if $\gamma$ is the solution of the Fedosov equation \eqref{equation: Fedosov-equation-gamma} obtained by quantizing Kapranov's $L_\infty$-algebra structure, then the Feynman graph expansion of the QME solution $\gamma_\infty$ involves {\em only trees and one-loop graphs}; in other words, $\gamma_\infty$ gives a {\em one-loop exact BV quantization} of the K\"ahler manifold $X$. This is in sharp contrast with the general symplectic case \cite{GLL}, in which the BV quantization involves all-loop quantum corrections. (To see this, only notice that the Fedosov connection in Fedosov's original construction is real, thus will have symmetries in $y^i$ and $\bar{y}^i$'s). 
The same kind of one-loop exactness was observed for the holomorphic Chern-Simons theory by Costello \cite{Kevin-CS} and for a sigma model from $S^1$ to the target $T^*Y$ (cotangent bundle of a smooth manifold $Y$) by Gwilliam-Grady \cite{Gwilliam-Grady}.

As a corollary, we obtain a succinct explicit expression of the algebraic index $\Tr(1) = \int_X\int_{Ber}e^{\tilde{R}_\nabla/2\hbar}e^{\gamma_\infty/\hbar}$ of the star product $\star_\alpha$. The latter is a cochain level enhancement of the result in \cite{GLL}: there the technique of equivariant localization was applied to show that all graphs of higher genera ($\geq 2$) give rise to exact differential forms after the Berezin integration and thus do not contribute after integration over $X$, while the Feynman weights associated to these graphs in our QME solution $\gamma_\infty$ vanish already on the cochain level.
%In particular, our result provides a much simpler presentation of the algebraic index theorem.

%\begin{rmk}
%	If the Feynman weights associated to graphs of higher genera ($\geq 2$) give rise to exact differential forms, we may call the quantization {\em cohomologically one-loop exact}; this is much more commonly found in the mathematical physics literature and should be distinguished from the notion of one-loop exactness in this paper.
%\end{rmk}

\subsubsection{A weight on the BV bundle and one-loop exactness}
\

The key to one-loop exactness lies in the existence of a suitable weight on the BV bundle:
\begin{defn}
We define a {\em weight} on the BV bundle $\A_X^\bullet(\hat{\Omega}_{TX}^{-\bullet})[[\hbar]]$ by setting   
\begin{itemize}
 \item $|\bar{y}^i|=|d\bar{y}^i|=1$, and
 \item $|y^i|=|dy^i|=|dz^i|=|d\bar{z}^i|=|\hbar|=0$. 
\end{itemize}
\end{defn}
The following two lemmas are shown by simple computations.
\begin{lem}\label{lemma: gamma-weight}
For every closed $(1,1)$-form $\alpha$, let $D_{F,\alpha}=\nabla-\delta+\frac{1}{\hbar}[\gamma,-]_\star$ be the associated Fedosov connection obtained in Theorem \ref{proposition: Fedosov-connection-general}. Then in the $\hbar$ power expansion of $\gamma$:
$$
\gamma=\sum_{i\geq 0}\hbar^i\cdot\gamma_i,
$$
the weight of each term in $\gamma_0$ is either $0$ or $1$.  
\end{lem}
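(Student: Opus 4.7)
The plan is to read off $\gamma_0$ directly from the construction in Theorem \ref{proposition: Fedosov-connection-general} and verify the weight bound by inspection; no analysis is needed. Recall that $\gamma = I_\alpha + \omega_{i\bar{j}}(dz^i\otimes\bar{y}^j - d\bar{z}^j\otimes y^i)$ where $I_\alpha = I + J_\alpha$, with $I = \sum_{n\geq 2} I_n$ the quantization of Kapranov's $L_\infty$-structure and $J_\alpha = \sum_{k\geq 1}\bigl((\delta^{1,0})^{-1}\circ\nabla^{1,0}\bigr)^k(\bar{\partial} g)$ the correction coming from a potential $g$ of $\alpha$.

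First I would observe that $J_\alpha$ does not contribute to $\gamma_0$. Since $[\alpha]\in\hbar H^2_{dR}(X)[[\hbar]]$, any choice of potential $g$ satisfies $g = O(\hbar)$, so $J_\alpha = O(\hbar)$ and the constant-in-$\hbar$ part $\gamma_0$ receives no contribution from $J_\alpha$. Next, by the Remark following Theorem \ref{theorem: quantization-of-Kapranov-is-Fedosov-connection}, the section $I = \sum_{n\geq 2} I_n$ is purely classical (it involves no positive powers of $\hbar$). Consequently,
$$\gamma_0 \;=\; \sum_{n\geq 2} I_n \;+\; \omega_{i\bar{j}}\,dz^i\otimes\bar{y}^j \;-\; \omega_{i\bar{j}}\,d\bar{z}^j\otimes y^i.$$

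Finally I would verify the weights monomial by monomial. From the explicit formula $I_n = R^j_{i_1\cdots i_n,\bar{l}}\,\omega_{j\bar{k}}\, d\bar{z}^l\otimes\bigl(y^{i_1}\cdots y^{i_n}\bar{y}^k\bigr)$, each $I_n$ carries exactly one anti-holomorphic jet variable $\bar{y}^k$ and no $d\bar{y}$'s, hence has weight $1$. The linear piece $\omega_{i\bar{j}}\,dz^i\otimes\bar{y}^j$ has weight $1$, while $\omega_{i\bar{j}}\,d\bar{z}^j\otimes y^i$ has weight $0$ because $y^i$, $d\bar{z}^j$ all carry weight $0$. Hence every monomial appearing in $\gamma_0$ has weight either $0$ or $1$. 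There is no real obstacle here, since the lemma is a bookkeeping statement; the only point worth flagging in writing it up is why $J_\alpha$ starts at order $\hbar$, which is immediate from $\alpha$ being of order $\hbar$.
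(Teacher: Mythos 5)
Your proof is correct and follows essentially the same route as the paper's: identify $\gamma_0$ explicitly as $\sum_{n\geq 2}I_n$ plus the linear terms $\omega_{i\bar{j}}(dz^i\otimes\bar{y}^j-d\bar{z}^j\otimes y^i)$, then check weights monomial by monomial, noting that each $I_n$ carries exactly one $\bar{y}$. You are in fact slightly more careful than the paper in spelling out why $J_\alpha$ contributes nothing to $\gamma_0$ (namely $\alpha=O(\hbar)$ forces $J_\alpha=O(\hbar)$), a point the paper leaves implicit.
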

\begin{proof}
 The only term in $\gamma_0$ of weight $0$ is $\omega_{i\bar{j}}d\bar{z}^j\otimes \bar{y}^i$. All the other monomials in $\gamma_0$ have exactly one $\bar{y}^i$'s, and are thus of weight $1$. 
\end{proof}

\begin{lem}\label{lemma: operators-preserving-weight}
The following operators preserve the weight:
$$
\hbar\Delta, \hbar\partial_{P},\hbar\{-,-\}_\Delta.
$$
In particular, the homotopy group flow operator also preserves the weight. 
\end{lem}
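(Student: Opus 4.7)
The plan is to verify weight-preservation for each of the three operators directly from its local formula, and then to combine this with the easy observation that $D$ preserves the weight to treat the homotopy group flow operator.

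First, for $\hbar\Delta$, I would use the explicit expression $\Delta = \omega^{p\bar{q}}(\mathcal{L}_{\partial_{z^p}}\iota_{\partial_{\bar{z}^q}} - \mathcal{L}_{\partial_{\bar{z}^q}}\iota_{\partial_{z^p}})$ recalled in the proof of Lemma \ref{lemma: partial-P-equal-BV-bracket}. In each of the two summands, the $z$-direction derivation differentiates a weight-$0$ generator ($y^p$ or $dy^p$) and the $\bar{z}$-direction derivation differentiates a weight-$1$ generator ($\bar{y}^q$ or $d\bar{y}^q$), and these are contracted by the scalar coefficient $\omega^{p\bar{q}}$, which itself has weight $0$. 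Tracking the weights of the input and output on a weight-homogeneous monomial, together with the $\hbar$ prefactor, yields the claim.

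Next, for $\hbar\partial_P$, the explicit formula in Definition \ref{definition: propagator-polarized} writes $\partial_P$ on $a_1\otimes\cdots\otimes a_k$ as a sum over ordered pairs $\alpha<\beta$ of $\omega^{i\bar{j}}$-contractions of $\mathcal{L}_{\partial_{z^i}}a_\alpha$ and $\mathcal{L}_{\partial_{\bar{z}^j}}a_\beta$ (plus the symmetric variant), each tensored with a pulled-back $1$-form on the configuration space $S^1[k]$. Since forms on $S^1[k]$ are not part of the BV bundle and carry weight $0$, and the $\omega^{i\bar{j}}$-contraction has the same structure as in $\Delta$, the identical weight analysis applies. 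For $\hbar\{-,-\}_\Delta$, the BV bracket is by definition the failure of $\Delta$ to be a derivation, so its weight behavior is inherited from that of $\Delta$ summand-by-summand.

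Finally, for the homotopy group flow operator $e^{\hbar\partial_P+D}$, I would verify that $D$ preserves the weight: it acts on a single tensor factor by the fiberwise de Rham differential $d_{TX}$, which sends $y^p\mapsto dy^p$ and $\bar{y}^q\mapsto d\bar{y}^q$ and therefore preserves weight on generators, and the accompanying factor $d\theta_i$ lives on the configuration space and carries weight $0$. Combined with the weight-preservation of $\hbar\partial_P$ just established, this shows that $\hbar\partial_P+D$ preserves weight, and hence so does its exponential. I do not expect any substantial obstacle: the argument is essentially bookkeeping once the local formulas are expanded. The one mild subtlety is tracking the Koszul signs when expanding the BV bracket via $\{A,B\}_\Delta=\Delta(AB)-\Delta(A)\cdot B\pm A\cdot\Delta(B)$, but signs do not affect the weight count.
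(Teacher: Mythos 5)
Your verification is correct and supplies exactly the ``simple computations'' that the paper omits (it states the lemma without proof): each of $\Delta$, $\partial_P$ and $\{-,-\}_\Delta$ contracts one weight-$0$ generator ($y^i$ or $dy^i$) against one weight-$1$ generator ($\bar{y}^j$ or $d\bar{y}^j$) via the weight-$0$ coefficient $\omega^{i\bar{j}}$, hence is homogeneous of weight $-1$, while $D$ and the integration over $S^1[k]$ are of weight $0$. The one point you should make explicit is that your phrase ``together with the $\hbar$ prefactor'' requires $|\hbar|=1$, whereas the definition preceding the lemma literally assigns $|\hbar|=0$; under that literal convention $\hbar\Delta$, $\hbar\partial_P$ and $\hbar\{-,-\}_\Delta$ still lower the weight by $1$ and the statement would fail as written. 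This is evidently a slip in the paper's definition rather than in your argument: what the subsequent weight count in Theorem \ref{theorem: gamma-infty-one-loop} actually uses is precisely that each propagator contributes weight $-1$ and that no operator ever increases the weight, and your computation establishes exactly that.
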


Here is one of the main discoveries in this paper:
\begin{thm}[One-loop exactness]\label{theorem: gamma-infty-one-loop}
Let $\gamma$ be a solution of the Fedosov equation \eqref{equation: Fedosov-equation-gamma}.
Then 
$$W_{\mathcal{G}}(P, d_{TX}\gamma) = 0\quad \text{ whenever $b_1(\mathcal{G}) \geq 2$}.$$
As a result, the graph expansion of the QME solution $\gamma_\infty$, as defined in Definition \ref{definition: gamma-infty}, involves only trees and one-loop graphs:
$$
\gamma_\infty=\sum_{\mathcal{G}:\ \text{connected},\ b_1(\mathcal{G})=0,1}\frac{\hbar^{g(\mathcal{G})}}{|\text{Aut}(\mathcal{G})|}W_{\mathcal{G}}(P, d_{TX}\gamma). 
$$
\end{thm}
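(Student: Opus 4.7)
The plan is to run a weight-counting argument using the grading from Lemma \ref{lemma: operators-preserving-weight}. I will bound the total weight of any Feynman graph contribution and observe that this bound is strictly negative as soon as $b_1(\mathcal G) \ge 2$, while every monomial in the BV bundle has non-negative weight.

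First, I would analyze the weight of the vertex label $d_{TX}\gamma$ used in the graph expansion of Lemma \ref{lemma: gamma-infty-graph-expression}. By Lemma \ref{lemma: gamma-weight}, each monomial in $\gamma_0$ has weight $0$ or $1$. The higher $\hbar$-order corrections $\gamma_k$ ($k\ge 1$) arise from the section $J_\alpha$ constructed in Theorem \ref{proposition: Fedosov-connection-general} and lie in $\A_X^{0,1}(\mathcal W_X)$, so they have weight $0$. Since $d_{TX}$ preserves the weight (sending $y^i\mapsto dy^i$ and $\bar y^j\mapsto d\bar y^j$), the vertex label $d_{TX}\gamma$ has weight $\le 1$ uniformly in the $\hbar$-expansion.

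Second, I would show that every propagator contraction lowers the total weight by exactly $1$. The polarized propagator $\partial_P$ is a Wick-type contraction of the shape $\omega^{i\bar j}\partial_{y^i}\otimes\partial_{\bar y^j}$, so its $\partial_{\bar y^j}$-factor removes one $\bar y^j$ of weight $1$, while the $\partial_{y^i}$-factor removes a weight-$0$ generator. Equivalently, Lemma \ref{lemma: operators-preserving-weight} states that $\hbar\partial_P$ preserves the weight, which pins down the weight of each edge as $-1$. The operator $D$ and the configuration-space integration are manifestly weight-preserving.

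Combining these two counts, for a connected graph $\mathcal G$ with $V$ vertices and $E$ edges,
\[
w\bigl(W_{\mathcal G}(P, d_{TX}\gamma)\bigr)\;\le\; V\cdot 1\;-\;E\;=\;1 - b_1(\mathcal G),
\]
where $w(\cdot)$ denotes the weight. Every monomial of the BV bundle $\A_X^\bullet(\widehat\Omega^{-\bullet}_{TX})[[\hbar]]$ has non-negative weight since the weight simply counts occurrences of $\bar y^j$ and $d\bar y^j$. So when $b_1(\mathcal G)\ge 2$, the bound becomes $\le -1<0$, forcing $W_{\mathcal G}(P, d_{TX}\gamma)=0$. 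The claimed one-loop-exact formula for $\gamma_\infty$ then follows directly from Lemma \ref{lemma: gamma-infty-graph-expression} by discarding all terms with $b_1(\mathcal G)\ge 2$.

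The main subtlety will be the second step: one must verify that every contraction of the polarized propagator $\partial_P = \partial_{P_1} + \partial_{P_2}$ (Lemma \ref{remark: propagator-comparision-with-symplectic-case}) genuinely removes a $\bar y$, and that the configuration-space integrals do not inject extra $\bar y,d\bar y$ content. Neither $\partial_{P_1}$ nor $\partial_{P_2}$ causes any problem because both are built from the same Wick pairing $\omega^{i\bar j}\partial_{y^i}\otimes\partial_{\bar y^j}$, and the integrals over $S^1[V]$ act only on the configuration-space factor, commuting with the Weyl-/BV-fiber grading.
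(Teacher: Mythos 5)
Your proof is correct and is essentially the paper's own argument: the same weight count (vertices contribute at most $+1$ each, propagators $-1$ each, so the total weight is at most $1-b_1(\mathcal{G})$, which must be non-negative) forces $b_1(\mathcal{G})\leq 1$. Your explicit check that the higher $\hbar$-order terms $\gamma_k$ lie in $\A_X^{0,1}(\mathcal{W}_X)$ and hence have weight $0$ is a detail the paper leaves implicit, but the route is identical.
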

\begin{proof}
Let $\mathcal{G}$ be a connected graph with first Betti number $b_1(\mathcal{G})\geq 2$. Since every term in $\gamma$ has at most weight $1$, the total weights of those decorated vertices are bounded above by $|V(\mathcal{G})|$.
On the other hand, each propagator $P$ is of weight $-1$. Thus the internal edges of $\mathcal{G}$ decorated by $P$ has total weight $-(|V(\mathcal{G}|+b_1(\mathcal{G})-1)$. Hence we must have
$$
|V(\mathcal{G})|-(|V(\mathcal{G}|+b_1(\mathcal{G})-1)\geq 0,
$$
which implies that $b_1(\mathcal{G})\leq 1$. 
\end{proof}
\begin{rmk}
The argument here is similar to the proof of one-loop exactness of the holomorphic Chern-Simons theory by Costello \cite{Kevin-CS}. 
\end{rmk}

\subsubsection{A cochain level formula for the trace}
\

To derive a formula for the algebraic index $\Tr(1) = \int_X\int_{Ber}e^{\tilde{R}_\nabla/2\hbar}e^{\gamma_\infty/\hbar}$ of the star product $\star_\alpha$, we first extend the symbol map to the BV bundle 
 \begin{equation*}\label{equation: symbol-map-BV-bundle}
 \begin{aligned}
  \sigma: \A_X^\bullet(\hat{\Omega}_{TX}^{-\bullet})&\rightarrow\A_X^\bullet\\
  y^i, \bar{y}^j, dy^i,d\bar{y}^j&\mapsto 0. 
 \end{aligned}
 \end{equation*}

\begin{lem}\label{lemma: Berezin-integral-graph}
The Berezin integral $\int_{Ber}e^{\tilde{R}_\nabla/2\hbar}e^{\gamma_\infty/\hbar}$ can be expressed as follows:
$$
\int_X\int_{Ber}e^{\tilde{R}_\nabla/2\hbar}e^{\gamma_\infty/\hbar}=\int_X\sigma\left(e^{\hbar\iota_{\Pi}}(e^{\tilde{R}_\nabla/2\hbar}e^{\gamma_\infty/\hbar})\right)
$$
\end{lem}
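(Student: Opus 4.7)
The plan is to verify the identity by decomposing both sides into contributions indexed by the fibrewise bidegree in $(dy, d\bar y)$, and showing that only the top $(n,n)$-component ultimately contributes to $\int_X$. For any $a \in \A_X^\bullet(\hat\Omega_{TX}^{-\bullet})$, write $a = \sum_{p,q\geq 0} a_{p,q}$ according to the number of $dy$- and $d\bar y$-factors. By definition, $\int_{Ber}(a) = \tfrac{1}{n!}(\iota_\Pi)^n(a)\big|_{y=\bar y=0}$, and since each $\iota_\Pi$ pairs one $dy$ against one $d\bar y$ via $\omega^{i\bar j}$, the operator $(\iota_\Pi)^n$ requires exactly $n$ copies of each to act nontrivially; after its application no odd fibrewise form remains, so the restriction $y=\bar y=0$ coincides with applying $\sigma$. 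Hence $\int_{Ber}(a) = \sigma\bigl(\tfrac{1}{n!}(\iota_\Pi)^n(a_{n,n})\bigr)$.

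For the right-hand side, expand
\[
\sigma\bigl(e^{\hbar\iota_\Pi}(a)\bigr) \;=\; \sum_{k\geq 0}\frac{\hbar^k}{k!}\,\sigma\bigl((\iota_\Pi)^k(a)\bigr) \;=\; \sum_{k=0}^{n}\frac{\hbar^k}{k!}\,\sigma\bigl((\iota_\Pi)^k(a_{k,k})\bigr),
\]
since $\sigma$ annihilates any piece still carrying an uncontracted $dy$, $d\bar y$, $y$, or $\bar y$, so only the component $a_{k,k}$ (with exactly $k$ of each odd fibre form and no $y, \bar y$ after contraction) contributes at level $k$.

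The crux is to show that, after integration over $X$, only the $k = n$ term survives, and that it matches $\int_X\int_{Ber}(a)$ exactly. For this I would use the rigid bidegree structure of $a = e^{\tilde R_\nabla/2\hbar}e^{\gamma_\infty/\hbar}$: each factor of $\tilde R_\nabla = \sqrt{-1}R_{i\bar j k\bar l}\,dz^i\wedge d\bar z^j\otimes dy^k\wedge d\bar y^l$ carries balanced degrees $(1,1;1,1)$ in $(dz, d\bar z; dy, d\bar y)$; in the Feynman-graph construction of $\gamma_\infty$, the operator $D$ converts $y\to dy$ and $\bar y\to d\bar y$ while preserving $(dz, d\bar z)$-content, and the polarized propagator $\partial_P$ preserves all four counts. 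Together with the explicit form of $\gamma$ from the Fedosov equation, this forces any $a_{k,k}$ to have $(dz, d\bar z)$-degree at most $(k,k)$, so only $k = n$ can reach the top $2n$-form on $X$ and contribute to $\int_X$. The $\hbar^n$ appearing in the $k=n$ term from $e^{\hbar\iota_\Pi}$ is absorbed by the $\hbar^{-n}$ implicit in the $n$-fold $(dy, d\bar y)$-producing factors coming from $e^{\tilde R_\nabla/2\hbar}$ (and $e^{\gamma_\infty/\hbar}$), giving the claimed identity.

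The main obstacle is making the bidegree pairing for $\gamma_\infty$ fully rigorous, by carefully tracking how each operation in the homotopy group flow (multiplication, $D$, and $\partial_P$) interacts with the $(dz, d\bar z)$- and $(dy, d\bar y)$-degrees; this should follow by induction on the graph expansion, or by a weight-type argument in the spirit of Lemmas~\ref{lemma: gamma-weight} and \ref{lemma: operators-preserving-weight}. Once the degree-matching is in hand, the identity reduces to comparing the normalization factors from $\tfrac{1}{n!}(\iota_\Pi)^n$ on the left with $\tfrac{\hbar^n}{n!}(\iota_\Pi)^n$ on the right, which cancels against the $\hbar$-power bookkeeping of the two exponentials.
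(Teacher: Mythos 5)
Your argument is essentially the paper's own proof: the key point in both is that every term of $\tilde{R}_\nabla$ and $\gamma_\infty$ carries the same degree in the base forms $dz^i,d\bar{z}^j$ as in the fibrewise odd forms $dy^i,d\bar{y}^j$, so after $\int_X$ selects the top base degree only the term $\frac{1}{n!}(\hbar\iota_{\Pi})^n$ of $e^{\hbar\iota_{\Pi}}$ survives, which is precisely the Berezin integral. Your additional care with the $\hbar$-power bookkeeping concerns a normalization the paper leaves implicit, but otherwise the two arguments coincide.
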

\begin{proof}
A simple observation is that every term in $\tilde{R}_\nabla$ and $\gamma_\infty$ has the same degree in $dz^i,d\bar{z}^j$'s and $dy^i,d\bar{y}^j$'s. Since the integration $\int_X$ only takes the top degree term in $dz^i,d\bar{z}^j$'s, the only term that matters in $e^{\hbar\iota_{\Pi}}$ is $\frac{1}{n!}(\hbar\iota_{\Pi})^n$.
\end{proof}

Then we compute: 
\begin{align*}
e^{\hbar\iota_{\Pi}}(e^{\tilde{R}_\nabla/2\hbar}e^{\gamma_\infty/\hbar})
& = e^{\hbar\iota_{\Pi}}\left(e^{\tilde{R}_\nabla/2\hbar}\cdot \text{Mult}\int_{S^1[*]}e^{\hbar\partial_P+D}e^{\otimes\gamma/\hbar}\right)\\
& \overset{(*)}{=} e^{\hbar\iota_{\Pi}}\left(\text{Mult}\int_{S^1[*]}e^{\hbar\partial_P+D}(e^{\tilde{R}_\nabla/2\hbar}d\theta_1)\otimes e^{\otimes\gamma/\hbar}\right)\\
& = \text{Mult}\int_{S^1[*]}e^{\hbar(\iota_{\Pi}+\partial_P)}\circ e^{D}\left((e^{\tilde{R}_\nabla/2\hbar}d\theta_1)\otimes e^{\otimes\gamma/\hbar}\right)\\
& = \exp\left(\hbar^{-1}\cdot\sum_{\mathcal{G}\ \text{connected}}\frac{\hbar^{g(\mathcal{G})}}{|\text{Aut}(\mathcal{G})|}W(P+\iota_{\Pi}, d\theta\otimes (d_{TX}\gamma+\frac{1}{2}\tilde{R}_\nabla))\right);
\end{align*}
here the equality $(*)$ follows from the facts that $\partial_P$ cannot be applied to $\tilde{R}_\nabla/2$ by type reasons and that $D(\tilde{R}_\nabla)=0$. 

This computation says that the integrand whose integral gives rise to $\Tr(1)$ can be expressed as a sum of Feynman weights. But note that they are different from those for $\gamma_\infty$ because the propagator is now $\hbar(\partial_P+\iota_{\Pi})$. Similar to Theorem \ref{theorem: gamma-infty-one-loop}, this graph sum also involves only trees and one-loop graphs:
\begin{prop}\label{prop:one-loop-exact-algebraic-index}
In the Feynman graph expansion
$$
\sum_{\mathcal{G}\ \text{connected}}\frac{\hbar^{g(\mathcal{G})}}{|\text{Aut}(\mathcal{G})|}W(P+\iota_{\Pi}, d_{TX}\gamma+\frac{1}{2}\tilde{R}_\nabla),
$$
only terms which correspond to graphs with first Betti number $0$ and $1$ are non-vanishing.
\end{prop}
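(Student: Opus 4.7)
The plan is to adapt the weight-counting argument of Theorem \ref{theorem: gamma-infty-one-loop} verbatim, with only two small updates to accommodate the new ingredients in the Feynman expansion.

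First I will check the weights of the new ingredients. The expression $\tilde{R}_\nabla=\sqrt{-1}\,R_{i\bar{j}k\bar{l}}\,dz^i\wedge d\bar{z}^j\otimes dy^k\wedge d\bar{y}^l$ carries exactly one $d\bar{y}$ (and no $\bar{y}$), so it has weight exactly $1$. This matches the upper bound on the weight of the terms of $\gamma$ (cf.\ Lemma \ref{lemma: gamma-weight} for $\gamma_0$; the higher $\hbar$-order terms of $\gamma$ all come from $J_\alpha$, which lies in $\mathcal{A}_X^{0,1}(\mathcal{W}_X)$ and thus has weight $0$ since it contains no $\bar{y}$'s), and $d_{TX}$ preserves the weight because $y^i\mapsto dy^i$ and $\bar{y}^j\mapsto d\bar{y}^j$. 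Hence every term of the vertex decoration $d_{TX}\gamma+\tfrac{1}{2}\tilde{R}_\nabla$ has weight at most $1$. The new half of the propagator, $\iota_\Pi$, is contraction by the Poisson tensor $\Pi=\omega^{i\bar{j}}\partial_{y^i}\wedge\partial_{\bar{y}^j}$: each application removes one $dy$-factor (weight $0$) and one $d\bar{y}$-factor (weight $1$), so it has weight $-1$, exactly matching the weight of $\partial_P$ used in the proof of Theorem \ref{theorem: gamma-infty-one-loop}.

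With these two inputs in place, the counting is identical to the earlier one: for a connected graph $\mathcal{G}$ with $V=|V(\mathcal{G})|$ vertices and $E=V+b_1(\mathcal{G})-1$ internal edges, each vertex contributes weight at most $1$ and each edge contributes weight $-1$, so $W_{\mathcal{G}}(P+\iota_\Pi,\, d_{TX}\gamma+\tfrac{1}{2}\tilde{R}_\nabla)$ has total weight at most $V-E=1-b_1(\mathcal{G})$. Since this Feynman weight takes values in $\A_X^\bullet(\hat{\Omega}^{-\bullet}_{TX})[[\hbar]]$, whose elements all have non-negative weight, we must have $1-b_1(\mathcal{G})\geq 0$, i.e., $b_1(\mathcal{G})\leq 1$, which is exactly the claim.

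The only delicate point -- essentially the one place where the present argument is not a literal copy of that of Theorem \ref{theorem: gamma-infty-one-loop} -- is the interpretation of $\iota_\Pi$ in the Wick-type expansion of $e^{\hbar(\iota_\Pi+\partial_P)}$: an application of $\iota_\Pi$ may pair two legs on distinct vertices (a genuine internal edge) or two legs on a single vertex (a tadpole). Since both contribute to the edge count $E$ and each still lowers the net weight by $1$, the relation $E=V+b_1(\mathcal{G})-1$ and the ensuing inequality $b_1(\mathcal{G})\leq 1$ are unaffected, completing the proof.
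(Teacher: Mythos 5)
Your proposal is correct and is essentially the paper's own argument: the paper proves this proposition by the same weight-counting as in Theorem \ref{theorem: gamma-infty-one-loop}, only noting that the new vertex $\tfrac{1}{2}\tilde{R}_\nabla$ still has weight $1$ and the new propagator component $\iota_\Pi$ still has weight $-1$, so that non-negativity of the total weight again forces $b_1(\mathcal{G})\leq 1$. Your explicit verification of these two weights, and your remark that tadpole contractions by $\iota_\Pi$ still count toward $E=V+b_1(\mathcal{G})-1$, are exactly the points the paper leaves implicit.
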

%Thus we only need to look at trees and one-loop graphs. Another simplification in our computation is due to the observation that the symbol map $\sigma$ in Definition \ref{definition: symbol-map-BV-bundle} has the graphical meaning of ignoring those Feynman graphs with tails.

We now proceed to compute the Feynman weights associated to trees and one-loop graphs.
To clarify the computation, we first decompose the terms labeling the vertices and edges respectively. For the vertices, recall that the term $\gamma$ in the Fedosov connection has the $\hbar$-power expansion:
$$
\gamma=\sum_{i\geq 0}\hbar^i\gamma_i.
$$
For later computations, we give a detailed description of these $\gamma_i$'s:
\begin{itemize}
 \item For $\gamma_0$, we know that
$$
\gamma_0=\omega_{i\bar{j}}(dz^i\otimes\bar{y}^j-d\bar{z}^j\otimes y^i)+\gamma_0',
$$
where all terms of $\gamma_0'$ are at least cubic, with the leading term $R_{i\bar{j}k\bar{l}}d\bar{z}^j\otimes y^iy^k\bar{y}^l$. 
\item For every $i>0$, the leading term of $\gamma_i$ is given by $(\delta^{1,0})^{-1}(\alpha_i)$.
\end{itemize}
For the propagators, notice that $\partial_{P}$ and $\iota_{\Pi}$ are respectively tensor products of forms on $S^1[*]$ and tensors on the BV bundle of $X$. They correspond to the {\em analytic} and {\em combinatorial} parts of the propagators respectively.

We assign colors to both the vertices and edges, according to the previous decomposition of the functionals and propagators. For edges, we have
\begin{enumerate}
 \item a blue edge is labeled by $\partial_{P_1}$;
 \item a black edge is labeled by $\partial_{P_2}$;
 \item a red edge is labeled by the operator $\iota_{\pi}$;
 \item a yellow vertex is labeled by $\omega_{i\bar{j}}(dz^i\otimes d\bar{y}^j-d\bar{z}^j\otimes dy^i)$;
 \item a purple vertex is labeled by $d_{TX}\gamma_0'$;
 \item a green vertex is labeled by $\sum_{i>0}d_{TX}\gamma_i$;
 \item a a blue vertex is labeled by $\frac{1}{2}\tilde{R}_\nabla$.
\end{enumerate}
In a graph, when we do not distinguish $\partial_{P_1}$ or $\partial_{P_2}$, we assign black color to this edge. Moreover, since the analytic parts of $\partial_{P_1}, \partial_{P_2}$ and $\iota_{\Pi}$ are all given by contractions using the inverse of the K\"ahler form, we assign an orientation on each internal edge, going from the holomorphic derivatives to the anti-holomorphic derivatives.

Some sample pictures are listed here:
$$
\figbox{0.39}{sample}
$$

\begin{lem}\label{lemma: properties-vertex-edges}
The vertices in our Feynman graphs have the following properties:
\begin{enumerate}
 \item A purple vertex is at least trivalent with exactly one incoming tail and at least two outgoing tails. 
 \item The tails on a green vertex must be outgoing.
 \item A blue vertex has exactly one incoming tail and one outgoing tail.
 \item Every yellow, purple or green vertex can be connected to at most one red internal edge.
\end{enumerate}
\end{lem}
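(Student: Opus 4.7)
The plan is to verify each of the four properties by directly reading off the types of fiberwise variables ($y^i$, $\bar{y}^j$, $dy^i$, $d\bar{y}^j$) that appear in the label of each vertex, since the analytic role of the propagators dictates which tails match: the contractions in $\partial_P$ are built from $\omega^{i\bar{j}}$ pairings between a holomorphic derivative and an anti-holomorphic derivative, while $\iota_\Pi$ specifically contracts a $dy^i$ with a $d\bar{y}^j$. So the four claims reduce to monomial bookkeeping about the labels.

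First, for claim (1), I will use the description of $\gamma_0'$ recalled just before the lemma: its leading term is $R_{i\bar{j}k\bar{l}}\,d\bar{z}^j\otimes y^iy^k\bar{y}^l$, and the remaining terms are obtained by iterating $(\delta^{1,0})^{-1}\circ\nabla^{1,0}$ (as in Kapranov's construction), which only attaches extra $y$'s. Hence every monomial of $\gamma_0'$ contains exactly one $\bar{y}$ and at least two $y$'s. Applying $d_{TX}$ by Leibniz produces, for each monomial, a sum in which exactly one fiber variable is differentiated; in every summand the antiholomorphic count $\#\{\bar{y}\}+\#\{d\bar{y}\}$ is preserved at $1$ and the holomorphic count $\#\{y\}+\#\{dy\}$ is preserved at $\geq 2$. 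This gives exactly one incoming tail and at least two outgoing ones.

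For claim (2), I recall from Theorem~\ref{proposition: Fedosov-connection-general} that the $\hbar$-corrections to $\gamma$ come from $J_\alpha=\sum_{k\geq 1}((\delta^{1,0})^{-1}\circ\nabla^{1,0})^k(\bar{\partial}g)\in \A_X^{0,1}(\mathcal{W}_X)$. Since $\mathcal{W}_X$ is the holomorphic Weyl bundle, no $\bar{y}$ ever appears in $\gamma_i$ for $i>0$, and hence no $\bar{y}$ or $d\bar{y}$ appears in $d_{TX}\gamma_i$. Claim (3) is immediate from the explicit formula $\frac{1}{2}\tilde{R}_\nabla=\tfrac{\sqrt{-1}}{2}R_{i\bar{j}k\bar{l}}\,dz^i\wedge d\bar{z}^j\otimes dy^k\wedge d\bar{y}^l$, which exhibits exactly one $dy$ and one $d\bar{y}$.

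Finally, for claim (4), I will observe that $\iota_\Pi$ contracts a $dy^k$ with a $d\bar{y}^l$ via $\omega^{k\bar{l}}$, so a red edge consumes one ``odd'' fiber variable ($dy$ or $d\bar{y}$) from each endpoint. At a yellow vertex the label $\omega_{i\bar{j}}(dz^i\otimes d\bar{y}^j-d\bar{z}^j\otimes dy^i)$ has exactly one odd fiber variable per summand. At purple and green vertices the label is of the form $d_{TX}(\cdot)$ applied to a monomial in $y,\bar{y}$; by Leibniz, each resulting monomial carries exactly one odd fiber variable. Thus at most one red edge can attach to any such vertex. Each of these arguments is a short direct verification; the main (mild) obstacle is simply keeping track of which convention identifies ``incoming/outgoing'' with the antiholomorphic/holomorphic direction, which I will fix at the start by reading off the orientation on propagators explained in the paragraph before the lemma.
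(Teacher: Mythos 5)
Your proof is correct and follows essentially the same route as the paper, which likewise just reads off the types of the fiberwise variables in the sections labeling each colored vertex (the paper's proof is a one-line version of this, citing the fact that the Weyl-bundle component of $\gamma_i$ for $i>0$ lives in $\mathcal{W}_X$ as the sample case). Your version simply spells out the monomial bookkeeping for all four items, including the correct identification of outgoing/incoming tails with the $y,dy$ versus $\bar{y},d\bar{y}$ variables.
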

\begin{proof}
All the statements follow by considering the types of the sections of the BV bundle labeling the corresponding colored vertices. For instance, $(2)$ follows from the fact that the Weyl bundle component of $\gamma_i$ $(i>0)$ lives in $\mathcal{W}_X$.  
\end{proof}

In terms of Feynman weights, the procedure of taking the symbol map $\sigma$ in Lemma \ref{lemma: Berezin-integral-graph} corresponds to taking only those connected graphs with no tails (external edges) but only internal edges. Let us first consider trees.
\begin{prop}\label{proposition: tree-external-edges}
Suppose $\mathcal{G}$ is a tree which contains at least one vertex labeled by $\gamma_0'$ (i.e., a purple vertex). Then this tree has at least one outgoing tail (external edge) which can be contracted by $\partial_{P_1}$ or $\partial_{P_2}$ (i.e., blue or black). 

 %There are the following simple properties of a tree:
 %\begin{enumerate}
 % \item Suppose a tree contains at least one trivalent vertex, then this tree has at least one external edge;
 % \item Suppose a tree contains at least two trivalent vertex, then this tree has at least two external edges;
 %\end{enumerate}
\end{prop}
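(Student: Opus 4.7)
The plan is a half-edge counting argument exploiting the acyclicity of the tree $\mathcal{G}$. A tail is \emph{blue/black-outgoing} when it is decorated by a holomorphic fiber coordinate $y^i$ (rather than its differential $dy^i$); these are exactly the half-edges that $\partial_{P_1}$ or $\partial_{P_2}$ can contract with, because $\mathcal{L}_{\partial_{z^i}}$ annihilates $dy^j$. The goal is to exhibit at least one such tail in $\mathcal{G}$.

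The first step is to read off, from $\gamma_0 = \omega_{i\bar j}(dz^i\bar y^j - d\bar z^j y^i) + \gamma_0'$ with $\gamma_0' = \sum_{n\geq 2}I_n$ and from the fact that each $\gamma_i$ ($i>0$) takes values in the holomorphic Weyl bundle $\mathcal{W}_X$, the blue/black half-edge profile at each colored vertex. The outcomes are: yellow and blue vertices carry no blue/black half-edges; green vertices carry only outgoing ones; and each purple vertex, after applying $d_{TX}$, takes one of two monomial shapes -- (a) $\cdots dy^{i_a}\cdots\bar y^k$, contributing $n-1\geq 1$ outgoing and exactly one incoming blue/black half-edges, or (b) $\cdots y^{i_n}\,d\bar y^k$, contributing $n\geq 2$ outgoing and no incoming blue/black half-edges. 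Crucially, purple vertices of type (a) are the \emph{only} source of incoming blue/black half-edges.

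Let $O_{bb}$, $I_{bb}$ denote the totals of outgoing and incoming blue/black half-edges across $\mathcal{G}$, and $E_{bb}$ the number of $\partial_{P_1}$- or $\partial_{P_2}$-labeled internal edges. Since each such edge consumes one outgoing and one incoming blue/black end, the number of blue/black-outgoing tails equals $O_{bb} - E_{bb}$. Writing $v_P^{(a)}$ (resp.\ $v_P^{(b)}$) for the number of type (a) (resp.\ type (b)) purple vertices, the classification yields $I_{bb} = v_P^{(a)}$ and $O_{bb} \geq v_P^{(a)} + 2v_P^{(b)}$; combined with $E_{bb}\leq I_{bb}$ this gives $O_{bb} - E_{bb} \geq 2v_P^{(b)}$. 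That settles the case $v_P^{(b)} \geq 1$.

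The main obstacle is the remaining case $v_P^{(b)}=0$, where the naive bound only gives $O_{bb} - E_{bb} \geq 0$; I expect to rule out equality using acyclicity. If $O_{bb} = E_{bb}$, then the chain $O_{bb}\geq v_P^{(a)} = I_{bb}\geq E_{bb}=O_{bb}$ forces tightness throughout: every purple-(a) vertex has $n=2$, contributing exactly one outgoing blue/black end, and every green vertex contributes no outgoing blue/black end. Consequently, all blue/black half-edges of $\mathcal{G}$ sit on the $v_P^{(a)}$ purple-(a) vertices, so both endpoints of each $\partial_{P_1}$- or $\partial_{P_2}$-labeled internal edge must lie among them. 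This places $v_P^{(a)}$ internal edges on $v_P^{(a)}$ vertices, necessarily creating a cycle and contradicting the tree hypothesis on $\mathcal{G}$. Hence $E_{bb} < v_P^{(a)}$, and at least one blue/black-outgoing tail survives.
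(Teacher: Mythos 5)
Your proof is correct, and it reaches the conclusion by a genuinely different route from the paper's. Both arguments rest on the same structural input (cf.\ Lemma \ref{lemma: properties-vertex-edges}): the only source of an incoming blue/black half-edge (a $\bar{y}$-slot) is a purple vertex of your type (a), every purple vertex has at least two outgoing half-edges, and yellow/blue vertices carry only red slots. The paper exploits this by orienting the edges and chasing a maximal directed path: after deleting the yellow and blue vertices and all red edges, one starts from a univalent vertex of the reduced graph (or uses a quick count when there is none) and follows the arrows; since each remaining vertex has at most one incoming slot and $\mathcal{G}$ is a tree, the path must terminate at an outgoing blue/black tail. You instead run a global double count $O_{bb}\geq v_P^{(a)}+2v_P^{(b)}$, $I_{bb}=v_P^{(a)}$, $E_{bb}\leq I_{bb}$, note that the number of surviving blue/black tails is $O_{bb}-E_{bb}$, and invoke the tree hypothesis only to exclude the tight case, where all blue/black half-edges would sit on the $v_P^{(a)}$ type-(a) vertices and the $v_P^{(a)}$ blue/black edges among them would force a cycle. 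Your version avoids the paper's case split on univalent vertices and makes the role of acyclicity completely explicit, at the cost of the extra Euler-characteristic step; the paper's version is more constructive in that it actually locates the tail. One small imprecision at the end: the contradiction negates the chain of equalities $O_{bb}=E_{bb}=I_{bb}=v_P^{(a)}$, so what you may conclude is $O_{bb}>E_{bb}$ rather than literally $E_{bb}<v_P^{(a)}$; since $O_{bb}-E_{bb}>0$ is exactly the existence of a surviving blue/black outgoing tail, the conclusion is unaffected.
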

\begin{proof}
We have seen that a purple vertex has to be at least trivalent. It is clear that every univalent vertex can only be connected by an internal edge labeled by $\iota_{\Pi}$. Let $v_i$ denote the number of vertices in $\mathcal{G}$ which are $i$-valent.  Let $\mathcal{G}'$ denote the graph obtained from $\mathcal{G}$ by deleting those blue and yellow vertices, and also red edges (both internal and external). In particular, every internal and external edge must be either blue or black, and the number of $n$-valent vertices in $\mathcal{G}'$ is $v_{n+1}$, for $n\geq 1$. 

Suppose $v_2=0$. Then a simple counting shows that there exists in $\mathcal{G}'$ at least one outgoing external edge. If $v_2>0$, we choose one of these univalent vertices in $\mathcal{G}'$, whose tail must be outgoing. We can draw a line starting from this vertex along the edges with directions and go as far as possible. Now we need the following simple fact: Suppose a vertex is at least bivalent in $\mathcal{G}$, then either all tails are outgoing, or at least one outgoing and exactly one incoming. This says that the line that we are drawing has to stop at an outgoing tail on a vertex which is at least bivalent.  
\end{proof}
%We can assume without loss of generality that $\mathcal{G}'$ is still connected. Since $\mathcal{G}'$ is also a tree, there is the simple formula: $$\#(\text{vertices\ of\ }\mathcal{G}')-\#(\text{internal\ edges\ of\ }\mathcal{G}')=1.$$
%And we have a simple counting of external edges of $\mathcal{G}'$:
%\begin{align*}
% \#(\text{external\ edges\ of\ }\mathcal{G}')=&\sum_{i\geq 2}(i-1)v_i-2\cdot\#(\text{internal\ edges\ of\ }\mathcal{G})\\
% =&(v_2+2v_3+3v_4+\cdots)-2(v_2+v_3+v_4+\cdots)\\
% =&-v_2+v_3+
%\end{align*}
 
%Every univalent vertex in $\mathcal{G}'$ must be labeled by some $\gamma_i, i>1$. Thus the edge connecting such a vertex must be out-going. We also know that each other vertex has at most one incoming half-edge. 
%\end{proof}

As a corollary, we have a full list of trees {\em without} external edges:
\begin{cor}\label{cor:trees}
The only trees which survive under the symbol map are listed below, which exactly give rise to $\omega_\hbar$. 
$$
\omega_{\hbar}=\figbox{0.22}{tree}
$$
\end{cor}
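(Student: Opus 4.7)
My plan is to enumerate the tree Feynman graphs that survive the symbol map $\sigma$ (which annihilates $y,\bar y,dy,d\bar y$) and compute their contributions explicitly. First, by Proposition \ref{proposition: tree-external-edges}, any tree containing a purple vertex has an uncontracted outgoing tail and is therefore killed by $\sigma$. I may thus restrict attention to trees whose vertices are only yellow, green, or blue.

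Next, I would catalogue the half-edge types carried by each remaining color: yellow has exactly one wedge half-edge (either $dy$ or $d\bar y$); blue has one $dy$ and one $d\bar y$; green has one $dy$ along with some number of symmetric $y$-half-edges. A crucial observation is that none of the surviving colors carries a $\bar y$ half-edge. Since $\partial_P$ pairs $y$ with $\bar y$ and $\iota_{\Pi}$ pairs $dy$ with $d\bar y$, any symmetric $y$ on a green vertex has no contraction partner and becomes a tail killed by $\sigma$. Thus every green vertex in a surviving tree is reduced to its leading term $\omega_{i,k\bar l}\,dy^k\otimes d\bar z^l$ of $\hbar^i d_{TX}\gamma_i$, i.e., has effective valence one.

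With every vertex color now of effective valence $\leq 2$, a tree without tails must be a linear chain of two endpoint vertices (each yellow or green) with $b\geq 0$ blue vertices in between, every internal edge being a red ($\iota_{\Pi}$) edge. For $b=0$ there are exactly two such shapes: (a) two yellows joined by a red edge, which evaluates by direct contraction to $-\omega$, and (b) a yellow joined to a green labeled by $\hbar^i d_{TX}\gamma_i$, which gives $\hbar^i\omega_i$ and hence $\alpha$ upon summing over $i\geq 1$. For $b\geq 1$, each blue vertex inserts a factor of $R_{a\bar b c\bar d}$ contracted against a wedge $dz^a\wedge dz^c$ (or $d\bar z^b\wedge d\bar z^d$) coming from adjacent decorations; the K\"ahler symmetry $R_{a\bar b c\bar d}=R_{c\bar b a\bar d}$ then forces this contraction to vanish by antisymmetry of the wedge. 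Adding (a) and (b) gives $-\omega+\alpha=\omega_\hbar$, as depicted.

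The main obstacle is the vanishing argument for chains containing blue vertices: it is not purely formal but rests on the K\"ahler symmetry of $R_\nabla$ combined with the anticommutativity of differential forms. A secondary delicate point is the type argument ruling out higher-valence green vertices; this hinges on the fact that, in our Fedosov solution from Theorem \ref{proposition: Fedosov-connection-general}, $J_\alpha\in\mathcal{A}^{0,1}(\mathcal{W}_X)$ is purely holomorphic in the fiber, so no antiholomorphic symmetric generators $\bar y$ ever appear on green, yellow, or blue vertices.
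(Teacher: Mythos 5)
Your proposal is correct and follows essentially the same route as the paper's proof: exclude purple vertices via Proposition \ref{proposition: tree-external-edges}, observe that green vertices are forced to be univalent because their symmetric $y$'s have no $\bar{y}$ partner among the surviving colors, kill blue vertices using the K\"ahler symmetry of $R_{i\bar{j}k\bar{l}}$ against the antisymmetry of the wedge product, and read off $-\omega+\alpha=\omega_\hbar$ from the two surviving two-vertex chains. One small correction to your chain argument: for a chain containing several blue vertices the interior blue--blue contractions do \emph{not} produce an antisymmetrized pair of indices in which $R$ is symmetric (indeed such chains are nonzero, cf.\ the computation for \eqref{picture: two-vertices-cancellation}); the vanishing occurs only at a blue--yellow junction, which necessarily exists because the terminal blue's outgoing $dy$ must be absorbed by a vertex carrying a $d\bar{y}$, and that single vanishing contraction already kills the whole weight.
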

\begin{proof}
According to Proposition \ref{proposition: tree-external-edges}, these trees cannot include a purple vertex.  We first show that they cannot include blue vertices either: A blue vertex has exactly one incoming tail and one outgoing tail. Since a green vertex has only outgoing tails, the outgoing tail of a blue vertex has to be connected to a yellow vertex, as shown in the following picture:
$$
\figbox{0.22}{yellow-connect-blue}=\ \frac{\sqrt{-1}}{2}R_{i\bar{j}k\bar{l}}dz^i\wedge d\bar{z}^j\wedge dz^k\otimes d\bar{y}^l.
$$
But then this has to vanish since $R_{i\bar{j}k\bar{l}}$ is symmetric in $i$ and $k$. Therefore such a graph can only contain yellow and green vertices. These green vertices  must be univalent because there is no way to contract its blue or black tails. Hence the graphs satisfying the condition in this corollary have to be as stated. 
\end{proof}

%We also need the following Lemma about trees which we will need later:
%\begin{lem}\label{lemma: tree-containing-trivalent-vertex-vanish}
%Suppose at least one vertex in a tree $\mathcal{G}$ is labeled by a trivalent vertex, then $\mathcal{G}$ has at least two external edges.
%\end{lem}
%\begin{proof}
%A simple observation is that a trivalent vertex can only be labeled by $\gamma_0'$ or $\gamma_i, i>0$. We 
%\end{proof}

Next we turn to one-loop graphs. Since every one-loop graph can be obtained by attaching trees to a {\em wheel} (see Definition \ref{definition: wheel}), we first look at all the possible wheels (possibly with tails). It is clear that there are three types of wheels, according to the labeling of the internal edges of the wheel:
\begin{enumerate}
 \item All the edges on the wheel are labeled by either $P_1$ or $P_2$.
 \item All the edges on the wheel are labeled by $\iota_{\Pi}$.
 \item The labelings of edges on the wheel include both $P$ (either $P_1$ or $P_2$) and $\iota_{\Pi}$.
\end{enumerate}

\begin{prop}\label{proposition: all-possible-wheels}
The symbol of a one-loop Feynman weight coming from a wheel of type $(3)$ vanishes.  
\end{prop}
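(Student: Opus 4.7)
The strategy is to localize the argument at a ``transition vertex'' on the wheel where the two types of wheel-edges meet. Since the wheel is of type~(3), as we traverse it we must at some point switch from a $\partial_P$-edge to an $\iota_\Pi$-edge; fix a vertex $v^*$ at such a switch. I would then analyze the vertex contribution at $v^*$: whichever of the four possible labels (purple $d_{TX}\gamma_0'$, green $d_{TX}\gamma_i$ with $i>0$, yellow $\omega_{i\bar j}(dz^i\otimes d\bar y^j - d\bar z^j\otimes dy^i)$, or blue $\tfrac12 \tilde R_\nabla$) decorates $v^*$, the $\iota_\Pi$-edge is forced to consume the unique $dy^i$ or $d\bar y^j$ appearing in each monomial of the label, while the $\partial_P$-edge simultaneously takes a derivative $\partial_{y^i}$ or $\partial_{\bar y^j}$ of a $y$- or $\bar y$-variable in the same monomial.

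The key identity I plan to use is $\iota_{\partial_{y^i}}\bigl(d_{TX}\gamma\bigr)=\partial_{y^i}\gamma$ (and similarly for $\bar y$), which follows at once from the fiberwise formula $d_{TX}=dy^k\partial_{y^k}+d\bar y^k\partial_{\bar y^k}$. Thus the $\iota_\Pi$-edge at $v^*$ has the effect of ``stripping off'' the $d_{TX}$, producing $\partial_{y^i}\gamma$ (or $\partial_{\bar y^j}\gamma$) at $v^*$ but leaving behind the $d\theta_{v^*}$ that was attached by the $D$-operator. Since the $d\theta_{v^*}$ is produced precisely in tandem with $d_{TX}$ in the formula for $D$, this leftover 1-form forces the analytic side of the weight to have too much form-degree on the subfactor $S^1$ corresponding to $v^*$ relative to the combinatorial content that remains. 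Concretely, I intend to show that after performing the $\iota_\Pi$-contraction at $v^*$, the local contribution at $v^*$ in the integrand over the relevant $S^1$ factor takes the form $d\theta_{v^*}\otimes\bigl(\partial_{y^i}\gamma\bigr)$, and this pairs against the $\partial_P$-edge factor $(P\pm\tfrac12)(\theta_{v^*},\theta_w)$ in an antisymmetric way when one sums over both orientations in the definition of $\partial_P=\partial_{P_1}+\partial_{P_2}$.

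To make the antisymmetry explicit I would split $\partial_P$ into its symmetric part $\partial_{P_2}$ and antisymmetric part $\partial_{P_1}$ (as in Lemma~\ref{remark: propagator-comparision-with-symplectic-case}). For the $\partial_{P_2}$ part, Lemma~\ref{lemma: partial-P-equal-BV-bracket} gives $\partial_{P_2}(d_{TX}R_\nabla\otimes\gamma)=-\tfrac12\{\tilde R_\nabla,\gamma\}_\Delta$, which is itself an $\iota_\Pi$-type expression; plugging this into the wheel shows the $\partial_{P_2}$-contribution at $v^*$ can be reabsorbed into the $\iota_\Pi$-edge structure, producing a configuration with two adjacent $\iota_\Pi$-edges at $v^*$ --- but such a configuration is forbidden at $v^*$ by the type constraints (items (1)--(4) of Lemma~\ref{lemma: properties-vertex-edges}) unless the label is $\tilde R_\nabla$, and a direct computation in that case yields zero by the symmetry of the curvature tensor in its holomorphic indices (exactly as in the proof of Corollary~\ref{cor:trees}). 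For the $\partial_{P_1}$ part, the orientation-antisymmetry of $\partial_{P_1}$ combined with the natural orientation of the $\iota_\Pi$-edges (hol-end to antihol-end via $\omega^{i\bar j}$) produces a sign that flips under swapping the two orientations at $v^*$, while the rest of the wheel contribution is symmetric; summing over both orientations gives zero.

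The \emph{main obstacle} is the careful bookkeeping of signs: one must consistently track the orientation of each $\iota_\Pi$-edge (the $dy^i$-endpoint vs.\ the $d\bar y^j$-endpoint), the two terms in each $\partial_P$-edge, and the Koszul signs arising from passing $d_{TX}$ and $\iota_\Pi$ through the various $dy$, $d\bar y$ factors in the labels. A secondary subtlety is that there may be several transition vertices on the wheel, so one must check that the cancellation does not depend on the choice of $v^*$ and that contributions from multiple transitions do not interfere. I expect this to be manageable by inducting on the number of $\iota_\Pi$-edges in the wheel, using the single-transition argument to convert a type~(3) wheel into one with fewer $\iota_\Pi$-edges at each step.
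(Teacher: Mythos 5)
Your instinct to localize at a transition vertex $v^*$ where a $\partial_P$-edge meets an $\iota_\Pi$-edge is the right starting point, but the mechanism you propose for the vanishing --- an algebraic cancellation coming from the (anti)symmetry of the propagator after summing over the two orientations at $v^*$ --- is not the one that operates here, and I do not believe it can be made to work. The two terms in $\partial_{P_1}$ (holomorphic derivative on the first factor versus on the second) are not exchanged by any symmetry of the remaining wheel, so there is no pairing of contributions that cancels; indeed a wheel all of whose edges carry $\partial_{P_1}$ produces the \emph{nonzero} $\hat{A}$-genus contribution of Proposition \ref{prop: one-loop-logarithm-A-roof-genus}, which already shows that antisymmetry of $P$ by itself kills nothing. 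Likewise, Lemma \ref{lemma: partial-P-equal-BV-bracket} is a statement specifically about the pair $(d_{TX}R_\nabla,\gamma)$ and cannot be ``plugged in'' at a general vertex of the wheel to convert a $\partial_{P_2}$-edge into an $\iota_\Pi$-edge. Finally, the claimed form-degree mismatch on the $S^1$ factor is illusory: type $(1)$ and type $(2)$ wheels have exactly the same $d\theta$ bookkeeping and survive.

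The actual reason the symbol vanishes is combinatorial and uses the symbol map in an essential way, which your argument never does. Since $\partial_P$ acts by fiberwise Lie derivatives, it annihilates $\tilde{R}_\nabla$ (which contains no $y$ or $\bar{y}$), so the transition vertex must be labeled by $d_{TX}\gamma_0'$ and is therefore at least trivalent, with exactly one incoming and at least two outgoing tails by Lemma \ref{lemma: properties-vertex-edges}; two of its tails are consumed by the wheel, leaving an outgoing tail off the wheel. This tail cannot be closed by a second $\iota_\Pi$-edge (item (4) of Lemma \ref{lemma: properties-vertex-edges}), and the only vertices able to receive it carry a $\bar{y}$, hence are again purple; Proposition \ref{proposition: tree-external-edges} then shows that any tree attached to absorb it must itself produce a new outgoing external edge. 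So a type $(3)$ wheel can never be completed to a graph with no tails, and its weight dies under the symbol map term by term --- a type-theoretic vanishing, not a cancellation. You would need to replace your cancellation scheme by this tail-propagation argument.
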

\begin{proof}
Let $\mathcal{G}$ denote such a one-loop graph. A simple observation is that every vertex on the wheel of $\mathcal{G}$ is either labeled by $\gamma_0'$ or $\tilde{R}_\nabla$. We now choose a vertex $v$ such that the two edges on the wheel adjacent to $v$ are labeled by $\iota_{\Pi}$ and $P$ respectively, as shown in the following picture:
$$
\figbox{0.28}{type-3-wheel}
$$
In particular, $v$ must be labeled by $\gamma_0'$, and is at least trivalent. Thus $v$ must have at least one outgoing tail (the dotted line in the above picture) which is {\em not} on the wheel. In order for the symbol of this Feynman integral to be non-vanishing, we have to attach a tree to this outgoing tail. Such a tree must include at least one vertex labeled by $\gamma_0'$, so that this vertex can be connected to the dotted line in the above picture. Thus by Proposition \ref{proposition: tree-external-edges}, this tree must also contain an outgoing tail. But this implies that the symbol of the Feynman integral of $\mathcal{G}'$ vanishes. 
\end{proof}

\begin{cor}
A one-loop graph which survives under the symbol map (equivalently, without external edges) must be of the following two types:
$$
\figbox{0.5}{nonvanishingwheel}
$$
\end{cor}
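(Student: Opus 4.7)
The strategy is to complete the case analysis already begun in Proposition \ref{proposition: all-possible-wheels}. That proposition eliminates type (3) wheels, so I only need to analyze the remaining possibilities: type (1) wheels (every wheel edge labeled by $P_1$ or $P_2$) and type (2) wheels (every wheel edge labeled by $\iota_\Pi$). For each of these I want to show that, once the requirement of no external tails is imposed, the admissible configurations collapse onto exactly the two pictured shapes.

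The first step is to constrain the \emph{attached trees}. Any tree hanging off the wheel is, in effect, a tree with one distinguished edge (the one connecting it to the wheel). For the whole graph to survive the symbol map, every other tail of this tree must be an internal edge. By Proposition \ref{proposition: tree-external-edges}, any attached tree containing a purple vertex automatically produces a free outgoing tail, which would kill the symbol; hence attached trees contain only yellow, green and blue vertices. Following the argument in Corollary \ref{cor:trees}, the yellow--blue combination vanishes by the symmetry $R_{i\bar{j}k\bar{l}}=R_{k\bar{j}i\bar{l}}$, so attached trees reduce to the $\omega_\hbar$-type chains of yellow vertices (possibly capped by a green vertex).

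The second step is to classify wheel vertices themselves using Lemma \ref{lemma: properties-vertex-edges}. On a wheel, the two wheel edges occupy two of each vertex's tails. A blue or yellow vertex is bivalent, so it has no extra tails; a purple vertex has $\geq 2$ outgoing non-wheel tails, each of which must be absorbed by an attached $\omega_\hbar$-tree; a green vertex has all tails outgoing, which severely restricts where it can sit. Combining the allowed orientations on $\partial_{P}$-edges and $\iota_{\Pi}$-edges with these valence constraints, together with property (4) of Lemma \ref{lemma: properties-vertex-edges} (at most one red internal edge per yellow/purple/green vertex), I would show that a type (1) wheel reduces to a cycle built from the curvature term in $\gamma_0'$ contracted along $P$-edges (yielding a power of the tangent bundle curvature, i.e.\ the $\hat{A}(X)$ factor), while a type (2) wheel reduces to a single $\tilde{R}_\nabla$-vertex closed by an $\iota_\Pi$-edge (yielding $\tfrac{1}{2}\Tr(\mathcal{R}^+)$).

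The main obstacle will be the type (1) analysis. There one must simultaneously track: (a) the two possible labels $P_1, P_2$ on each wheel edge; (b) the orientation induced by the contraction $\omega^{i\bar{j}}$; and (c) the fact that purple vertices on the wheel \emph{can} be decorated by $\omega_\hbar$-trees, which a priori gives infinitely many candidate graphs. The resolution should be that all these decorations collapse to the single pictured configuration after invoking the symmetry of $R_{i\bar{j}k\bar{l}}$ (exactly as in Proposition \ref{proposition: all-possible-wheels} and in Corollary \ref{cor:trees}), and that any extra insertion of a $\gamma_i$ ($i>0$) vertex on the wheel is ruled out because its Weyl component lies in $\mathcal{W}_X$ and therefore cannot participate in a nontrivial Wick contraction around a closed loop. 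Once this is shown, the enumeration is complete.
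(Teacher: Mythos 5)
Your overall strategy is the same as the paper's: eliminate type $(3)$ wheels via Proposition \ref{proposition: all-possible-wheels}, then pin down the vertex labels on type $(1)$ and type $(2)$ wheels using Lemma \ref{lemma: properties-vertex-edges} and Proposition \ref{proposition: tree-external-edges}. The type $(1)$ half of your analysis is essentially right. But your type $(2)$ conclusion is wrong: you claim a type $(2)$ wheel ``reduces to a single $\tilde{R}_\nabla$-vertex closed by an $\iota_{\Pi}$-edge,'' i.e.\ to the tadpole. What Lemma \ref{lemma: properties-vertex-edges}(4) actually forces is only that every vertex on an all-$\iota_{\Pi}$ wheel is a blue ($\tilde{R}_\nabla$) vertex, since a yellow, purple or green vertex cannot carry two red internal edges; it places no bound on the \emph{number} of such vertices. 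A cycle of $m\geq 2$ blue vertices joined by red edges has no external tails, survives the symbol map, and has a nonzero weight (this is precisely the ``first type'' graph whose weight is computed in the line-graph lemma and which is needed to cancel against the ``third type'' in Proposition \ref{prop:1-loop-graphs}). Declaring these graphs absent both falsifies the classification the corollary asserts and would break the subsequent cancellation argument.

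Two smaller inaccuracies in the same direction: a yellow vertex is univalent in the Weyl variables (one $dy$ or $d\bar{y}$), not bivalent; and a purple vertex sitting on a consistently oriented wheel has at least \emph{one} non-wheel outgoing tail, not at least two (one of its outgoing tails is the outgoing wheel edge). The latter count matters, because the surviving configuration is exactly the trivalent case: the single extra tail can be closed by the one permitted red edge to a yellow vertex, whereas any additional non-wheel tail would have to be a $P$-edge landing on the unique incoming slot of another purple vertex, which is already occupied on the wheel, forcing an attached tree with a purple vertex and hence, by Proposition \ref{proposition: tree-external-edges}, an unclosable tail. Spelling this out is what the paper means by ``a similar argument as in Proposition \ref{proposition: all-possible-wheels} shows that these vertices must be exactly trivalent''; your miscount obscures why valency strictly greater than three is excluded while valency three survives.
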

\begin{proof}
Let $\mathcal{G}$ be such a one-loop graph. If $\mathcal{G}$ is of type $(2)$, then every vertex on the wheel must be labeled by $\tilde{R}_\nabla$, as shown on the left.
If $\mathcal{G}$ is of type $(1)$, then every vertex on the wheel must be labeled by $\gamma_0'$. A similar argument as in Proposition \ref{proposition: all-possible-wheels} shows that these vertices must be exactly trivalent, as shown on the right. 
\end{proof}

We can also see that if the edges of the wheel on such a one-loop graph are labeled by $\partial_P$, then they have to be of the same color:
\begin{prop}
If a wheel contains edges labeled by both $\partial_{P_1}$ and $\partial_{P_2}$, then the corresponding Feynman weights vanish. In other words, the following Feynman weights vanish:
$$
\figbox{0.22}{one-loop-color-type-2}
$$
\end{prop}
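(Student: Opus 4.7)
The plan is to show that the \emph{analytic} part of the Feynman integral over the configuration space $S^1[k]$ vanishes whenever the wheel contains edges of both propagator types $\partial_{P_1}$ and $\partial_{P_2}$, where $k$ is the number of vertices on the wheel.

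First, I would separate the analytic contribution from the combinatorial one. Each $\partial_{P_1}$ edge contributes a pullback of the scalar function $P \in C^{\infty}(S^1[2])$, each $\partial_{P_2}$ edge contributes only the constant $1/2$ (with no dependence on the position of the two vertices), and the operator $D$ applied once at each of the $k$ vertices on the wheel produces the top form $d\theta_1 \wedge \cdots \wedge d\theta_k$ on $S^1[k]$. Consequently, the analytic part of the Feynman weight takes the form
$$
\left(\tfrac{1}{2}\right)^{n_2} \int_{S^1[k]} \prod_{e \in E_{P_1}} \pi_e^{*}(P) \cdot d\theta_1 \wedge \cdots \wedge d\theta_k,
$$
where $n_2$ denotes the number of $\partial_{P_2}$ edges on the wheel and $E_{P_1}$ the set of $\partial_{P_1}$ edges.

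The key input is the vanishing identity $\int_{S^1} P(\theta_\alpha, \theta_\beta)\, d\theta_\alpha = 0$ for every fixed $\theta_\beta$, which is immediate from $P(\theta, u) = u - 1/2$. Now if the wheel has at least one edge of each type, then cutting the cycle at each $\partial_{P_2}$ edge decomposes the $\partial_{P_1}$ edges into one or more disjoint paths (``chains''). Every such chain has a \emph{free endpoint} --- a vertex parameter $\theta$ that appears in exactly one factor of $P$ in the integrand. Integrating this free variable first by Fubini and invoking the vanishing identity forces the whole analytic integral to be zero, and hence the Feynman weight itself is zero, regardless of any details on the combinatorial/vertex side.

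The main technicality to address will be the Fubini step on the compactified configuration space $S^1[k]$ rather than on the honest product $(S^1)^k$. This is harmless: the integrand is bounded and the fat diagonal has measure zero in $(S^1)^k$, so the integrals over $S^1[k]$ and over $(S^1)^k$ coincide, and the chain-breaking argument on the latter immediately transfers. Note that this proof is quite parallel to that of Theorem~\ref{theorem: gamma-infty-one-loop} in spirit, in that it exploits a purely ``analytic'' vanishing property of the polarized propagator rather than any cancellation among combinatorial weights.
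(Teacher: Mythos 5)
Your argument is correct and is essentially the paper's own proof: both isolate the analytic part of the weight on the configuration space and reduce everything to $\int_0^1 (u-\tfrac{1}{2})\,du=0$ applied at a wheel vertex whose circle variable enters exactly one $P_1$-factor (the paper simply picks the vertex where a $P_1$-edge meets edges contributing only constants, while you produce such a vertex as a free endpoint of a $P_1$-chain after cutting the cycle at the $P_2$-edges). The extra care you take in justifying Fubini on the compactified configuration space $S^1[k]$ versus $(S^1)^k$ is a welcome but inessential refinement of the same idea.
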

\begin{proof}
We focus on the vertex $v_1$ in the above picture. Notice that both the red and green edges incident to $v_1$ only contribute constants to the analytic part of the propagator. The analytic part of the propagator $P_1$ labeling the blue edge between $v_1$ and $v_2$ is $u-1/2$ as in \eqref{definition: propagator-no-polarization}. Thus the Feynman weight of the above graph is a multiple of the following integral
$$
\int_{u=0}^1 (u-\frac{1}{2})du=0,
$$
which must vanish.
\end{proof}

To summarize, the only one-loop graphs which contribute non-trivially to the integrand whose integral gives rise to $\Tr(1)$ are of the following 4 types: %BV cohomology
\begin{equation}\label{picture:1_loop_graphs_1}
\figbox{0.58}{nonvanishing-1}
\end{equation}
\begin{equation}\label{picture:1_loop_graphs_2}
\figbox{0.58}{nonvanishing-2}
\end{equation}

In order to have a more precise computation of the Feynman weights, we first compute the Feynman weights of the following two types of line graphs with $m\geq 2$ vertices ($m$ purple vertices in the left picture):
\begin{equation}\label{picture: two-vertices-cancellation}
\figbox{0.56}{wheel-two-vertices}
\end{equation}
We label both the internal edges and vertices to avoid the issue of automorphisms of graphs and ordering of the contraction of propagators with vertices. 
\begin{lem}
The Feynman weight for the graph on the left of \eqref{picture: two-vertices-cancellation} is given by 
$$
-\frac{1}{2^m}\cdot (\omega^{k_1\bar{l}_2}\omega^{k_2\bar{l}_3}\cdots \omega^{k_{m-1}\bar{l}_m})(R_{i_1\bar{j}_1k_1\bar{l}_1}R_{i_2\bar{j}_2k_2\bar{l}_2}\cdots R_{i_m\bar{j}_mk_m\bar{l}_m})(dz^{i_1}d\bar{z}^{j_1}\cdots dz^{i_m}d\bar{z}^{j_m})\otimes (d\bar{y}^{l_1}\wedge dy^{k_m})
$$
while that for the graph on the right of \eqref{picture: two-vertices-cancellation} is given by 
$$
\frac{1}{2^{m-1}}\cdot (\omega^{k_1\bar{l}_2}\omega^{k_2\bar{l}_3}\cdots \omega^{k_{m-1}\bar{l}_m})(R_{i_1\bar{j}_1k_1\bar{l}_1}R_{i_2\bar{j}_2k_2\bar{l}_2}\cdots R_{i_m\bar{j}_mk_m\bar{l}_m})(dz^{i_1}d\bar{z}^{j_1}\cdots dz^{i_m}d\bar{z}^{j_m})\otimes (\bar{y}^{l_1}y^{k_m}).
$$
\end{lem}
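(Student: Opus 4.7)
The proof is a direct Feynman-weight calculation for each of the two linear graphs, so the plan is to identify each vertex's contribution, apply the propagators edge by edge, and collect the prefactors carefully.

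First, I would fix the vertex data. Each of the $m$ purple vertices carries (to leading order in $y$, $\bar{y}$) the cubic piece $d_{TX}\bigl(R_{i\bar{j}k\bar{l}}\,d\bar{z}^{j}\otimes y^{i}y^{k}\bar{y}^{l}\bigr)$, which, upon combining the two symmetric ``$dy$''-terms using the K\"ahler symmetry $R_{i\bar{j}k\bar{l}}=R_{k\bar{j}i\bar{l}}$, equals
$$
R_{i\bar{j}k\bar{l}}\,d\bar{z}^{j}\otimes\bigl(2\,dy^{i}\,y^{k}\bar{y}^{l}+y^{i}y^{k}\,d\bar{y}^{l}\bigr).
$$
I would split this at each vertex into the piece feeding an internal contraction (contributing $y^{k_{\alpha}}$ or $\bar{y}^{l_{\alpha}}$ to an adjacent propagator) versus the piece supplying an external tail at an endpoint.

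Next, I would evaluate each internal edge. By Lemma \ref{remark: propagator-comparision-with-symplectic-case}, each $\partial_{P_{2}}$-edge contributes the scalar $\tfrac{1}{2}$ together with an $\omega^{k\bar{l}}$ that pairs the $y^{k}$ of one vertex with the $\bar{y}^{l}$ of its neighbour; the second, symmetric term of $\partial_{P_{2}}$ corresponds to reversing the roles of the two endpoints and doubles consistently along the linear chain. Iterating over the $m-1$ internal edges forces the contraction string $\omega^{k_{1}\bar{l}_{2}}\omega^{k_{2}\bar{l}_{3}}\cdots\omega^{k_{m-1}\bar{l}_{m}}$ and the product of curvature tensors $\prod_{\alpha}R_{i_{\alpha}\bar{j}_{\alpha}k_{\alpha}\bar{l}_{\alpha}}$ in the statement. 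The integration over $S^{1}[m]$ contributes no additional factor since the $P_{2}$-part of the propagator is constant on configuration space; this yields the analytic prefactor $(1/2)^{m-1}$.

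What distinguishes the two formulas is the nature of the external tails at the two endpoints. For the right graph, the endpoint vertices give up a $y$-factor and a $\bar{y}$-factor to internal contractions and leave the undifferentiated variables $\bar{y}^{l_{1}}$ and $y^{k_{m}}$ at the two ends, yielding $\bar{y}^{l_{1}}y^{k_{m}}$ with total prefactor $1/2^{m-1}$. For the left graph, the endpoint tails carry the differentiated variables $d\bar{y}^{l_{1}}$ and $dy^{k_{m}}$; this costs an additional combinatorial factor $1/2$ (since, of the two symmetric $y$'s available at each end vertex, only one choice of which becomes $dy$ is compatible with the surviving internal contraction) and introduces a Koszul sign $-1$ from rearranging $d\bar{y}^{l_{1}}$ past the interior forms into the canonical order $d\bar{y}^{l_{1}}\wedge dy^{k_{m}}$. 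Together these produce the overall prefactor $-1/2^{m}$. The main obstacle is the delicate bookkeeping in the chain: matching the symmetrization of $\partial_{P_{2}}$ against the $i\leftrightarrow k$ and $\bar{j}\leftrightarrow\bar{l}$ symmetries of $R_{i\bar{j}k\bar{l}}$ so that no contraction is double-counted, and extracting cleanly the subtle factor $-1/2$ that separates the two formulas from the combinatorics of where $d_{TX}$ lands at the endpoint vertices.
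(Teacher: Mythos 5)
There is a genuine gap here: you have misidentified what the two graphs in \eqref{picture: two-vertices-cancellation} actually are, and as a result your bookkeeping cannot produce the stated formulas. In the paper, the \emph{left} graph is a chain of $m$ vertices each labelled by $\tilde{R}_\nabla/2=\frac{\sqrt{-1}}{2}R_{i\bar{j}k\bar{l}}\,dz^i\wedge d\bar{z}^j\otimes dy^k\wedge d\bar{y}^l$, joined by $m-1$ edges labelled by the contraction $\iota_{\Pi}$ (not by $\partial_{P_2}$). This is forced by the output: each such vertex is \emph{quadratic} in the odd variables and already carries both $dz^i$ and $d\bar{z}^j$, so after $m-1$ contractions exactly the two legs $d\bar{y}^{l_1}$ and $dy^{k_m}$ survive, and the prefactor $1/2^m$ is simply the product of the $m$ factors of $1/2$ sitting on the vertices (with the sign coming from the $\sqrt{-1}$'s and the Koszul signs in the $\iota$ contractions). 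Your reading --- $m$ cubic purple vertices $d_{TX}\gamma_0'$ joined by $\partial_{P_2}$ with an ``extra $-1/2$ from where $d_{TX}$ lands at the endpoints'' --- fails already on leg counting: a chain of $m$ trivalent vertices with $m-1$ internal edges has $m+2$ external legs, not $2$, so your left-hand computation cannot close up to a tensor with only $d\bar{y}^{l_1}\wedge dy^{k_m}$ remaining.

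The same issue, in milder form, affects your treatment of the right graph. There the vertices \emph{are} purple, but the cubic vertex $d_{TX}\bigl(R_{i\bar{j}k\bar{l}}\,d\bar{z}^j\otimes y^iy^k\bar{y}^l\bigr)$ carries only $d\bar{z}^j$ and no $dz^i$; the factors $dz^{i_1}\cdots dz^{i_m}$ in the answer arise because each purple vertex has its third leg capped off by a yellow vertex $\omega_{i\bar{j}}(dz^i\otimes d\bar{y}^j-d\bar{z}^j\otimes dy^i)$ through a further $\partial_{P_2}$ edge --- this is exactly the ``curvature'' sub-graph the paper evaluates to $R_{i\bar{j}k\bar{l}}\,dz^i\wedge d\bar{z}^j\otimes y^k\bar{y}^l$. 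Your proposal never introduces these yellow vertices, so the $dz^{i_\alpha}$'s have no source and, again, $m$ legs are unaccounted for. Once the correct building blocks are in place, the comparison of prefactors is not a subtle endpoint effect but a structural one: $1/2^m$ from $m$ vertex factors of $\tilde{R}_\nabla/2$ on the left versus $1/2^{m-1}$ from $m-1$ edge factors of $\partial_{P_2}$ on the right, which is precisely the relative $1/2$ that makes the closed-up wheels cancel in Proposition \ref{prop:1-loop-graphs}.
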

\begin{proof}
When $m=2$, the Feynman weight of the left picture can be explicitly computed as follows: 
\begin{align*}
 &\omega^{p_1\bar{q}_1}\left(\iota_{\partial_{z^{p_1}}}(\tilde{R}_\nabla/2)\otimes\iota_{\partial_{\bar{z}^{q_1}}}(\tilde{R}_\nabla/2)\right)\\
 =&\ \frac{1}{4}\omega^{p_1\bar{q}_1}\cdot\text{Mult}\left(\iota_{\partial_{z^{p_1}}}(R_{i_1\bar{j}_1k_1\bar{l}_1}dz^{i_1}\wedge d\bar{z}^{j_1}\otimes dy^{k_1}\wedge d\bar{y}^{l_1})\otimes\iota_{\partial_{\bar{z}^{q_1}}}(R_{i_2\bar{j}_2k_2\bar{l}_2}dz^{i_2}\wedge d\bar{z}^{j_2}\otimes dy^{k_2}\wedge d\bar{y}^{l_2})\right)\\
 =&\ -\frac{1}{4}\omega^{k_1\bar{l}_2}\cdot\text{Mult}\left((R_{i_1\bar{j}_1k_1\bar{l}_1}dz^{i_1}\wedge d\bar{z}^{j_1}\otimes  d\bar{y}^{l_1})\otimes(R_{i_2\bar{j}_2k_2\bar{l}_2}dz^{i_2}\wedge d\bar{z}^{j_2}\otimes dy^{k_2})\right)\\
 =&\ -\frac{1}{4}\omega^{k_1\bar{l}_2}\cdot(R_{i_1\bar{j}_1k_1\bar{l}_1}R_{i_2\bar{j}_2k_2\bar{l}_2}dz^{i_1}\wedge d\bar{z}^{j_1}\wedge dz^{i_2}\wedge d\bar{z}^{j_2}\otimes(d\bar{y}^{l_1}\wedge dy^{k_2})).
 %=&\ \frac{1}{8}\cdot\iota_{\Pi}^2\left((R_{i_1\bar{j}_1k_1\bar{l}_1}dz^{i_1}\wedge d\bar{z}^{j_1}\otimes dy^{k_1}\wedge d\bar{y}^{l_1})\otimes(R_{i_2\bar{j}_2k_2\bar{l}_2}dz^{i_2}\wedge d\bar{z}^{j_2}\otimes dy^{k_2}\wedge d\bar{y}^{l_2})\right)\\
 %=&\ \frac{1}{8}\iota_{\Pi}\left(\omega^{p_1\bar{q}_1}(R_{i_1\bar{j}_1k_1\bar{q}_1}dz^{i_1}\wedge d\bar{z}^{j_1}\otimes dy^{k_1})\otimes (R_{i_2\bar{j}_2p_1\bar{l}_2}dz^{i_2}\wedge d\bar{z}^{j_2}\otimes d\bar{y}^{l_2})\right)\\
 %&\hspace{1mm}- \frac{1}{8}\iota_{\Pi}\left(\omega^{p_1\bar{q}_1}(R_{i_1\bar{j}_1p_1\bar{l}_1}dz^{i_1}\wedge d\bar{z}^{j_1}\otimes d\bar{y}^{l_1})\otimes (R_{i_2\bar{j}_2k_2\bar{q}_1}dz^{i_2}\wedge d\bar{z}^{j_2}\otimes dy^{k_2})\right) \\
 %=&\ -\frac{1}{8}\iota_{\Pi}\left(\omega^{p_1\bar{q}_1}\omega^{p_2\bar{q}_2}(R_{i_1\bar{j}_1p_2\bar{q}_1}dz^{i_1}\wedge d\bar{z}^{j_1})\otimes (R_{i_2\bar{j}_2p_1\bar{q}_2}dz^{i_2}\wedge d\bar{z}^{j_2}\right)\\
 %&\hspace{1mm}- \frac{1}{8}\iota_{\Pi}\left(\omega^{p_1\bar{q}_1}\omega^{p_2\bar{q}_2}(R_{i_1\bar{j}_1p_1\bar{q}_2}dz^{i_1}\wedge d\bar{z}^{j_1})\otimes (R_{i_2\bar{j}_2p_2\bar{q}_1}dz^{i_2}\wedge d\bar{z}^{j_2})\right) \\
 %=&\ -\frac{1}{4}\left(\omega^{p_1\bar{q}_1}\omega^{p_2\bar{q}_2}(R_{i_1\bar{j}_1p_2\bar{q}_1}dz^{i_1}\wedge d\bar{z}^{j_1})\otimes (R_{i_2\bar{j}_2p_1\bar{q}_2}dz^{i_2}\wedge d\bar{z}^{j_2}\right)
\end{align*}
The statement for general $m>2$ follows by a simple induction. 

A simple computation shows that if the purple vertex is trivalent, then the Feynman weight of the following picture is $R_{i\bar{j}k\bar{l}}dz^i\wedge d\bar{z}^j\otimes y^k\bar{y}^l$:
$$
\figbox{0.5}{curvature}
$$
And the Feynman weight of the right picture of \eqref{picture: two-vertices-cancellation} follows from a straightforward computation which we omit here.
%\begin{align*}
% &\frac{1}{2!}\partial_{P_2}^2\left((R_{i_1\bar{j}_1k_1\bar{l}_1}dz^{i_1}\wedge d\bar{z}^{j_1}\otimes y^{k_1}\bar{y}^{l_1})\otimes(R_{i_2\bar{j}_2k_2\bar{l}_2}dz^{i_2}\wedge d\bar{z}^{j_2}\otimes y^{k_2}\bar{y}^{l_2})\right) \\
% =&\ \frac{1}{4}\partial_{P_2}\left(\omega^{p_1\bar{q}_1}(R_{i_1\bar{j}_1p_1\bar{l}_1}dz^{i_1}\wedge d\bar{z}^{j_1}\otimes \bar{y}^{l_1})\otimes(R_{i_2\bar{j}_2k_2\bar{q}_1}dz^{i_2}\wedge d\bar{z}^{j_2}\otimes y^{k_2})\right)\\
% &\hspace{1mm}+\frac{1}{4}\partial_{P_2}\left(\omega^{p_1\bar{q}_1}(R_{i_1\bar{j}_1k_1\bar{q}_1}dz^{i_1}\wedge d\bar{z}^{j_1}\otimes y^{k_1})\otimes(R_{i_2\bar{j}_2p_1\bar{l}_2}dz^{i_2}\wedge d\bar{z}^{j_2}\otimes \bar{y}^{l_2})\right)\\
% =&\ \frac{1}{8}\left(\omega^{p_1\bar{q}_1}\omega^{p_2\bar{q}_2}(R_{i_1\bar{j}_1p_1\bar{q}_2}dz^{i_1}\wedge d\bar{z}^{j_1})\otimes(R_{i_2\bar{j}_2p_2\bar{q}_1}dz^{i_2}\wedge d\bar{z}^{j_2})\right)\\
% &\hspace{1mm}+\frac{1}{8}\left(\omega^{p_1\bar{q}_1}\omega^{p_2\bar{q}_2}(R_{i_1\bar{j}_1p_2\bar{q}_1}dz^{i_1}\wedge d\bar{z}^{j_1})\otimes(R_{i_2\bar{j}_2p_1\bar{q}_2}dz^{i_2}\wedge d\bar{z}^{j_2})\right)\\
% =&\frac{1}{4}\left(\omega^{p_1\bar{q}_1}\omega^{p_2\bar{q}_2}(R_{i_1\bar{j}_1p_2\bar{q}_1}dz^{i_1}\wedge d\bar{z}^{j_1})\otimes(R_{i_2\bar{j}_2p_1\bar{q}_2}dz^{i_2}\wedge d\bar{z}^{j_2})\right).
%\end{align*}

Thus we indeed get the cancellation. For wheels with more vertices, the cancellation can be proved by an induction on the number of vertices on the wheels. 
\end{proof}

A simple consequence of this computation is the following
\begin{prop}\label{prop:1-loop-graphs}
The first type (i.e., the left picture in \eqref{picture:1_loop_graphs_1}) and third type (i.e., the left picture in \eqref{picture:1_loop_graphs_2}) one-loop Feynman weights cancel with each other. 
\end{prop}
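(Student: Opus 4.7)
The plan is to derive closed-form Feynman weights for the two closed wheels and check that they sum to zero term-by-term in the number of vertices. First I would identify the precise structures: the first type in \eqref{picture:1_loop_graphs_1} is a closed wheel of length $m$ with $m$ blue vertices labeled by $\tfrac{1}{2}\tilde{R}_\nabla$ and $m$ red $\iota_{\Pi}$-edges, and the third type in \eqref{picture:1_loop_graphs_2} is a closed wheel of length $m$ with $m$ purple vertices labeled by the cubic piece $d_{TX}\gamma_0'$ (whose leading term is $R_{i\bar j k\bar l}d\bar z^j\otimes y^i y^k\bar y^l$) and $m$ $\partial_{P_2}$-edges. The restriction to $\partial_{P_2}$ rather than $\partial_{P_1}$ on the wheel will follow from the separate observation that the configuration-space integral of $\pi_{12}^*P\cdot\pi_{23}^*P\cdots\pi_{m1}^*P$ reduces after one ordering to an integral involving $\int_0^1(u-\tfrac12)\,du=0$.

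Next I would cut each wheel at one internal edge, producing the two line graphs of \eqref{picture: two-vertices-cancellation}, and read their Feynman weights off the preceding lemma: the blue line contributes a scalar $-1/2^m$ times a tensor with free ends $d\bar y^{l_1}\wedge dy^{k_m}$, while the purple line contributes a scalar $+1/2^{m-1}$ times a tensor with free ends $\bar y^{l_1}y^{k_m}$, with all other bosonic factors (the $\omega^{k_s\bar l_{s+1}}$-chain, the $R_{i_s\bar j_s k_s\bar l_s}$-chain, and the wedge of $dz^{i_s}\wedge d\bar z^{j_s}$) identical. Closing the blue wheel with $\iota_\Pi = \omega^{p\bar q}\iota_{\partial_{y^p}}\iota_{\partial_{\bar y^q}}$ extracts the scalar $\omega^{k_m\bar l_1}$ from the factor $d\bar y^{l_1}\wedge dy^{k_m}$, while closing the purple wheel with $\partial_{P_2}$ extracts the same scalar $\omega^{k_m\bar l_1}$ from $\bar y^{l_1}y^{k_m}$ but brings an additional prefactor of $\tfrac{1}{2}$ from the definition of $\partial_{P_2}$. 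Hence the two closed wheels yield identical bosonic tensors with scalar prefactors $-1/2^m$ and $+1/2^{m-1}\cdot\tfrac{1}{2}=+1/2^m$ respectively, i.e.\ exact negatives of each other.

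Finally I would match the exponential-series and automorphism-group normalizations. The $m$-th term in the expansion of $e^{\tilde R_\nabla/2\hbar}$ and the $m$-vertex purple contribution to $\gamma_\infty$ coming from $m$ insertions of $\gamma_0'$ both carry a $1/m!$; both cyclic wheels of length $m$ are oriented (by the holomorphic-to-antiholomorphic convention on each edge) and hence have automorphism group of order $m$ rather than $2m$, so the normalizations agree on the two sides. The cancellation of the bosonic tensor weights established in the previous paragraph therefore upgrades to an equality with opposite signs of the total Feynman weights, proving the proposition.

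The main obstacle will be careful sign and combinatorial bookkeeping. In particular I would need to verify: the Koszul sign produced by the closing $\iota_\Pi$-contraction on $d\bar y^{l_1}\wedge dy^{k_m}$ against the one produced by $\partial_{P_2}$ on $\bar y^{l_1}y^{k_m}$; the sign generated by the two-term structure of $\partial_{P_2}$ in Lemma \ref{remark: propagator-comparision-with-symplectic-case} (which swaps $\mathcal{L}_{\partial_{z^i}}$ and $\mathcal{L}_{\partial_{\bar z^j}}$); the orientation-breaking that pins down $|\text{Aut}(\mathcal{G})|=m$ for each wheel; and the way the factor $\tfrac12$ is distributed among the three natural sources $\tilde R_\nabla/2$, $\partial_{P_2}$, and the implicit $\tfrac12$ in the Kapranov datum $R_2^*$. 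Once all of these are aligned as indicated, the first- and third-type contributions cancel pairwise for every $m$.
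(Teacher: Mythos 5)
Your proposal is correct and follows essentially the same route as the paper: cut each closed wheel into the corresponding line graph of \eqref{picture: two-vertices-cancellation}, invoke the preceding lemma for their weights, and observe that closing with $\iota_{\Pi}$ versus $\partial_{P_2}$ produces the same tensor with prefactors $-1/2^m$ and $+\tfrac{1}{2}\cdot\tfrac{1}{2^{m-1}}=+1/2^m$ respectively. Your extra care about the combinatorial normalization (the $1/m!$ versus $|\mathrm{Aut}(\mathcal{G})|$ bookkeeping) and about why only $\partial_{P_2}$ can close the purple wheel is exactly what the paper compresses into the unspecified common constant $C$ and into the adjacent proposition on mixed-color wheels, so it is a welcome elaboration rather than a deviation.
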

\begin{proof}
The first type and third type one-loop graphs are exactly obtained by connecting the starting and ending tails of the graphs shown in the picture \eqref{picture: two-vertices-cancellation}. It follows from the previous lemma that the Feynman weight for the first type is of the form
$$
-C\cdot \frac{1}{2^m}\cdot (\omega^{k_1\bar{l}_2}\omega^{k_2\bar{l}_3}\cdots \omega^{k_{m-1}\bar{l}_m}\omega^{k_m\bar{l}_1})(R_{i_1\bar{j}_1k_1\bar{l}_1}R_{i_2\bar{j}_2k_2\bar{l}_2}\cdots R_{i_m\bar{j}_mk_m\bar{l}_m})(dz^{i_1}d\bar{z}^{j_1}\cdots dz^{i_m}d\bar{z}^{j_m}),
$$
where the constant $C$ arises from the combinatorics of graphs, while that for the third type is of the form
$$
C\cdot \frac{1}{2^m}\cdot (\omega^{k_1\bar{l}_2}\omega^{k_2\bar{l}_3}\cdots \omega^{k_{m-1}\bar{l}_m}\omega^{k_m\bar{l}_1})(R_{i_1\bar{j}_1k_1\bar{l}_1}R_{i_2\bar{j}_2k_2\bar{l}_2}\cdots R_{i_m\bar{j}_mk_m\bar{l}_m})(dz^{i_1}d\bar{z}^{j_1}\cdots dz^{i_m}d\bar{z}^{j_m}).
$$
\end{proof}

The second type one-loop graphs, i.e., connected wheels with only blue edges on the wheel (the right picture in \eqref{picture:1_loop_graphs_1}), give rise precisely to the logarithm of $\hat{A}$ genus, as was shown by Grady and Gwilliam \cite{Gwilliam-Grady}:
\begin{prop}[Corollary 8.6 in \cite{Gwilliam-Grady}]\label{prop: one-loop-logarithm-A-roof-genus}
The Feynman weights corresponding to the following one-loop graphs give rise to the logarithm of the $\hat{A}$ genus of $X$.
$$
\figbox{0.58}{one-loop-color}
$$
\end{prop}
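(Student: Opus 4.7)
The plan is to adapt the Grady--Gwilliam one-loop computation \cite{Gwilliam-Grady} (done there for the $T^*Y$ sigma model) to the present K\"ahler setting; the combinatorial structure of the wheels and the analytic integral over $S^1[n]$ are essentially identical. First I would identify the graphs in question as wheels with $n\ge 1$ trivalent purple vertices cyclically joined by $n$ internal $\partial_{P_1}$-edges, with no external legs surviving the symbol map. Each purple vertex is decorated by the leading cubic piece of $d_{TX}\gamma_0'$, namely (up to a numerical constant) $R_{i\bar j k\bar l}\,d\bar z^j\otimes d_{TX}(y^i y^k\bar y^l)$; two of its outgoing $y,\bar y$-legs are absorbed by the two adjacent wheel edges.

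The Feynman weight of a length-$n$ wheel then factors cleanly into a combinatorial and an analytic piece. For the combinatorial piece, each $\partial_{P_1}$-edge contributes a contraction by $\omega^{\alpha\bar\beta}(x)$; composing these contractions cyclically around the wheel converts the cyclic product of the $n$ copies of $R_{i_\ell\bar j_\ell k_\ell\bar l_\ell}$ into the trace of the $n$-th power of the $(1,1)$-form-valued curvature endomorphism of $TX$, i.e.\
\[
\Tr(\mathcal R^n)\in\A_X^{n,n}(X).
\]
For the analytic piece, one evaluates
\[
I_n \;:=\; \int_{S^1[n]}\pi_{1,2}^{*}P\wedge \pi_{2,3}^{*}P\wedge\cdots\wedge\pi_{n,1}^{*}P,\qquad P(\theta,u)=u-\tfrac12.
\]
By the antisymmetry $P(\theta,u)=-P(u,\theta)$ combined with the cyclic symmetry of the wheel, $I_n=0$ when $n$ is odd; for $n=2k$ a direct (and classical) computation of this iterated integral on the compactified configuration space yields a universal multiple of the Bernoulli number $B_{2k}/(2k)!$.

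Assembling the two pieces and dividing by the automorphism order $2n$ of a length-$n$ wheel, the sum of surviving wheel weights becomes
\[
\sum_{k\ge 1}\frac{B_{2k}}{4k\,(2k)!}\,\Tr(\mathcal R^{2k}),
\]
which by the classical generating identity $\log\det\bigl(\tfrac{x/2}{\sinh(x/2)}\bigr)=-\sum_{k\ge 1}\tfrac{B_{2k}}{2k(2k)!}x^{2k}$ is precisely a Chern--Weil representative of $\log\hat A(TX)$. The main obstacle is the bookkeeping of signs and numerical constants: Koszul signs from the odd variables $dy^i,d\bar y^j$ in the BV bundle, the factor of $\tfrac12$ from the splitting $\partial_P=\partial_{P_1}+\partial_{P_2}$, orientation conventions on $S^1[n]$, and the symmetry factor $|\mathrm{Aut}(\mathcal G)|$ must all be reconciled so that the resulting coefficients line up with the Bernoulli series. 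Once the normalization of $I_n$ and the cyclic collapse to $\Tr(\mathcal R^{2k})$ are matched, the identification with $\log\hat A(X)$ is automatic, as in \cite[Section 8]{Gwilliam-Grady}.
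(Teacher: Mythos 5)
The paper does not actually prove this statement: Proposition \ref{prop: one-loop-logarithm-A-roof-genus} is imported wholesale as Corollary 8.6 of \cite{Gwilliam-Grady}, so there is no internal argument to compare against. What you have written is a reconstruction of the cited proof, and in outline it is the right one --- factor each wheel weight into a combinatorial part (cyclic contraction of curvature tensors giving $\Tr(\mathcal{R}^{n})$) and an analytic part (the cyclic integral of the $S^1$ propagator), observe that odd wheels vanish by the antisymmetry of $P$, evaluate the even ones in terms of Bernoulli numbers, and match against the generating series for $\log\hat{A}$. That is exactly the strategy of \cite[Section 8]{Gwilliam-Grady}, so your proposal should be read as a faithful plan for the external proof rather than a new route.

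Two points deserve more care. First, the graphs are not quite the bare wheels you describe. A wheel of $n$ trivalent purple vertices with only the $n$ cyclic $\partial_{P_1}$-edges leaves one holomorphic fiber leg dangling at each vertex (the cubic vertex has legs $y^i,y^k,\bar{y}^l$ and each wheel edge absorbs one holomorphic and one anti-holomorphic leg), so its symbol vanishes; moreover the vertex $R_{i\bar{j}k\bar{l}}\,d\bar{z}^j\otimes y^iy^k\bar{y}^l$ is only a $(0,1)$-form, so the bare wheel could never produce the $(n,n)$-form $\Tr(\mathcal{R}^n)$. In the paper's bookkeeping each third leg must be capped by an extra edge ending on a univalent yellow vertex $\omega_{i\bar{j}}\,dz^i\otimes d\bar{y}^j$ (cf.\ Lemma \ref{lemma: properties-vertex-edges} and Proposition \ref{proposition: tree-external-edges}); this contraction replaces the dangling $y^i$ by $dz^i$ and reconstitutes the genuine curvature two-form at each vertex, after which your cyclic-trace claim is correct. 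Second, everything quantitative is deferred: the evaluation $\int_{(S^1)^n}\prod_{\ell}P(\theta_\ell,\theta_{\ell+1})\,d\theta_1\cdots d\theta_n=\sum_{m\neq 0}(2\pi i m)^{-n}=-B_n/n!$ (most cleanly seen from the Fourier expansion $P=\sum_{m\neq 0}e^{2\pi i m(\theta_1-\theta_2)}/(2\pi i m)$ rather than from the compactified configuration space), together with the reconciliation of the factor $\tfrac12$ from $\partial_P=\partial_{P_1}+\partial_{P_2}$, the Koszul signs, and $|\mathrm{Aut}(\mathcal{G})|$, is precisely the substance of the cited corollary. As it stands your coefficient $\tfrac{B_{2k}}{4k(2k)!}$ differs from the target $-\tfrac{B_{2k}}{2k(2k)!}$ by a sign and a factor of $2$, which you flag but do not resolve; until that is done the identification with $\log\hat{A}(X)$ is a plausible expectation rather than a proof.
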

Finally, the contribution from the fourth type one-loop graphs, i.e., a tadpole graph (the right picture in \eqref{picture:1_loop_graphs_2}), is described by the following lemma:
\begin{lem}\label{lemma: weight-tadple-graph}
The Feynman weight of the tadpole graph is given by $\frac{1}{2}\Tr(\mathcal{R}^+)$, i.e.,
$$
\frac{1}{2}\Tr(\mathcal{R}^+)=\figbox{0.66}{tadpole},
$$
where $\mathcal{R}^+$ is given in \eqref{equation: R-plus}, \eqref{equation: trace-R-plus}.
\end{lem}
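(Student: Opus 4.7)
Based on the graph classification just given, the tadpole in question consists of a single vertex decorated by the blue insertion $\tfrac{1}{2}\tilde R_\nabla$ whose two fiber-direction legs $dy^k$ and $d\bar y^l$ are contracted into a self-loop by the propagator $\hbar\iota_\Pi$. The plan is to simply evaluate this contraction and match the result against the explicit formula \eqref{equation: trace-R-plus} for $\Tr(\mathcal R^+)$.

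Using the Poisson bivector $\Pi = \omega^{p\bar q}\,\partial_{y^p}\wedge\partial_{\bar y^q}$ implicit in the definition of $\iota_\Pi$ (compatible with $\Delta=[d_{TX},\iota_\Pi]$ of Lemma \ref{lemma: BV-operator-differential}), the graded Leibniz rule for interior products gives $\iota_\Pi(dy^k\wedge d\bar y^l) = -\omega^{k\bar l}$. Plugging in the explicit expression $\tilde R_\nabla = \sqrt{-1}\,R_{i\bar j k\bar l}\,dz^i\wedge d\bar z^j\otimes dy^k\wedge d\bar y^l$ and then invoking the K\"ahler identity $\omega^{k\bar l} = \sqrt{-1}\,g^{k\bar l}$ collected at the start of Section \ref{section: kalher-geom}, one obtains
\[
\iota_\Pi\!\left(\tfrac{1}{2}\tilde R_\nabla\right)
= -\tfrac{\sqrt{-1}}{2}\,R_{i\bar j k\bar l}\,\omega^{k\bar l}\,dz^i\wedge d\bar z^j
= \tfrac{1}{2}\,R_{i\bar j k\bar l}\,g^{k\bar l}\,dz^i\wedge d\bar z^j
= \tfrac{1}{2}\,R_{i\bar j k}^{\,k}\,dz^i\wedge d\bar z^j
= \tfrac{1}{2}\Tr(\mathcal R^+),
\]
where the final equality is \eqref{equation: trace-R-plus}.

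The only bookkeeping point is the combinatorial factor. Because $\iota_\Pi$ contracts $dy^k$ against $d\bar y^l$ and the corresponding self-loop is oriented from the holomorphic leg to the anti-holomorphic leg (as specified just before the sample pictures), the would-be $\mathbb{Z}/2$ swap of the two endpoints is not an automorphism of the colored, oriented graph; consequently $|\mathrm{Aut}(\mathcal G)|=1$ and the Feynman weight is read off directly from the contraction above. The only genuine subtlety is pinning down the correct sign of $\iota_\Pi(dy^k\wedge d\bar y^l)$; once this is fixed consistently with the conventions already used in the bracket formula of Lemma \ref{lemma: partial-P-equal-BV-bracket}, the lemma follows.
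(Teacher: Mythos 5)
Your proof is correct and follows essentially the same route as the paper: both evaluate the tadpole weight as the single contraction $\hbar\iota_{\Pi}\bigl(\tfrac{1}{2\hbar}\tilde{R}_\nabla\bigr)=-\tfrac{\sqrt{-1}}{2}\,\omega^{k\bar{l}}R_{i\bar{j}k\bar{l}}\,dz^i\wedge d\bar{z}^j$ and then identify this with $\tfrac{1}{2}\Tr(\mathcal{R}^+)$ via \eqref{equation: trace-R-plus}. Your extra remarks on the orientation of the self-loop and the trivial automorphism group are harmless refinements that the paper leaves implicit.
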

\begin{proof}
The Feynman weight of the tadpole graph is explicitly given by
\begin{align*}
 \hbar\iota_{\Pi}\left(\frac{1}{2\hbar}\tilde{R}_\nabla\right)& = \iota_{\Pi}\left(\frac{\sqrt{-1}}{2}R_{i\bar{j}k\bar{l}}dz^i\wedge d\bar{z}^j\otimes dy^k\wedge d\bar{y}^l\right)\\
 & = -\frac{\sqrt{-1}}{2}\omega^{k\bar{l}}R_{i\bar{j}k\bar{l}}dz^i\wedge d\bar{z}^j = \frac{1}{2}\Tr(\mathcal{R}^+).
\end{align*}
\end{proof}

Combining Proposition \ref{prop:one-loop-exact-algebraic-index}, Corollary \ref{cor:trees}, Proposition \ref{prop:1-loop-graphs}, Proposition \ref{prop: one-loop-logarithm-A-roof-genus} and Lemma \ref{lemma: weight-tadple-graph}, we arrive at our second main result, which is a cochain level formula for the algebraic index:
\begin{thm}\label{theorem:algebraic-index-theorem}
	Let $\gamma$ be a solution of the Fedosov equation \eqref{equation: Fedosov-equation-gamma} and $\gamma_\infty$ be the associated solution of the QME as defined in Definition \ref{definition: gamma-infty}. Then we have
$$\sigma\left(e^{\hbar\iota_{\Pi}} (e^{\tilde{R}_\nabla/2\hbar}e^{\gamma_\infty/\hbar}) \right)
= \hat{A}(X)\cdot e^{-\frac{\omega_\hbar}{\hbar}+\frac{1}{2}\Tr(\mathcal{R}^+)} = \text{Td}(X)\cdot e^{-\frac{\omega_\hbar}{\hbar}+\Tr(\mathcal{R}^+)},$$
where $Td(X)$ is the Todd class of $X$. 
\end{thm}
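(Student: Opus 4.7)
The plan is to assemble the graph-by-graph analysis carried out in Propositions \ref{prop:one-loop-exact-algebraic-index}--\ref{prop: one-loop-logarithm-A-roof-genus}, Corollary \ref{cor:trees} and Lemma \ref{lemma: weight-tadple-graph} into a single identity. Starting from Lemma \ref{lemma: Berezin-integral-graph} together with the graph expansion of $\gamma_\infty$ (Lemma \ref{lemma: gamma-infty-graph-expression}), I would first rewrite the left-hand side as
$$\sigma\!\left(e^{\hbar\iota_{\Pi}}(e^{\tilde{R}_\nabla/2\hbar}e^{\gamma_\infty/\hbar})\right) = \exp\!\left(\hbar^{-1}\!\sum_{\mathcal{G}\text{ connected}} \tfrac{\hbar^{g(\mathcal{G})}}{|\text{Aut}(\mathcal{G})|}\,W_{\mathcal{G}}\!\left(P+\iota_{\Pi},\,d_{TX}\gamma+\tfrac{1}{2}\tilde{R}_\nabla\right)\right),$$
exactly as in the computation preceding Proposition \ref{prop:one-loop-exact-algebraic-index}, where the symbol map restricts the sum to connected graphs with no external legs. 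The one-loop exactness in Proposition \ref{prop:one-loop-exact-algebraic-index} then reduces the sum to trees and one-loop (wheel-based) diagrams only.

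Next I would evaluate the surviving pieces family by family. The tree contributions, by Corollary \ref{cor:trees}, sum to $-\omega_\hbar$, producing the factor $e^{-\omega_\hbar/\hbar}$. The non-vanishing one-loop graphs fall into four families, enumerated in \eqref{picture:1_loop_graphs_1}--\eqref{picture:1_loop_graphs_2}: wheels of $\tilde{R}_\nabla$-vertices joined by $P$-edges, wheels of $\tilde{R}_\nabla$-vertices joined by $\iota_{\Pi}$-edges, wheels of trivalent $\gamma_0'$-vertices joined by $\iota_{\Pi}$-edges, and the tadpole formed by self-contracting $\tilde{R}_\nabla$ by $\iota_{\Pi}$. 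By Proposition \ref{prop:1-loop-graphs}, families one and three cancel pairwise; by Proposition \ref{prop: one-loop-logarithm-A-roof-genus}, family two sums to $\log\hat{A}(X)$; and by Lemma \ref{lemma: weight-tadple-graph}, the tadpole contributes $\tfrac{1}{2}\Tr(\mathcal{R}^+)$. Collecting everything and exponentiating delivers
$$\sigma\!\left(e^{\hbar\iota_{\Pi}}(e^{\tilde{R}_\nabla/2\hbar}e^{\gamma_\infty/\hbar})\right) = \hat{A}(X)\cdot e^{-\omega_\hbar/\hbar+\frac{1}{2}\Tr(\mathcal{R}^+)},$$
which is the first claimed equality.

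The second equality is the standard characteristic-class identity $\text{Td}(E)=\hat{A}(E)\cdot e^{c_1(E)/2}$ applied to $E=TX$, with $c_1(TX)$ represented in this normalization by $\Tr(\mathcal{R}^+)$; after cancelling the common factor $e^{-\omega_\hbar/\hbar}$ the identity reduces to the elementary rearrangement $\hat{A}(X)\cdot e^{\frac{1}{2}\Tr(\mathcal{R}^+)} = \text{Td}(X)\cdot e^{\Tr(\mathcal{R}^+)}$. The step I expect to absorb most of the care is not any of the individual graph identifications, but the combinatorial bookkeeping in Proposition \ref{prop:1-loop-graphs}: one must match the automorphism groups and Wick-contraction orderings of the two cancelling wheel families, keeping track of the $2^m$ factors that distinguish $\partial_{P_1}$ from $\partial_{P_2}$, the minus sign from $\iota_{\Pi}$, and the symmetries of $R_{i\bar{j}k\bar{l}}$, so that the signs and multiplicities line up exactly. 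Everything else is either a direct invocation of the preceding lemmas or a standard manipulation of characteristic classes.
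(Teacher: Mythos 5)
Your proposal is correct and follows essentially the same route as the paper: the displayed graph expansion with propagator $P+\iota_{\Pi}$ and vertices $d_{TX}\gamma+\tfrac{1}{2}\tilde{R}_\nabla$, the reduction to trees and one-loop wheels via Proposition \ref{prop:one-loop-exact-algebraic-index}, the identification of the four surviving families through Corollary \ref{cor:trees}, Propositions \ref{prop:1-loop-graphs} and \ref{prop: one-loop-logarithm-A-roof-genus}, and Lemma \ref{lemma: weight-tadple-graph}, and finally the identity $\text{Td}(X)=\hat{A}(X)\cdot e^{-\frac{1}{2}\Tr(\mathcal{R}^+)}$ for the second equality. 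This is exactly how the paper assembles the theorem, so there is nothing to add beyond confirming that your sign bookkeeping for the tree contribution $-\omega_\hbar$ is consistent with the stated formula.
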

\begin{proof}
We only need to show the second equality, which follows from the formula 
$Td(X)=\hat{A}(X)\cdot e^{-\frac{1}{2}\Tr(\mathcal{R}^+)}$.
\end{proof}

Applying Lemma \ref{lemma: Berezin-integral-graph} gives the {\em algebraic index theorem}:
\begin{cor}\label{corollary:algebraic-index-theorem}
	The trace of the function $1$ is given by 
	\begin{align*}
	\Tr(1) = \int_X\hat{A}(X)\cdot e^{-\frac{\omega_\hbar}{\hbar}+\frac{1}{2}\Tr(\mathcal{R}^+)}
	=  \int_X\text{Td}(X)\cdot e^{-\frac{\omega_\hbar}{\hbar}+\Tr(\mathcal{R}^+)}.
	\end{align*}
\end{cor}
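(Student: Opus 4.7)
The plan is to chain together results already established in the excerpt, without doing any new graph-level computation. By Definition \ref{definition:correlation-functions} applied to $f = 1$, we have
\[
\Tr(1) = \langle 1\rangle = \int_X \int_{Ber} [O_1]_\infty \cdot e^{\tilde R_\nabla/2\hbar} \cdot e^{\gamma_\infty/\hbar}.
\]
First I would verify that $O_1 = 1$: since $\nabla(1) = 0$ and $[I_\alpha, 1]_\star = 0$, the constant function $1$ solves the iterative equation \eqref{equation: iteration-equation-quantum-flat-section}, and by the uniqueness clause of Proposition \ref{proposition: iteration-equation-quantum-flat-section} it is the unique solution. Unpacking the definition of the factorization map in Theorem \ref{theorem: local-to-global-factorization-map} with $O_f = 1$ then gives $[O_1]_\infty = 1$, because inserting the constant $1$ at one marked point on each $S^1[k]$ simply reproduces the expression defining $e^{\gamma_\infty/\hbar}$ in Definition \ref{definition: gamma-infty}, which is cancelled by the prefactor $e^{-\gamma_\infty/\hbar}$.

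With $[O_1]_\infty = 1$ in hand, the correlation function reduces to $\Tr(1) = \int_X \int_{Ber} e^{\tilde R_\nabla/2\hbar} \cdot e^{\gamma_\infty/\hbar}$. Applying Lemma \ref{lemma: Berezin-integral-graph} rewrites the Berezin integral as the symbol of $e^{\hbar\iota_\Pi}$ acting on the integrand, so that
\[
\Tr(1) = \int_X \sigma\left(e^{\hbar\iota_\Pi}\bigl(e^{\tilde R_\nabla/2\hbar}\cdot e^{\gamma_\infty/\hbar}\bigr)\right).
\]
Now I plug in Theorem \ref{theorem:algebraic-index-theorem}, which identifies the integrand with $\hat{A}(X)\cdot e^{-\omega_\hbar/\hbar + \frac{1}{2}\Tr(\mathcal{R}^+)} = \text{Td}(X)\cdot e^{-\omega_\hbar/\hbar + \Tr(\mathcal{R}^+)}$, and both displayed equalities of the corollary drop out.

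In summary, the corollary is a direct post-composition: Definition \ref{definition:correlation-functions} $\to$ the identification $[O_1]_\infty = 1$ $\to$ Lemma \ref{lemma: Berezin-integral-graph} $\to$ Theorem \ref{theorem:algebraic-index-theorem}. There is no substantive obstacle, since all of the hard work — one-loop exactness, the cancellation of Proposition \ref{prop:1-loop-graphs}, the identification of the surviving wheels with $\log\hat{A}(X)$, and the tadpole contribution $\frac{1}{2}\Tr(\mathcal{R}^+)$ — has been absorbed into Theorem \ref{theorem:algebraic-index-theorem}. The only point that would merit a careful sentence in the write-up is the unit normalization $[O_1]_\infty = 1$, which is the standard "vacuum" convention for the factorization map but deserves to be stated explicitly so that the reader does not confuse $\Tr(1)$ with a formally different expectation value.
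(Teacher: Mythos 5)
Your proposal is correct and follows essentially the same route as the paper: the paper also obtains the corollary by applying Lemma \ref{lemma: Berezin-integral-graph} to the expression $\Tr(1) = \int_X\int_{Ber}e^{\tilde{R}_\nabla/2\hbar}e^{\gamma_\infty/\hbar}$ and then invoking Theorem \ref{theorem:algebraic-index-theorem}, with the identification $[O_1]_\infty = 1$ left implicit. Your explicit verification that $O_1 = 1$ and that the factorization map sends it to $1$ is a harmless (and welcome) elaboration of a step the paper takes for granted.
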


As we mentioned in the introduction, when $\alpha = \hbar\cdot\Tr(\mathcal{R}^+)$ or $\omega_\hbar=-\omega+\hbar\cdot\Tr(\mathcal{R}^+)$, we will prove in the forthcoming work \cite{CLL-PartIII} that the associated star product $\star_\alpha$ is exactly equal to the {\em Berezin-Toeplitz star product} studied in \cite{Bordemann-Meinrenken, Bordemann, Karabegov}. In this case, the algebraic index theorem is formulated as:
$$
\Tr(1)=\int_X\text{Td}(X)\cdot e^{\omega/\hbar}.
$$

\appendix
\section{Feynman graphs}\label{section: Feynman graphs}
In this section, we describe the basics of Feynman graphs. For more details, we refer the reader to \cite{Kevin-book}.
\begin{defn}\label{definition: Feynman-graphs}
A {\em graph} $\mathcal{G}$ consists of the following data:
\begin{enumerate}
 \item A finite set $V(\mathcal{G})$ of {\em vertices},
 \item A finite set $H(\mathcal{G})$ of {\em half edges},
 \item An involution $\sigma: H(G)\rightarrow H(G)$. The set of fixed points of this map is called the set of {\em tails} of $\mathcal{G}$, denoted by $T(\mathcal{G})$; a tail is also called an {\em external edge}.  The set of two-element orbits of this map is called the set of {\em internal edges} of $\mathcal{G}$, denoted by $E(\mathcal{G})$,
 \item A map $\pi: H(\mathcal{G})\rightarrow V(\mathcal{G})$ sending a half-edge to the vertex to which it is attached,
 \item A map $g: V(\mathcal{G})\rightarrow\Z_{\geq 0}$ assigning a {\em genus} to each vertex. 
\end{enumerate}
\end{defn}
\begin{rmk}
 In a picture of a graph, we will use solid lines and dotted lines to denote internal edges and tails respectively.
\end{rmk}

It is explained in \cites{Kevin-book, Manin} how to construct a topological space $|\mathcal{G}|$ associated to a graph $\mathcal{G}$, and we omit the details here. A graph $\mathcal{G}$ is called {\em connected} if $|\mathcal{G}|$ is connected. The {\em genus} is a graph is defined by
\begin{equation*}\label{equation: genus-graph}
 g(\mathcal{G}):=b_1(\mathcal{G})+\sum_{v\in V(\mathcal{G})}g(v).
\end{equation*}
Here $b_1(\mathcal{G})$ denote the first Betti number of $|\mathcal{G}|$. 
\begin{rmk}
We call a graph $\mathcal{G}$ a {\em tree} if $b_1(\mathcal{G})=0$, or a {\em one-loop graph} if $b_1(\mathcal{G})=1$. 
\end{rmk}
We also introduce the notion of a wheel graph:
\begin{defn}\label{definition: wheel}
A one-loop graph $\mathcal{G}$ is called a {\em wheel} if removing any of its internal edges will give rise to a tree, as in the following picture:
$$
\figbox{0.28}{wheel}
$$
\end{defn}

Let $\E$ be a graded module over a base ring $R$, with $I\subset R$ a nilpotent ideal. Let $\E^*:=\Hom_R(\E,R)$ denote its $R$-linear dual (or continuous dual when there is a topology on $\E$). Let
$$
\hat{\mathcal{O}}(\E):=\prod_{k\geq 0}\Sym^k_R(\E^*),
$$
denote the space of formal functions on $\E$. 
\begin{eg}
We give the example in the K\"ahler quantization as an illustration: the base ring here consists of differential form $R=\A_X^*$ on $X$, where the nilpotent ideal is the $I=\A_X^{>0}$ is the subspace of forms of degree at least $1$. In particular, if we take the $R$-module to be $\E=\A_X^*\otimes_{\mathcal{C}^\infty_X}TX_\C$, then there is the canonical isomorphism 
$$
\hat{\mathcal{O}}(\E)\cong \A_X^*\otimes_{\mathcal{C}^\infty_X}\W_{X,\C}.
$$
\end{eg}

We define a subspace
$$
\mathcal{O}^+(\E)\subset\hat{\mathcal{O}}(\E)[[\hbar]]
$$
consisting of those formal functions which are at least cubic modulo $\hbar$ and the nilpotent ideal $\mathcal{I}$ in $R$. Let $F\in\mathcal{O}^+(\E)$ which we expand as
$$
F=\sum_{g,k\geq 0}F_g^{(k)},
$$
where $F_g^{(k)}:\E^{\otimes k}\rightarrow R$ is an $S_k$-invariant (continuous) map.

We will fix an element $P\in\Sym^2(\E)$ which we call the {\em propagator}. With $F$ and $P$, we will describe for every connected stable graph $\mathcal{G}$ the {\em Feynman weight}:
$$
W_{\mathcal{G}}(P,F)\in\mathcal{O}^+(\E).
$$
Explicitly, we label each vertex $v\in V(\mathcal{G})$ of genus $g(v)$ and valency $k$ by $F_{g(v)}^{(k)}$. This defines an element:
$$
F_v:\E^{\otimes H(v)}\rightarrow R,
$$
where $H(v)$ denotes the set of half-edges incident to $v$. We label each internal edge by the propagator 
$$
P_e=P\in\E^{H(e)},
$$
where $H(e)$ denotes the two half edges which together give rise to the internal edge $e$. We can then contract the tensor product of vectors in $E$ (from $E(\mathcal{G})$) with the tensor product of formal functions on $E$ (from $V(\mathcal{G})$) to yield an $R$-linear map:
$$
W_{\mathcal{G}}(P,F):\E^{T(\mathcal{G})}\rightarrow R. 
$$

\begin{defn}\label{definition: homotopic-group-flow}
We define the {\em homotopic renormalization group flow operator (HRG)} with respect to the propagator $P$
$$
W(P,-): \mathcal{O}^+(\E)\rightarrow\mathcal{O}^+(\E)
$$
by
$$
W(P,F):=\sum_{\mathcal{G}}\frac{\hbar^{g(\mathcal{G})}}{|\text{Aut}(\mathcal{G})|}W_{\mathcal{G}}(P,F),
$$
where the sum is over all connected graphs, and $\text{Aut}(\mathcal{G})$ denotes the automorphism group of $\mathcal{G}$. We can equivalently describe the HRG operator formally by the simple formula:
$$
e^{W(P,F)/\hbar}=e^{\hbar\partial_P}(e^{F/\hbar}),
$$
where $\partial_P$ denotes the second order differential operator on $\mathcal{O}(\E)$ given by contracting with $P$.  
\end{defn}

%\section{An example}

%In this appendix, we will set up some rules 

\begin{bibdiv}
\begin{biblist}
\bib{AS2}{article}{
   author={Axelrod, S.},
   author={Singer, I. M.},
   title={Chern-Simons perturabation theory. II},
   journal={J. Differential Geometry},
   volume={39},
   date={1994},
   pages={173--213},
}

\bib{BV}{article}{
   author={Batalin, I. A.},
   author={Vilkovisky, G. A.},
   title={Gauge algebra and quantization},
   journal={Phys. Lett. B},
   volume={102},
   date={1981},
   number={1},
   pages={27--31}
}

%\bib{Bezru-Kaledin}{article}{
%    AUTHOR = {Bezrukavnikov, R.},
%    author = {Kaledin, D.},
%     TITLE = {Fedosov quantization in algebraic context},
%   JOURNAL = {Mosc. Math. J.},
%    VOLUME = {4},
%      YEAR = {2004},
%    NUMBER = {3},
%     PAGES = {559--592, 782},
%}

\bib{Bordemann-Meinrenken}{article}{
    AUTHOR = {Bordemann, M.},
    author = {Meinrenken, E.},
    author = {Schlichenmaier, M.},
     TITLE = {Toeplitz quantization of {K}\"{a}hler manifolds and {${\rm
              gl}(N)$}, {$N\to\infty$} limits},
   JOURNAL = {Comm. Math. Phys.},
    VOLUME = {165},
      YEAR = {1994},
    NUMBER = {2},
     PAGES = {281--296},
}
		
\bib{Bordemann}{article}{
	AUTHOR = {Bordemann, M.},
	author = {Waldmann, S.},
	TITLE = {A {F}edosov star product of the {W}ick type for {K}\"{a}hler
		manifolds},
	JOURNAL = {Lett. Math. Phys.},
	VOLUME = {41},
	YEAR = {1997},
	NUMBER = {3},
	PAGES = {243--253},
}

\bib{CLL-PartI}{article}{
	author={Chan, K.},
	author={Leung, N. C.},
	author={Li, Q.},
	title={A geometric construction of representations of the {B}erezin-{T}oeplitz quantization},
   eprint={http://arxiv.org/abs/2001.10869},
}

\bib{CLL-PartIII}{article}{
	author={Chan, K.},
	author={Leung, N. C.},
	author={Li, Q.},
	note={in preparation},
}

%\bib{Costello-CS}{article}{
%	author={Costello, K.},
%	title={Renormalization and the Batalin-Vilkovisky formalism},
%	eprint={http://arxiv.org/abs/0706.1533}
%}

\bib{Kevin-book}{book}{
   author={Costello, K.},
   title={Renormalization and effective field theory},
   series={Mathematical Surveys and Monographs},
   volume={170},
   publisher={American Mathematical Society},
   place={Providence, RI},
   date={2011},
   pages={viii+251},
}

%\bib{Costello-SUSY}{article}{
%	AUTHOR = {Costello, K.},
%	TITLE = {Notes on supersymmetric and holomorphic field theories in
%		dimensions 2 and 4},
%	JOURNAL = {Pure Appl. Math. Q.},
%	VOLUME = {9},
%	YEAR = {2013},
%	NUMBER = {1},
%	PAGES = {73--165},
%}

\bib{Kevin-CS}{article}{
   author={Costello, K.},
   title={A geometric construction of the Witten genus, II},
   eprint={http://arxiv.org/abs/1112.0816},
}

\bib{Kevin-Owen}{book}{
	AUTHOR = {Costello, K.},
	AUTHOR = {Gwilliam, O.},
	TITLE = {Factorization algebras in quantum field theory. {V}ol. 1},
	SERIES = {New Mathematical Monographs},
	VOLUME = {31},
	PUBLISHER = {Cambridge University Press, Cambridge},
	YEAR = {2017},
	PAGES = {ix+387},
}

\bib{Kevin-Owen-2}{article}{
	author={Costello, K.},
	author={Gwilliam, O.},
	title={Factorization algebras in quantum field theory, {V}ol. 2},
	eprint={http://people.math.umass.edu/~gwilliam/},
}

\bib{Fed}{article}{
    AUTHOR = {Fedosov, B. V.},
     TITLE = {A simple geometrical construction of deformation quantization},
   JOURNAL = {J. Differential Geom.},
    VOLUME = {40},
      YEAR = {1994},
    NUMBER = {2},
     PAGES = {213--238}
}

\bib{Fedbook}{book}{
	AUTHOR = {Fedosov, B. V.},
	TITLE = {Deformation quantization and index theory},
	SERIES = {Mathematical Topics},
	VOLUME = {9},
	PUBLISHER = {Akademie Verlag, Berlin},
	YEAR = {1996},
	PAGES = {325},
}

\bib{Fed-index}{article}{
    AUTHOR = {Fedosov, B. V.},
     TITLE = {The {A}tiyah-{B}ott-{P}atodi method in deformation
              quantization},
   JOURNAL = {Comm. Math. Phys.},
    VOLUME = {209},
      YEAR = {2000},
    NUMBER = {3},
     PAGES = {691--728},
}

\bib{Gwilliam-Grady}{article}{
	AUTHOR = {Gwilliam, O.},
	AUTHOR = {Grady, R.},
	TITLE = {One-dimensional {C}hern-{S}imons theory and the {$\hat A$} genus},
	JOURNAL = {Algebr. Geom. Topol.},
	VOLUME = {14},
	YEAR = {2014},
	NUMBER = {4},
	PAGES = {2299--2377},
}

\bib{GLL}{article}{
    AUTHOR = {Grady, R.},
    author = {Li, Q.},
    author = {Li, S.},
     TITLE = {Batalin-{V}ilkovisky quantization and the algebraic index},
   JOURNAL = {Adv. Math.},
    VOLUME = {317},
      YEAR = {2017},
     PAGES = {575--639},
}

\bib{Kapranov}{article}{
    AUTHOR = {Kapranov, M.},
     TITLE = {Rozansky-{W}itten invariants via {A}tiyah classes},
   JOURNAL = {Compositio Math.},
    VOLUME = {115},
      YEAR = {1999},
    NUMBER = {1},
     PAGES = {71--113},
}

%\bib{Kapustin}{article}{
%    AUTHOR = {Kapustin, A.},
%     TITLE = {Topological field theory, higher categories, and their
%              applications},
% BOOKTITLE = {Proceedings of the {I}nternational {C}ongress of
%              {M}athematicians. {V}olume {III}},
%     PAGES = {2021--2043},
%      YEAR = {2010},
%}

\bib{Karabegov96}{article}{
    AUTHOR = {Karabegov, A.V.},
     TITLE = {Deformation quantizations with separation of variables on a
              {K}\"{a}hler manifold},
   JOURNAL = {Comm. Math. Phys.},
    VOLUME = {180},
      YEAR = {1996},
    NUMBER = {3},
     PAGES = {745--755},
}

\bib{Karabegov00}{incollection}{
    AUTHOR = {Karabegov, A.V.},
     TITLE = {On {F}edosov's approach to deformation quantization with
              separation of variables},
 BOOKTITLE = {Conf\'{e}rence {M}osh\'{e} {F}lato 1999, {V}ol. {II} ({D}ijon)},
    SERIES = {Math. Phys. Stud.},
    VOLUME = {22},
     PAGES = {167--176},
 PUBLISHER = {Kluwer Acad. Publ., Dordrecht},
      YEAR = {2000},
}

\bib{Karabegov07}{article}{
    AUTHOR = {Karabegov, A.V.},
     TITLE = {A formal model of {B}erezin-{T}oeplitz quantization},
   JOURNAL = {Comm. Math. Phys.},
    VOLUME = {274},
      YEAR = {2007},
    NUMBER = {3},
     PAGES = {659--689},
}

\bib{Karabegov}{article}{
    AUTHOR = {Karabegov, A.V.},
    author = {Schlichenmaier, M.},
     TITLE = {Identification of {B}erezin-{T}oeplitz deformation
              quantization},
   JOURNAL = {J. Reine Angew. Math.},
    VOLUME = {540},
      YEAR = {2001},
     PAGES = {49--76},
}

%\bib{Kontsevich-notes}{article}{
%    author = {Kontsevich, M.},
%     title = {Lectures at Harvard University}
%}

%\bib{Kontsevich}{article}{
%    AUTHOR = {Kontsevich, M.},
%     TITLE = {Feynman diagrams and low-dimensional topology},
% BOOKTITLE = {First {E}uropean {C}ongress of {M}athematics, {V}ol.\ {II}
%              ({P}aris, 1992)},
%    SERIES = {Progr. Math.},
%    VOLUME = {120},
%     PAGES = {97--121},
% PUBLISHER = {Birkh\"auser, Basel},
%      YEAR = {1994},
%}

\bib{Kontsevich2}{article}{
    AUTHOR = {Kontsevich, M.},
     TITLE = {Rozansky-{W}itten invariants via formal geometry},
   JOURNAL = {Compositio Math.},
    VOLUME = {115},
      YEAR = {1999},
    NUMBER = {1},
     PAGES = {115--127},
}

%\bib{Li-Li}{article}{
%	AUTHOR = {Li, Q.},
%	AUTHOR = {Li, S.},
%	TITLE = {On the {B}-twisted topological sigma model and {C}alabi-{Y}au
%		geometry},
%	JOURNAL = {J. Differential Geom.},
%	VOLUME = {102},
%	YEAR = {2016},
%	NUMBER = {3},
%	PAGES = {409--484},
%}

%\bib{Ma-Ma-1}{article}{
%    AUTHOR = {Ma, X.},
%    author = {Marinescu, G.},
%     TITLE = {Toeplitz operators on symplectic manifolds},
%   JOURNAL = {J. Geom. Anal.},
%    VOLUME = {18},
%      YEAR = {2008},
%    NUMBER = {2},
%     PAGES = {565--611},
%}

\bib{Ma-Ma}{article}{
    AUTHOR = {Ma, X.},
    author = {Marinescu, G.},
     TITLE = {Berezin-{T}oeplitz quantization on {K}\"{a}hler manifolds},
   JOURNAL = {J. Reine Angew. Math.},
    VOLUME = {662},
      YEAR = {2012},
     PAGES = {1--56},

}

\bib{Manin}{book}{
    AUTHOR = {Manin, Y.I.},
     TITLE = {Frobenius manifolds, quantum cohomology, and moduli spaces},
    SERIES = {American Mathematical Society Colloquium Publications},
    VOLUME = {47},
 PUBLISHER = {American Mathematical Society, Providence, RI},
      YEAR = {1999},
     PAGES = {xiv+303},
}

\bib{Neumaier}{article}{
    AUTHOR = {Neumaier, N.},
     TITLE = {Universality of {F}edosov's construction for star products of
              {W}ick type on pseudo-{K}\"{a}hler manifolds},
   JOURNAL = {Rep. Math. Phys.},
    VOLUME = {52},
      YEAR = {2003},
    NUMBER = {1},
     PAGES = {43--80},
}

\bib{RW}{article}{
    author = {Rozansky, L.},
    author = {Witten, E.},
     title = {Hyper-K\"{a}hler geometry and invariants of three-manifolds},
   journal = {Selecta Math.},
    volume = {3},
      year = {1997},
    number = {3},
     pages = {401--458},
}

%\bib{Tian}{article}{
%    AUTHOR = {Tian, G.},
%     TITLE = {On a set of polarized {K}\"{a}hler metrics on algebraic manifolds},
%   JOURNAL = {J. Differential Geom.},
%    VOLUME = {32},
%      YEAR = {1990},
%    NUMBER = {1},
%     PAGES = {99--130},
%}

\end{biblist}
\end{bibdiv}

\end{document}